\newcommand{\myemail}{\texttt{stefan.waldmann@mathematik.uni-wuerzburg.de}}
\newcommand{\myaddress}{
        \begin{minipage}{8cm}
            \centering\small
            Julius Maximilian University of Würzburg \\
            Institute of Mathematics \\
            Emil-Fischer-Straße 31 \\
            97074 Würzburg \\
            Germany
        \end{minipage}
        \\
        }
\newcommand{\AuthorOne}{\textbf{Marvin Dippell}}
\newcommand{\AuthorTwo}{\textbf{Chiara Esposito}}
\newcommand{\AuthorThree}{\textbf{Jonas Schnitzer}}
\newcommand{\AuthorFour}{\textbf{Stefan Waldmann}}
\newcommand{\AuthorAddressOne}{
        \begin{minipage}{8cm}
        \centering\small
        Department Mathematik/Informatik\\
        Universität zu Köln\\
        Weyertal 86-90\\
        50931 Köln\\
        Germany
    \end{minipage}
    \\}
\newcommand{\AuthorAddressTwo}{
        \begin{minipage}{8cm}
            \centering\small
            Dipartimento di Matematica\\
            Università degli Studi di Salerno\\
            via Giovanni Paolo II, 132\\
            84084 Fisciano (SA)\\
            Italy
        \end{minipage}
        \\
        }
\newcommand{\AuthorAddressThree}{
        \begin{minipage}{8cm}
            \centering\small
            Dipartimento di Matematica "Felice Casorati"\\
            Università degli Studi di Pavia\\
            Via Ferrata 5\\
            27100 Pavia\\
            Italy
        \end{minipage}
        \\
}
\newcommand{\AuthorAddressFour}{\myaddress}
\newcommand{\AuthorEmailOne}{\texttt{mdippell@unisa.it}}
\newcommand{\AuthorEmailTwo}{\texttt{chesposito@unisa.it}}
\newcommand{\AuthorEmailThree}{\texttt{jonaschristoph.schnitzer@unipv.it}}
\newcommand{\AuthorEmailFour}{\myemail}
\author{\AuthorOne\thanks{\AuthorEmailOne},\\[0.2cm]
        \AuthorAddressOne
        \\[-0.1cm]
        \AuthorTwo\thanks{\AuthorEmailTwo},\\[0.2cm]
        \AuthorAddressTwo
        \\[-0.1cm]
        \AuthorThree\thanks{\AuthorEmailThree},\\[0.2cm]
        \AuthorAddressThree
        \\[-0.1cm]
        \AuthorFour\thanks{\AuthorEmailFour}\\[0.2cm]
        \AuthorAddressFour
}
\newcommand{\bibnote}[2]{\nocite{#1}\@namedef{#1chairxnote}{#2}}
  \patchcmd{\hyper@makecurrent}{%
      \ifx\Hy@param\Hy@chapterstring
          \let\Hy@param\Hy@chapapp
      \fi
  }{%
      \iftoggle{inappendix}{
          \@checkappendixparam{chapter}%
          \@checkappendixparam{section}%
          \@checkappendixparam{subsection}%
          \@checkappendixparam{subsubsection}%
          \@checkappendixparam{paragraph}%
          \@checkappendixparam{subparagraph}%
      }{}%
  }{}{\errmessage{failed to patch}}
  \newcommand*{\@checkappendixparam}[1]{%
      \def\@checkappendixparamtmp{#1}%
      \ifx\Hy@param\@checkappendixparamtmp
          \let\Hy@param\Hy@appendixstring
      \fi
  }
  \apptocmd{\appendix}{\toggletrue{inappendix}}{}{\errmessage{failed to patch appendix}}
    \apptocmd{\subappendices}{\toggletrue{inappendix}}{}{\errmessage{failed to patch subappendices}}
\renewcommand{\to}{%
  \relax\if@display
    \expandafter\longrightarrow
  \else
    \expandafter\rightarrow
  \fi
}
\newcommand{\vanishing}{\mathcal{J}}
\newcommand{\HC}{\operator{HC}}
\newcommand{\HCdiff}{\HC_{\script{diff}}}
\newcommand{\HH}{\operator{HH}}
\newcommand{\HHdiff}{\HH_{\script{diff}}}
\newcommand{\hkr}{\operator{hkr}}
\newcommand{\hkrInv}{\hkr^{-1}}
\newcommand{\hkrHomo}{\Theta^\nabla}
\newcommand{\SymSec}{\ch@irxspacefont{S}}
\newcommand{\redSym}{\cc{\Sym}}
\newcommand{\Ca}{{\script{ca}}}
\newcommand{\CCa}{\operator{C}_\Ca}
\newcommand{\redCCa}{\cc{\operator{C}}_\Ca}
\newcommand{\HCa}{\operator{H}_\Ca}
\newcommand{\redHCa}{\cc{\operator{H}}_\Ca}
\newcommand{\HCainv}[1]{\operator{H}_{\Ca,#1}}
\newcommand{\CCE}{\operator{C}_\CE}
\newcommand{\CVanEst}{\operator{D}_{\script{vE}}}
\newcommand{\redCVanEst}{\cc{\operator{D}}_{\script{vE}}}
\newcommand{\HVanEst}{\operator{H}_{\script{vE}}}
\newcommand{\VanEstDiff}{\operator{VE}}
\newcommand{\VanEstInt}{\operator{VE}^{-1}}
\newcommand{\VanEstHomo}{\Theta}
\renewcommand{\Op}{\operatorname{Op}^\nabla}
\newcommand{\ch@irxcoalgebrafont}{\mathcal}
\newcommand{\coalgebra}[1]{\ch@irxcoalgebrafont{#1}}
\newcommand{\coproduct}{\Delta}
\newcommand{\counit}{\varepsilon}
\newcommand{\Perm}{\operator{S}}
\newcommand{\Shuffle}{\operator{Sh}}
\newcommand{\shcoprod}{\coproduct_{\script{sh}}}
\newcommand{\redshcoprod}{\overline{\coproduct}_{\script{sh}}}
\newcommand{\Left}{\mathrm{L}}
\newcommand{\Double}{\mathrm{D}}
\newcommand{\imap}{\mathrm{i}}
\newcommand{\Imap}{\mathrm{I}}
\newcommand{\jmap}{\mathrm{j}}
\newcommand{\pmap}{\mathrm{p}}
\newcommand{\Pmap}{\mathrm{P}}
\newcommand{\qmap}{\mathrm{q}}
\newcommand{\Qmap}{\mathrm{Q}}
\newcommand{\hmap}{\mathrm{h}}
\newcommand{\Hmap}{\mathrm{H}}
\newcommand{\kmap}{\mathrm{k}}
\newcommand{\Kmap}{\mathrm{K}}
\newcommand{\bmap}{\mathrm{b}}
\newcommand{\Bmap}{\mathrm{B}}
\title{Global Homotopies for Differential Hochschild Cohomologies}
\date{
    \ifdraft{
        Current Version of hkr: \gitAuthorIsoDate\\[0.2cm]
        {\small
            Last changes by \gitAuthorName{} on \gitAuthorDate \\
            Git revision of hkr: \texttt{\gitAbbrevHash{}} \gitReferences%
        }%
    }{}
}
\begin{document}

\selectlanguage{english}

\maketitle

\begin{abstract}
    We construct explicit global homotopies for differential Hochschild cochains in differential geometry, thereby upgrading the classical Hochschild–Kostant–Rosenberg map to a deformation retract. Our approach combines two key techniques: a symbol calculus from differential geometry and a coalgebraic version of the van Est theorem. To demonstrate its effectiveness, we develop deformation retracts in several related settings, including principal bundles and invariant contexts. As a byproduct, we recover the classical Hochschild–Kostant–Rosenberg theorem and compute previously inaccessible Hochschild cohomologies.
\end{abstract}

\newpage

\tableofcontents
\thispagestyle{plain}
\newpage

\section{Introduction}
\label{sec:Introduction}

Hochschild cohomology encodes important information about associative
algebras, in particular it is used to understand their structure,
deformations, and representations.  To name just a few important
applications, Hochschild cohomology appears in algebraic geometry,
algebraic topology, category theory as well as in functional analysis.
In particular, in algebraic deformation theory in the sense of
Gerstenhaber \cite{gerstenhaber:1963a,gerstenhaber:1964a,
  gerstenhaber:1966a, gerstenhaber:1968a, gerstenhaber:1974a}
Hochschild cohomology controls the obstructions to existence and
equivalence of formal associative deformations of an algebra.  Equally
important is its appearance in noncommutative geometry
\cite{connes:1994a} with its relations to cyclic cohomology. Here it
is the arena to formulate various types of index
theorems \cite{nest.tsygan:1995a}.  It is thus of utmost interest to
have a good understanding of Hochschild cohomology of the algebras
under consideration. However, explicitly computing cohomologies can be
difficult and requires more insight into the specific problem.  The
Hochschild-Kostant-Rosenberg theorem (HKR theorem for short)
determines Hochschild cohomology in the case of an algebra of
functions as the multivector fields.  In its original version
\cite{hochschild.kostant.rosenberg:1962a} the focus was on regular
affine algebras.  But ever since, this statement has been extended in
various directions.

In many applications not only the cohomology but the complex itself is
of crucial importance. The Hochschild complex carries the structure of
a differential graded Lie algebra with the Gerstenhaber bracket as Lie
bracket.  In the celebrated formality theorem of Kontsevich
\cite{kontsevich:1997:pre, kontsevich:2003a} the Lie bracket in
(differential) Hochschild cohomology is related to the Gerstenhaber
bracket of the complex by means of an $L_\infty$-quasi isomorphism, a
fundamental result to deformation theory.

If the algebra under consideration carries additional structures this
is usually reflected on the corresponding Hochschild complex.  In the
case of a smooth manifold $M$ the algebra $\Cinfty(M)$ of smooth
functions carries a Fréchet space structure. Thus it is meaningful to
require Hochschild cochains to be continuous. For this scenario,
Connes computed the continuous Hochschild cohomology under the
assumption that $M$ admits a nowhere vanishing vector field in
\cite{connes:1986a}, see also \cite[Sect.~3.2$\alpha$]{connes:1994a},
using a suitable topological bar and Koszul resolution. This has been
extended to general smooth manifolds by Pflaum \cite{pflaum:1998a},
see also \cite{gutt:1997a} and \cite{nadaud:1999a}.

Deformation quantization is one of the major applications of algebraic
deformation theory \cite{bayen.et.al:1978a}.  In this realm,
continuous cochains are often considered to be still too general.
Instead, one is interested in local or multidifferential cochains. By
Peetre's theorem \cite{peetre:1960a, peetre:1959a}, local cochains are
at least locally multidifferential \cite{cahen.gutt.dewilde:1980a}.
Therefore the focus is on multidifferential cochains and their
differential Hochschild cohomology.  Here Vey gave a first proof of
the HKR theorem in \cite{vey:1975a}, Gutt gave an explicit (recursive)
local formula for the primitive of an exact cochain in her thesis
\cite{gutt:1980b}, later extended by Cahen, DeWilde, and Gutt also to
local cochains in \cite{cahen.gutt.dewilde:1980a}.  While in these
results no homotopies have been provided, in
\cite{dewilde.lecomte:1995a} DeWilde and Lecomte gave a proof of the
HKR theorem by constructing local homotopies.  The overall result is
that the differential Hochschild cohomology $\HHdiff(M)$ of
$\Cinfty(M)$ is canonically given by the multivector fields
$\Anti^\bullet\Secinfty(TM)$ on $M$ with the Gerstenhaber bracket
being the Schouten bracket and the cup product being the usual wedge
product. The canonical inclusion of multivector fields as
multidifferential operators, called the HKR map, becomes the
isomorphism in cohomology. A purely algebraic version based on the
relation to the Koszul resolution was given by Kassel in
\cite[Thm.~XVIII.7.1]{kassel:1995a}.

\medskip

While this settles the computation of the Hochschild cohomology of
$\Cinfty(M)$, in many applications it is desirable to have a canonical
choice of primitives for classes in Hochschild cohomology given by a
(global) homotopy.  Moreover, there are further applications in need
of Hochschild cohomologies with various $\Cinfty(M)$-bimodules as
coefficients. One application is the deformation of modules where the
Hochschild cohomology with values in the endomorphisms of the module
controls the deformation theory. Such cohomologies have successfully
been computed in various contexts like
\cite{bordemann.neumaier.waldmann.weiss:2010a, bordemann:2005a,
  bordemann.et.al:2005a:pre, hurle:2018a:pre}, see also
\cite{dippell.esposito.waldmann:2022a} for a recent set-up in the
context of coisotropic reduction.

Finally, in various deformation problems the original algebra carries
some group action by automorphisms relevant for the problem. Thus it
is natural to require the deformations to respect this symmetry as
well. This leads to the notion of invariant Hochschild
cohomology. First results for the explicit computation of invariant
Hochschild cohomologies have been obtained for proper Lie group
actions on a manifold in \cite{miaskiwskyi:2021a} by averaging
procedures.

\medskip

\emph{The aim of this paper is to present a conceptual and adaptable construction of a deformation retract for differential Hochschild cochains (with coefficients). This approach not only refines existing results but also provides a framework for computing Hochschild cohomology in a wide range of important cases.}

\medskip

The strategy is surprisingly simple concerning the
techniques needed to arrive at global and explicit
homotopies. Essentially, we need to proceed in two steps:
\begin{enumerate}
\item \label{item:PartII} In a first step, we identify the
    differential Hochschild complex $\HCdiff^\bullet(M)$ with certain tensor
    fields by means of a global symbol calculus $\Op$. This requires
    to use a torsion-free covariant derivative $\nabla$ on the
    manifold $M$ and perhaps additional covariant derivatives
    depending on the coefficients one is interested in. The usage of
    covariant derivatives to obtain a global symbol calculus is a
    classic technique one can trace back at least to \cite[Chap.~IV,
    §9]{palais:1965a}. The version we use can be found e.g. in
    \cite[App.~A]{waldmann:2007a}.

    Once we replaced differential Hochschild cochains by their global
    symbols the Hochschild differential becomes the differential of
    the coalgebra complex $\CCa^\bullet(\Secinfty(TM))$ of the symmetric
    coalgebra $\Sym\Secinfty(TM)$ and we obtain an isomorphism
    \begin{equation}
        \label{eq:intro_SymbolCalculus}
        \begin{tikzcd}[column sep = huge]
            \CCa^\bullet(\Secinfty(TM))
            \arrow[r,"\Op","\simeq"{swap}]
            & \HCdiff^\bullet(M)
        \end{tikzcd}
    \end{equation}
    of differential graded algebras.  This way we translated the
    problem of computing Hochschild cohomology to the purely algebraic
    task of finding the associated coalgebra cohomology.

\item \label{item:PartI} To compute coalgebra cohomology
    $\CCa^\bullet(V)$ for a module $V$ over a commutative ring
    $\ring{R}$ we will use techniques from Lie group and Lie algebra
    cohomology.  Here the van Est double complex is a very powerful
    and far-reaching method, see \cite{vanest:1953a, vanest:1953b} for
    the original formulation and \cite{meinrenken.salazar:2020a} for a
    more recent version which was the main source of inspiration for
    us.  In our situation we can interpret $\CCa^\bullet(V)$ as the
    complex of polynomial group cohomology on the (hypothetical)
    predual of $V$ considered as an \emph{abelian} Lie group.  Then
    the classical van Est theorem should identify this cohomology with
    the Chevalley-Eilenberg cohomology of the \emph{abelian} Lie
    algebra on this predual.

    This was already observed for \emph{local} symbols based on the
    flat covariant derivative in a local chart in
    \cite{cahen.gutt.dewilde:1980a} and used to compute Hochschild
    cohomologies locally with a subsequent gluing using partitions of
    unity.

    However, we have to go beyond the finite-dimensional case and
    hence preduals do not necessarily exist.  Therefore we decide to
    work in a dual picture from the beginning.  The van Est double
    complex $\CVanEst^{\bullet,\bullet}(V)$ simplifies drastically in
    this formulation and it is connected to $\CCa^\bullet(V)$ and the
    Chevalley-Eilenberg complex $\CCE^\bullet(V) = \Anti^\bullet V$ of
    the abelian Lie coalgebra $V$ by means of deformation retracts:
    \begin{equation*}
        \begin{tikzcd}[column sep = huge]
            \CCE^\bullet(V)
            \arrow[r,shift left = 2pt]
            & \CVanEst^{\bullet}(V)
            \arrow[l,shift left = 2pt]
            \arrow[r,shift left = 2pt]
            & \CCa^\bullet(V)
            \arrow[l,shift left = 2pt]
        \end{tikzcd}
    \end{equation*}
    The main technique is to use the homological perturbation lemma as
    it can be found in e.g. \cite{crainic:2004a:pre,
      meinrenken.salazar:2020a}, see also \cite{huebschmann:2011a} for
    a historical overview.  Combining these deformation retracts we
    obtain the van Est deformation retract
    \begin{equation}
        \label{eq:intro_VanEst}
        \begin{tikzcd}[column sep = huge]
            \CCE^\bullet(V)
            \arrow[r,"\VanEstInt", shift left = 3pt]
            & \CCa^{\bullet}(V)
            \arrow[l,"\VanEstDiff", shift left = 3pt]
            \arrow[loop,
            out = -30,
            in = 30,
            distance = 30pt,
            start anchor = {[yshift = -7pt]east},
            end anchor = {[yshift = 7pt]east},
            "\VanEstHomo"{swap}
            ]
        \end{tikzcd}.
    \end{equation}
    This concludes the computation of the coalgebra cohomology as the
    differential on $\CCE^\bullet(V)$ is trivial and therefore its
    cohomology is given by $\Anti^\bullet V$.

    All structures we develop in this part are completely canonical
    thus enjoying good functorial properties and can be adapted easily
    to a large class of coefficient modules as well. In fact, again
    the perturbation lemma proves to be extremely efficient to include
    modules into the picture.
    Moreover, we believe that this result alone has quite some relevance,
    since it can be directly applied to  compute certain cohomologies
    related to non-trivial Lie (co)algebras, see
    \autoref{rem:nontrivialLiecoalg}.
\end{enumerate}

By combining the symbol calculus \eqref{eq:intro_SymbolCalculus} with
the van Est deformation retract \eqref{eq:intro_VanEst} we arrive at a
two-fold result: an HKR theorem for differential Hochschild cohomology
together with a global homotopy such that
\begin{equation*}
    \begin{tikzcd}[column sep = large]
        \Anti^\bullet \Secinfty(TM)
        \arrow[r,"\hkr", shift left = 3pt]
        &\bigl( \HCdiff^{\bullet}(M),\delta \bigr)
        \arrow[l,"\hkrInv_\nabla", shift left = 3pt]
        \arrow[loop,
        out = -30,
        in = 30,
        distance = 30pt,
        start anchor = {[yshift = -7pt]east},
        end anchor = {[yshift = 7pt]east},
        "\hkrHomo"{swap}
        ]
    \end{tikzcd}
\end{equation*}
is a deformation retract.

The above procedure can easily be adapted to incorporate coefficient
modules.  This opens the way to obtain global homotopies for
Hochschild cohomologies in many related scenarios.  To illustrate how
our results can be used we apply the techniques in various different
situations which have been considered in the literature before,
computing some formerly unknown cohomologies. This results in the
following list of theorems, more of which can be obtained similarly:
\begin{cptitem}
\item For vector bundles $E$ and $F$ over $M$ we compute
    $\HCdiff^{\bullet}(M, E)$ and
    $\HCdiff^{\bullet}(M, \Diffop(E; F))$ in
    \autoref{thm:HKR_VectorValued} and
    \autoref{thm:HKR_DiffopEFValues}, respectively.
\item Given a submanifold $C \subseteq M$ we obtain the tangential
    Hochschild cohomology $\HCdiff^\bullet(M)_C$ as well as the
    Hochschild cohomology $\HCdiff^\bullet\bigl(M,\Diffop(C)\bigr)$ with
    values in $\Diffop(C)$, see \autoref{thm:HKR_Submanifold} and
    \autoref{thm:HKR_ValuesDiffopC}.
\item For a surjective submersion $\pr \colon P \to M$ the projectable
    Hochschild cohomology $\HCdiff^{\bullet}(P)^F$ as well as the
    Hochschild cohomology $\HCdiff^{\bullet}(M,\Diffop(P))$ with
    values in $\Diffop(P)$ is computed in
    \autoref{thm:HKR_Projectable} and
    \autoref{thm:HKR_surjSubmersions}.
\item Given a foliation $D \subseteq TM$ we compute its related
    Hochschild cohomology $\HC_D^{\bullet}(M)$ in
    \autoref{thm:HKR_Foliation}.
\item Considering invariant covariant derivatives and using the good
    functorial properties of the homotopies and the deformation
    retracts opens the door to invariant versions of all the previous
    HKR theorems.  For instance, we prove HKR theorems for arbitrary
    Lie group and Lie algebras actions preserving a covariant
    derivative in \autoref{thm:HKR_inv} as well as for principal
    bundles in \autoref{thm:HKR_PrincipalBundle}.
\end{cptitem}

Finally, we also remark that all our constructions are localizable and
hence of sheaf-theoretic nature. In fact, the global symbol calculus
is local in the sense that the global symbol of a multidifferential
operator at a point only depends on the restriction of it to an
arbitrary open neighbourhood of the point.  The subsequent algebraic
part is even pointwise. Thus our constructions can, in principle, also
be used in more algebraic frameworks, even though we refrain from
doing this in the present work.

\medskip

The paper is organized as follows: in
\autoref{part:CoalgebraCohomology} we compute coalgebra cohomology in
a purely algebraic setting.  To do this we collect in
\autoref{sec:CoalgebrasComodules} the required basics of the coalgebra
cohomology of symmetric algebras and Chevalley-Eilenberg cohomology of
abelian Lie coalgebras.  This sets the stage for a coalgebraic
formulation of the van Est double complex together with the van Est
differentiation and integration maps in
\autoref{sec:VanEstDoubleComplex}.  Functorial aspects are shown
before we conclude the first part by including coefficients into the
picture in \autoref{sec:VanEstComplexCoeff}.  In
\autoref{part:HochschildCohomology} we start by recalling the
differential Hochschild complex and establish the global symbol
calculus for multidifferential operators in
\autoref{sec:HochschildComplex}.  Finally, we combine the previous
results to obtain various HKR theorems based on explicit homotopies
and deformation retracts in \autoref{sec:HKRDifferentialGeometry}.  We
conclude with \autoref{sec:AdaptedCovariantDerivatives} and
\autoref{sec:HomologicalAlgebra}, the first on adapted covariant
derivatives, the second on the homological perturbation lemma.

\medskip
\noindent
\textbf{Acknowledgements:} It is a pleasure to thank Martin Bordemann
and Thomas Weber for valuable discussions.  Moreover, we want to thank
Luca Vitagliano and Olivier Brahic for helpful remarks. Furthermore,
we are very grateful to Ezra Getzler for explaining the relations to
his recent work and giving us detailed comments and remarks on our
construction.  M. D. and C. E. were supported by the National Group
for Algebraic and Geometric Structures, and their Applications (GNSAGA
– INdAM).

\part{Coalgebra Cohomology of the Symmetric Algebra}
\label{part:CoalgebraCohomology}

In this first part we study the relation of coalgebra cohomology with
Chevalley-Eilenberg cohomology of a symmetric coalgebra using the
so-called van Est double complex.

For this we fix an associative and commutative unital ring $\ring{R}$
with $\mathbb{Q} \subseteq \ring{R}$.  All modules are supposed to be
$\ring{R}$-modules and all maps will be $\ring{R}$-linear.  Moreover,
all unadorned tensor products will be tensor products over the ring
$\ring{R}$.  In the second part we will mainly be interested in the
case $\ring{R} = \Cinfty(M)$ of smooth real-valued functions on a
smooth manifold.

\section{(Lie) Coalgebras, their Comodules and Cohomology}
\label{sec:CoalgebrasComodules}

The aim of this section is now to give a quick overview on the
algebraic structures on the symmetric algebra $\Sym V$ of some
$\ring{R}$-module $V$.  We then define the coalgebra cohomology and
Chevalley-Eilenberg cohomology of $V$.

\subsection{The Bialgebra Structure of $\Sym V$}

This section is mainly meant to fix notation: we recall the classical
(graded) bialgebra structures on $\Sym V$ and $\Anti V$.  All this can
be found e.g. in \cite{kassel:1995a}.  To keep the notation as short
as possible we use \emph{Sweedler's notation} for the coproduct
$\Delta$ of a coalgebra $\algebra{C}$ and write for $c\in \algebra{C}$
\begin{equation}
    \Delta(c)
    =
    c_\sweedler{1}\tensor c_\sweedler{2}.
\end{equation}

In the following list, we collect the explicit formulas for the
coalgebra and bialgebra structures on the symmetric and anti-symmetric
algebras generated by an $\ring{R}$-module $V$.
\begin{enumerate}
\item \label{item:Coalgebra_SymV} Consider
    $\Sym V = \bigoplus_{r = 0}^\infty \Sym^r V$ with $\Sym^r V$ being
    the $r$-fold symmetric tensor power of $V$.  We can define a
    coproduct $\shcoprod \colon \Sym V \to \Sym V \tensor \Sym V$ by
    specifying it on generators by
    \begin{align}
        \label{eq:shcoproductOnGenerators}
        \shcoprod(\Unit) \coloneqq \Unit \tensor \Unit,\qquad
        \shcoprod(x) \coloneqq \Unit \tensor x + x \tensor \Unit.
    \end{align}
    Together with the canonical projection
    $\counit \colon \Sym V \to \Sym^0 V = \ring{R}$ as counit this
    defines the structure of an (ungraded) cocommutative coalgebra on
    $\Sym V$ when we extend $\shcoprod$ as algebra morphism with
    respect to the symmetric tensor product $\vee$.  For general
    $x_1 \vee \dots \vee x_n \in \Sym^r V$ we obtain the explicit
    formula
    \begin{equation}
        \shcoprod (x_1 \vee \cdots \vee x_r)
        =
        \sum_{i=0}^{r} \sum_{\sigma \in \Shuffle(i,r-i)}
        x_{\sigma(1)}
        \vee \cdots \vee
        x_{\sigma(i)}
        \tensor
        x_{\sigma(i+1)}
        \vee \cdots \vee
        x_{\sigma(r)}.
    \end{equation}
    Here $\Shuffle(i,r-i)\subset \Perm_r$ denotes the set of
    $(i,r-i)$-\emph{shuffles}, i.e. permutations $\sigma$ such that
    $\sigma(1) < \cdots < \sigma(i)$ and
    $\sigma(i+1) < \cdots < \sigma(r)$, where we set
    $\Shuffle(0,r) = \Shuffle(r,0) = \{\id\}$.  The map $\shcoprod$ is
    usually referred to as the \emph{shuffle coproduct}. Moreover,
    $\Sym V$ together with the shuffle coproduct $\shcoprod$ and
    counit $\counit$ , as well as the symmetric tensor product $\vee$
    and unit $\iota(1_\ring{R}) \coloneqq \Unit \in \Sym^0 V$ is a
    commutative cocommutative bialgebra.
\item \label{item:Coalgebra_SymVred} Starting from the shuffle
    coproduct $\shcoprod$ on $\Sym V$ we can define the \emph{reduced
      shuffle coproduct}
    $\redshcoprod \colon \Sym V \to \Sym V \tensor \Sym V $ by
    \begin{equation}
        \redshcoprod(\phi)
        \coloneqq
        \shcoprod(\phi) - \Unit \tensor \phi - \phi \tensor \Unit,
    \end{equation}
    for all $\phi \in \Sym V$.  This yields another cocommutative
    coalgebra structure on $\Sym V$.  More explicitly, we get for
    $\phi = x_1 \vee \dots \vee x_r \in \Sym^r V$ the formula
    \begin{equation}
        \label{eq:redcoproduct}
        \redshcoprod(x_1 \vee \cdots \vee x_r)
        =
        \sum_{i=1}^{r-1} \sum_{\sigma \in Sh(i,r-i)}
        x_{\sigma(1)}
        \vee \cdots \vee
        x_{\sigma(i)}
        \tensor
        x_{\sigma(i+1)}
        \vee \cdots \vee
        x_{\sigma(r)},
    \end{equation}
    for $r \geq 1$ and $\redshcoprod(\Unit) = -\Unit \tensor \Unit$.
    To avoid confusion we will not use Sweedler's notation for the
    reduced shuffle coproduct.
\item \label{item:Coalgebra_AntiV} Similar to the symmetric case we
    can also define a coproduct $\shcoprod$ on
    $\Anti^\bullet V = \bigoplus_{\ell=0}^\infty \Anti^\ell V$ by
    setting
    \begin{align}
        \label{eq:AntiShCoproductOnGenerators}
        \shcoprod(1) \coloneqq 1 \tensor 1,
        \qquad
        \shcoprod(v) \coloneqq 1 \tensor v + v \tensor 1
    \end{align}
    on generators of $\Anti^\bullet V$ and extend this again to be an
    algebra morphism, now with respect to $\wedge$.  Together with the
    canonical projection
    $\counit \colon \Anti^\bullet V \to \Anti^0 V = \ring{R}$ as
    counit this defines the structure of a graded cocommutative
    coalgebra on $\Anti^\bullet V$.  For general
    $\xi_1 \vee \dots \vee \xi_\ell \in \Anti^\ell V$ we obtain the
    explicit formula
    \begin{equation}
        \shcoprod (\xi_1 \wedge \cdots \wedge \xi_\ell)
        =
        \sum_{k=0}^{\ell} \sum_{\sigma \in \Shuffle(k,\ell-k)}
        \sign(\sigma) \cdot
        \xi_{\sigma(1)}
        \wedge \cdots \wedge
        \xi_{\sigma(k)}
        \tensor
        \xi_{\sigma(k+1)}
        \wedge \cdots \wedge
        \xi_{\sigma(\ell)}.
    \end{equation}
    The anti-symmetric tensor power $\Anti^\bullet V$ together with
    the shuffle coproduct $\shcoprod$ and counit $\counit$ as well as
    the anti-symmetric tensor product $\wedge$ and unit
    $\iota(1_\ring{R}) \coloneqq 1 \in \Anti^0 V$ is a graded
    commutative cocommutative bialgebra.
\end{enumerate}

We will denote by $\pr_+ \colon \Sym V \to \Sym V$ the projection onto
positive symmetric degrees, i.e. $\pr_+(\Unit) = 0$ and
$\pr_+(\phi) = \phi$ for all $\phi \in \Sym^n V$ with $n \geq 1$.
Note that it holds
\begin{equation}
    \label{eq:ProjectionOnShcorprod}
    (\pr_+ \tensor \pr_+) \circ \redshcoprod(\phi)
    =
    (\pr_+ \tensor \pr_+) \circ \shcoprod(\phi)
    =
    \pr_+(\phi_\sweedler{1}) \tensor \pr_+(\phi_\sweedler{2})
\end{equation}
for all $\phi \in \Sym V$.
The following observation will be useful in many situations.
\begin{lemma}
    \label{lem:exponentialproj}%
    Let $V$ be an $\ring{R}$-module.
    \begin{lemmalist}
    \item For $\phi \in \Sym V$ one has
        \begin{align}
            \pr_{\Sym^r V}(\phi)
            = \frac{1}{r!}
            \pr_V(\phi_\sweedler{1})
            \vee \cdots \vee
            \pr_V(\phi_\sweedler{r}).
        \end{align}
    \item For $\xi \in \Anti^\bullet V$ one has
        \begin{align}
            \pr_{\Anti^\ell V}(\xi)
            = \frac{1}{\ell!}
            \pr_V(\xi_\sweedler{1})
            \wedge \cdots \wedge
            \pr_V(\xi_\sweedler{\ell}).
        \end{align}
    \end{lemmalist}
\end{lemma}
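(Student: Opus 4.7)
My plan is to reduce both statements to the case of a pure monomial by $\ring{R}$-linearity and then compute the iterated coproduct directly, using that $\shcoprod$ is an algebra morphism that on generators is given by \eqref{eq:shcoproductOnGenerators} (resp. \eqref{eq:AntiShCoproductOnGenerators} in the antisymmetric case). The projection $\pr_V$ then kills all but a very small set of Sweedler components, and counting these gives the combinatorial factor $1/r!$.

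In detail, for $\phi = x_1 \vee \cdots \vee x_n \in \Sym^n V$ I would first establish by an easy induction on $r$ that the $(r-1)$-fold iterated coproduct satisfies
\begin{equation*}
    \phi_\sweedler{1} \tensor \cdots \tensor \phi_\sweedler{r}
    =
    \sum_{f \colon \{1,\dots,n\} \to \{1,\dots,r\}}
    \Bigl(\bigvee_{i \in f^{-1}(1)} x_i\Bigr)
    \tensor \cdots \tensor
    \Bigl(\bigvee_{i \in f^{-1}(r)} x_i\Bigr),
\end{equation*}
where the empty symmetric product equals $\Unit$. This is immediate from coassociativity and the fact that $\shcoprod$ is a $\vee$-algebra morphism satisfying $\shcoprod(x) = \Unit \tensor x + x \tensor \Unit$.

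Now I apply $\pr_V$ in each tensor slot. Since $\pr_V(\Unit) = 0$ and $\pr_V$ vanishes on $\Sym^{\geq 2} V$, a summand survives if and only if every fibre $f^{-1}(j)$ is a singleton; this forces $n = r$ and $f$ to be a bijection of $\{1,\dots,r\}$. Summing over the $r!$ bijections gives $r! \cdot x_1 \vee \cdots \vee x_r$ by symmetry of $\vee$, and after division by $r!$ I recover $\pr_{\Sym^r V}(\phi)$. When $n \neq r$ both sides are zero, the right-hand side because no such bijection exists. I expect no real obstacle beyond bookkeeping; the only step worth care is verifying the explicit shuffle formula for the iterated coproduct, which one may alternatively read off from the formula for $\shcoprod$ already recorded in \autoref{item:Coalgebra_SymV}.

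The second part is completely analogous: one uses $\shcoprod(v) = 1 \tensor v + v \tensor 1$ on $\Anti V$, notes that $\shcoprod$ is an algebra morphism with respect to $\wedge$, and obtains the same type of expansion indexed by set-theoretic maps $f$. Reordering the $x_i$ to match the order of the bijection produces the sign $\sign(\sigma)$, but this sign is absorbed by the antisymmetry of $\wedge$, so summing over the $\ell!$ bijections again yields $\ell!$ copies of $\xi_1 \wedge \cdots \wedge \xi_\ell$ and the factor $1/\ell!$ completes the identification with $\pr_{\Anti^\ell V}(\xi)$.
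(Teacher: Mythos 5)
Your proposal is correct and follows essentially the same route as the paper: both compute the iterated coproduct of a monomial (your indexing by set maps $f$ is just a reparametrization of the paper's sum over multi-shuffles), observe that applying $\pr_V$ in every slot forces all blocks to be singletons, and then sum over the $r!$ permutations. The treatment of the antisymmetric case via sign absorption also matches the paper's remark that the $\Anti^\bullet V$ computation is analogous.
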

\begin{proof}
    We prove the statement only for $\Sym V$, the computations for
    $\Anti^\bullet V$ is very similar.  Let
    $\phi = x_1 \vee \cdots \vee x_r \in \Sym^r V$ be arbitrary, then
    \begin{align*}
        \phi_\sweedler{1} \tensor \cdots \tensor \phi_\sweedler{n}
        &=
        \Delta^n(x_1 \vee \cdots \vee x_r)
        \\
        &=
        \sum_{i_1 + \cdots + i_n = r}
        \sum_{\sigma\in \Shuffle(i_1, \ldots, i_n)}
        x_{\sigma(1)}
        \vee \cdots \vee
        x_{\sigma(i_1)}
        \tensor \cdots \tensor
        x_{\sigma(i_1 + \cdots + i_{n-1}+1)}
        \vee \cdots \vee
        x_{\sigma(r)}.
    \end{align*}
    Here $\Shuffle(i_1, \dots, i_n)$ denotes the set of
    multi-shuffles, i.e. permutations $\sigma$ such that
    $\sigma(i_1 + \cdots + i_{j-1} + 1) < \cdots < \sigma(i_1 + \cdots
    + i_j)$ for all $j = 1, \ldots, n$.  This means in particular that
    $\pr_V(\phi_\sweedler{1}) \tensor \cdots \tensor
    \pr_V(\phi_\sweedler{n})$ is only non-zero, if $n=r$ and we have
    \begin{align*}
        \pr_V(\phi_\sweedler{1})
        \tensor \cdots \tensor
        \pr_V(\phi_\sweedler{r})
        =
        \sum_{\sigma \in \Shuffle(1, \ldots, 1) = \Perm_r}
        x_{\sigma(1)}
        \tensor \cdots \tensor
        x_{\sigma(r)}
    \end{align*}
    and therefore by applying $\vee$ on both sides we obtain
    \begin{align*}
        \pr_V(\phi_\sweedler{1})
        \vee \dots \vee
        \pr_V(\phi_\sweedler{r})
        =
        r! \cdot
        x_1
        \vee \cdots \vee
        x_r
    \end{align*}
    and the claim is proven.
\end{proof}

There is another structure available both on $\Sym V$ and on
$\Anti V$, namely the antipode $s$ defined by
\begin{align}
    \label{eq:AntipodeSV}
    s(x_1 \vee \dots \vee x_r)
    &\coloneqq
    (-1)^r \cdot x_1 \vee \dots \vee x_r
    \\
    \shortintertext{and}
    s(x_1 \wedge \dots \wedge x_k)
    &\coloneqq
    (-1)^k \cdot x_1 \wedge \dots \wedge x_k,
\end{align}
respectively.  The antipode turns $\Sym V$ and $\Anti V$ into Hopf
algebras.

\subsection{Coalgebra Cohomology}
\label{sec:CoalgebraCohomology}

In this subsection we introduce the complex which computes the
cohomology of a coalgebra.  This is sometimes called (co-)Hochschild
cohomology of a coalgebra but we will call it simply coalgebra
cohomology, since we want to avoid a clash of nomenclature with the
later parts of this note.  The construction we present here works for
every coaugmented coalgebra, see
e.g. \cite[Chap. XVIII]{kassel:1995a}, nevertheless, we will only
discuss the case of $\Sym V$.
\begin{definition}[Coalgebra complex]
    \label{def:CoalgebraComplex}%
    Let $V$ be an $\ring{R}$-module.  On the tensor algebra
    $\Tensor^\bullet \Sym V$ over the symmetric algebra of $V$, which
    we will denote by $\CCa^\bullet(V)$, one defines the differential
    $\delta_\Ca \colon \Tensor^k \Sym V \to \Tensor^{k+1} \Sym V$ by
    extending $-\redshcoprod$ as a graded derivation with respect to
    the tensor product $\tensor$, i.e. we have
    \begin{equation}
        \label{eq:dDefTheRealThing}
        \delta_\Ca(X)
        =
        \sum_{i=1}^k (-1)^i
        \big(\id^{\tensor i-1}
        \tensor
        \redshcoprod
        \tensor
        \id^{\tensor k - i}\big)(X),
    \end{equation}
    for $X \in \Tensor^k\Sym V$.  We call the complex
    $(\CCa^\bullet(V),\delta_\Ca)$ the \emph{coalgebra complex of
      $V$}.
\end{definition}

Note that for simplicity we call $\CCa^\bullet(V)$ the coalgebra
complex of $V$ instead of $\Sym V$, even though $V$ itself is not even
a coalgebra. Note also that $\CCa^\bullet(V)$ will always be
considered as associative algebra with respect to the tensor
product. This will be important when we introduce modules of
$\CCa^\bullet(V)$ to admit also coefficients later on.
\begin{proposition}
    \label{prop:dIsGradedDifferential}%
    Let $V$ be an $\ring{R}$-module.
    \begin{propositionlist}
    \item \label{item:dSquareZero} The map $\delta_\Ca$ is a
        differential, i.e. $\delta_\Ca^2 = 0$.
    \item \label{item:dIsDerivation} The coalgebra complex
        $(\CCa^\bullet(V), \delta_\Ca)$ together with the tensor
        product $\tensor$ is a differential graded algebra.  In
        particular we have
        \begin{equation}
            \label{eq:dFG}
            \delta_\Ca(X \tensor Y)
            =
            \delta_\Ca(X) \tensor Y
            +
            (-1)^k X \tensor \delta_\Ca (Y),
        \end{equation}
        for all $X \in \Tensor^k \Sym V$ and
        $Y \in \Tensor^\bullet\Sym V$ with $k \in \mathbb{N}_0$.
    \item \label{item:dSweedler}
        In Sweedler's notation $\delta_\Ca$ is given by
        \begin{equation}
            \delta_\Ca(X)
            =
            \Unit \tensor X
            -
            \sum_{i=1}^{k} (-1)^i
            X_1 \tensor \cdots \tensor
            (X_i)_\sweedler{1} \tensor (X_i)_\sweedler{2}
            \tensor \cdots \tensor X_k
            +
            (-1)^{k +1} X \tensor \Unit,
        \end{equation}
        for $X = X_1 \tensor \cdots \tensor X_k \in \Tensor^k\Sym V$.
    \end{propositionlist}
\end{proposition}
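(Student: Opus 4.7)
The plan is to prove the three parts in the order (ii), (iii), (i). The Leibniz rule makes $\delta_\Ca$ an algebra derivation, which in turn reduces the cocycle property $\delta_\Ca^2 = 0$ to a computation on the generating subspace $\Sym V = \Tensor^1 \Sym V$ of the free associative algebra $\CCa^\bullet(V) = \Tensor^\bullet \Sym V$.

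For part (ii), the Leibniz rule \eqref{eq:dFG} follows directly from the defining formula \eqref{eq:dDefTheRealThing}: for $X \in \Tensor^k \Sym V$ and $Y \in \Tensor^m \Sym V$, one splits the sum over positions $i \in \{1,\dots,k+m\}$ into those indices at which $\redshcoprod$ hits a factor of $X$ and those at which it hits a factor of $Y$. The first group yields $\delta_\Ca(X) \tensor Y$ immediately, while for the second the position sign $(-1)^{k+j}$ factors as $(-1)^k(-1)^j$, producing the Koszul sign $(-1)^k$ in front of $X \tensor \delta_\Ca(Y)$.

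For part (iii), I would substitute $\redshcoprod(X_i) = \shcoprod(X_i) - \Unit \tensor X_i - X_i \tensor \Unit$ into \eqref{eq:dDefTheRealThing}. The $\shcoprod$ contributions are precisely the main sum written in Sweedler's notation. The $-\Unit \tensor X_i$ contributions insert a $\Unit$ at position $i$ (before $X_i$), and the $-X_i \tensor \Unit$ contributions insert a $\Unit$ at position $i{+}1$ (after $X_i$). Reindexing the second family by $j = i+1$ and comparing signs at each interior position $p \in \{2,\dots,k\}$ shows the two families cancel pairwise; only the two boundary terms $\Unit \tensor X$ (from $i=1$ in the first family) and $(-1)^{k+1}X \tensor \Unit$ (from $i=k$ in the second family) survive.

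Part (i) is the core content. By (ii), $\delta_\Ca$ is a graded derivation of degree one, so $\delta_\Ca^2 = \tfrac12 [\delta_\Ca,\delta_\Ca]$ is again a graded derivation. Since $\CCa^\bullet(V)$ is generated as an algebra by $\Tensor^0\Sym V = \ring{R}$ together with $\Tensor^1\Sym V = \Sym V$, it suffices to verify $\delta_\Ca^2 = 0$ on $\Sym V$. For $X \in \Sym V$ one has $\delta_\Ca(X) = -\redshcoprod(X) \in \Sym V \tensor \Sym V$, so applying $\delta_\Ca$ once more and using the Leibniz rule reduces the claim to
\begin{equation*}
  (\redshcoprod \tensor \id)\circ \redshcoprod
  =
  (\id \tensor \redshcoprod) \circ \redshcoprod
\end{equation*}
as maps $\Sym V \to \Sym V^{\tensor 3}$, i.e. to the coassociativity of $\redshcoprod$. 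Expanding $\redshcoprod = \shcoprod - \Unit \tensor \id - \id \tensor \Unit$ on both sides, all extra terms cancel after using $\shcoprod(\Unit) = \Unit \tensor \Unit$ and the (classical) coassociativity of the shuffle coproduct $\shcoprod$ recalled in \ref{item:Coalgebra_SymV}. The main obstacle is purely combinatorial: tracking signs and the positions of inserted units in (iii), and tidying the nine terms of $\delta_\Ca^2$ on a single generator in (i); no input beyond coassociativity of $\shcoprod$ is required.
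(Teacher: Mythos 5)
Your proposal is correct and takes essentially the same route as the paper's own (much terser) proof: part (ii) directly from the defining formula, part (iii) by substituting $\redshcoprod(\phi) = \shcoprod(\phi) - \Unit \tensor \phi - \phi \tensor \Unit$ and cancelling the inserted units pairwise, and part (i) by reduction to generators via the derivation property and the coassociativity of $\redshcoprod$. One remark: carried out literally, your substitution in (iii) yields $+\sum_{i=1}^{k}(-1)^i\, X_1 \tensor \cdots \tensor (X_i)_\sweedler{1} \tensor (X_i)_\sweedler{2} \tensor \cdots \tensor X_k$ for the middle sum (check $k=1$, where $\delta_\Ca(X) = -\redshcoprod(X) = \Unit \tensor X - X_\sweedler{1} \tensor X_\sweedler{2} + X \tensor \Unit$), so the additional minus sign printed in front of that sum in the statement appears to be a typo rather than a defect of your argument.
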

\begin{proof}
    The first part follows by checking it on generators $X \in \Sym V$
    of the tensor algebra.  There it amounts to the coassociativity of
    $\redshcoprod$.  The second part is clear since $\delta_\Ca$ was
    defined as a graded derivation.  The last part follows from the
    explicit formula \eqref{eq:dDefTheRealThing} for $\delta_\Ca$ and
    using Sweedler's notation for $\redshcoprod$.
\end{proof}

Thus we indeed obtain a complex whose cohomology becomes a graded
algebra since the differential is a graded derivation of the tensor
product.  Before actually computing this cohomology we need to extend
the complex in order to incorporate also coefficients in a comodule.
\begin{definition}[$\Sym V$-comodule]
    \label{def:SVComodule}
    Let $V$ and $\module{M}$ be $\ring{R}$-modules.  A
    \emph{$\Sym V$-comodule} structure on $\module{M}$ is given by a
    linear map $\Left \colon \module{M} \to \Sym V \tensor \module{M}$
    such that
    \begin{equation}
        \label{eq:LeftComoduleStructure}
        (\id \tensor \Left) \circ \Left
        =
        (\shcoprod \tensor \id) \circ \Left
    \end{equation}
    and
    \begin{equation}
        \label{eq:LeftComoduleCounital}
        (\counit \tensor \id) \circ \Left
        =
        \id.
    \end{equation}
\end{definition}
We will use Sweedler's notation and write
\begin{equation}
    \Left (m)
    =
    m_\sweedler{-1} \tensor m_\sweedler{0}
\end{equation}
for $m \in \module{M}$.  Note that we use negative Sweedler indices to
denote the left comodule structure.  The comodule property
\eqref{eq:LeftComoduleStructure} then means that we can write
$m_\sweedler{-2} \tensor m_\sweedler{-1} \tensor m_\sweedler{0}$
without having to bother if we used twice the coaction or the
comultiplication to obtain $m_\sweedler{-2}$.  Moreover,
\eqref{eq:LeftComoduleCounital} becomes
$\counit(m_\sweedler{-1}) m_\sweedler{0} = m$ in Sweedler's notation.

The following examples will turn out to be of interest even from a
geometric point of view.
\begin{example}[$\Sym V$-comodules]
    \label{ex:Comodules}%
    Let $V$ be an $\ring{R}$-module.
    \begin{examplelist}
    \item \label{item:TrivialComoduleStructure} Since $\Sym V$ has a
        distinguished group-like element $\Unit$, we have for every
        $\ring{R}$-module $\module{M}$ the trivial comodule structure
        given by
        \begin{equation}
            \label{eq:TrivialLeft}
            \Left_{\script{triv}}(m)
            =
            \Unit \tensor m,
        \end{equation}
        where $m \in \module{M}$.
    \item \label{item:TrivialComodule} Consider the scalars
        $\module{M} = \ring{R}$ which we can turn into a comodule by
        \begin{equation}
            \label{eq:ScalarsAreComodule}
            \Left_\ring{R} (1_\ring{R})
            =
            \Unit \tensor 1_\ring{R}.
        \end{equation}
        Note that this is a particular case of the trivial comodule
        structure \eqref{eq:TrivialLeft}.
    \item \label{item:SVIsComodule} The symmetric algebra $\Sym V$
        becomes a left comodule itself by setting
        \begin{equation}
            \label{eq:LeftSymVComodule}
            \Left_{\Sym V}(\phi)
            =
            \shcoprod(\phi)
        \end{equation}
        for $\phi \in \Sym V$.  Note that the two tensor factors
        $\Sym V \tensor \Sym V$ on the target side play a different
        role: one is the comodule, the other the coalgebra.
    \item \label{item:SymVTensorTrivialComoduleM} A combination of
        these two constructions can be obtained as follows.  For an
        $\ring{R}$-module $\module{M}$ we endow
        $\Sym V \tensor \module{M}$ with the tensor product of the
        canonical comodule structure on $\Sym V$ and the trivial one
        on $\module{M}$.  This yields
        \begin{equation}
            \label{eq:LeftM}
            \Left_{\Sym V \tensor \module{M}}(\phi \tensor m)
            =
            \Left_{\Sym V} (\phi) \tensor m
            =
            \shcoprod(\phi) \tensor m
        \end{equation}
        on factorizing elements
        $\phi \tensor m \in \Sym V \tensor \module{M}$.
    \item \label{item:OtherSymWisComodule} A particular case of
        \eqref{eq:LeftM} comes from the following situation. Assume
        $V \subseteq W$ is a submodule of an ambient $\ring{R}$-module
        such that we have a complementary $\ring{R}$-module $V^\perp$
        with $W = V \oplus V^\perp$.  Then
        \begin{equation}
            \label{eq:SymWisSymVTensorSymKerp}
            \Sym W
            \simeq
            \Sym V \tensor \Sym V^\perp
        \end{equation}
        yields the left $V$-coaction
        \begin{equation}
            \label{eq:ComoduleSW}
            \Left_{\Sym W}
            =
            \Left_{\Sym V} \tensor \id_{\Sym V^\perp}
            =
            (\pr_{V}^\vee \tensor \id) \circ \shcoprod^W,
        \end{equation}
        where $\shcoprod^W$ is the shuffle coproduct of the symmetric
        algebra $\Sym W$ over $W$, and
        $\pr_V^\vee = \pr_V \vee \dots \vee \pr_V \colon \Sym W \to
        \Sym V$ is the extension of the projection
        $\pr_V \colon W \to V$ as an algebra morphism.  Here we use
        the canonical identification
        \eqref{eq:SymWisSymVTensorSymKerp}.
    \item \label{item:SymUComoduleV} Similarly, we can exchange the
        role of the ambient module with the submodule. Thus consider a
        submodule $U \subseteq V$.  Then $\Sym U$ becomes a left
        comodule over $\Sym V$ by
        \begin{equation}
            \label{eq:LeftSymU}
            \Left_{\Sym U}(u)
            =
            \shcoprod(u)
            \in \Sym U \tensor \Sym U
            \subseteq
            \Sym V \tensor \Sym U
        \end{equation}
        for $u \in \Sym U$. Strictly speaking, the last step might not
        be an injective inclusion but just a possibly non-injective
        map due to torsion effects.
    \end{examplelist}
\end{example}
Having a comodule $\module{M}$ we can generalize the coalgebra complex
from \autoref{def:CoalgebraComplex} to an $\module{M}$-valued version
as follows.  We consider the differential
\begin{equation}
    \label{eq:DifferentialBiComoduleVersion}
    \delta_\module{M}\colon
    \Tensor^\bullet\Sym V \tensor \module{M}
    \to
    \Tensor^{\bullet+1}\Sym V \tensor \module{M},
\end{equation}
defined by
\begin{equation}
    \label{eq:DifferentialDefOnMValued}
    \delta_\module{M}(X \tensor m)
    \coloneqq
    \delta_\Ca(X) \tensor m
    + (-1)^{k+1}(X \tensor \pr_+(m_\sweedler{-1})) \tensor m_\sweedler{0},
\end{equation}
for $X \tensor m \in \Tensor^k\Sym V \tensor \module{M}$.  Here
$\pr_+ \colon \Sym V \to \oplus_{k=1}^\infty \Sym^k V$ denotes the
projection to positive symmetric degrees, as in
\eqref{eq:ProjectionOnShcorprod}.
\begin{proposition}
    \label{prop:dgModuleStructureCCaVM}%
    Let $\module{M}$ be an $\Sym V$-comodule.
    \begin{propositionlist}
    \item Then $\delta_\module{M}$ defined by
        \eqref{eq:DifferentialDefOnMValued} satisfies
        $(\delta_\module{M})^2 = 0$.
    \item The complex
        $(\Tensor^\bullet \Sym V \tensor \module{M},
        \delta_\module{M})$ is a differential graded left
        $\CCa^\bullet(V)$-module with left action given by
        \begin{equation}
            \label{eq:ModuleStructureCCaVM}
            Y \acts (X \tensor m)
            \coloneqq
            (Y \tensor X) \tensor m,
        \end{equation}
        for
        $X \tensor m \in \Tensor^\bullet \Sym V \tensor \module{M}$
        and $Y \in \Tensor^\bullet\Sym V$.  In particular, we have
        \begin{equation}
            \label{eq:dgModuleStructureCCaVM}
            \delta_\module{M}
            \bigl(
                Y \acts(X \tensor m)
            \bigr)
            = \delta_\Ca(Y) \acts (X \tensor m)
            + (-1)^{k} Y \acts \delta_\module{M}(X \tensor m),
        \end{equation}
        for
        $X \tensor m \in \Tensor^\bullet \Sym V \tensor \module{M}$
        and $Y \in \Tensor^k\Sym V$.
    \end{propositionlist}
\end{proposition}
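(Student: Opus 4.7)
The plan is to split $\delta_\module{M}$ into its two natural pieces and reduce both claims to \autoref{prop:dIsGradedDifferential} together with the comodule axioms \eqref{eq:LeftComoduleStructure}--\eqref{eq:LeftComoduleCounital}. Writing
\[
    \delta_\module{M}
    = D_1 + D_2,
    \qquad
    D_1 \coloneqq \delta_\Ca \tensor \id_\module{M},
    \qquad
    D_2(X \tensor m)
    \coloneqq (-1)^{k+1}\bigl(X \tensor \pr_+(m_\sweedler{-1})\bigr) \tensor m_\sweedler{0}
\]
for $X \in \Tensor^k \Sym V$, I would establish (i) by checking $D_1^2 = 0$ and $D_1 D_2 + D_2 D_1 + D_2^2 = 0$ separately. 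The first identity is immediate from \autoref{prop:dIsGradedDifferential}~\ref{item:dSquareZero}.

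For the mixed term I expand $\delta_\Ca(X \tensor \pr_+(m_\sweedler{-1}))$ using the graded derivation rule \eqref{eq:dFG}: the contribution $\delta_\Ca(X) \tensor \pr_+(m_\sweedler{-1})$ produced by $D_1 D_2$ carries sign $(-1)^{k+1}$, while $D_2 D_1$---which applies $D_2$ to an element of tensor degree $k+1$---produces the opposite sign $(-1)^{(k+1)+1}$, so these cancel. Since $\delta_\Ca$ restricted to $\Tensor^1 \Sym V = \Sym V$ equals $-\redshcoprod$, what is left is
\[
    (D_1 D_2 + D_2 D_1)(X \tensor m)
    = \bigl(X \tensor \redshcoprod(\pr_+(m_\sweedler{-1}))\bigr) \tensor m_\sweedler{0},
\]
whereas a second application of $D_2$ gives
\[
    D_2^2(X \tensor m)
    = -\bigl(X \tensor \pr_+(m_\sweedler{-1}) \tensor \pr_+(m_\sweedler{0,\sweedler{-1}})\bigr) \tensor m_\sweedler{0,\sweedler{0}}.
\]
The heart of the argument is the cancellation of these two. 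Since $\redshcoprod$ already lands in bi-positive degrees on positive-degree elements, \eqref{eq:ProjectionOnShcorprod} yields $\redshcoprod(\pr_+(m_\sweedler{-1})) = \pr_+(m_\sweedler{-1,\sweedler{1}}) \tensor \pr_+(m_\sweedler{-1,\sweedler{2}})$, and applying $\pr_+ \tensor \pr_+ \tensor \id$ to the iterated comodule axiom $m_\sweedler{-1,\sweedler{1}} \tensor m_\sweedler{-1,\sweedler{2}} \tensor m_\sweedler{0} = m_\sweedler{-1} \tensor m_\sweedler{0,\sweedler{-1}} \tensor m_\sweedler{0,\sweedler{0}}$ coming from \eqref{eq:LeftComoduleStructure} matches the two contributions precisely.

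For (ii) I would unfold $\delta_\module{M}((Y \tensor X) \tensor m)$ directly from the definitions. Splitting $\delta_\Ca(Y \tensor X) \tensor m$ via \eqref{eq:dFG} yields $\delta_\Ca(Y) \acts (X \tensor m) + (-1)^{|Y|} Y \acts (\delta_\Ca(X) \tensor m)$, while the coaction contribution carries sign $(-1)^{|Y|+|X|+1}$ and therefore reassembles as $(-1)^{|Y|} Y \acts D_2(X \tensor m)$; summing the pieces gives \eqref{eq:dgModuleStructureCCaVM}. Associativity and unitality of the action \eqref{eq:ModuleStructureCCaVM} are automatic from the tensor algebra structure on $\Tensor^\bullet \Sym V$. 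The only non-routine step throughout is the reduced coassociativity identity for the coaction; everything else is sign bookkeeping and the Leibniz rule of $\delta_\Ca$.
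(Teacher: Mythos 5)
Your proposal is correct and follows essentially the same route as the paper's proof: a direct expansion of $(\delta_\module{M})^2$ using the Leibniz rule \eqref{eq:dFG}, the identity $\delta_\Ca(\pr_+(\phi)) = -\pr_+(\phi_\sweedler{1})\tensor\pr_+(\phi_\sweedler{2})$ from \eqref{eq:ProjectionOnShcorprod}, and the comodule coassociativity \eqref{eq:LeftComoduleStructure}. The only difference is presentational — you organize the computation as $D_1+D_2$ and make the use of the comodule axiom explicit where the paper hides it in the Sweedler notation $m_\sweedler{-2}\tensor m_\sweedler{-1}\tensor m_\sweedler{0}$ — and your sign bookkeeping checks out.
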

\begin{proof}
    For the first part let
    $X \tensor m \in \Tensor^k\Sym V \tensor \module{M}$.  Then
    \begin{align*}
        (\delta_\module{M})^2(X \tensor m)
        &=
        \delta_\module{M}\big(
        \delta_\Ca(X) \tensor m
        + (-1)^{k+1} (X \tensor \pr_+(m_\sweedler{-1})) \tensor m_\sweedler{0}
        \big)
        \\
        &=
        \underbrace{(\delta_\Ca)^2}_{=0}(X) \tensor m
        \\
        &\quad+ (-1)^{k+1}
        \delta_\Ca\bigl(X \tensor \pr_+(m_\sweedler{-1})\bigr)
        \tensor m_\sweedler{0}
        \\
        &\quad+ (-1)^{k}
        \bigl(\delta_\Ca(X) \tensor \pr_+(m_\sweedler{-1}) \bigr)
        \tensor m_\sweedler{0}
        \\
        &\quad-
        \bigl( X \tensor \pr_+(m_\sweedler{-2}) \tensor \pr_+(m_\sweedler{-1}) \bigr)
        \tensor m_\sweedler{0}
        \\
        &= -
        \bigl(X \tensor \delta_\Ca(\pr_+(m_\sweedler{-1}))\bigr)
        \tensor m_{\sweedler{0}}
        \\
        &\quad-
        \bigl(X \tensor \pr_+(m_\sweedler{-2}) \tensor \pr_+(m_\sweedler{-1}) \bigr)
        \tensor m_\sweedler{0}
        \\
        &=
        0.
    \end{align*}
    In the last step we used that
    $\delta_\Ca(\pr_+(X)) = - \pr_+(X_\sweedler{1}) \tensor
    \pr_+(X_\sweedler{2})$ holds for all $X \in \Sym V$ by
    \eqref{eq:ProjectionOnShcorprod}.  For the second part it is clear
    that \eqref{eq:ModuleStructureCCaVM} defines a graded left module
    structure.  Finally, \eqref{eq:dgModuleStructureCCaVM} is a
    straightforward computation.
\end{proof}
\begin{definition}[Coalgebra complex with coefficients]
    \label{definition:CgrComplex}%
    Let $\module{M}$ be a $\Sym V$-comodule.  We denote the complex
    $\Tensor^\bullet\Sym V \tensor \module{M}$ with the differential
    $\delta_\module{M}$ defined in \eqref{eq:DifferentialDefOnMValued}
    by $\CCa^\bullet(V,\module{M})$.  The corresponding cohomology
    will be denoted by
    \begin{equation}
        \label{eq:CohomSVM}
        \HCa^\bullet(V, \module{M})
        =
        \bigoplus_{k=0}^\infty
        \HCa^k(V, \module{M})
        \quad
        \textrm{with}
        \quad
        \HCa^k(V, \module{M})
        =
        \frac{\ker \delta_\Ca\at{\Tensor^k \Sym V \tensor \module{M}}}
        {\image \delta_\Ca\at{\Tensor^{k-1}\Sym V \tensor \module{M}}}.
    \end{equation}
\end{definition}

Thus each comodule from \autoref{ex:Comodules} yields an associated
coalgebra cohomology, which will turn out to be of great interest from
a geometric point of view.  Nevertheless, at the moment we are only
able to compute the coalgebra cohomology in one of these situations:
\begin{example}[Coalgebra cohomology]
    \label{ex:CoalgebraCohomology}%
    Let $V$ be an $\ring{R}$-module.
    \begin{examplelist}
    \item \label{item:CoalgebraCohomology_Scalar} The scalar case from
        \autoref{prop:dIsGradedDifferential} can be recovered by
        considering the trivial comodule $\module{M} = \ring{R}$ as in
        \autoref{ex:Comodules}~\ref{item:TrivialComodule}.
    \item \label{item:CoalgebraCohomology_SymmetricAlgebra} Consider
        the coefficient module $\module{M} = \Sym V$ with coaction
        $\Left_{\Sym V} (\phi) = \shcoprod(\phi) = \phi_\sweedler{-1}
        \tensor \phi_\sweedler{0}$ as in
        \autoref{ex:Comodules}~\ref{item:SVIsComodule}.  Then the
        differential is given by
        \begin{equation}
            \delta_{\Sym V}(X \tensor \phi)
            =
            \delta_\Ca(X) \tensor \phi
            + (-1)^{k+1}\bigl(X \tensor \pr_+(\phi_\sweedler{-1})\bigr)
            \tensor
            \phi_\sweedler{0},
        \end{equation}
        for $X \in \Tensor^k\Sym V$.  Consider now the linear map
        \begin{equation}
            \label{eq:deltaInverse}
            \delta^{-1}\colon
            \CCa^\bullet (V, \Sym V)
            \to
            \CCa^{\bullet - 1}(V, \Sym V)
        \end{equation}
        defined by
        \begin{equation}
            \delta^{-1}\big(
            (X_1 \tensor \cdots \tensor X_k)
            \tensor
            \phi
            \big)
            = \begin{cases}
                (-1)^{k}\counit(\phi) \cdot
                (X_1 \tensor \cdots \tensor X_{k-1})
                \tensor
                X_k
                & \textrm{if } k \geq 1 \\
                0
                & \textrm{if } k = 0.
            \end{cases}
        \end{equation}
        Then a straightforward computation shows that
        \begin{equation}
            \delta_{\Sym V} \delta^{-1}
            + \delta^{-1} \delta_{\Sym V} = \id - \iota \pi
        \end{equation}
        holds, where $\iota(1_\ring{R}) = 1 \tensor \Unit$ is the
        canonical inclusion and
        $\pi \colon \Tensor^\bullet \Sym V \tensor \Sym V \to \ring{R}
        = \Tensor^0 \Sym V \tensor \Sym^0 V$ denotes the projection
        onto degree $0$.  In other words, we obtain a homotopy retract
        \begin{equation}
            \begin{tikzcd}
                \ring{R}
                \arrow[r,"\iota", shift left = 3pt]
                &\bigl( \CCa^{\bullet}(V,\Sym V), \delta_{\Sym V} \bigr)
                \arrow[l,"\pi", shift left = 3pt]
                \arrow[loop,
                out = -30,
                in = 30,
                distance = 30pt,
                start anchor = {[yshift = -7pt]east},
                end anchor = {[yshift = 7pt]east},
                "\delta^{-1}"{swap}
                ]
            \end{tikzcd}
            .
        \end{equation}
        See \autoref{sec:HomotopyRetracts} for the basics on homotopy
        retracts.  Here we consider $\ring{R}$ as a complex
        concentrated in degree $0$.  Hence we computed the cohomology
        to be
        \begin{equation}
            \HCa^\bullet(V,\Sym V)
            \simeq \ring{R}.
        \end{equation}
    \item \label{item:CoalgebraCohomology_SVtensorTrivCoeff} Consider
        an $\ring{R}$-module $\module{M}$ with trivial
        $\Sym V$-coaction, as in
        \autoref{ex:Comodules}~\ref{item:SymVTensorTrivialComoduleM}.
        Then the differential on
        $\CCa^\bullet(V,\Sym V \tensor \module{M})$ is simply given by
        $\delta_{\Sym V \tensor \module{M}} = \delta_{\Sym V} \tensor
        \id_\module{M}$.  Thus, when viewing $\module{M}$ as a complex
        concentrated in degree $0$, we have
        $\CCa^\bullet(V, \Sym V \tensor \module{M}) \simeq
        \CCa^\bullet(V, \Sym V) \tensor \module{M}$ as cochain
        complexes. Hence, we clearly obtain a homotopy retract
        \begin{equation}
            \begin{tikzcd}[column sep=large]
                \ring{R} \tensor \module{M}
                \arrow[r,"\iota \tensor \id_\module{M}", shift left = 3pt]
                &\bigl (\CCa^{\bullet}(V,\Sym V \tensor \module{M}),\delta_{\Sym V \tensor \module{M}} \bigr)
                \arrow[l,"\pi \tensor \id_\module{M}", shift left = 3pt]
                \arrow[loop,
                out = -30,
                in = 30,
                distance = 30pt,
                start anchor = {[yshift = -7pt]east},
                end anchor = {[yshift = 7pt]east},
                "\delta^{-1} \tensor \id_\module{M}"{swap}
                ]
            \end{tikzcd}
            .
        \end{equation}
        Thus one has for a trivial comodule $\module{M}$ the
        cohomology
        \begin{equation}
            \label{eq:HCEValuesInSVTensorM}
            \HCa^\bullet(V, \Sym V \tensor \module{M})
            \simeq
            \ring{R} \tensor \module{M}
            \simeq
            \module{M},
        \end{equation}
        concentrated in degree $0$.
    \item \label{item:CoalgebraCohomology_Submodule} A particular
        scenario for the previous part
        \ref{item:CoalgebraCohomology_SVtensorTrivCoeff} can be
        obtained already for $W$ being another $\ring{R}$-module such
        that $V \subseteq W$ is a submodule with complement and write
        $W = V^\perp \oplus V$, see
        \autoref{ex:Comodules}~\ref{item:OtherSymWisComodule}.  Then
        $\Sym W = \Sym V \tensor \Sym V^\perp$ as in
        \eqref{eq:SymWisSymVTensorSymKerp}.  We take the trivial
        coaction on $\Sym V^\perp$ and the canonical one on $\Sym V$.
        Then \ref{item:CoalgebraCohomology_SVtensorTrivCoeff} yields
        \begin{equation}
            \label{eq:HCaSymW}
            \HCa^\bullet(V, \Sym W)
            \simeq
            \Sym V^\perp
        \end{equation}
        concentrated in degree $0$.
    \end{examplelist}
\end{example}

To compute the coalgebra cohomologies in the case of trivial
coefficients, as well as for $U \subseteq V$ as in
\autoref{ex:Comodules}~\ref{item:SymUComoduleV}, we need to develop
more technology.

\subsection{Chevalley-Eilenberg Cohomology}
\label{sec:ChevalleyEilenbergCohomologyDualPointView}

There is another complex associated to an $\ring{R}$-module $V$,
namely the Chevalley-Eilenberg complex of $V$ considered as an abelian
Lie coalgebra with trivial cobracket
\begin{equation}
    \label{eq:TrivialCobracket}
    c\colon V \ni v
    \; \mapsto \;
    0 \in \Anti^2 V.
\end{equation}
Since we want to introduce this complex immediately with coefficients,
we first need to consider Lie coactions of $V$.
\begin{definition}[$V$-Lie coaction]
    \label{definition:LieCoaction}%
    Let $V$ and $\module{M}$ be $\ring{R}$-modules.  A \emph{Lie
      coaction} of $V$ on $\module{M}$ is given by a linear map
    $\rho \colon \module{M} \to \module{M} \tensor V$ such that
    \begin{equation}
        \label{eq:LieCoaction}
        \image (\rho \tensor \mathord{\id}_V) \circ \rho
        \subseteq
        \module{M} \tensor \Sym^2 V.
    \end{equation}
\end{definition}
\begin{remark}
    \label{remark:LeftRichtLieStuff}%
    Note that instead of using left coactions as in
    \autoref{def:SVComodule} we defined a Lie coaction as a right Lie
    coaction.  This turns out to be convenient when we introduce the
    van Est double complex. Of course, in our situation $V$ is
    abelian, i.e. the cobracket vanishes identically. Hence left and
    right coactions coincide. Nevertheless, it will be more convenient
    to think of right coactions in the following.
\end{remark}

We will use Sweedler's notation for the Lie coaction and write
$\rho(m) = m_\sweedler{0} \tensor m_\sweedler{1}$.  With this notation,
\eqref{eq:LieCoaction} is equivalent to
\begin{equation}
    m_\sweedler{0} \tensor m_\sweedler{1} \tensor m_\sweedler{2}
    =
    m_\sweedler{0} \tensor m_\sweedler{2} \tensor m_\sweedler{1},
\end{equation}
for $m \in \module{M}$.

Most examples of $V$-Lie coaction will be derived from coactions of
the coalgebra $\Sym V$ as follows.
\begin{lemma}
    \label{lemma:CoactionInfinitesimal}%
    Let $\module{M}$ be an $\Sym V$-comodule.  Then
    $\rho \colon \module{M} \to \module{M} \tensor V$ defined by
    \begin{equation}
        \label{eq:InfinitesimalLieCoaction}
        \rho(m)
        \coloneqq
        - m_\sweedler{0} \tensor \pr_V(m_\sweedler{-1})
    \end{equation}
    is a $V$-Lie coaction on $\module{M}$.
\end{lemma}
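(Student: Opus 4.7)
The plan is to verify the Lie coaction axiom $\image (\rho \tensor \id_V) \circ \rho \subseteq \module{M} \tensor \Sym^2 V$ by a direct computation, exploiting coassociativity and cocommutativity of the $\Sym V$-coaction $\Left$.

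First I would unfold the definition. Starting from $\rho(m) = -m_\sweedler{0} \tensor \pr_V(m_\sweedler{-1})$ and applying $\rho \tensor \id_V$, the two minus signs cancel and I obtain (using iterated Sweedler notation for $\Left$)
\begin{equation*}
    (\rho \tensor \id_V)(\rho(m))
    =
    (m_\sweedler{0})_\sweedler{0}
    \tensor
    \pr_V\bigl((m_\sweedler{0})_\sweedler{-1}\bigr)
    \tensor
    \pr_V(m_\sweedler{-1}).
\end{equation*}

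Next, I apply the coassociativity property \eqref{eq:LeftComoduleStructure} of the left coaction. In Sweedler's notation it says that $(m_\sweedler{0})_\sweedler{-1} \tensor (m_\sweedler{0})_\sweedler{0}$ produced by iterating $\Left$ agrees with what is obtained by applying $\shcoprod$ to the single coaction, i.e. with $(m_\sweedler{-1})_\sweedler{2} \tensor m_\sweedler{0}$ when the other factor $m_\sweedler{-1}$ is relabeled as $(m_\sweedler{-1})_\sweedler{1}$. Rewriting the previous expression accordingly yields
\begin{equation*}
    (\rho \tensor \id_V)(\rho(m))
    =
    m_\sweedler{0}
    \tensor
    \pr_V\bigl((m_\sweedler{-1})_\sweedler{2}\bigr)
    \tensor
    \pr_V\bigl((m_\sweedler{-1})_\sweedler{1}\bigr),
\end{equation*}
so the $V \tensor V$ tensor factors take the form $(\pr_V \tensor \pr_V)(\tau \circ \shcoprod(m_\sweedler{-1}))$, where $\tau$ denotes the flip.

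Finally, cocommutativity of the shuffle coproduct $\shcoprod$ on $\Sym V$ established in Section~2.1 implies that $(\pr_V \tensor \pr_V) \circ \shcoprod$ is invariant under $\tau$, and hence takes values in the symmetric subspace $\Sym^2 V \subseteq V \tensor V$. Therefore the full expression lies in $\module{M} \tensor \Sym^2 V$, which is exactly condition \eqref{eq:LieCoaction}. No step presents any real obstacle; the only care needed is bookkeeping of the nested Sweedler indices and making sure the coassociativity identity is applied to the correct factor before the projection $\pr_V$ is invoked.
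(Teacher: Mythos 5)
Your argument is correct: unwinding $(\rho\tensor\id_V)\circ\rho$, invoking the coassociativity axiom \eqref{eq:LeftComoduleStructure} to rewrite the iterated coaction as $(\shcoprod\tensor\id)\circ\Left$, and then using cocommutativity of $\shcoprod$ to conclude that the last two tensor factors are flip-invariant, hence land in $\Sym^2 V$, is exactly the verification of \eqref{eq:LieCoaction}. The paper states this lemma without any proof, and your computation is the evidently intended one-line argument, so there is nothing to compare beyond noting that you have supplied the omitted details correctly.
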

The minus sign is purely conventional and reflects the above idea to
have a right coaction.  We call $\rho$ with
\eqref{eq:InfinitesimalLieCoaction} the corresponding
\emph{infinitesimal Lie coaction} for the comodule $\module{M}$.  This
allows us to construct examples of Lie coactions out of the
$\Sym V$-coactions considered in \autoref{ex:Comodules}:
\begin{example}[Infinitesimal Lie coactions]
    \label{ex:LieCoactions}%
    Let $V$ and $\module{M}$ be $\ring{R}$-modules.
    \begin{examplelist}
    \item \label{item:TrivialLieCoaction} Starting with the trivial
        comodule structure $\Left_{\script{triv}}$ as in
        \eqref{eq:TrivialLeft} yields the \emph{trivial $V$-Lie
          coaction} $\rho_{\script{triv}}$
        \begin{equation}
            \label{eq:TrivialLieCoaction}
            \rho_{\script{triv}} = 0.
        \end{equation}
    \item \label{item:CanonicalLieCoaction} Consider the canonical
        comodule structure $\Left_{\Sym V} = \shcoprod$ of $\Sym V$
        over itself as in \eqref{eq:LeftSymVComodule}. Then the
        corresponding infinitesimal Lie coaction $\rho_{\Sym V}$ is
        given by the linear extension of
        \begin{equation}
            \label{eq:lambdaOnSV}
            \rho_{\Sym V}(x_1 \vee \cdots \vee x_r)
            = -
            \sum_{i=1}^r
            x_1 \vee \cdots \stackrel{i}{\wedge} \cdots \vee x_r \tensor x_i,
        \end{equation}
        where $r \in \mathbb{N}_0$ and $x_1, \ldots, x_\ell \in V$.
    \item \label{item:LieCoactionForSubmoduleU} Consider a submodule
        $U \subseteq V$ with the comodule structure $\Left_{\Sym U}$
        for $\Sym U$ from \eqref{eq:LeftSymU}.  Then the infinitesimal
        Lie coaction is
        \begin{equation}
            \label{eq:VcoactsOnSU}
            \rho_{\Sym U} (\phi)
            =
            - \phi_\sweedler{0} \tensor \pr_V(\phi_\sweedler{-1}),
        \end{equation}
        where we view $\phi_\sweedler{-1}$ as an element of $\Sym V$.
    \end{examplelist}
\end{example}

The $\Sym V$-coaction can be reconstructed from its infinitesimal Lie
coaction as follows: Since for every $r \in \mathbb{N}_0$ and
$m \in \module{M}$ we have
\begin{align}
    \rho^r(m)
    =
    (-1)^r m_\sweedler{0} \tensor \pr_V(m_\sweedler{-1})
    \vee \dots \vee
    \pr_V(m_\sweedler{-r})
    =
    (-1)^r m_\sweedler{0} \tensor r! \cdot \pr_{\Sym^r V}(m_\sweedler{-1})
\end{align}
by \autoref{lem:exponentialproj}, we know that there exists some
$n \in \mathbb{N}_0$ such that $\rho^r(m) = 0$ for all $r > n$.  Thus
we get
\begin{equation}
    \label{eq:SummingUpInfinitesimalStuff}
    \sum_{r=0}^{\infty} (-1)^r\frac{\rho^r(m)}{r!}
    =
    m_\sweedler{0} \tensor
    \sum_{r=0}^{\infty} (-1)^r \pr_{\Sym^r V}(m_\sweedler{-1})
    =
    m_\sweedler{0} \tensor s(m_\sweedler{-1}).
\end{equation}
Hence, up to a tensor flip and a sign we obtain the $\Sym V$-coaction
on $\module{M}$.  Recall, that $s$ denotes the antipode on $\Sym V$,
see \eqref{eq:AntipodeSV}.

Let us now recall the definition of the complex for the
Chevalley-Eilenberg cohomology of the abelian Lie coalgebra $V$.  The
underlying $\ring{R}$-module is
\begin{equation} \label{eq:ChevalleyEilenbergComplex}
    \CCE^\bullet(V, \module{M})
    \coloneqq
    \bigoplus_{\ell=0}^\infty
    \CCE^\ell(V, \module{M})
    \quad
    \textrm{with}
    \quad
    \CCE^\ell(V, \module{M})
    \coloneqq
    \module{M} \tensor \Anti^\ell V ,
\end{equation}
where $\module{M}$ carries a Lie coaction $\rho$ in the following.
The differential of
$\partial_\module{M} \colon \module{M} \tensor \Anti^\bullet V \to
\module{M} \tensor \Anti^{\bullet+1}V$ is then defined by
\begin{equation}
    \label{eq:CEDifferentialDef}
    \partial_\module{M}(m \tensor \xi)
    \coloneqq
    - m_\sweedler{0} \tensor (m_\sweedler{1} \wedge \xi),
\end{equation}
for $m \in \module{M}$ and $\xi \in \Anti^\bullet V$.  In the case of
$\module{M} = \ring{R}$ with trivial Lie coaction we will write
$\CCE^\bullet(V) \coloneqq \CCE^\bullet(V,\ring{R})$.
If $\rho$ is the infinitesimal Lie coaction for a $\Sym V$-comodule $\module{M}$
as in \autoref{lemma:CoactionInfinitesimal}, then the differential is given by
\begin{equation}
    \label{eq:CEDifferentialInfinitesimal}
    \partial_\module{M}(m \tensor \xi)
    \coloneqq
    m_\sweedler{0} \tensor (\pr_V(m_\sweedler{-1}) \wedge \xi),
\end{equation}
for $m \tensor \xi \in \module{M} \tensor \Anti^\bullet V$.
\begin{proposition}
    \label{prop:dgModuleStructureCCEVM}
    Let $\module{M}$ be a $V$-Lie comodule.
    \begin{propositionlist}
    \item Then $\partial_\module{M}$ defined by
        \eqref{eq:CEDifferentialDef} satisfies
        $(\partial_\module{M})^2 = 0$.
    \item For $\module{M} = \ring{R}$ with trivial Lie coaction
        $\CCE^\bullet(V) = \Anti^\bullet V$ is a differential graded
        algebra with differential $\partial_\ring{R} = 0$ and graded
        commutative multiplication $\wedge$.
    \item The complex
        $(\CCE^\bullet(V,\module{M}),\partial_\module{M})$ is a
        differential graded right $\Anti^\bullet V$-module with right
        action given by
        \begin{equation}
            \label{eq:ModuleStructureCCEVM}
            (m \tensor \xi) \racts \eta
            \coloneqq m \tensor (\xi \wedge \eta),
        \end{equation}
        for $m \tensor \xi \in \module{M} \tensor \Anti^\bullet V$ and
        $\eta \in \Anti^\bullet V$.  In particular, we have
        \begin{equation}
            \label{eq:dgModuleStructureCCEVM}
            \partial_\module{M}\bigl((m \tensor \xi) \racts \eta\bigr)
            = \partial_\module{M}(m \tensor \xi) \racts \eta
        \end{equation}
        for $m \tensor \xi \in \module{M} \tensor \Anti^\bullet V$ and
        $\eta \in \Anti^\bullet V$.
    \end{propositionlist}
\end{proposition}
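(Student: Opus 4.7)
The plan is to verify the three claims by direct computation from the defining formula \eqref{eq:CEDifferentialDef}, with the Lie coaction axiom \eqref{eq:LieCoaction} entering in an essential way only in part (i). Parts (ii) and (iii) will then be essentially bookkeeping consequences.

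For part (i), I apply $\partial_\module{M}$ twice to a generator $m \tensor \xi$. Writing $(\rho \tensor \id_V) \circ \rho(m) = m_\sweedler{0} \tensor m_\sweedler{1} \tensor m_\sweedler{2}$ in Sweedler's notation, two applications of \eqref{eq:CEDifferentialDef} and cancellation of the overall sign yield
\begin{equation*}
    \partial_\module{M}^2(m \tensor \xi)
    =
    m_\sweedler{0} \tensor m_\sweedler{1} \wedge m_\sweedler{2} \wedge \xi.
\end{equation*}
By \eqref{eq:LieCoaction} this triple is symmetric under the swap of its two $V$-slots, $m_\sweedler{0} \tensor m_\sweedler{1} \tensor m_\sweedler{2} = m_\sweedler{0} \tensor m_\sweedler{2} \tensor m_\sweedler{1}$, while $m_\sweedler{1} \wedge m_\sweedler{2}$ is antisymmetric under the same swap. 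Hence the expression equals its own negative and vanishes. This cancellation is the only genuinely non-formal step in the whole proof.

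For part (ii), trivial Lie coaction immediately gives $\partial_\ring{R} = 0$ from \eqref{eq:CEDifferentialDef}, so the claim reduces to the fact that $(\Anti^\bullet V, \wedge, 1)$ is a graded commutative unital associative algebra, which is already recalled in the previous subsection. For part (iii), associativity of $\wedge$ shows that \eqref{eq:ModuleStructureCCEVM} defines a graded right $\Anti^\bullet V$-module structure. The identity \eqref{eq:dgModuleStructureCCEVM} then follows from a one-line computation,
\begin{equation*}
    \partial_\module{M}\bigl((m \tensor \xi) \racts \eta\bigr)
    =
    - m_\sweedler{0} \tensor m_\sweedler{1} \wedge \xi \wedge \eta
    =
    \partial_\module{M}(m \tensor \xi) \racts \eta,
\end{equation*}
and the usual graded Leibniz correction of the form $(-1)^{|\xi|}(m \tensor \xi) \racts \partial \eta$ does not appear since $\partial_{\Anti^\bullet V} = 0$ by (ii).
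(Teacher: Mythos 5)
Your proposal is correct and follows essentially the same route as the paper's proof: part (i) rests on the observation that the iterated coaction lands in symmetric tensors by \eqref{eq:LieCoaction} and is then killed by the antisymmetry of $\wedge$, while (ii) and (iii) are immediate from the definitions. The only difference is that you write out the sign bookkeeping explicitly where the paper leaves it implicit; both arguments are sound.
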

\begin{proof}
    By \eqref{eq:LieCoaction} we know that applying $\rho$ twice
    yields symmetric tensors in $\Sym^2V$.  A subsequent
    $\wedge$-product then results in $\partial_\module{M}^2 = 0$,
    proving the first part.  The second part is clear from the
    definition of $\delta_\ring{R}$.  For the last part note that
    \eqref{eq:ModuleStructureCCEVM} clearly defines a graded module
    structure.  The last part follows directly from the definition of
    $\partial_\module{M}$ and \eqref{eq:ModuleStructureCCEVM}.
\end{proof}
\begin{definition}[Chevalley-Eilenberg complex with coefficients]
    \label{definition:CEWithCoefficients}%
    Let $\module{M}$ be a $V$-Lie comodule.  We denote the complex
    $\module{M} \tensor \Anti^\bullet V$ with differential
    $\partial_\module{M}$ defined in \eqref{eq:CEDifferentialDef} by
    $\CCE^\bullet(V,\module{M})$.  The corresponding cohomology will
    be denoted by
    \begin{equation} \label{eq:HCEDef}
        \HCE^\bullet(V, \module{M})
        =
        \bigoplus_{k=0}^\infty \HCE^\ell(V, \module{M})
        \quad
        \textrm{with}
        \quad
        \HCE^\ell(V, \module{M})
        =
        \frac{\ker \partial_\module{M}\at{\CCE^\ell(V, \module{M})}}
        {\image \partial_\module{M}\at{\CCE^{\ell-1}(V, \module{M})}}.
    \end{equation}
    In the case $\module{M} = \ring{R}$ with trivial Lie coaction we
    write $\HCE^\bullet(V) \coloneqq \HCE^\bullet(V,\ring{R})$.
\end{definition}

For later use we collect a few examples where the Chevalley-Eilenberg
cohomology can be computed explicitly and easily:
\begin{example}[Chevalley-Eilenberg cohomology]
    \label{ex:CECohomology}%
    Let $V$ and $\module{M}$ be $\ring{R}$-modules.
    \begin{examplelist}
    \item \label{item:TrivialCE} For the trivial Lie coaction on
        $\module{M}$ as in
        \autoref{ex:LieCoactions}~\ref{item:TrivialLieCoaction} the
        Chevalley-Eilenberg complex is given by
        $\CCE^\bullet(V, \module{M}) = \module{M} \tensor
        \Anti^\bullet V$ with differential $\partial_\module{M} = 0$.
        Thus for the trivial Lie coaction on $\module{M}$ we have a
        trivial homotopy retract
        \begin{equation}
            \begin{tikzcd}
                \module{M} \tensor \Anti^\bullet V
                \arrow[r,"\id", shift left = 3pt]
                &\bigl( \CCE^{\bullet}(V,\module{M}), 0 \bigr)
                \arrow[l,"\id", shift left = 3pt]
                \arrow[loop,
                out = -30,
                in = 30,
                distance = 30pt,
                start anchor = {[yshift = -7pt]east},
                end anchor = {[yshift = 7pt]east},
                "0"{swap}]
            \end{tikzcd}
        \end{equation}
        computing the cohomology:
        \begin{equation}
            \label{eq:HCETrivialCoaction}
            \HCE^\bullet(V, \module{M})
            =
            \module{M} \tensor \Anti^\bullet V.
        \end{equation}
        In the case $\module{M} = \ring{R}$ we have
        $\HCE^\bullet(V) = \Anti^\bullet V$.
    \item \label{item:SVwithVLieCoaction} For $\module{M} = \Sym V$
        with canonical Lie coaction
        $\rho_{\Sym V}(\phi) = - \phi_\sweedler{0} \tensor
        \pr_V(\phi_\sweedler{-1})$ as in
        \autoref{ex:LieCoactions}~\ref{item:CanonicalLieCoaction} the
        corresponding Chevalley-Eilenberg differential
        $\partial_{\Sym V}$ is given by
        \begin{equation}
            \label{eq:deltaCEforSV}
            \partial_{\Sym V} (x_1 \vee \cdots \vee x_r \tensor \xi)
            =
            \sum_{i=1}^r
            x_1 \vee \cdots \stackrel{i}{\wedge} \cdots \vee x_r
            \tensor
            (x_i \wedge \xi),
        \end{equation}
        where $\xi \in \Anti^\ell V$ and $x_1, \ldots, x_r \in V$ with
        $r \in \mathbb{N}_0$, or in Sweedler's notation
        \begin{equation}
            \partial_{\Sym V}(\phi \tensor \xi)
            =
            \phi_\sweedler{0}
            \tensor
            \pr_V(\phi_\sweedler{-1}) \wedge \xi.
        \end{equation}
        Here for $r = 0$ we have $\partial_{\Sym V}(\xi) = 0$.  Now
        $\partial_{\Sym V}$ can also be characterized as follows: the
        Chevalley-Eilenberg complex $\Sym V \tensor \Anti^\bullet V$
        carries the structure of a graded commutative algebra with
        respect to the antisymmetric degree and freely generated as
        such by two copies of $V$, one for the Grassmann part, one for
        the symmetric algebra part.  Then $\partial_{\Sym V}$ is the
        unique graded derivation of degree $+1$ of this algebra with
        \begin{equation}
            \label{eq:deltaOnGenerators}
            \partial_{\Sym V}(x \tensor 1) = 1 \tensor x
            \quad
            \textrm{and}
            \quad
            \partial_{\Sym V}(1 \tensor x) = 0
        \end{equation}
        for $x \in V$. The computation of the cohomology of
        $\partial_{\Sym V}$ is now a standard homotopy argument: We
        define a graded derivation $\partial^*$ of
        $\Sym V \tensor \Anti^\bullet V$ with antisymmetric degree
        $-1$ by specifying it on generators $x \in V$ by
        \begin{equation}
            \label{eq:deltaStarDef}
            \partial^*(x \tensor 1) = 0
            \quad
            \textrm{and}
            \quad
            \partial^*(1 \tensor x) = x \tensor 1.
        \end{equation}
        Explicitly, we get
        \begin{equation}
            \partial^*(
            \phi \tensor
            \xi_1 \wedge \dots \wedge \xi_\ell)
            = \sum_{i=1}^{\ell} (-1)^{i+1}
            \phi \vee \xi_i
            \tensor
            \xi_1 \wedge \cdots
            \overset{i}{\wedge} \cdots \wedge
            \xi_\ell,
        \end{equation}
        or in Sweedler's notation
        \begin{equation}
            \partial^*(\phi \tensor \xi)
            =
            \phi \vee \pr_V(\xi_\sweedler{-1})
            \tensor
            \xi_\sweedler{0},
        \end{equation}
        for $\phi \tensor \xi \in \Sym V \tensor \Anti^\bullet V$.
        Then the graded commutator $[\partial_{\Sym V}, \partial^*]$
        is again a graded derivation, now of symmetric and
        antisymmetric degree $0$.  We obtain
        \begin{equation}
            \label{eq:deltaHomotopy}
            \partial_{\Sym V} \partial^* + \partial^* \partial_{\Sym V}
            =
            \degs + \dega
        \end{equation}
        with the symmetric and antisymmetric degree derivations being
        defined by their values on generators as
        $\dega(x \tensor 1) = 0 = \degs(1 \tensor x)$ and
        $\dega(1 \tensor x) = 1 \tensor x$ as well as
        $\degs(x \tensor 1) = x \tensor 1$. Note that
        \eqref{eq:deltaHomotopy} is embarrassingly trivial to show as
        it suffices to check this on generators. We introduce the map
        \begin{equation}
            \label{eq:deltaInv}
            \partial^{-1}_{\Sym V}(\phi \tensor \xi)
            =
            \begin{cases}
                \frac{1}{r + \ell} \partial^*(\phi \tensor \xi)
                & \textrm{if } r + \ell > 0 \\
                0
                & \textrm{if } r + \ell = 0,
            \end{cases}
        \end{equation}
        where $\xi \in \Anti^\ell V$ and $\phi \in \Sym^r V$.
        Together with the projection
        $\pi \colon \Sym V \tensor \Anti^\bullet V \to \ring{R}$ onto
        the part with antisymmetric degree and symmetric degree zero,
        and the inclusion
        $\iota \colon \ring{R} \to \Sym V \tensor \Anti^\bullet V$ we
        end up with a homotopy retract
        \begin{equation}
            \label{eq:PoincareLemma}
            \begin{tikzcd}
                \ring{R}
                \arrow[r,"\iota", shift left = 3pt]
                &\bigl( \CCE^{\bullet}(V,\Sym V),\partial_{\Sym V} \bigr)
                \arrow[l,"\pi", shift left = 3pt]
                \arrow[loop,
                out = -30,
                in = 30,
                distance = 30pt,
                start anchor = {[yshift = -7pt]east},
                end anchor = {[yshift = 7pt]east},
                "\partial^{-1}_{\Sym V}"{swap}]
            \end{tikzcd}
            .
        \end{equation}
        Hence we have computed the Chevalley-Eilenberg cohomology to
        be
        \begin{equation}
            \label{eq:HCEValuesInSV}
            \HCE^\bullet(V, \Sym V)
            \simeq
            \ring{R}
        \end{equation}
        concentrated in degree $0$ with the explicit isomorphism given
        by the projection onto symmetric and antisymmetric degree
        zero.

        Note that this computation has a nice geometric
        interpretation: for $V = \field{R}^n$ this reduces to the
        computation of the de Rham cohomology of differential forms
        with \emph{polynomial coefficients} by means of the standard
        homotopy from the Poincaré Lemma, see also
        e.g. \cite[Exercise~2.11]{waldmann:2007a} for this point of
        view. The notation for the homotopy is inspired by Fedosov's
        construction of star products on symplectic manifolds
        \cite[Lemma~5.1.2]{fedosov:1996a}.
    \item \label{item:UintoVSymUCE} Another case is obtained for a
        submodule $U \subseteq V$ and the canonical $V$-Lie coaction
        $\rho_{\Sym U}$ as in
        \autoref{ex:LieCoactions}~\ref{item:LieCoactionForSubmoduleU}.
        The Chevalley-Eilenberg differential
        \begin{equation}
            \label{eq:CEDiffForUinVCoactOnSU}
            \partial_{\Sym U}\colon
            \Sym U \tensor \Anti^\bullet V
            \to
            \Sym U \tensor \Anti^{\bullet+1} V
        \end{equation}
        is now determined by
        \begin{equation}
            \label{eq:CEDiffForUinVCoactOnSUExplicit}
            \partial_{\Sym U}(u_1 \vee \cdots \vee u_r \tensor \xi)
            =
            \sum_{i=1}^r
            u_1 \vee \cdots \stackrel{i}{\wedge} \cdots \vee u_r
            \tensor
            u_i \wedge \xi
        \end{equation}
        for $\xi \in \Anti^\bullet V$ and $u_1, \ldots, u_r \in U$.
        We assume now that $U \subseteq V$ is complemented, i.e. we
        have $V = U \oplus U^\perp$.  This splitting induces an
        isomorphism
        $\Anti^\bullet V \simeq \Anti^\bullet U \tensor \Anti^\bullet
        U^\perp$ which in turn induces an isomorphism
        \begin{equation}
            \CCE^\bullet(V,\Sym U)
            = \Sym U \tensor \Anti^\bullet V
            \simeq
            \Sym U
            \tensor
            \Anti^\bullet U
            \tensor
            \Anti^\bullet U^\perp
            \simeq
            \CCE^\bullet(U,\Sym U)
            \tensor
            \CCE^\bullet(U^\perp)
        \end{equation}
        of complexes.  For $\CCE^\bullet(U^\perp)$ and
        $\CCE^\bullet(U,\Sym U)$ we have homotopy retracts by
        \ref{item:TrivialCE} and \ref{item:SVwithVLieCoaction},
        respectively.  Their tensor product then yields a homotopy
        retract
        \begin{equation}
            \label{diag:CCESUhomotopRetract}
            \begin{tikzcd}[column sep = large]
                \ring{R} \tensor \Anti^\bullet U^\perp
                \arrow[r,"\iota \tensor \id", shift left = 3pt]
                &\CCE^\bullet(V,\Sym U)
                \arrow[l,"\pi \tensor \id", shift left = 3pt]
                \arrow[loop,
                out = -30,
                in = 30,
                distance = 30pt,
                start anchor = {[yshift = -7pt]east},
                end anchor = {[yshift = 7pt]east},
                "\partial^{-1}_{\Sym U} \tensor \id"{swap}
                ]
            \end{tikzcd}
            ,
        \end{equation}
        see \autoref{prop:SumTensorHomotopies}, thus inducing an
        explicit isomorphism
        $\HCE^\bullet(V,\Sym U) \simeq \ring{R} \tensor \Anti^\bullet
        U^\perp \simeq \Anti^\bullet U^\perp$.
    \end{examplelist}
\end{example}
\begin{remark}
    \label{remark:HCEIsEasy}%
    As a conclusion from these examples we note that the computation
    of the Chevalley-Eilenberg cohomology is, in many cases, fairly
    easy and \emph{explicit}.  We even have very explicit homotopies
    to find explicit primitives of exact cocycles as well as explicit
    decompositions of a cocycle into an exact part and a
    representative of its cohomology class.  This will turn out to be
    advantageous for the computation of the Hochschild cohomologies
    later on.
\end{remark}

\section{The Van Est Double Complex}
\label{sec:VanEstDoubleComplex}

We now introduce and study a double complex connecting the coalgebra
cohomology of a given $\ring{R}$-module $V$ with the
Chevalley-Eilenberg cohomology of $V$ understood as a abelian Lie
coalgebra.  To motivate the introduction of this double complex,
consider again the case $V = \Secinfty(TM)$ of vector fields on some
manifold $M$.  In this case the coalgebra cohomology
$\CCa^\bullet(V) = \Tensor^\bullet\Sym \Secinfty(TM)$ can be
interpreted as the complex of polynomial functions on the nerve of
$T^*M$ interpreted as a Lie groupoid with multiplication $+$.  Thus we
can view $\CCa^\bullet(V)$ as a certain polynomial group(oid)
cohomology.  For group cohomology there is the classical van Est
Theorem \cite{vanest:1953a,vanest:1953b} relating it to the
Chevalley-Eilenberg cohomology of the corresponding Lie algebra, and
this has been generalized to groupoids in \cite{crainic:2003a}.

Our strategy is now to reformulate the geometric double complex used
in the original proof of the van Est Theorem, see
\cite{vanest:1953b,vanest:1955b}, in purely (co-)algebraic terms in
order to relate $\CCa^\bullet(V)$ with $\CCE^\bullet(V)$ for a general
$V$.  We follow closely \cite{meinrenken.salazar:2020a} in our
notation. Note, however, that our situation is more general in so far
as we deal with a purely algebraic situation and, on the other hand,
much more special since we only consider the (co-)abelian situation.

Consider the following $\mathbb{N}_0 \times \mathbb{N}_0$-graded
module:
\begin{equation}
    \label{eq:CVanEstDef}
    \CVanEst^{\bullet, \bullet}(V)
    \coloneqq
    \Tensor^\bullet \Sym V
    \tensor
    \Sym V
    \tensor
    \Anti^\bullet V
\end{equation}
We can interpret it in two different ways: Either as
$\CVanEst^{\bullet,\bullet}(V) = \CCa^\bullet(V, \Sym V) \tensor
\Anti^\bullet V$, or as
$\CVanEst^{\bullet,\bullet}(V) = \Tensor^\bullet \Sym V \tensor
\CCE^\bullet(V, \Sym V)$.  Both interpretations can now be used to
equip $\CVanEst^{\bullet,\bullet}(V)$ with a differential: Using the
canonical left coaction of $\Sym V$ on itself we obtain from
\autoref{sec:CoalgebraCohomology} a differential $\delta_{\Sym V}$ on
$\CCa^\bullet(V,\Sym V)$, see also
\autoref{ex:CoalgebraCohomology}~\ref{item:CoalgebraCohomology_SymmetricAlgebra}.
We define
\begin{equation}
    \delta \coloneqq \delta_{\Sym V} \tensor \id_{\Anti V}
    \colon
    \CVanEst^{k,\ell}(V)
    \to
    \CVanEst^{k+1,\ell}(V).
\end{equation}
On elements this differential is given by
\begin{equation}
    \label{eq:VerticalVEDifferential}
    \delta(X \tensor \phi \tensor \xi)
    = \delta_\Ca (X)
    \tensor
    \phi
    \tensor
    \xi
    + (-1)^{k+1}
    (X \tensor \pr_+(\phi_\sweedler{-1}))
    \tensor
    \phi_\sweedler{0}
    \tensor
    \xi,
\end{equation}
for $X \tensor \phi \tensor \xi \in \CVanEst^{k,\ell}(V)$.

On the other hand, we can use the infinitesimal Lie coaction
$\rho(\phi) = - \phi_\sweedler{0} \tensor \pr_V(\phi_\sweedler{1})$ of
$V$ on $\Sym V$ from
\autoref{ex:CECohomology}~\ref{item:SVwithVLieCoaction} to define
another differential
\begin{equation}
    \partial
    \coloneqq
    (-1)^{k+1} \id_{\Tensor\Sym V} \tensor \partial_{\Sym V}
    \colon
    \CVanEst^{k,\ell}(V)
    \to
    \CVanEst^{k,\ell+1}(V).
\end{equation}
On elements this differential is given by
\begin{equation}
    \label{eq:HorizontalVEDifferential}
    \partial(X \tensor \phi \tensor \xi)
    = (-1)^{k}
    X
    \tensor
    \phi_\sweedler{0}
    \tensor
    \bigl(\pr_V(\phi_\sweedler{1}) \wedge \xi\bigr),
\end{equation}
for $X \tensor \phi \tensor \xi \in \CVanEst^{k,\ell}(V)$.  These
differentials $\delta$ and $\partial$ equip
$\CVanEst^{\bullet,\bullet}(V)$ with the structure of a double
complex: For this we define the \emph{total differential} $\Double$ by
\begin{equation}
    \Double
    \coloneqq \delta + \partial.
\end{equation}
\begin{proposition}
    \label{proposition:VanEstDoubleComplex}%
    On $\CVanEst^{\bullet,\bullet}(V)$ it holds $\Double^2 = 0$.
\end{proposition}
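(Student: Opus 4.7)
My plan is to exploit the already established facts $\delta_{\Sym V}^2 = 0$ and $\partial_{\Sym V}^2 = 0$. Since $\Double = \delta + \partial$, we expand
\begin{equation*}
    \Double^2
    = \delta^2
    + (\delta\partial + \partial\delta)
    + \partial^2.
\end{equation*}
The first summand vanishes because $\delta = \delta_{\Sym V} \tensor \id_{\Anti V}$, and $\delta_{\Sym V}^2 = 0$ by \autoref{prop:dgModuleStructureCCaVM} applied to the canonical $\Sym V$-comodule $\module{M} = \Sym V$ of \autoref{ex:Comodules}~\ref{item:SVIsComodule}. The third summand vanishes because $\partial$ differs from $\id_{\Tensor \Sym V} \tensor \partial_{\Sym V}$ only by the sign $(-1)^{k+1}$, which is constant with respect to the $\partial$-degree $\ell$; hence $\partial^2 = \id \tensor \partial_{\Sym V}^2 = 0$ by \autoref{prop:dgModuleStructureCCEVM} applied to $\module{M} = \Sym V$ with the infinitesimal Lie coaction of \autoref{ex:LieCoactions}~\ref{item:CanonicalLieCoaction}. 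Hence everything reduces to the anticommutation $\delta\partial + \partial\delta = 0$.

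For this I would split $\delta = \delta' + \delta''$, where on $\CVanEst^{k,\ell}(V)$
\begin{align*}
    \delta'(X \tensor \phi \tensor \xi)
    &= \delta_\Ca(X) \tensor \phi \tensor \xi,
    \\
    \delta''(X \tensor \phi \tensor \xi)
    &= (-1)^{k+1}\bigl(X \tensor \pr_+(\phi_\sweedler{-1})\bigr) \tensor \phi_\sweedler{0} \tensor \xi,
\end{align*}
and verify separately that $\delta'\partial + \partial\delta' = 0$ and $\delta''\partial + \partial\delta'' = 0$. The first identity is the easy half: $\delta'$ only touches the $\Tensor^\bullet \Sym V$ factor and $\partial$ only touches the $\Sym V \tensor \Anti^\bullet V$ factor, so the two maps would commute were it not for the prefactor $(-1)^{k+1}$ in $\partial$, which picks up an extra sign when $\delta'$ raises $k$ to $k+1$. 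The two signs $(-1)^{k+1}$ and $(-1)^{k+2}$ are precisely what is needed to produce a cancellation. This is exactly the reason for including the sign $(-1)^{k+1}$ in the definition of $\partial$.

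The substantive part is the identity $\delta''\partial + \partial\delta'' = 0$, where both pieces act on the $\phi$-slot via the comultiplication. Applied to $X \tensor \phi \tensor \xi \in \CVanEst^{k,\ell}(V)$, each summand produces a threefold comultiplication of $\phi$, which by coassociativity of $\shcoprod$ yields $\phi_\sweedler{-2} \tensor \phi_\sweedler{-1} \tensor \phi_\sweedler{0}$ (with $\pr_+$ applied to one factor, $\pr_V$ to another, and the third remaining in the $\Sym V$-slot). After tracking the various sign contributions coming from $(-1)^{k+1}$ versus $(-1)^{k+2}$, together with the parity of moving the $\pr_V$-factor past a $\pr_+$-factor in the tensor algebra, the two expressions differ by exactly a minus sign. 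At this stage cocommutativity of $\shcoprod$ (which lets one swap the roles of the two Sweedler factors that feed $\pr_+$ and $\pr_V$ respectively) gives the desired cancellation. The main obstacle is purely a bookkeeping one---carefully matching the signs picked up by the prefactor $(-1)^{k+1}$ of $\partial$ with those introduced by applying $\delta''$ before or after $\partial$---but no coassociativity, cocommutativity, or compatibility condition beyond those already recorded for $\shcoprod$ is needed.
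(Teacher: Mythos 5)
Your proposal is correct and follows essentially the same route as the paper: both reduce $\Double^2=0$ to the anticommutation $\delta\partial+\partial\delta=0$ (using the previously established $\delta^2=0$ and $\partial^2=0$) and then verify this on elements $X\tensor\phi\tensor\xi$, with your split $\delta=\delta'+\delta''$ merely making explicit the two summands that the paper's single computation of $\partial\delta$ treats line by line before recognizing the result as $-\delta\partial$. The only cosmetic remark is that the cancellation in the $\delta''$-part comes entirely from the prefactors $(-1)^{k}$ versus $(-1)^{k+1}$ together with coassociativity (and, depending on conventions, cocommutativity) of $\shcoprod$; there is no additional Koszul sign from ``moving the $\pr_V$-factor past a $\pr_+$-factor,'' since these land in different tensor slots.
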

\begin{proof}
    We have
    \begin{equation*}
        \Double^2 =
        \delta^2 + \delta \partial + \partial\delta + \partial^2
        = \delta \partial + \partial\delta.
    \end{equation*}
    Let
    $X \tensor \phi \tensor \xi \in \CVanEst^{k,\ell}(V)$.
    Then it holds
    \begin{align*}
        \partial\delta(X \tensor \phi \tensor \xi)
        &=
        \partial\bigl(\delta_{\Sym V} (X \tensor \phi) \tensor \xi\bigr) \\
        &=
        \partial\bigl(
        \delta_\Ca(X) \tensor \phi \tensor \xi\bigr)
        \\
        &\quad+
        (-1)^{k+1}
        \partial\bigl((X \tensor \pr_+(\phi_\sweedler{-1}))
        \tensor
        \phi_\sweedler{0}
        \tensor
        \xi
        \bigr)
        \\
        &=
        (-1)^{k+1}
        \delta_\Ca(X)
        \tensor
        \phi_\sweedler{0}
        \tensor
        \bigl(\pr_V(\phi_\sweedler{1}) \wedge \xi \bigr)
        \\
        &\quad+
        \bigl(X \tensor \pr_+(\phi_\sweedler{-1}) \bigr)
        \tensor
        \phi_\sweedler{0}
        \tensor
        \bigl(\pr_V(\phi_\sweedler{1}) \wedge \xi \bigr)
        \\
        &=
        (-1)^{k+1} \delta\bigl(
        X
        \tensor
        \phi_\sweedler{0}
        \tensor
        (\pr_V(\phi_\sweedler{1}) \wedge \xi )\bigr)
        \\
        &=
        -\delta \partial(X \tensor \phi \tensor \xi).
    \end{align*}
\end{proof}
\begin{definition}[Van Est double complex]
    \label{def:VanEstDoubleComplex}%
    Let $V$ be an $\ring{R}$-module.  We call
    $\CVanEst^{\bullet,\bullet}(V)$ with differentials $\delta$ and
    $\partial$ the \emph{van Est double complex} of $V$.  Its total
    complex will be denoted by
    \begin{equation}
        \CVanEst^\bullet(V)
            \coloneqq
        \bigoplus_{i=0}^\infty \bigoplus_{k + \ell = i}
        \CVanEst^{k,\ell}(V),
    \end{equation}
    with the total differential
    $\Double \colon \CVanEst^{\bullet}(V) \to
    \CVanEst^{\bullet+1}(V)$, and we will denote its total cohomology
    by $\HVanEst^\bullet(V)$.
\end{definition}
\begin{proposition}[Bimodule structure on $\CVanEst^\bullet(V)$]
    \label{prop:BimoduleCVanEst}
    The total complex $(\CVanEst^\bullet(V),\Double)$ equipped with
    the left action
    \begin{equation}
        Y \acts (X \tensor \phi \tensor \xi)
        \coloneqq
        (Y \tensor X) \tensor \phi \tensor \xi
    \end{equation}
    and the right action
    \begin{equation}
        (X \tensor \phi \tensor \xi) \racts \eta
        \coloneqq
        X \tensor \phi \tensor (\xi \wedge \eta)
    \end{equation}
    is a differential graded
    $(\CCa^\bullet(V),\CCE^\bullet(V))$-bimodule.
\end{proposition}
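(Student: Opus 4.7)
The plan is to decompose the claim into three pieces: the underlying graded bimodule axioms, the Leibniz rule for $\Double$ under the $\CCa^\bullet(V)$-action, and the Leibniz rule for $\Double$ under the $\CCE^\bullet(V)$-action. Each piece will reduce to a short calculation thanks to the tensor-product structure $\CVanEst^{\bullet,\bullet}(V) = \Tensor^\bullet\Sym V \tensor \Sym V \tensor \Anti^\bullet V$, where the left action acts only on the outermost tensor slot and the right action only on the innermost $\Anti^\bullet V$-slot.

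First, I would verify the graded bimodule axioms. Associativity of the left action is immediate from associativity of $\tensor$ in the tensor algebra, and associativity of the right action from associativity of $\wedge$. Commutativity of the two actions, $Y \acts (Z \racts \eta) = (Y \acts Z) \racts \eta$, is transparent since the left and right actions affect disjoint tensor factors.

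Second, I would establish the Leibniz rule for the total differential with respect to the left action. Splitting $\Double = \delta + \partial$, the identity $\delta(Y \acts Z) = \delta_\Ca(Y) \acts Z + (-1)^{|Y|} Y \acts \delta(Z)$ is inherited directly from \autoref{prop:dgModuleStructureCCaVM} applied to the $\CCa^\bullet(V,\Sym V)$-factor, since the trailing $\Anti^\bullet V$-factor is inert under $\delta$. For $\partial$, which acts only on the $\Sym V \tensor \Anti^\bullet V$-part, one obtains $\partial(Y \acts Z) = (-1)^{|Y|} Y \acts \partial(Z)$; the crucial point is that the prefactor $(-1)^{k+1}$ built into the definition of $\partial$ increases by exactly $|Y|$ when $Y$ is prepended, producing the desired Koszul sign. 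Summing the two gives the DG left-module property with respect to $\delta_\Ca$.

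Third, I would check the Leibniz rule for $\Double$ with respect to the right action. Since $\CCE^\bullet(V) = \Anti^\bullet V$ carries the zero differential (trivial Lie coaction, cf. \autoref{ex:CECohomology}~\ref{item:TrivialCE}), the DG right-module condition collapses to $\Double(Z \racts \eta) = \Double(Z) \racts \eta$. Splitting again, $\delta$ manifestly commutes with the right action because $\delta$ leaves the $\Anti^\bullet V$-factor alone; for $\partial$ the identity amounts to the associativity $\pr_V(\phi_\sweedler{1}) \wedge \xi \wedge \eta = (\pr_V(\phi_\sweedler{1}) \wedge \xi) \wedge \eta$, which is essentially \autoref{prop:dgModuleStructureCCEVM} tensored with the $\Tensor^\bullet\Sym V$-factor. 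The only obstacle worth flagging is careful sign bookkeeping: the two prefactors $(-1)^{k+1}$ sitting inside $\delta$ and $\partial$ need to be tracked through the degree shift caused by the left action so that the Koszul sign emerges correctly; once that verification is carried out, both the left DG structure and the compatibility with the right action follow with no further work.
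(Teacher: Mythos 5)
Your proposal is correct and follows essentially the same route as the paper, which likewise reduces the claim to the commutativity of the two actions together with \autoref{prop:dgModuleStructureCCaVM} and \autoref{prop:dgModuleStructureCCEVM}; you merely spell out the splitting $\Double = \delta + \partial$ and the sign bookkeeping that the paper leaves implicit. The checks you flag (the degree-dependent prefactor in $\partial$ shifting by $|Y|$ under the left action, and the right action being inert under $\delta$) are exactly the points that make the citation of those two propositions work.
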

\begin{proof}
    Both actions do obviously commute.  The fact that both actions are
    differential graded follows directly from
    \autoref{prop:dgModuleStructureCCaVM} and
    \autoref{prop:dgModuleStructureCCEVM}.
\end{proof}

\subsection{Relation to Chevalley-Eilenberg Complex}
\label{subsec:RelationChevalleyEilenbergComplex}

We can relate $\CCE^\bullet(V)$ with $\CVanEst^{\bullet,\bullet}(V)$
using the linear maps
$\imap \colon \CCE^\ell(V) \to \CVanEst^{\bullet,\ell}(V)$, defined by
\begin{equation}
    \label{eq:VanEstiDef}
    \imap(\xi)
    =
    1 \tensor \Unit \tensor \xi,
\end{equation}
and $\pmap \colon \CVanEst^{\bullet,\ell}(V) \to \CCE^\ell(V)$,
defined by
\begin{equation}
    \label{eq:pDef}
    \pmap(X \tensor \phi \tensor \xi)
    =
    \counit(X) \counit(\phi) \cdot \xi.
\end{equation}
Note that we distinguish the unit $1$ in the tensor algebra from the
unit $\Unit$ in the symmetric algebra.  The next lemma shows that for
every fixed $\ell \in \mathbb{N}_0$ these form morphisms of complexes
between $(\CCE^\ell(V),0)$ and $(\CVanEst^{\bullet,\ell},\delta)$,
with $\CCE^\ell(V)$ being concentrated in degree zero.  It can be
shown by an easy computation.
\begin{lemma}
    \label{lemma:deltaiipartial}%
    \
    \begin{lemmalist}
    \item \label{item:deltaiNull}%
        We have $\delta \imap = 0$.
    \item \label{item:pdeltaNull}%
        We have $\pmap \delta = 0$.
    \end{lemmalist}
\end{lemma}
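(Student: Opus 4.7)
The plan is that both assertions reduce to direct inspection of the defining formulas \eqref{eq:VerticalVEDifferential}, \eqref{eq:VanEstiDef}, and \eqref{eq:pDef}; no additional machinery is needed.

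For part \ref{item:deltaiNull}, I would evaluate $\delta$ on $\imap(\xi) = 1 \tensor \Unit \tensor \xi$, which corresponds in \eqref{eq:VerticalVEDifferential} to $k = 0$, $X = 1 \in \ring{R} = \Tensor^0 \Sym V$, and $\phi = \Unit$. The first summand carries $\delta_\Ca(1)$, which vanishes because the sum defining $\delta_\Ca$ in \autoref{def:CoalgebraComplex} is empty on $\Tensor^0 \Sym V$. The second summand contains the factor $(\pr_+ \tensor \id) \circ \shcoprod(\Unit) = (\pr_+ \tensor \id)(\Unit \tensor \Unit) = 0$, since $\Unit \in \Sym^0 V$ lies in the kernel of $\pr_+$. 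Hence both summands vanish and $\delta \imap(\xi) = 0$.

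For part \ref{item:pdeltaNull}, the key observation is a tensor-degree count. Inspecting \eqref{eq:VerticalVEDifferential}, in both summands of $\delta(X \tensor \phi \tensor \xi)$ the leftmost tensor factor lives in $\Tensor^{k+1} \Sym V$. On the other hand, $\pmap$ as defined in \eqref{eq:pDef} involves the counit of the tensor algebra $\Tensor^\bullet \Sym V$ in the first slot, which is the projection onto $\Tensor^0 \Sym V = \ring{R}$ and hence annihilates every factor of strictly positive tensor degree. Therefore $\pmap \delta = 0$ for purely degree-theoretic reasons.

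I do not foresee any real obstacle here: the lemma is structural and its sole purpose is to make $\imap$ and $\pmap$ into morphisms of complexes between $\CCE^\ell(V)$ (concentrated in degree zero) and $(\CVanEst^{\bullet,\ell}(V),\delta)$, so that they can serve as components of the eventual deformation retract between $\CCE^\bullet(V)$ and $\CVanEst^\bullet(V)$.
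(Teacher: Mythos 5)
Your proof is correct and is exactly the ``easy computation'' the paper leaves to the reader: part one by evaluating the two summands of $\delta$ on $1 \tensor \Unit \tensor \xi$ (both vanish, via $\delta_\Ca(1)=0$ and $\pr_+(\Unit)=0$), and part two by the tensor-degree count, since $\delta$ lands in positive tensor degree while $\pmap$ factors through the projection onto tensor degree zero. Nothing is missing.
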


Now consider for each $\ell \in \mathbb{N}_0$ the map
$\hmap \colon \CVanEst^{\bullet,\ell}(V) \to
\CVanEst^{\bullet-1,\ell}(V)$, defined by
\begin{equation}
    \label{eq:hDef}
    \hmap\big(
    (X_1 \tensor \cdots \tensor X_k)
    \tensor
    \phi
    \tensor
    \xi
    \big)
    \coloneqq
    \begin{cases}
        (-1)^{k} \counit(\phi) \cdot
        (X_1 \tensor \cdots \tensor X_{k-1})
        \tensor
        X_k
        \tensor
        \xi
        & \text{if } k \geq 1, \\
        0 & \text{if } k = 0.
    \end{cases}
\end{equation}
The following lemma shows that for each $\ell$ we have a deformation
retract of the form
\begin{equation}
    \label{diag:CEdeformationRetract}
    \begin{tikzcd}
        \CCE^\ell(V)
        \arrow[r,"\imap", shift left = 3pt]
        &\bigl( \CVanEst^{\bullet,\ell}(V),\delta \bigr)
        \arrow[l,"\pmap", shift left = 3pt]
        \arrow[loop,
        out = -30,
        in = 30,
        distance = 30pt,
        start anchor = {[yshift = -7pt]east},
        end anchor = {[yshift = 7pt]east},
        "\hmap"{swap}
        ]
    \end{tikzcd},
\end{equation}
see also \autoref{fig:ColumnAugmentation}.
\newpage
\begin{figure}
    \centering
    \includegraphics{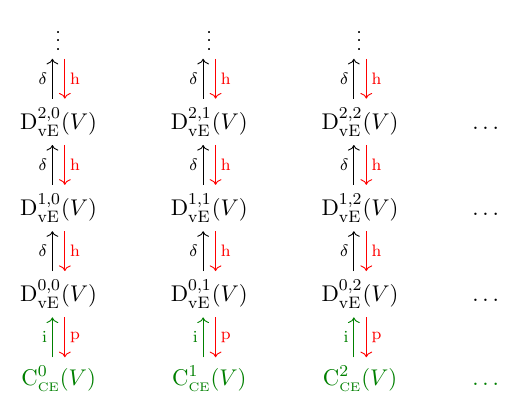}
    \caption{The augmentation of the columns of the van Est double complex}
    \label{fig:ColumnAugmentation}
\end{figure}
\begin{lemma}
    \label{lemma:pideltah}%
    \
    \begin{lemmalist}
    \item \label{item:piNull}%
        We have $\pmap \imap = \id$.
    \item \label{item:deltahhdeltaId}%
        We have $\delta \hmap + \hmap \delta = \id - \imap \pmap$.
    \end{lemmalist}
\end{lemma}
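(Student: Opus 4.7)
Part (i) is immediate from the definitions: evaluating on $\xi \in \CCE^\ell(V)$ gives $\pmap\imap(\xi) = \pmap(1\tensor\Unit\tensor\xi) = \counit(1)\counit(\Unit)\cdot\xi = \xi$, where we used $\counit(\Unit) = 1_\ring{R}$ and the fact that the tensor-algebra counit sends $1 \in \Tensor^0\Sym V$ to $1_\ring{R}$.

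For part (ii), the cleanest plan is to observe that $(\CVanEst^{\bullet,\ell}(V),\delta)$ is literally the tensor product of the complex $(\CCa^\bullet(V,\Sym V),\delta_{\Sym V})$ from \autoref{ex:CoalgebraCohomology}~\ref{item:CoalgebraCohomology_SymmetricAlgebra} with the trivial complex $\Anti^\ell V$ concentrated in the appropriate bidegree. Comparing formulas, $\delta = \delta_{\Sym V}\tensor\id_{\Anti V}$ by \eqref{eq:VerticalVEDifferential}, and $\hmap = \delta^{-1}\tensor\id_{\Anti V}$ using the map $\delta^{-1}$ of \eqref{eq:deltaInverse}; likewise $\imap = \iota\tensor\id_{\Anti V}$ and $\pmap = \pi\tensor\id_{\Anti V}$ under the canonical identification $\ring{R}\tensor\Anti^\ell V \simeq \Anti^\ell V$. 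Hence the identity to be proved reduces to tensoring the already-established homotopy identity
\begin{equation*}
    \delta_{\Sym V}\delta^{-1} + \delta^{-1}\delta_{\Sym V} = \id - \iota\pi
\end{equation*}
from \autoref{ex:CoalgebraCohomology}~\ref{item:CoalgebraCohomology_SymmetricAlgebra} with $\id_{\Anti^\ell V}$.

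Alternatively, one can run the computation directly on a representative element $X\tensor\phi\tensor\xi \in \CVanEst^{k,\ell}(V)$ with $X = X_1\tensor\cdots\tensor X_k$. Splitting $\delta$ into its Hochschild part $\delta_\Ca\tensor\id\tensor\id$ and its coaction tail $(-1)^{k+1}(X\tensor\pr_+(\phi_\sweedler{-1}))\tensor\phi_\sweedler{0}\tensor\xi$, and applying $\hmap$ on either side, one sees all terms telescope: the action of $\hmap$ essentially peels off the last tensor slot of the tensor-algebra factor, and the tail provides exactly the extra contribution needed to cancel the mismatched boundary term arising from $\delta_\Ca$. The only slightly delicate bookkeeping concerns the cases $k=0$ and the role of the projection $\pr_+$: on the $k=0$ summand $\hmap$ vanishes while $\delta$ produces a term $-\Unit\tensor\phi\tensor\xi + \cdots$ whose $\hmap$-image reconstitutes $\counit(\phi)\cdot 1\tensor\Unit\tensor\xi = \imap\pmap(X\tensor\phi\tensor\xi)$ on the nose.

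The main obstacle is thus purely bookkeeping rather than conceptual; the $\Anti^\ell V$-factor is inert throughout, and the sign conventions built into \eqref{eq:hDef} and \eqref{eq:VerticalVEDifferential} are precisely calibrated so that the direct telescoping works column-by-column. I would present the short argument via the tensor-product identification, since it makes manifest that \eqref{diag:CEdeformationRetract} is really nothing but the deformation retract of \autoref{ex:CoalgebraCohomology}~\ref{item:CoalgebraCohomology_SymmetricAlgebra} promoted fibrewise to every antisymmetric degree $\ell$.
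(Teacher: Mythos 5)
Your proposal is correct and takes essentially the same route as the paper: part (i) is the same one-line evaluation, and for part (ii) the paper likewise observes that $\imap\pmap$ is the projection onto tensor and symmetric degree zero and reduces the claim to the homotopy identity $\delta_{\Sym V}\delta^{-1} + \delta^{-1}\delta_{\Sym V} = \id - \iota\pi$ of \autoref{ex:CoalgebraCohomology}~\ref{item:CoalgebraCohomology_SymmetricAlgebra}, which is exactly your tensor-product identification with the inert factor $\Anti^\ell V$.
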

\begin{proof}
    We compute
    $\pmap \imap(\xi) = \pmap(1 \tensor \Unit \tensor \xi) = \xi$ for
    the first part.  For the second part note that $\imap \pmap$ is
    simply the projection onto tensor and symmetric degree $0$.  Then
    \autoref{ex:CoalgebraCohomology}~\ref{item:CoalgebraCohomology_SymmetricAlgebra}
    yields the second part.
\end{proof}

Using the homological perturbation lemma we want to view $\partial$ as
perturbation of $\delta$, see \autoref{sec:HomologicalPerturbation}.
For this we show that $\partial$ is locally nilpotent, and for later
use we also collect some formulas.
\begin{lemma}
    \label{lem:partialh}%
    \
    \begin{lemmalist}
    \item \label{item:partialHPowerNZero} For
        $X \tensor \phi \tensor \xi \in \CVanEst^{k,\ell}(V)$ and
        $n > k$ it holds
        $(\partial \hmap)^n(X \tensor \phi \tensor \xi) = 0$.
    \item \label{item:partialHPowerN} For
        $X = X_1 \tensor \cdots \tensor X_k \in \CCa^k(V)$ and
        $1 \leq n \leq k$ we have
        \begin{equation}
            \label{eq:partialhpowern}
            \begin{split}
                (\partial \hmap)^n (X \tensor \Unit \tensor 1)
                &= (-1)^n
                (
                X_1
                \tensor
                \cdots
                \tensor
                X_{k-n}
                )
                \tensor
                (X_{k-n+1})_\sweedler{0}
                \\
                &\qquad
                \tensor
                \pr_V((X_{k-n+1})_\sweedler{1})
                \wedge
                \pr_V(X_{k-n+2})
                \wedge \cdots \wedge
                \pr_V(X_k).
            \end{split}
        \end{equation}
    \end{lemmalist}
\end{lemma}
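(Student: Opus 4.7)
Both parts are essentially bookkeeping once one notices how the bidegrees interact. The key observation for both statements is that $\hmap$ strictly lowers the tensor-degree in the first slot of $\CVanEst^{\bullet,\bullet}(V)$ by one, while $\partial$ preserves it; thus $\partial\hmap$ lowers the tensor-degree by exactly one per iteration.

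For part \ref{item:partialHPowerNZero} I would argue directly from this degree-counting observation. Given $X \tensor \phi \tensor \xi \in \CVanEst^{k,\ell}(V)$, the iterate $(\partial\hmap)^{k}(X\tensor\phi\tensor\xi)$ lands in tensor-degree zero (if it is not already zero). One more application of $\hmap$ vanishes by definition of $\hmap$ on tensor-degree $0$, so $(\partial\hmap)^{k+1}(X\tensor\phi\tensor\xi) = 0$, and consequently $(\partial\hmap)^{n}=0$ for every $n>k$. No subtlety is hidden here beyond noticing that the sign prefactors in \eqref{eq:hDef} and \eqref{eq:HorizontalVEDifferential} do not affect the vanishing.

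For part \ref{item:partialHPowerN} I would proceed by induction on $n$. The base case $n=1$ is a direct computation: applying $\hmap$ to $X\tensor\Unit\tensor 1$ uses $\counit(\Unit)=1_\ring{R}$ and produces $(-1)^k(X_1\tensor\cdots\tensor X_{k-1})\tensor X_k\tensor 1$, after which $\partial$ at tensor-degree $k-1$ contributes an extra sign $(-1)^{k-1}$, giving the claimed formula with empty wedge part. For the inductive step from $n$ to $n+1$, I apply $\hmap$ to the formula in \eqref{eq:partialhpowern}: the element $(X_{k-n+1})_\sweedler{0}$ sits in the symmetric slot, so $\hmap$ produces a factor $\counit\bigl((X_{k-n+1})_\sweedler{0}\bigr)$ which, combined with the $\pr_V\bigl((X_{k-n+1})_\sweedler{1}\bigr)$ appearing in the antisymmetric slot, collapses via the counit axiom to $\pr_V(X_{k-n+1})$. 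Thus $\hmap$ of the right-hand side of \eqref{eq:partialhpowern} equals
\begin{equation*}
    (-1)^{k}(X_1\tensor\cdots\tensor X_{k-n-1})\tensor X_{k-n}\tensor \pr_V(X_{k-n+1})\wedge\cdots\wedge\pr_V(X_k).
\end{equation*}
A subsequent application of $\partial$ acts on the new symmetric slot $X_{k-n}$ and contributes a further sign $(-1)^{k-n-1}$; altogether the sign becomes $(-1)^{2k-n-1}=(-1)^{n+1}$, matching \eqref{eq:partialhpowern} with $n$ replaced by $n+1$.

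The only step that needs a small argument is the counit collapse $\counit(\phi_\sweedler{0})\pr_V(\phi_\sweedler{1})=\pr_V(\phi)$; I would make this explicit by noting that $\phi_\sweedler{0}\tensor\phi_\sweedler{1}$ is (a flip of) the shuffle coproduct and then invoking the counit axiom for $\shcoprod$. Apart from this, the proof is purely mechanical bookkeeping of signs and Sweedler indices, and the main risk of error lies in correctly tracking the parity of the tensor-degree at each step.
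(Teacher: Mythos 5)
Your proof is correct and follows essentially the same route as the paper: part (i) by observing that $\partial\hmap$ drops the tensor degree by one each time (so the iterate eventually lands where $\hmap$ vanishes), and part (ii) by induction on $n$, with the sign bookkeeping $(-1)^n(-1)^{k-n}(-1)^{k-n-1}=(-1)^{n+1}$ and the counit collapse $\counit\bigl(\phi_\sweedler{0}\bigr)\pr_V\bigl(\phi_\sweedler{1}\bigr)=\pr_V(\phi)$ matching the paper's computation exactly. The only difference is cosmetic: you make the counit step explicit where the paper performs it silently.
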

\begin{proof}
    The first part can be shown by counting degrees: For this note
    that
    $\partial \hmap \colon \CVanEst^{k,\ell}(V) \to
    \CVanEst^{k-1,\ell+1}(V)$.  Thus
    $(\partial \hmap )^n(X \tensor \phi \tensor \xi) \in
    \CVanEst^{k-n,\ell+n}(V)$, and hence
    $(\partial \hmap)^n(X \tensor \phi \tensor \xi) = 0$ for all
    $n > k$.  We prove the second part by induction.  Thus let
    $X = X_1 \tensor \cdots \tensor X_k \in \CCa^k(V)$ with $1 \leq k$.
    Then
    \begin{align*}
        \partial \hmap (X \tensor \Unit \tensor 1)
        &=
        (-1)^{k} \cdot
        \partial \bigl(
        (X_1 \tensor \cdots \tensor X_{k-1})
        \tensor
        X_k
        \tensor
        1
        \bigr)
        \\
        &=
        -
        (X_1 \tensor \cdots \tensor X_{k-1})
        \tensor
        (X_k)_\sweedler{0}
        \tensor
        \pr_V\bigl((X_k)_\sweedler{1}\bigr).
    \end{align*}
    Now assume that \eqref{eq:partialhpowern} holds for some
    $1 \leq n \leq k - 1$.  Then
    \begin{align*}
        (\partial \hmap)^{n+1}(X \tensor \Unit \tensor 1)
        &=
        (-1)^n (\partial \hmap)\Bigl(
        (
        X_1
        \tensor
        \cdots
        \tensor
        X_{k-n}
        )
        \tensor
        (X_{k-n+1})_\sweedler{0}
        \\
        &\qquad
        \tensor
        \pr_V((X_{k-n+1})_\sweedler{1})
        \wedge
        \pr_V(X_{k-n+2})
        \wedge \cdots \wedge
        \pr_V(X_k)
        \Bigr)
        \\
        &=
        (-1)^{k} \cdot
        \partial \bigl(
        (
        X_{1}
        \tensor
        \cdots
        \tensor
        X_{k-n-1}
        )
        \tensor
        X_{k-n}
        \\
        &\qquad
        \tensor
        \pr_V(X_{k-n+1})
        \wedge
        \cdots
        \wedge
        \pr_V(X_k)
        \bigr)
        \\
        &=
        (-1)^{n+1}
        (
        X_1
        \tensor
        \cdots
        \tensor
        X_{k-(n+1)}
        )
        \tensor
        (X_{k-(n+1)+1})_\sweedler{0}
        \\
        &\qquad
        \tensor
        \pr_V((X_{k-(n+1)+2})_\sweedler{1})
        \wedge
        \pr_V(X_{k-(n+1)+3})
        \wedge
        \cdots
        \wedge
        \pr_V(X_k).
    \end{align*}
\end{proof}

\begin{proposition}[Chevalley-Eilenberg deformation retract]
    \label{prop:CEDeformationRetract}%
    The following is a deformation retract
    \begin{equation}
        \label{diag:CEperturbedDeformationRetract}
        \begin{tikzcd}
            \bigl( \CCE^\bullet(V), 0 \bigr)
            \arrow[r,"\imap", shift left = 3pt]
            &\bigl( \CVanEst^{\bullet}(V),\Double \bigr)
            \arrow[l,"\Pmap", shift left = 3pt]
            \arrow[loop,
            out = -30,
            in = 30,
            distance = 30pt,
            start anchor = {[yshift = -7pt]east},
            end anchor = {[yshift = 7pt]east},
            "\Hmap"{swap}
            ]
        \end{tikzcd}
    \end{equation}
    with
    \begin{equation}
        \Pmap
        =
        \pmap \sum_{n=0}^{\infty}(-1)^n(\partial \hmap)^n,
            \qquad\text{and}\qquad
        \Hmap
        =
        \hmap \sum_{n=0}^{\infty}(-1)^n(\partial \hmap)^n.
    \end{equation}
    In particular, we have the following statements:
    \begin{propositionlist}
    \item It holds $\Pmap \imap = \id$.
    \item It holds
        $\Double \Hmap + \Hmap \Double = \id - \imap \Pmap$.
    \end{propositionlist}
\end{proposition}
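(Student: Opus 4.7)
The plan is to deduce this directly from the homological perturbation lemma as stated in \autoref{sec:HomologicalPerturbation}. The starting point is the deformation retract \eqref{diag:CEdeformationRetract} from \autoref{lemma:pideltah}, which lives on each column of fixed antisymmetric degree $\ell$. Since the maps $\imap$, $\pmap$, $\hmap$ do not touch the $\Anti^\bullet V$-factor, summing over $\ell$ yields a deformation retract between $(\CCE^\bullet(V), 0)$ and $(\CVanEst^{\bullet,\bullet}(V), \delta)$; the side conditions $\pmap \imap = \id$ and $\delta \hmap + \hmap \delta = \id - \imap \pmap$ together with the natural vanishings $\hmap \imap = 0$, $\pmap \hmap = 0$, $\hmap^2 = 0$ are readily checked on generators.

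Next I would introduce $\partial$ as a perturbation of $\delta$. By \autoref{proposition:VanEstDoubleComplex} we have $\Double^2 = 0$, i.e.\ $\delta \partial + \partial \delta = 0$, so $\partial$ is a legitimate perturbation. The crucial smallness hypothesis required by the HPL is that $\partial \hmap$ be locally nilpotent on $\CVanEst^{\bullet,\bullet}(V)$; this is exactly \autoref{lem:partialh} \ref{item:partialHPowerNZero}, which gives $(\partial \hmap)^n(X \tensor \phi \tensor \xi) = 0$ whenever $n > k$ for $X \tensor \phi \tensor \xi \in \CVanEst^{k,\ell}(V)$. Consequently the formal series $\sum_{n \geq 0}(-1)^n (\partial \hmap)^n$ terminates pointwise and defines a well-behaved endomorphism of $\CVanEst^\bullet(V)$.

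With both ingredients in place, the HPL produces a perturbed deformation retract in which the homotopy and projection are obtained by postcomposing $\hmap$, respectively $\pmap$, with this series; these are exactly the formulas given for $\Hmap$ and $\Pmap$. Two simplifications occur which I would highlight explicitly: the inclusion $\imap$ is not deformed and the induced differential on $\CCE^\bullet(V)$ remains zero. Both facts follow from the identities $\delta \imap = 0$ (\autoref{lemma:deltaiipartial} \ref{item:deltaiNull}) and $\partial \imap = 0$; the latter is a short direct calculation using \eqref{eq:HorizontalVEDifferential} together with $\shcoprod(\Unit) = \Unit \tensor \Unit$ and $\pr_V(\Unit) = 0$. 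Hence $(\partial \hmap)^n \imap = 0$ for $n \geq 1$, killing all higher-order corrections to both the inclusion and the differential on $\CCE^\bullet(V)$.

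I expect the main obstacle to be essentially bookkeeping rather than a substantial new idea: verifying the side conditions for the unperturbed retract in the exact form required by the version of HPL in \autoref{sec:HomologicalPerturbation}, and matching the sign conventions so that the closed-form formulas for $\Pmap$ and $\Hmap$ emerge with the stated $(-1)^n$. Once the HPL is invoked in the right form, the claimed identities $\Pmap \imap = \id$ and $\Double \Hmap + \Hmap \Double = \id - \imap \Pmap$ are immediate consequences of its general conclusion.
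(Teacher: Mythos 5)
Your proposal is correct and follows essentially the same route as the paper: perturb the column-wise deformation retract \eqref{diag:CEdeformationRetract} by $\partial$, use \autoref{lem:partialh}~\ref{item:partialHPowerNZero} for local nilpotency, and observe $\partial\imap = 0$ to see that neither $\imap$ nor the zero differential on $\CCE^\bullet(V)$ acquires corrections (the paper packages the HPL invocation as \autoref{cor:PerturbationLemmaII}). The only cosmetic slip is that the correction to the inclusion involves $(\hmap\,\partial)^n\imap$ rather than $(\partial\,\hmap)^n\imap$, but both vanish for $n\geq 1$ precisely because $\partial\imap = 0$, so nothing is affected.
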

\begin{proof}
    The first part follows directly from $\hmap \imap = 0$ and
    $\pmap \imap = \id$.  Since by
    \autoref{lem:partialh}~\ref{item:partialHPowerNZero} the
    perturbation is locally nilpotent we know that
    \begin{equation*}
        (\id + \partial \hmap)^{-1}
        =
        \sum_{n=0}^{\infty}(-1)^n(\partial \hmap)^n,
    \end{equation*}
    see \autoref{sec:HomologicalPerturbation}.  Applying now the
    perturbation lemma in the sense of
    \autoref{cor:PerturbationLemmaII} to
    \eqref{diag:CEdeformationRetract} we obtain the above deformation
    retract.  The only thing left to show is that the perturbed
    differential on $\CCE^\bullet(V)$ is indeed the zero map.  For
    this note that
    $\partial \imap(\xi) = \partial(1 \tensor \Unit \tensor \xi) = 0$.
    By \autoref{prop:HomologicalPerturbation} the perturbed
    differential is given by
    \begin{align*}
        \pmap \sum_{n=0}^{\infty}(-1)^n
        (\partial \hmap)^n \partial \imap = 0.
    \end{align*}
\end{proof}
\begin{corollary}
    \label{cor:CEandVEcohomology}%
    Let $V$ be a $\ring{R}$-module.  Then
    $\imap \colon \Anti^\bullet V \to \HVanEst^\bullet(V)$ is an
    isomorphism with inverse $\Pmap$.
\end{corollary}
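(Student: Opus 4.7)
The plan is to extract the corollary directly from the deformation retract established in \autoref{prop:CEDeformationRetract}. The general fact I would invoke is that any deformation retract induces mutually inverse isomorphisms on cohomology. Concretely, $\Pmap \imap = \id$ immediately gives $[\Pmap]\circ[\imap] = \id$ after passing to cohomology. The other composition is handled by the homotopy identity $\Double \Hmap + \Hmap \Double = \id - \imap \Pmap$: any $\Double$-cocycle $c$ satisfies $c - \imap\Pmap(c) = \Double \Hmap(c)$, which is a coboundary, so $[\imap]\circ[\Pmap] = \id$ on $\HVanEst^\bullet(V)$ as well.

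To conclude, I would observe that the source complex $(\CCE^\bullet(V), 0)$ carries the zero differential, as shown at the end of the proof of \autoref{prop:CEDeformationRetract}. Hence no kernels-modulo-images need to be computed and the cohomology of $\CCE^\bullet(V)$ is literally $\CCE^\bullet(V) = \Anti^\bullet V$ in every degree. Under this identification the induced map on cohomology is nothing but $\imap$ itself, with honest inverse $\Pmap$.

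There is no genuine obstacle here: all of the work has been done in the preceding section, where the column homotopy retract \eqref{diag:CEdeformationRetract} was set up and then the perturbation lemma (via \autoref{cor:PerturbationLemmaII} and the local nilpotency from \autoref{lem:partialh}~\ref{item:partialHPowerNZero}) was applied to produce the total deformation retract. The corollary is simply the cohomological shadow of that retract, combined with the fortunate fact that the perturbed differential on the Chevalley-Eilenberg side vanishes, so that passing to cohomology is trivial on the target of $\imap$.
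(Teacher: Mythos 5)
Your proposal is correct and is exactly the argument the paper intends: the corollary is stated without proof as the immediate cohomological consequence of the deformation retract in \autoref{prop:CEDeformationRetract}, using that $\Pmap\imap = \id$ and that $\id - \imap\Pmap$ is null-homotopic via $\Hmap$, together with the vanishing of the differential on $\CCE^\bullet(V)$ so that its cohomology is literally $\Anti^\bullet V$. Nothing is missing.
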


Thus we have shown that $\CCE^\bullet(V)$ and $\CVanEst^\bullet(V)$
are quasi-isomorphic.  This quasi-isomorphism is even compatible with
the $\CCE^\bullet(V)$-module structure:
\begin{lemma}
    \label{lem:ImapIsModuleHom}%
    The map $\imap \colon \CCE^\bullet(V) \to \CVanEst^\bullet(V)$ is
    a morphism of differential graded right $\CCE^\bullet(V)$-modules.
\end{lemma}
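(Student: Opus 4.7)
The plan is to verify directly the three defining properties of a morphism of differential graded right $\CCE^\bullet(V)$-modules: $\imap$ preserves the grading, $\imap$ is a chain map, and $\imap$ intertwines the right $\Anti^\bullet V$-actions. Since for $\module{M} = \ring{R}$ the Chevalley-Eilenberg differential vanishes by \autoref{prop:dgModuleStructureCCEVM}, the chain map condition reduces to showing $\Double \circ \imap = 0$.

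First I would observe that $\imap(\xi) = 1 \tensor \Unit \tensor \xi \in \CVanEst^{0,\ell}(V)$ for $\xi \in \CCE^\ell(V) = \Anti^\ell V$, hence $\imap$ sends the total degree $\ell$ to total degree $\ell$. For the chain map property I would split $\Double = \delta + \partial$ and invoke \autoref{lemma:deltaiipartial}~\ref{item:deltaiNull} for $\delta \imap = 0$. For the remaining piece $\partial \imap = 0$, I would use the formula \eqref{eq:HorizontalVEDifferential} together with the fact that $\shcoprod(\Unit) = \Unit \tensor \Unit$ by \eqref{eq:shcoproductOnGenerators}, so that
\begin{equation*}
    \partial(1 \tensor \Unit \tensor \xi)
    =
    1 \tensor \Unit \tensor \bigl(\pr_V(\Unit) \wedge \xi\bigr)
    = 0,
\end{equation*}
since $\pr_V(\Unit) = 0$. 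This is exactly the calculation already used in the proof of \autoref{prop:CEDeformationRetract}.

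Finally, for the module compatibility I would just unfold the two right actions: on $\CCE^\bullet(V) = \Anti^\bullet V$ the right action is $\wedge$, while on $\CVanEst^\bullet(V)$ the action is given in \autoref{prop:BimoduleCVanEst}. Hence for $\xi \in \Anti^\ell V$ and $\eta \in \Anti^\bullet V$,
\begin{equation*}
    \imap(\xi \wedge \eta)
    = 1 \tensor \Unit \tensor (\xi \wedge \eta)
    = (1 \tensor \Unit \tensor \xi) \racts \eta
    = \imap(\xi) \racts \eta.
\end{equation*}

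There is no real obstacle here: everything collapses to the two trivial facts that $\Unit$ is group-like in $\Sym V$ with $\pr_V(\Unit) = 0$, and that the right action on the Anti-factor of $\CVanEst^\bullet(V)$ is defined precisely so that it matches wedging in the last tensor slot.
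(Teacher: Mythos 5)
Your proof is correct and follows essentially the same route as the paper: cite \autoref{lemma:deltaiipartial} for compatibility with the differential and unfold the right actions to get $\imap(\xi \wedge \eta) = \imap(\xi) \racts \eta$. You are in fact slightly more careful than the paper's own proof, which only invokes $\delta\imap = 0$ and leaves the $\partial\imap = 0$ piece of $\Double\imap = 0$ implicit, whereas you verify it explicitly via $\pr_V(\Unit) = 0$.
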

\begin{proof}
    By \autoref{lemma:deltaiipartial} we know that $\imap$ is
    compatible with the differentials.  Moreover, for
    $\xi \in \CCE^\bullet(V)$ and $\eta \in \CCE^\ell(V)$ we have
    \begin{equation*}
        \imap(\xi \wedge \eta)
        =
        1 \tensor \Unit \tensor (\xi \wedge \eta)
        =
        \imap(\xi) \racts \eta.
    \end{equation*}
\end{proof}

\subsection{Relation to the Coalgebra Complex}
\label{sec:RelationCoalgebraComplex}

Consider for each fixed $k \in \mathbb{N}_0$ the maps
$\jmap \colon \CCa^k(V) \to \CVanEst^{k,\bullet}(V)$, defined by
\begin{equation}
    \label{eq:jNullDef}
    \jmap(X)
    \coloneqq
    X \tensor \Unit \tensor 1,
\end{equation}
and $\qmap \colon \CVanEst^{k,\bullet}(V) \to \CCa^k(V)$, defined by
\begin{equation}
    \label{eq:qDef}
    \qmap(X \tensor \phi \tensor \xi)
    \coloneqq
    \counit(\phi) \counit(\xi) \cdot X.
\end{equation}
The next lemma shows that for every fixed $k \in \mathbb{N}_0$ these
form morphisms of complexes between $\CCa^k(V)$ and
$(\CVanEst^{k,\bullet},\partial)$, with $\CCa^k(V)$ being concentrated
in degree zero.  This is again a simple computation.
\begin{lemma}
    \label{lem:partialjjdelta}%
    \
    \begin{lemmalist}
    \item \label{item:partialjNull}%
        We have $\partial \jmap = 0$.
    \item \label{item:qpartialNull}%
        We have $\qmap \partial = 0$.
    \end{lemmalist}
\end{lemma}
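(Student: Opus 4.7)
The plan is to verify both identities by a direct, essentially one-line, computation from the definitions. Both are parallel to \autoref{lemma:deltaiipartial} for $\imap$ and $\pmap$ with the roles of the two differentials interchanged, and they express the fact that $\jmap$ lands in the column consisting of factors $\phi = \Unit$ and $\xi = 1$, on which the horizontal differential $\partial$ acts trivially, while $\qmap$ projects onto the component with $\phi \in \Sym^0 V$ and $\xi \in \Anti^0 V$, which is exactly annihilated by $\partial$.

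For part \ref{item:partialjNull}, I would expand $\partial \jmap(X) = \partial(X \tensor \Unit \tensor 1)$ by means of formula \eqref{eq:HorizontalVEDifferential}. The crucial observation is that $\Unit \in \Sym^0 V$ is grouplike with $\shcoprod(\Unit) = \Unit \tensor \Unit$, and since $\pr_V$ kills $\Sym^0 V$, the infinitesimal Lie coaction satisfies $\rho(\Unit) = -\Unit \tensor \pr_V(\Unit) = 0$. Consequently $\partial(X \tensor \Unit \tensor 1) = 0$.

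For part \ref{item:qpartialNull}, I would apply $\qmap$ to $\partial(X \tensor \phi \tensor \xi) = (-1)^{k}\, X \tensor \phi_\sweedler{0} \tensor \bigl(\pr_V(\phi_\sweedler{1}) \wedge \xi\bigr)$. By definition of $\qmap$, this yields $(-1)^k \counit(\phi_\sweedler{0})\, \counit\bigl(\pr_V(\phi_\sweedler{1}) \wedge \xi\bigr) \cdot X$. Now the second counit factor vanishes: either $\pr_V(\phi_\sweedler{1}) = 0$, or else $\pr_V(\phi_\sweedler{1}) \wedge \xi$ has antisymmetric degree at least $1$, and is therefore killed by $\counit \colon \Anti^\bullet V \to \Anti^0 V = \ring{R}$. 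In both cases $\qmap \partial = 0$.

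I do not foresee any genuine obstacle. The only subtlety is notational bookkeeping around the two different Sweedler conventions (the coaction indices $\sweedler{0}, \sweedler{1}$ for $\rho$ versus the left comodule indices $\sweedler{-1}, \sweedler{0}$ for $\shcoprod$), but once the definitions are unfolded both identities collapse to the trivial observation that $\pr_V$ annihilates $\Sym^0 V$ and that $\counit_{\Anti^\bullet V}$ annihilates anything of positive antisymmetric degree.
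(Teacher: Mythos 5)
Your proposal is correct and follows exactly the route the paper intends: the paper omits the proof, remarking only that it "is again a simple computation," and your unfolding of \eqref{eq:HorizontalVEDifferential} together with the observations that $\pr_V$ annihilates $\Sym^0 V$ and that $\counit$ on $\Anti^\bullet V$ kills positive antisymmetric degree is precisely that computation.
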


Now consider for each $k \in \mathbb{N}_0$ the map
$\kmap \colon \CVanEst^{k,\bullet}(V) \to \CVanEst^{k,\bullet-1}(V)$,
defined by
\begin{equation}
    \label{eq:kNullDef}
    \begin{split}
        \kmap(
        X
        \tensor
        \phi
        \tensor
        \xi
        )
        &\coloneqq
        \frac{(-1)^{k}}{r + \ell} \cdot
        X
        \tensor
        (\phi \vee \pr_V(\xi_\sweedler{-1}) )
        \tensor
        \xi_\sweedler{0},
    \end{split}
\end{equation}
for
$X \tensor \phi \tensor \xi \in \Tensor^k\Sym V \tensor \Sym^r V
\tensor \Anti^\ell V$.

\begin{figure}
    \centering
    \includegraphics{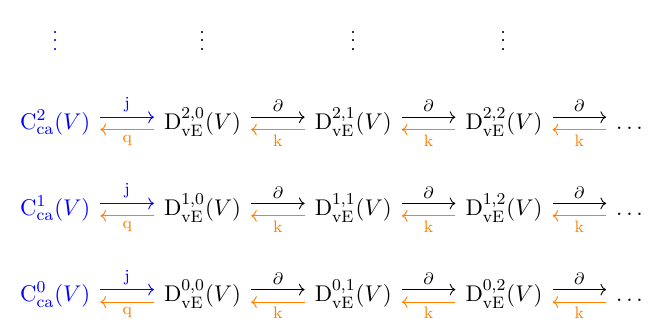}
    \caption{The augmentation of the rows of the van Est double complex.}
    \label{fig:RowAugmentation}
\end{figure}

\begin{lemma}
    \label{lem:CoalgebraDeformationRetract}%
    For each $k \in \mathbb{N}_0$ we have a deformation retract
    \begin{equation}
        \begin{tikzcd}
            \CCa^k(V)
            \arrow[r,"\jmap", shift left = 3pt]
            &\bigl( \CVanEst^{k,\bullet}(V),\partial \bigr)
            \arrow[l,"\qmap", shift left = 3pt]
            \arrow[loop,
            out = -30,
            in = 30,
            distance = 30pt,
            start anchor = {[yshift = -7pt]east},
            end anchor = {[yshift = 7pt]east},
            "\kmap"{swap}
            ]
        \end{tikzcd}
        ,
    \end{equation}
    see also \autoref{fig:RowAugmentation}.  In particular, we have
    the following:
    \begin{lemmalist}
    \item \label{item:qjNull}%
        We have $\qmap \jmap = \id$.
    \item \label{item:partialkIdMinusjq}%
        We have $\partial \kmap + \kmap \partial = \id - \jmap \qmap$.
    \end{lemmalist}
\end{lemma}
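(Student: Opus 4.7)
The plan is to recognise the diagram as the Poincar\'e Lemma homotopy retract \eqref{eq:PoincareLemma} from \autoref{ex:CECohomology}~\ref{item:SVwithVLieCoaction}, tensored on the left with the identity on $\Tensor^k\Sym V$. Once this identification is set up, both claimed identities drop out essentially for free.

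The first step would be to factor
$$\CVanEst^{k,\bullet}(V) = \Tensor^k\Sym V \tensor \bigl(\Sym V \tensor \Anti^\bullet V\bigr) = \Tensor^k\Sym V \tensor \CCE^\bullet(V,\Sym V),$$
where $\Sym V$ is viewed as a $\Sym V$-comodule via $\shcoprod$ as in \autoref{ex:Comodules}~\ref{item:SVIsComodule}, and to check that under this identification the horizontal differential of \eqref{eq:HorizontalVEDifferential} coincides with $(-1)^k \id_{\Tensor^k\Sym V} \tensor \partial_{\Sym V}$, where $\partial_{\Sym V}$ is the Chevalley--Eilenberg differential \eqref{eq:deltaCEforSV}. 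The only subtlety here is translating between Sweedler's notation for the left coaction $\shcoprod$ and the right-coaction notation used in \eqref{eq:HorizontalVEDifferential}, but cocommutativity of $\shcoprod$ makes this purely notational.

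The second step is to match the auxiliary maps. Directly from the definitions one sees that $\jmap(X) = X \tensor (\Unit \tensor 1) = X \tensor \iota(1_{\ring{R}})$ and $\qmap = \id_{\Tensor^k\Sym V} \tensor \pi$, with $\iota$ and $\pi$ the inclusion and projection of \eqref{eq:PoincareLemma}. Comparing the explicit formula \eqref{eq:kNullDef} for $\kmap$ with the formula \eqref{eq:deltaInv} for $\partial^{-1}_{\Sym V}$ gives $\kmap = (-1)^k \id_{\Tensor^k\Sym V} \tensor \partial^{-1}_{\Sym V}$.

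With these identifications in place, both claims reduce to the corresponding identities already established in \autoref{ex:CECohomology}~\ref{item:SVwithVLieCoaction}. For \ref{item:qjNull}, we get $\qmap \jmap = \id \tensor (\pi \iota) = \id$. For \ref{item:partialkIdMinusjq}, the two signs $(-1)^k$ on $\partial$ and $\kmap$ cancel and we obtain
$$\partial \kmap + \kmap \partial = \id \tensor \bigl(\partial_{\Sym V} \partial^{-1}_{\Sym V} + \partial^{-1}_{\Sym V} \partial_{\Sym V}\bigr) = \id \tensor (\id - \iota \pi) = \id - \jmap \qmap.$$
The only non-trivial point in the whole argument is the sign and Sweedler bookkeeping in the first step; no new homological algebra is needed beyond the Poincar\'e Lemma retract \eqref{eq:PoincareLemma}.
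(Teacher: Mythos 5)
Your proof is correct and is essentially identical to the paper's: the paper's entire proof consists of the one-line computation $\qmap \jmap(X) = \qmap(X \tensor \Unit \tensor 1) = X$ together with the remark that the homotopy identity ``is just the polynomial Poincaré Lemma from \eqref{eq:PoincareLemma} in positive degrees'', i.e.\ the retract of \autoref{ex:CECohomology}~\ref{item:SVwithVLieCoaction} tensored with $\id_{\Tensor^k \Sym V}$, which is precisely the identification you carry out. Your sign bookkeeping is also the right one: the operative formula for $\partial$ is the element formula \eqref{eq:HorizontalVEDifferential} with prefactor $(-1)^k$ (the one the paper actually uses in its computations), matching the $(-1)^k$ in \eqref{eq:kNullDef}, so the two signs cancel as you claim.
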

\begin{proof}
    For the first part compute
    $\qmap \jmap (X) = \qmap (X \tensor \Unit \tensor 1) = X$.  The
    second part is just the polynomial Poincaré Lemma from
    \eqref{eq:PoincareLemma} in positive degrees.
\end{proof}

In fact, this is a special deformation retract, see
\autoref{def:SpecialDeformationRetract}:
\begin{lemma}
    \label{lem:groupSpecialDeformationRetract}%
    \
    \begin{lemmalist}
    \item \label{item:kk}%
        We have $\kmap^2 = 0$.
    \item \label{item:qk}%
        We have $\qmap \kmap = 0$.
    \item \label{item:kj}%
        We have $\kmap \jmap = 0$.
    \end{lemmalist}
\end{lemma}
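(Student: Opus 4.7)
The unifying observation is that $\kmap$ leaves the first tensor factor untouched and, up to the global sign $(-1)^{k}$, reduces to the Poincaré-type homotopy $\partial^{-1}_{\Sym V}$ from \autoref{ex:CECohomology}~\ref{item:SVwithVLieCoaction} acting on the remaining two factors. Concretely, for $X \tensor \phi \tensor \xi \in \Tensor^{k} \Sym V \tensor \Sym^{r} V \tensor \Anti^{\ell} V$ the definition \eqref{eq:kNullDef} can be rewritten as
\begin{equation*}
    \kmap(X \tensor \phi \tensor \xi)
    = (-1)^{k} \, X \tensor \partial^{-1}_{\Sym V}(\phi \tensor \xi),
\end{equation*}
with the convention that $\partial^{-1}_{\Sym V}$ vanishes on bi-degree $(r,\ell)=(0,0)$. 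All three identities then reduce to straightforward facts about $\partial^{-1}_{\Sym V}$, the counits, and $\jmap$.

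For part~\ref{item:kk} I plan to use that the underlying graded derivation $\partial^{*}$ of $\Sym V \tensor \Anti^{\bullet} V$ satisfies $(\partial^{*})^{2} = 0$: indeed $(\partial^{*})^{2}$ is itself a graded derivation and vanishes on the generators $x \tensor 1$ and $1 \tensor x$ for $x \in V$, hence everywhere. Since $\partial^{*}$ preserves the total degree $r+\ell$, the scalar $1/(r+\ell)$ in $\partial^{-1}_{\Sym V}$ is constant along two successive applications, so $(\partial^{-1}_{\Sym V})^{2} = (r+\ell)^{-2} (\partial^{*})^{2} = 0$; combined with $(-1)^{2k} = 1$ this yields $\kmap^{2} = 0$. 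For part~\ref{item:qk} I plan to track degrees: whenever $\kmap(X \tensor \phi \tensor \xi)$ does not already vanish, it lies in $\Tensor^{k} \Sym V \tensor \Sym^{r+1} V \tensor \Anti^{\ell-1} V$, so the middle factor has strictly positive symmetric degree. Since $\counit$ vanishes on $\Sym^{\geq 1} V$, the formula \eqref{eq:qDef} defining $\qmap$ produces zero. Part~\ref{item:kj} is immediate from the conventions: $\jmap(X) = X \tensor \Unit \tensor 1$ sits in bi-degree $(r,\ell) = (0,0)$, where $\kmap$ is by definition zero.

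I expect no serious obstacle; all three statements reduce to elementary bookkeeping once the identification with $\partial^{-1}_{\Sym V}$ and the relevant bi-degrees are recorded. The only delicate point worth flagging is the mild subtlety in part~\ref{item:kk} that the prefactor $1/(r+\ell)$ in $\kmap$ is preserved under its own iteration, precisely because $\partial^{*}$ respects the total degree $r+\ell$; without this observation one might worry about a spurious scalar factor when composing $\kmap$ with itself.
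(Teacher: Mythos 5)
Your proof is correct, and for parts~\ref{item:qk} and~\ref{item:kj} it is the paper's argument almost verbatim: $\kmap$ raises the symmetric degree of the middle factor while $\qmap$ projects onto symmetric degree zero, and $\jmap$ lands in bi-degree $(0,0)$, where $\kmap$ vanishes (the paper phrases the latter as $\pr_V(\Unit) = 0$, which kills the numerator and so also settles the $1/(r+\ell)$ issue at $(0,0)$ without appealing to a separate convention). For part~\ref{item:kk} the paper instead uses the antisymmetry $\xi_\sweedler{-2} \tensor \xi_\sweedler{-1} = -\xi_\sweedler{-1} \tensor \xi_\sweedler{-2}$ of the iterated coaction legs, which is annihilated by the symmetric product $\vee$; your alternative --- $(\partial^*)^2$ is a graded derivation vanishing on the generators $x \tensor 1$ and $1 \tensor x$, together with the observation that $\partial^*$ preserves the total degree $r+\ell$ so the scalar prefactors compose harmlessly --- is an equally short, valid substitute expressing the same underlying fact.
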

\begin{proof}
    The first part follows since
    $\xi_\sweedler{-2} \tensor \xi_\sweedler{-1} = -\xi_\sweedler{-1}
    \tensor \xi_\sweedler{-2}$.  Since $\kmap$ raises the symmetric
    degree by $1$ and $\qmap$ projects to symmetric degree $0$ we get
    the second part.  The last part follows directly from
    $\pr_V(1) = 0$.
\end{proof}

We want to view $\delta$ as perturbation of $\partial$.  For later use
we also collect some formulas.
\begin{lemma}
    \label{lem:deltaKPowerN}%
    \
    \begin{lemmalist}
    \item \label{item:deltaKPowerNZero} For
        $X \tensor \phi \tensor \xi \in \CVanEst^{k,\ell}(V)$ and
        $n > \ell$ we have
        $(\delta \kmap)^{n}(X \tensor \phi \tensor \xi) = 0$.
    \item \label{item:FormularDeltaKPowerN} Let
        $\xi \in \Anti^\ell V$.  Then
        \begin{equation}
            (\delta \kmap)^n \imap(\xi)
            = (-1)^n \frac{(\ell-n)!}{\ell!} \cdot
            \bigl(\pr_V(\xi_\sweedler{-n})
            \tensor
            \cdots
            \tensor
            \pr_V(\xi_\sweedler{-1})
            \bigr)
            \tensor
            \Unit
            \tensor
            \xi_\sweedler{0}
        \end{equation}
        for all $n \in \mathbb{N}_0$.
    \end{lemmalist}
\end{lemma}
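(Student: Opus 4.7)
The plan is to prove part~\ref{item:deltaKPowerNZero} by a quick degree count and part~\ref{item:FormularDeltaKPowerN} by induction on $n$. For the first part, observe that $\delta$ preserves the antisymmetric degree while $\kmap$ lowers it by one, so $(\delta\kmap)^n$ sends $\CVanEst^{k,\ell}(V)$ into $\CVanEst^{k+n,\ell-n}(V)$, which vanishes as soon as $n>\ell$. This is exactly parallel to the argument used for $\partial\hmap$ in \autoref{lem:partialh}~\ref{item:partialHPowerNZero}.

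For part~\ref{item:FormularDeltaKPowerN} I would induct on $n$. The base case $n=0$ reduces to $\imap(\xi)=1\tensor\Unit\tensor\xi$, interpreting the empty tensor product as $1\in\Tensor^0\Sym V$ and $\xi_\sweedler{0}$ as $\xi$ itself. For the inductive step, assume the element at stage $n$ has the form $c_n\cdot Y\tensor\Unit\tensor\eta$ with $c_n=(-1)^n(\ell-n)!/\ell!$, $Y=\pr_V(\xi_\sweedler{-n})\tensor\cdots\tensor\pr_V(\xi_\sweedler{-1})$, and $\eta=\xi_\sweedler{0}\in\Anti^{\ell-n}V$. Applying $\kmap$, which here picks up the factor $(-1)^n/(\ell-n)$ since $r=0$ and the antisymmetric degree is $\ell-n$, yields $\tfrac{c_n(-1)^n}{\ell-n}\cdot Y\tensor\pr_V(\eta_\sweedler{-1})\tensor\eta_\sweedler{0}$. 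The subsequent application of $\delta$ rests on two clean observations. First, each tensor factor of $Y$ lies in $V\subset\Sym V$ and is therefore primitive, so $\redshcoprod$ annihilates it; since $\delta_\Ca$ is defined as the derivation extending $-\redshcoprod$, the first summand in \eqref{eq:VerticalVEDifferential} dies. Second, for any $v\in V$ we have $\shcoprod(v)=\Unit\tensor v+v\tensor\Unit$, so $\pr_+(v_\sweedler{-1})\tensor v_\sweedler{0}=v\tensor\Unit$; applied to $\phi=\pr_V(\eta_\sweedler{-1})\in V$ this pushes the middle factor into the tensor leg and leaves $\Unit$ in the middle, producing an overall coefficient $-c_n/(\ell-n)$ after a straightforward sign check involving $(-1)^n\cdot(-1)^{n+1}=-1$.

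The remaining step is to use coassociativity of $\shcoprod$ on $\Anti^\bullet V$ to identify $\pr_V(\eta_\sweedler{-1})\tensor\eta_\sweedler{0}=\pr_V((\xi_\sweedler{0})_\sweedler{-1})\tensor(\xi_\sweedler{0})_\sweedler{0}$ with the next stage of the iterated shuffle coproduct of $\xi$. After this relabeling the expression takes exactly the claimed form for $n+1$, and the coefficient recursion $c_{n+1}=-c_n/(\ell-n)=(-1)^{n+1}(\ell-n-1)!/\ell!$ is an elementary factorial manipulation. I expect the main obstacle to be purely notational, namely the careful Sweedler-index bookkeeping when iterating $\shcoprod$ and keeping the antisymmetric degree of $\xi_\sweedler{0}$ correctly tracked through the induction; the underlying algebraic content — primitivity of $V$ in $\Sym V$ and coassociativity of the shuffle coproduct — is very clean.
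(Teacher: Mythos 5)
Your proposal is correct and follows essentially the same route as the paper: degree counting for the vanishing statement, and induction on $n$ for the explicit formula, using the primitivity of elements of $V$ (so $\delta_\Ca$ kills the pure-degree-one tensor leg) together with the explicit formulas for $\kmap$ and $\delta$ and the coassociativity relabeling of Sweedler indices. Your sign and coefficient bookkeeping, including $c_{n+1}=-c_n/(\ell-n)$, matches the paper's computation exactly.
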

\begin{proof}
    The first part can be shown by counting degrees: For this note
    that
    $\delta \kmap \colon \CVanEst^{k,\ell}(V) \to
    \CVanEst^{k+1,\ell-1}(V)$.  Thus
    $(\delta \kmap )^n(X \tensor \phi \tensor \xi) \in
    \CVanEst^{k+n,\ell-n}(V)$, and hence
    $(\delta \kmap)^n(X \tensor \phi \tensor \xi) = 0$ for all
    $n > \ell$.  The second part is clear for $n=0$.  Suppose the
    equation holds for some $n \in \mathbb{N}_0$, then
    \begin{align*}
        (\delta \kmap)^{n+1}\imap(\xi)
        &=
        (-1)^n
        \frac{(\ell-n)!}{\ell!} \delta \kmap \Bigl(
        \bigl(\pr_V(\xi_\sweedler{-n})
        \tensor
        \cdots
        \tensor
        \pr_V(\xi_\sweedler{-1})
        \bigr)
        \tensor
        \Unit
        \tensor
        \xi_\sweedler{0}
        \Bigr)
        \\
        &=
        \frac{(\ell-(n+1))!}{\ell!} \delta \Bigl(
        \bigl(\pr_V(\xi_\sweedler{-(n+1)})
        \tensor
        \cdots
        \tensor
        \pr_V(\xi_\sweedler{-2})
        \bigr)
        \tensor
        \pr_V(\xi_\sweedler{-1})
        \tensor
        \xi_\sweedler{0}
        \Bigr)
        \\
        &=
        (-1)^{n+1}\frac{(\ell-(n+1))!}{\ell!}
        \bigl(
        \pr_V(\xi_\sweedler{-(n+1)})
        \tensor
        \cdots
        \tensor
        \pr_V(\xi_\sweedler{-1})
        \bigr)
        \tensor
        \Unit
        \tensor
        \xi_\sweedler{0}.
    \end{align*}
    For the third line we used that
    $\delta_\Ca(\pr_V(\xi_\sweedler{-n}) \tensor \cdots \tensor
    \pr_V(\xi_\sweedler{-1})) = 0$.
\end{proof}

\begin{proposition}[Coalgebra complex deformation retract]
    \label{prop:coalgebraDeformationRetract}
    The following is a deformation retract
    \begin{equation}
        \label{diag:CgrNormPerturbedDeformationRetract}
        \begin{tikzcd}
            \bigl( \CCa^\bullet(V), \delta_\Ca \bigr)
            \arrow[r,"\jmap", shift left = 3pt]
            &\bigl( \CVanEst^{\bullet}(V),\Double \bigr)
            \arrow[l,"\Qmap", shift left = 3pt]
            \arrow[loop,
            out = -30,
            in = 30,
            distance = 30pt,
            start anchor = {[yshift = -7pt]east},
            end anchor = {[yshift = 7pt]east},
            "\Kmap"{swap}
            ]
        \end{tikzcd}
    \end{equation}
    with
    \begin{equation}
        \Qmap
        \coloneqq
        \qmap\sum_{n=0}^{\infty}(-1)^n(\delta \kmap)^n,
            \qquad\text{and}\qquad
        \Kmap
        \coloneqq
        \kmap\sum_{n=0}^{\infty}(-1)^n(\delta \kmap)^n.
    \end{equation}
    In particular, we have the following results:
    \begin{propositionlist}
    \item We have $\Qmap \jmap = \id$.
    \item We have $\Double \Kmap + \Kmap \Double = \id - \jmap \Qmap$.
    \end{propositionlist}
\end{proposition}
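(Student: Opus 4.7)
The statement is the mirror image of \autoref{prop:CEDeformationRetract}, so the strategy is to apply the homological perturbation lemma (\autoref{cor:PerturbationLemmaII}) to the row deformation retract of \autoref{lem:CoalgebraDeformationRetract}, this time viewing $\delta$ as a perturbation of $\partial$. Assembling the per-row retracts $(\jmap, \qmap, \kmap)$ over all $k$ yields a deformation retract between $(\CCa^\bullet(V), 0)$ and $(\CVanEst^{\bullet}(V), \partial)$, and \autoref{lem:groupSpecialDeformationRetract} ensures this is a \emph{special} deformation retract, so the stronger form of the perturbation lemma applies.

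To invoke the lemma I need the perturbation $\delta \kmap$ to be locally nilpotent, and this is precisely the content of \autoref{lem:deltaKPowerN}~\ref{item:deltaKPowerNZero}: on $\CVanEst^{k,\ell}(V)$ the operator $\delta \kmap$ raises tensor degree and lowers antisymmetric degree by one, so $(\delta \kmap)^{n}$ vanishes for $n > \ell$. Hence $\id + \delta \kmap$ is invertible with inverse $\sum_{n=0}^\infty (-1)^n (\delta \kmap)^n$, and the perturbation lemma outputs the perturbed chain maps and homotopy $\Qmap$ and $\Kmap$ in exactly the form stated, together with the two identities $\Qmap \jmap = \id$ and $\Double \Kmap + \Kmap \Double = \id - \jmap \Qmap$. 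In fact $\Qmap \jmap$ collapses to $\qmap \jmap = \id$ already at the level of the series, because $\kmap \jmap = 0$ (\autoref{lem:groupSpecialDeformationRetract}~\ref{item:kj}) kills every term with $n \geq 1$.

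The one step that is not a direct citation of the perturbation lemma, and therefore the main point to check, is that the perturbed differential on the retract side matches the coalgebra differential $\delta_\Ca$ already sitting on $\CCa^\bullet(V)$. By \autoref{prop:HomologicalPerturbation} this perturbed differential equals $\qmap \sum_{n=0}^\infty (-1)^n (\delta \kmap)^n \delta \jmap$. Since $\shcoprod(\Unit) = \Unit \tensor \Unit$ forces $\pr_+(\Unit_\sweedler{-1}) = 0$, the explicit formula \eqref{eq:VerticalVEDifferential} for $\delta$ simplifies to
\begin{equation*}
    \delta \jmap(X)
    = \delta_\Ca(X) \tensor \Unit \tensor 1
    = \jmap(\delta_\Ca(X)),
\end{equation*}
so $\delta \jmap$ lands in the image of $\jmap$. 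Applying $\kmap \jmap = 0$ once more, all terms with $n \geq 1$ vanish and the series reduces to $\qmap \delta \jmap = \qmap \jmap \delta_\Ca = \delta_\Ca$. This bookkeeping, not the perturbation lemma itself, is the only genuinely new computation, and it is where one has to be careful because the starting differential on $\CCa^\bullet(V)$ was zero rather than $\delta_\Ca$.
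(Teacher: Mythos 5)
Your proposal is correct and follows essentially the same route as the paper: perturb the row-wise retract of \autoref{lem:CoalgebraDeformationRetract} by $\delta$ via \autoref{cor:PerturbationLemmaII}, using \autoref{lem:deltaKPowerN}~\ref{item:deltaKPowerNZero} for local nilpotency, and then check that the perturbed differential reduces to $\delta_\Ca$ because $\delta\jmap(X) = \delta_\Ca(X) \tensor \Unit \tensor 1$ lies in the image of $\jmap$ and $\kmap\jmap = 0$ (the paper phrases this as $\kmap\delta\jmap = 0$, which is the same observation). The only cosmetic difference is that you invoke the special-deformation-retract refinement already here, whereas the paper defers that to \autoref{proposition:KQSpecialDefRetract}.
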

\begin{proof}
    Since by \autoref{lem:deltaKPowerN}~\ref{item:deltaKPowerNZero}
    the perturbation is locally nilpotent we know
    \begin{equation*}
        (\id + \delta \kmap)^{-1} = \sum_{n=0}^{\infty}(-1)^n(\delta \kmap)^n.
    \end{equation*}
    Applying now the perturbation lemma in the sense of
    \autoref{cor:PerturbationLemmaII} to
    \eqref{diag:CEdeformationRetract} we obtain the above deformation
    retract.  The only thing left to show is that the perturbed
    differential on $\CCa^\bullet(V)$ is indeed the coalgebra
    differential.  For this note that $\kmap \delta \jmap = 0$.  By
    \autoref{prop:HomologicalPerturbation} this differential is given
    by
    \begin{equation*}
        \qmap \sum_{n=0}^{\infty}(-1)^n(\delta \kmap)^n \delta \jmap(X)
        = \qmap \delta \jmap(X)
        = \qmap (\delta_\Ca(X) \tensor \Unit \tensor 1)
        = \delta_\Ca(X).
    \end{equation*}
\end{proof}

In fact, we obtain a special deformation retract:
\begin{proposition}
    \label{proposition:KQSpecialDefRetract}%
    \
    \begin{propositionlist}
    \item \label{item:KKNull}%
        We have $\Kmap^2 = 0$.
    \item \label{item:QKNull}%
        We have $\Qmap \Kmap = 0$.
    \item \label{item:KjNull}%
        We have $\Kmap \jmap = 0$.
    \end{propositionlist}
\end{proposition}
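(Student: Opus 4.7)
The plan is to reduce all three identities to the unperturbed analogues $\kmap^2 = 0$, $\qmap \kmap = 0$, and $\kmap \jmap = 0$ from \autoref{lem:groupSpecialDeformationRetract}, exploiting the fact that the perturbation series defining $\Kmap$ and $\Qmap$ collapses dramatically when composed with $\kmap$ or $\jmap$. The series manipulations are legitimate since, by \autoref{lem:deltaKPowerN}~\ref{item:deltaKPowerNZero}, the operator $\delta \kmap$ is locally nilpotent and the sums are pointwise finite, so I can freely reassociate them.

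For \ref{item:KKNull} and \ref{item:QKNull} I would first observe the key collapse: for every $n \geq 1$ the composition $(\delta \kmap)^n \kmap$ ends in $\kmap \kmap$ and hence vanishes by \autoref{lem:groupSpecialDeformationRetract}~\ref{item:kk}. Only the $n=0$ term survives and gives
\begin{equation*}
\left(\sum_{n=0}^\infty (-1)^n (\delta \kmap)^n \right) \circ \kmap = \kmap.
\end{equation*}
Multiplying $\Kmap$ and $\Qmap$ on the right by the same series representation of $\Kmap$, this identity yields
\begin{equation*}
\Kmap^2 = \kmap \circ \kmap \circ \sum_{m=0}^\infty (-1)^m (\delta \kmap)^m = 0
\end{equation*}
using $\kmap^2 = 0$, and
\begin{equation*}
\Qmap \Kmap = \qmap \circ \kmap \circ \sum_{m=0}^\infty (-1)^m (\delta \kmap)^m = 0
\end{equation*}
using $\qmap \kmap = 0$ from \autoref{lem:groupSpecialDeformationRetract}~\ref{item:qk}.

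For \ref{item:KjNull} the collapse happens on the other side: from $\kmap \jmap = 0$ (\autoref{lem:groupSpecialDeformationRetract}~\ref{item:kj}) I obtain $\delta \kmap \jmap = 0$, and inductively $(\delta \kmap)^n \jmap = 0$ for all $n \geq 1$. Only the $n=0$ term of the series defining $\Kmap$ contributes when applied to $\jmap$, giving $\Kmap \jmap = \kmap \jmap = 0$.

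There is essentially no obstacle here; the real content was already packaged into \autoref{lem:groupSpecialDeformationRetract}, and the proposition follows formally from those three identities together with the local nilpotency of $\delta \kmap$. The only thing to be careful about is the order of composition in the Neumann-type series, but this is handled uniformly by the observation above.
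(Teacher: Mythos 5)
Your proof is correct, but it takes a more hands-on route than the paper. The paper's own proof is a two-line appeal to general machinery: by \autoref{lem:groupSpecialDeformationRetract} the unperturbed retract $(\jmap,\qmap,\kmap)$ is a \emph{special} deformation retract, and \autoref{cor:SpecialDeformationRetract} says that perturbing a special deformation retract by a locally nilpotent perturbation again yields a special deformation retract; the three identities are then just the definition of ``special''. You instead unpack what that corollary does in this particular instance: the observation that $(\delta\kmap)^n\kmap = 0$ for $n \geq 1$ (ending in $\kmap^2 = 0$) collapses the Neumann series on one side, and $(\delta\kmap)^n\jmap = 0$ for $n \geq 1$ (ending in $\kmap\jmap = 0$) collapses it on the other, after which the three claims reduce to the unperturbed identities of \autoref{lem:groupSpecialDeformationRetract}. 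Both arguments rest on exactly the same input lemma; what the paper's version buys is brevity and reuse of a stated general principle, while yours buys self-containedness and makes visible \emph{why} the perturbed homotopy inherits the side conditions -- essentially re-proving the relevant case of \autoref{cor:SpecialDeformationRetract}. Your appeal to local nilpotency (\autoref{lem:deltaKPowerN}) to justify the termwise manipulation of the series is the right thing to say and matches the hypothesis under which the corollary applies.
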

\begin{proof}
    By \autoref{lem:groupSpecialDeformationRetract} the homotopy
    retract we perturb in \autoref{prop:coalgebraDeformationRetract}
    is in fact a special deformation retract.  Thus by
    \autoref{cor:SpecialDeformationRetract} the perturbation is also a
    special deformation retract.
\end{proof}
\begin{corollary}
    \label{cor:CgrandVEcohomology}%
    Let $V$ be a $\ring{R}$-module.  Then
    $\jmap \colon \HCa^\bullet(V) \to \HVanEst^\bullet(V)$ is an
    isomorphism with inverse $\Qmap$.
\end{corollary}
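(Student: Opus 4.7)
The plan is to read off the corollary directly from the deformation retract established in \autoref{prop:coalgebraDeformationRetract}, using only the standard fact that chain homotopic maps induce the same map on cohomology. Essentially no new work is required.

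First, I would observe that by the very definition of a deformation retract, both $\jmap$ and $\Qmap$ are chain maps between $(\CCa^\bullet(V),\delta_\Ca)$ and $(\CVanEst^\bullet(V),\Double)$. Hence they descend to well-defined $\ring{R}$-linear maps
\begin{equation*}
    [\jmap] \colon \HCa^\bullet(V) \to \HVanEst^\bullet(V)
    \qquad\text{and}\qquad
    [\Qmap] \colon \HVanEst^\bullet(V) \to \HCa^\bullet(V)
\end{equation*}
on cohomology. It then suffices to show that these two maps are mutually inverse.

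For the first composition, the identity $\Qmap \jmap = \id$ from \autoref{prop:coalgebraDeformationRetract} holds already at the chain level, so it passes immediately to cohomology, giving $[\Qmap] \circ [\jmap] = \id_{\HCa^\bullet(V)}$. For the reverse composition, the other identity of \autoref{prop:coalgebraDeformationRetract} rewrites as
\begin{equation*}
    \jmap\Qmap - \id
    =
    -\bigl(\Double \Kmap + \Kmap \Double\bigr),
\end{equation*}
which exhibits $\Kmap$ as an explicit chain homotopy from $\jmap \Qmap$ to $\id_{\CVanEst^\bullet(V)}$. Any cocycle $X \in \ker \Double$ therefore satisfies $\jmap\Qmap(X) - X = -\Double \Kmap(X)$, so $[\jmap \Qmap(X)] = [X]$ in $\HVanEst^\bullet(V)$, i.e. $[\jmap] \circ [\Qmap] = \id_{\HVanEst^\bullet(V)}$.

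There is really no obstacle: the entire content has been packaged into \autoref{prop:coalgebraDeformationRetract}, and the corollary is just the standard observation that a deformation retract induces an isomorphism on cohomology in which the homotopy inverse is realized by the explicit map $\Qmap$. The analogous statement for the Chevalley--Eilenberg side (\autoref{cor:CEandVEcohomology}) was obtained by exactly the same reasoning from \autoref{prop:CEDeformationRetract}, so one could simply cite that proof verbatim.
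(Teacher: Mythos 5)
Your argument is correct and is precisely the standard reasoning the paper leaves implicit: the corollary is stated without proof because it follows immediately from the two identities of \autoref{prop:coalgebraDeformationRetract}, with $\Qmap\jmap = \id$ giving one composition on the nose and the homotopy $\Kmap$ showing $\jmap\Qmap$ is the identity on cohomology. Nothing is missing.
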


Thus we obtain a quasi-isomorphism between $\CCa^\bullet(V)$ and
$\CVanEst^\bullet(V)$.  Moreover, this quasi-isomorphism is a morphism
of $\CCa^\bullet(V)$-modules:
\begin{lemma}
    \label{lem:JmapIsModuleHom}%
    The map $\jmap \colon \CCa^\bullet(V) \to \CVanEst^\bullet(V)$ is
    a morphism of differential graded left $\CCa^\bullet(V)$-modules.
\end{lemma}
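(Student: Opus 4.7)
The plan is to verify directly the two defining conditions of being a morphism of differential graded left $\CCa^\bullet(V)$-modules: compatibility with the left actions, and compatibility with the differentials. Both are almost immediate from the explicit definition $\jmap(X) = X \tensor \Unit \tensor 1$.

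First, for the left module structure, take $Y \in \CCa^\bullet(V)$ and $X \in \CCa^\bullet(V)$. By the definition of the $\CCa^\bullet(V)$-action on $\CVanEst^\bullet(V)$ given in \autoref{prop:BimoduleCVanEst}, one has
\begin{equation*}
    Y \acts \jmap(X)
    = Y \acts (X \tensor \Unit \tensor 1)
    = (Y \tensor X) \tensor \Unit \tensor 1
    = \jmap(Y \tensor X),
\end{equation*}
so $\jmap$ intertwines the left actions by construction.

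Next, for compatibility with differentials I need $\Double \jmap = \jmap \delta_\Ca$. Since $\Double = \delta + \partial$ and $\partial \jmap = 0$ by \autoref{lem:partialjjdelta}~\ref{item:partialjNull}, it suffices to check $\delta \jmap = \jmap \delta_\Ca$. Applying the explicit formula \eqref{eq:VerticalVEDifferential} to $\jmap(X) = X \tensor \Unit \tensor 1$, the second term involves $\pr_+(\Unit_\sweedler{-1}) \tensor \Unit_\sweedler{0}$; but $\shcoprod(\Unit) = \Unit \tensor \Unit$ so $\pr_+(\Unit_\sweedler{-1}) = 0$, and only the first term survives, giving $\delta_\Ca(X) \tensor \Unit \tensor 1 = \jmap(\delta_\Ca(X))$.

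There is no real obstacle here; the whole content is that $\Unit \in \Sym V$ is group-like so that the shuffle coproduct produces nothing in positive symmetric degree when applied to it, which kills the extra term in the definition of $\delta$. Combining the two verifications yields the claim.
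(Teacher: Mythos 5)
Your proposal is correct and follows essentially the same route as the paper: the module compatibility is the identical one-line computation $Y \acts \jmap(X) = (Y \tensor X) \tensor \Unit \tensor 1 = \jmap(Y \tensor X)$, and the differential compatibility rests on \autoref{lem:partialjjdelta} exactly as the paper does. You merely spell out the remaining check $\delta \jmap = \jmap \delta_\Ca$ (the vanishing of the $\pr_+(\Unit_\sweedler{-1})$ term because $\Unit$ is group-like), which the paper leaves implicit.
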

\begin{proof}
    By \autoref{lem:partialjjdelta} we know that $\jmap$ is compatible
    with the differentials.  Moreover, for $X \in \CCa^{k_1}(V)$ and
    $Y \in \CCa^{k_2}(V)$ we have
    \begin{equation*}
        \jmap(Y \tensor X)
        =
        (Y \tensor X) \tensor \Unit \tensor 1
        =
        Y \acts \jmap(X).
    \end{equation*}
\end{proof}

\subsection{The Van Est Maps}
\label{sec:VanEstMaps}

In the last two sections we have constructed homotopy retracts
\begin{equation}
    \begin{tikzcd}
        \CCa^\bullet(V)
        \arrow[r,"\jmap",shift left = 3pt]
        &\CVanEst^{\bullet}(V)
        \arrow[l,"\Qmap", shift left = 3pt]
        \arrow[r,"\Pmap", shift left = 3pt]
        &\CCE^\bullet(V)
        \arrow[l,"\imap",shift left = 3pt]
    \end{tikzcd}
\end{equation}
with $\imap$ and $\jmap$ being quasi-isomorphisms with quasi-inverses
$\Pmap$ and $\Qmap$, respectively.  Thus we immediately see that
$\Pmap \circ \jmap$ is a quasi-isomorphism.  But in fact we can
achieve more: We can even construct a homotopy retract between
$\CCa^\bullet(V)$ and $\CCE^\bullet(V)$ directly.  For this we need to
collect some properties.
\begin{lemma}
    \label{lem:hkpk}%
    \
    \begin{lemmalist}
    \item \label{item:hk}%
        We have $\hmap \kmap = 0$.
    \item \label{item:HK}%
        We have $\Hmap \Kmap = 0$.
    \item \label{item:pk}%
        We have $\pmap \kmap = 0$.
    \item \label{item:PK}%
        We have $\Pmap \Kmap = 0$.
    \end{lemmalist}
\end{lemma}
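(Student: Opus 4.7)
The plan is to prove parts (i) and (iii) by a direct application of the explicit formulas for $\hmap$, $\pmap$, and $\kmap$, and then to deduce (ii) and (iv) from (i) and (iii) by unwinding the Neumann series that define $\Hmap$, $\Pmap$, and $\Kmap$.

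For parts (i) and (iii), the key observation is that $\kmap$ always produces an element whose middle $\Sym V$-factor lies in symmetric degrees $\geq 1$. Indeed, for $X \tensor \phi \tensor \xi \in \Tensor^k \Sym V \tensor \Sym^r V \tensor \Anti^\ell V$ with $r + \ell > 0$, formula \eqref{eq:kNullDef} reads
\[
\kmap(X \tensor \phi \tensor \xi) = \frac{(-1)^{k}}{r+\ell}\, X \tensor \bigl(\phi \vee \pr_V(\xi_\sweedler{-1})\bigr) \tensor \xi_\sweedler{0},
\]
whose middle factor has symmetric degree $r+1 \geq 1$, while $\kmap$ vanishes when $r+\ell = 0$. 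Since both $\hmap$ and $\pmap$ apply the counit $\counit$ to the middle $\Sym V$-factor (see \eqref{eq:hDef} and \eqref{eq:pDef}), and since $\counit$ annihilates $\Sym^{\geq 1} V$, we obtain $\hmap \kmap = 0$ and $\pmap \kmap = 0$ on all generators.

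For parts (ii) and (iv), I would write out the double series
\[
\Hmap \Kmap = \sum_{n,m \geq 0} (-1)^{n+m}\, \hmap (\partial \hmap)^n \kmap (\delta \kmap)^m,
\qquad
\Pmap \Kmap = \sum_{n,m \geq 0} (-1)^{n+m}\, \pmap (\partial \hmap)^n \kmap (\delta \kmap)^m,
\]
which are finite on every element by the local nilpotency statements \autoref{lem:partialh}~\ref{item:partialHPowerNZero} and \autoref{lem:deltaKPowerN}~\ref{item:deltaKPowerNZero}. The decisive identity is that, for every $n \geq 1$,
\[
(\partial \hmap)^n \kmap = (\partial \hmap)^{n-1}\, \partial\, (\hmap \kmap) = 0
\]
by part (i), so only the $n = 0$ terms in the outer sum survive; but those are $\hmap \kmap (\delta \kmap)^m = 0$ and $\pmap \kmap (\delta \kmap)^m = 0$ by parts (i) and (iii) respectively, since $\kmap(\delta\kmap)^m$ still ends with an application of $\kmap$. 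Thus every term in both double sums vanishes.

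No real obstacle arises in this lemma once (i) and (iii) are in hand; the only point requiring care is bookkeeping of the composition order, which is handled by the associativity identity $(\partial \hmap)^n = (\partial \hmap)^{n-1}\,(\partial \hmap)$ used above.
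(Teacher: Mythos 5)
Your proof is correct and follows essentially the same route as the paper: parts (i) and (iii) by observing that $\kmap$ raises the symmetric degree of the middle $\Sym V$-factor while $\hmap$ and $\pmap$ apply the counit to that factor, and parts (ii) and (iv) by expanding the perturbation series and killing every term using (i) and (iii). The paper streamlines (ii) slightly by invoking the identity $\hmap(\id + \partial\hmap)^{-1} = (\id + \hmap\partial)^{-1}\hmap$ instead of writing out the double sum, but the content is identical.
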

\begin{proof}
    Since $\hmap$ raises the symmetric degree by $1$ and $\hmap$
    projects onto symmetric degree $0$ we immediately obtain
    $\hmap \kmap = 0$.  Then it follows directly
    \begin{equation*}
        \Hmap \Kmap
        =
        (\id + \hmap \partial)^{-1} \hmap
        \kmap (\id + \delta \kmap)^{-1}
        =
        0.
    \end{equation*}
    Similarly, $\pmap$ projects onto symmetric degree $0$ and thus
    $\pmap \kmap = 0$ holds.  Then we obtain
    \begin{equation*}
        \Pmap \Kmap
        =
        \pmap \sum_{i=0}^{\infty} (-1)^i (\partial \kmap)^i
        \kmap (\id + \delta \kmap)^{-1}
        =
        \pmap \kmap (\id + \delta \kmap)^{-1}
        =
        0,
    \end{equation*}
    since $\kmap^2 = 0$.
\end{proof}

\begin{theorem}[Van Est deformation retract]
    \label{thm:VanEstDeformationRetract}%
    Let $V$ be an $\ring{R}$-module.  Then
    \begin{equation}
        \label{diag:VanEstMapsDeformationRetract}
        \begin{tikzcd}[column sep=large]
            \bigl( \CCE^\bullet(V), 0 \bigr)
            \arrow[r,"\VanEstInt", shift left = 3pt]
            &\bigl( \CCa^{\bullet}(V),\delta_\Ca \bigr)
            \arrow[l,"\VanEstDiff", shift left = 3pt]
            \arrow[loop,
            out = -30,
            in = 30,
            distance = 30pt,
            start anchor = {[yshift = -7pt]east},
            end anchor = {[yshift = 7pt]east},
            "\VanEstHomo"{swap}
            ]
        \end{tikzcd}
    \end{equation}
    is a deformation retract, where
    \begin{align}
        \VanEstDiff
        \coloneqq
        \Pmap \circ \jmap,
            \qquad
        \VanEstInt
        \coloneqq
        \Qmap \circ \imap
            \qquad\text{and}\qquad
        \VanEstHomo
        \coloneqq
        \Qmap \Hmap \jmap.
    \end{align}
    In particular, the following holds:
    \begin{theoremlist}
    \item We have $\VanEstDiff \circ \VanEstInt = \id$.
    \item We have
        $\delta_\Ca \VanEstHomo + \VanEstHomo \delta_\Ca = \id -
        \VanEstInt \circ \VanEstDiff$.
    \end{theoremlist}
\end{theorem}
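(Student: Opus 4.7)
The strategy is to compose the two deformation retracts of \autoref{prop:CEDeformationRetract} and \autoref{prop:coalgebraDeformationRetract} through their common middle complex $\CVanEst^\bullet(V)$, using the auxiliary vanishings from \autoref{lem:hkpk}. Both retracts are built with respect to the total differential $\Double$, so $\imap, \Pmap, \jmap, \Qmap$ are all chain maps; because the perturbed differential on $\CCE^\bullet(V)$ is zero, the chain-map property for $\imap$ and $\Pmap$ reduces to $\Double \imap = 0$ and $\Pmap \Double = 0$.

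For the first identity $\VanEstDiff \circ \VanEstInt = \id$, I insert the homotopy relation $\jmap \Qmap = \id - (\Double \Kmap + \Kmap \Double)$ from \autoref{prop:coalgebraDeformationRetract} into the definition:
\begin{equation*}
\Pmap \jmap \Qmap \imap = \Pmap \imap - \Pmap \Double \Kmap \imap - \Pmap \Kmap \Double \imap.
\end{equation*}
The first summand equals $\id$ by $\Pmap \imap = \id$, the second vanishes because $\Pmap \Double = 0$, and the third vanishes either by $\Double \imap = 0$ or by the cross-compatibility $\Pmap \Kmap = 0$ provided by \autoref{lem:hkpk}~\ref{item:PK}.

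For the homotopy identity, I push both copies of $\delta_\Ca$ across the chain maps $\Qmap$ and $\jmap$, and then apply the $\Hmap$-homotopy from \autoref{prop:CEDeformationRetract}:
\begin{align*}
\delta_\Ca \VanEstHomo + \VanEstHomo \delta_\Ca
&= \delta_\Ca \Qmap \Hmap \jmap + \Qmap \Hmap \jmap \delta_\Ca \\
&= \Qmap \Double \Hmap \jmap + \Qmap \Hmap \Double \jmap \\
&= \Qmap (\Double \Hmap + \Hmap \Double) \jmap \\
&= \Qmap (\id - \imap \Pmap) \jmap \\
&= \Qmap \jmap - \Qmap \imap \Pmap \jmap = \id - \VanEstInt \circ \VanEstDiff,
\end{align*}
using $\Qmap \jmap = \id$ at the very end.

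The only conceptual step in this whole argument is recognising that the cross-compatibility $\Pmap \Kmap = 0$, which does \emph{not} follow abstractly from merely having two homotopy retracts sharing a common middle complex, is what upgrades the composition from a bare quasi-isomorphism to a genuine deformation retract. Once this ingredient, already isolated in \autoref{lem:hkpk}, is in hand, the rest is formal bookkeeping with chain maps and the two homotopy identities.
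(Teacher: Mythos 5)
Your proof is correct and takes essentially the same route as the paper: both compose the deformation retracts of \autoref{prop:CEDeformationRetract} and \autoref{prop:coalgebraDeformationRetract} through $\CVanEst^\bullet(V)$, the paper by invoking the abstract composition of homotopy retracts and you by expanding that composition by hand. One small quibble with your closing remark: as your own computation shows, the identity $\VanEstDiff \circ \VanEstInt = \id$ already follows from $\Pmap \Double = 0$ and $\Double \imap = 0$ alone (the term $\Pmap\Kmap\Double\imap$ dies without \autoref{lem:hkpk}), so $\Pmap\Kmap = 0$ is the ingredient the paper cites but not strictly the load-bearing one you make it out to be.
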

\begin{proof}
    We obtain both statements by composition of homotopy retracts, see
    \autoref{sec:HomotopyRetracts}.  Recall that $(\jmap,\Qmap,\Kmap)$
    is a deformation retract by \autoref{prop:coalgebraDeformationRetract},
    and thus $(\Qmap,\jmap,0)$ is a homotopy retract.
    Similarly, we know that $(\imap,\Pmap,\Hmap)$ is a
    deformation retract by \autoref{prop:CEDeformationRetract},
    and thus $(\Pmap,\imap,0)$ is a homotopy retract.
    Composing $(\jmap,\Qmap,\Kmap)$ with $(\Pmap,\imap,0)$
    we obtain the homotopy retract
    \begin{equation*}
        (\Pmap,\imap,0) \circ (\jmap,\Qmap,\Kmap)
        =
        (\Pmap \jmap, \Qmap \imap, \Pmap \Kmap \imap)
        =
        (\VanEstDiff, \VanEstInt, 0).
    \end{equation*}
    Here we used \autoref{lem:hkpk}~\ref{item:PK}.  In other words, we
    have
    $\id - \VanEstDiff \circ \VanEstInt = \partial 0 + 0 \partial =
    0$.  This shows the first part.  For the second part we compose
    $(\imap,\Pmap;\Hmap)$ and $(\Qmap,\jmap,0)$ to obtain the
    deformation retract
    \begin{equation*}
        (\Qmap,\jmap,0)
        \circ
        (\imap,\Pmap, \Hmap)
        =
        (\Qmap \imap, \Pmap \jmap, \Qmap \Hmap \jmap)
        =
        (\VanEstInt, \VanEstDiff, \VanEstHomo).
    \end{equation*}
    Hence we have
    $\delta_\Ca \VanEstHomo + \VanEstHomo \delta_\Ca = \id -
    \VanEstInt \circ \VanEstDiff$.
\end{proof}

\begin{remark}
    \label{rem:nontrivialLiecoalg}%
    \begin{remarklist}
    \item
    Note that both of the complexes $\CCE^\bullet(V)$ and
    $\CCa^\bullet(V)$ have interpretations even if the Lie cobracket
    is not trivial: $\CCE^\bullet(V)$ is the usual Chevalley-Eilenberg
    complex of a Lie coalgebra, and $\CCa^\bullet(V)$ is the coalgebra
    complex of the universal enveloping coalgebra of the Lie
    coalgebra. Since our Lie coalgebra $V$ is abelian, one can show
    that its universal enveloping coalgebra is indeed $\Sym V$, see
    \cite{michaelis:1980a}. We are very certain that one can use the
    deformation retract \eqref{diag:VanEstMapsDeformationRetract} and
    the dual Poincaré-Birkhoff-Witt theorem (see
    \cite{michaelis:1985a}) to perturb $\delta_\Ca$ in the direction
    of a non-trivial Lie cobracket in order to obtain a deformation
    retract connecting the Chevalley-Eilenberg complex of a locally
    finite Lie coalgebra with the coalgebra complex of its universal
    enveloping coalgebra. Since this is not directly connected to the
    overall aim of this paper, this attempt will be part of a future
    work.
    \item Moreover, the deformation retract
          \autoref{diag:VanEstMapsDeformationRetract}, or better said a
          dual version of it, can be used to build an explicit deformation
          retract for a (dg) Lie algebra $\liealg{g}$ and its
           universal enveloping algebra $\mathcal{U}(\liealg{g})$
             \begin{equation}
        \begin{tikzcd}[column sep=large]
            \mathcal{U}(\liealg{g})
            \arrow[r, shift left = 3pt]
            &\bigl(\Omega\Anti\liealg{g},\partial)
            \arrow[l, shift left = 3pt]
            \arrow[loop,
            out = -30,
            in = 30,
            distance = 30pt,
            start anchor = {[yshift = -7pt]east},
            end anchor = {[yshift = 7pt]east},
            ]
        \end{tikzcd},
    \end{equation}
    where $\Anti\liealg{g}$ is the Chevalley-Eilenberg complex with
    its canonical dg coalgebra structure and $\Omega$ is the cobar
    resolution of this coalgebra.  This implies immediately that the
    universal enveloping algebra of a (dg) Lie algebra is Koszul. This
    line of thought together with an extended version of the
    Poincaré-Birkoff-Witt theorem for $L_\infty$-algebras is carried
    out in \cite{getzler:2024a}, but using the homotopy constructed in
    \cite{dewilde.lecomte:1995a}. In the appendix of
    \cite{getzler:2024a} a detailed comparison between the two
    homotopies is given.
\end{remarklist}
 \end{remark}

We call $\VanEstDiff$ the \emph{van Est differentiation} map, while we
call $\VanEstInt$ the \emph{van Est integration}.  It follows directly
that $\HCa^\bullet(V) \simeq \Anti^\bullet V$ via the van Est maps.
In fact, this is actually an isomorphism of algebras as the next
result shows.
\begin{corollary}
    \label{cor:CaCohomologyComputed}%
    Let $V$ be an $\ring{R}$-module.  Then
    $\VanEstDiff \colon \HCa^\bullet(V) \to \Anti^\bullet V$ is an
    isomorphism of graded algebras with inverse $\VanEstInt$.
\end{corollary}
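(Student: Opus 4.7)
Theorem~\ref{thm:VanEstDeformationRetract} already gives that $\VanEstDiff$ and $\VanEstInt$ induce mutually inverse isomorphisms of graded $\ring{R}$-modules on cohomology, so only multiplicativity remains. The plan is to verify that
\[
    \VanEstInt(\xi \wedge \eta)
    \equiv
    \VanEstInt(\xi) \tensor \VanEstInt(\eta)
    \pmod{\image \delta_\Ca}
\]
for $\xi \in \Anti^k V$ and $\eta \in \Anti^\ell V$.

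The first step is to extract an explicit chain-level formula for $\VanEstInt = \Qmap \circ \imap$. For $\xi \in \Anti^\ell V$, the expansion $\Qmap = \qmap \sum_{n \geq 0} (-1)^n (\delta \kmap)^n$ applied to $\imap(\xi) = 1 \tensor \Unit \tensor \xi$ only receives a contribution from $n = \ell$, since $\qmap$ forces the residual antisymmetric degree to vanish. Plugging in Lemma~\ref{lem:deltaKPowerN}~\ref{item:FormularDeltaKPowerN} and unfolding the iterated shuffle coproduct on $\Anti^\bullet V$ via Lemma~\ref{lem:exponentialproj}, the two signs $(-1)^\ell$ cancel and one arrives at the normalized antisymmetrization
\[
    \VanEstInt(\xi_1 \wedge \cdots \wedge \xi_\ell)
    =
    \frac{1}{\ell!}
    \sum_{\sigma \in \Perm_\ell} \sign(\sigma) \,
    \xi_{\sigma(1)} \tensor \cdots \tensor \xi_{\sigma(\ell)}
    \in \Tensor^\ell V \subset \CCa^\ell(V).
\]
In particular $\VanEstInt(\xi) = \xi$ for $\xi \in V$.

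To verify multiplicativity, the instructive case $\xi, \eta \in V$ reads
\[
    \VanEstInt(\xi) \tensor \VanEstInt(\eta) - \VanEstInt(\xi \wedge \eta)
    =
    \tfrac{1}{2}\bigl( \xi \tensor \eta + \eta \tensor \xi \bigr)
    =
    - \tfrac{1}{2} \redshcoprod(\xi \vee \eta)
    =
    \delta_\Ca\bigl( -\tfrac{1}{2}\, \xi \vee \eta \bigr),
\]
since $\delta_\Ca$ acts on $\Sym V = \CCa^1(V)$ by $-\redshcoprod$. For general $\xi \in \Anti^k V$ and $\eta \in \Anti^\ell V$, the difference between $\frac{1}{k!\ell!}$ times the concatenation of antisymmetrizations and $\frac{1}{(k+\ell)!}$ times the full antisymmetrization of $\xi \wedge \eta$ is precisely the familiar gap between concatenation and $(k,\ell)$-shuffling. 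An explicit $\delta_\Ca$-primitive is then obtained inductively on $k+\ell$ via the classical shuffle homotopy, exploiting that $\delta_\Ca$ is built from $\redshcoprod$, which itself enumerates shuffles.

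The main obstacle is the combinatorial bookkeeping of multinomials and signs in this shuffle primitive. A cleaner packaging avoids it as follows. Since $\Sym V$ is cocommutative, the tensor and shuffle products on $\CCa^\bullet(V)$ are chain-homotopic by a standard cobar-complex argument, so $\HCa^\bullet(V)$ is graded commutative. Using $\mathbb{Q} \subseteq \ring{R}$, the universal property of $\Anti^\bullet V$ as the free graded-commutative $\ring{R}$-algebra on $V$ then provides a unique algebra map $\Phi \colon \Anti^\bullet V \to \HCa^\bullet(V)$ extending $\xi \mapsto [\xi]$ on $V$. The degree-$1$ computation above shows $\Phi$ and $\VanEstInt$ agree on the generating subspace $V$, hence $\Phi = \VanEstInt$ as algebra maps from $\Anti^\bullet V$. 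Combined with the module isomorphism of Theorem~\ref{thm:VanEstDeformationRetract}, this upgrades $\VanEstInt$ to the desired isomorphism of graded algebras, with inverse $\VanEstDiff$.
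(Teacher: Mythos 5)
Your chain-level formula for $\VanEstInt$ and the degree-one computation producing the primitive $-\tfrac{1}{2}\,\xi\vee\eta$ are both correct, but the argument does not close. The direct route is only sketched: for $\xi\in\Anti^kV$ and $\eta\in\Anti^\ell V$ you assert that an explicit $\delta_\Ca$-primitive of $\VanEstInt(\xi)\tensor\VanEstInt(\eta)-\VanEstInt(\xi\wedge\eta)$ comes from ``the classical shuffle homotopy,'' which is exactly the combinatorial content that would need to be supplied. The ``cleaner packaging'' meant to replace it is circular: the universal property of $\Anti^\bullet V$ gives a unique \emph{algebra} map $\Phi$ extending $\xi\mapsto[\xi]$, and two maps agreeing on generators coincide only if both are already known to be algebra maps. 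Since multiplicativity of $\VanEstInt$ in cohomology is precisely what you are trying to prove, you cannot conclude $\Phi=\VanEstInt$ from agreement on $V$. In addition, the graded commutativity of $\HCa^\bullet(V)$, which you need even to invoke the universal property, is asserted via an unproved ``standard cobar-complex argument''; this dual-Gerstenhaber statement would itself require a proof.

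Both gaps disappear if you work with the other van Est map. The paper proves the corollary by showing that $\VanEstDiff=\Pmap\circ\jmap$ is multiplicative in cohomology, using that $\CVanEst^\bullet(V)$ is a dg $(\CCa^\bullet(V),\CCE^\bullet(V))$-bimodule, that $\jmap$ is a morphism of left $\CCa^\bullet(V)$-modules, and that $\Pmap$ is right $\HCE^\bullet(V)$-linear in cohomology; multiplicativity of the inverse $\VanEstInt$ is then automatic. Even closer to your own computation: the same perturbation expansion you carried out for $\VanEstInt$ gives
\begin{equation*}
    \VanEstDiff(X_1\tensor\cdots\tensor X_k)
    =
    \pr_V(X_1)\wedge\cdots\wedge\pr_V(X_k),
\end{equation*}
which is multiplicative on the nose at the chain level, so $\VanEstDiff$ is a morphism of dg algebras and the corollary follows with no shuffle combinatorics at all. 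If you want to keep $\Phi$, you could salvage the last step by verifying $\VanEstDiff\circ\Phi=\id$ and using that $\VanEstDiff$ is injective on cohomology, but this again requires the explicit formula for $\VanEstDiff$, at which point $\Phi$ is superfluous.
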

\begin{proof}
    It remains to show that $\VanEstDiff$ is an algebra homomorphism.
    For this recall that in cohomology $\Pmap$ is the inverse of
    $\imap$ and $\Pmap$ is right $\HCE^\bullet(V)$-linear.  Hence for
    $[A] \in \HVanEst^\bullet(V)$ we have
    \begin{align*}
        [1 \tensor \Unit \tensor 1] \racts \Pmap[A]
        &=
        \imap\Pmap[1 \tensor \Unit \tensor 1] \racts \Pmap[A]
        \\
        &=
        \imap(\Pmap[1 \tensor \Unit \tensor 1] \wedge \Pmap[A])
        \\
        &=
        \imap \Pmap [A]
        \\
        &=
        [A].
    \end{align*}
    Similarly, we get
    $[A] = \Qmap[A] \acts [1 \tensor \Unit \tensor 1]$.  With this we
    get for two classes $[A], [B] \in \HVanEst^\bullet(V)$
    \begin{align*}
        \VanEstDiff([A] \tensor[] [B])
        &=
        \Pmap \jmap ([A] \tensor[] [B])
        \\
        &=
        \Pmap([A] \acts \jmap[B])
        \\
        &=
        \Pmap([A] \acts [1 \tensor \Unit \tensor 1] \racts \Pmap\jmap[B])
        \\
        &=
        \Pmap([A] \acts [1 \tensor \Unit \tensor 1]) \wedge \Pmap\jmap([B])
        \\
        &=
        \Pmap(\Qmap\jmap[A] \acts [1 \tensor \Unit \tensor 1]) \wedge \Pmap\jmap[B]
        \\
        &=
        \Pmap \jmap [A] \wedge \Pmap \jmap [B]
        \\
        &=
        \VanEstDiff[A] \wedge \VanEstDiff[B].
    \end{align*}
\end{proof}
\begin{figure}
    \centering
    \includegraphics{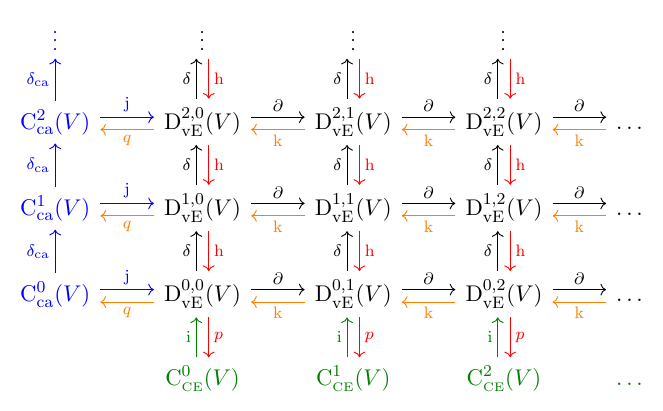}
    \caption{The double complex with the augmentation maps and the
      homotopies.}
    \label{fig:doublecomplex}
\end{figure}

Even though $\Pmap$ and $\Qmap$ were obtained by a perturbation
formula we can determine $\VanEstDiff$ and $\VanEstInt$ very
explicitly.
\begin{proposition}[Van Est maps]
    \label{prop:VanEstMaps}%
    \
    \begin{propositionlist}
    \item It holds
        \begin{equation}
            \label{eq:FormulaVanEstDiff}
            \VanEstDiff(X_1 \tensor \cdots \tensor X_k)
            =
            \pr_V(X_1) \wedge \cdots \wedge \pr_V(X_k)
        \end{equation}
        for $X_1, \ldots, X_k \in \Sym V$.
        In particular, $\VanEstDiff \colon \CCa^\bullet(V) \to \CCE^\bullet(V)$
        is a morphism of dg algebras.
    \item It holds
        \begin{equation}
            \label{eq:FormulaVanEstInt}
            \VanEstInt(\xi_1 \wedge \cdots \wedge \xi_\ell)
            =
            \frac{1}{\ell!}
            \sum_{\sigma \in \Perm_\ell}
            \sign(\sigma)
            \xi_{\sigma(1)} \tensor \cdots \tensor \xi_{\sigma(\ell)}
        \end{equation}
        for $\xi_1, \ldots, \xi_\ell \in V$.
    \end{propositionlist}
\end{proposition}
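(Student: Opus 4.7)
The plan is to evaluate $\VanEstDiff = \Pmap \circ \jmap$ and $\VanEstInt = \Qmap \circ \imap$ directly, using the explicit series expressions for $\Pmap$ and $\Qmap$ from \autoref{prop:CEDeformationRetract} and \autoref{prop:coalgebraDeformationRetract} together with the iterated formulas from \autoref{lem:partialh} and \autoref{lem:deltaKPowerN}. In both cases a bidegree argument forces essentially one term of the series to survive the outer projection, after which everything reduces to the counit property and the explicit form of the shuffle coproduct.

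For the first part, apply $\jmap$ to $X = X_1 \tensor \cdots \tensor X_k$ to land at $X \tensor \Unit \tensor 1 \in \CVanEst^{k,0}(V)$, and then expand
\begin{equation*}
    \VanEstDiff(X)
    =
    \pmap \sum_{n=0}^\infty (-1)^n (\partial\hmap)^n (X \tensor \Unit \tensor 1).
\end{equation*}
Since $(\partial\hmap)^n$ maps $\CVanEst^{k,0}(V)$ into $\CVanEst^{k-n,n}(V)$ and $\pmap$ is nonzero only on elements of tensor degree zero, only the term $n = k$ contributes. Substituting the explicit expression from \autoref{lem:partialh}\,\ref{item:partialHPowerN} at $n = k$, the two signs $(-1)^k$ cancel, and one is left with $\counit((X_1)_\sweedler{0}) \cdot \pr_V((X_1)_\sweedler{1}) \wedge \pr_V(X_2) \wedge \cdots \wedge \pr_V(X_k)$, which collapses via the counit axiom of the $\Sym V$-coaction to $\pr_V(X_1) \wedge \cdots \wedge \pr_V(X_k)$, giving \eqref{eq:FormulaVanEstDiff}. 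Being compatible with the wedge product is immediate from the formula, since $\VanEstDiff(X \tensor Y)$ simply concatenates the two wedge products; compatibility with the differentials amounts to checking that $\VanEstDiff \circ \delta_\Ca = 0$, where the boundary contributions vanish because $\pr_V(\Unit) = 0$ and the middle terms vanish because $\pr_V((X_i)_\sweedler{1}) \wedge \pr_V((X_i)_\sweedler{2}) = 0$ by cocommutativity of $\shcoprod$ combined with the antisymmetry of $\wedge$.

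For the second part, apply $\imap$ to $\xi = \xi_1 \wedge \cdots \wedge \xi_\ell$ to land at $1 \tensor \Unit \tensor \xi \in \CVanEst^{0,\ell}(V)$, and expand
\begin{equation*}
    \VanEstInt(\xi)
    =
    \qmap \sum_{n=0}^\infty (-1)^n (\delta\kmap)^n \imap(\xi).
\end{equation*}
Since $(\delta\kmap)^n$ maps $\CVanEst^{0,\ell}(V)$ into $\CVanEst^{n,\ell-n}(V)$ and $\qmap$ annihilates everything of positive antisymmetric degree, only $n = \ell$ survives. Plugging in the formula from \autoref{lem:deltaKPowerN}\,\ref{item:FormularDeltaKPowerN} at $n = \ell$ yields a prefactor $1/\ell!$ from $(\ell-\ell)!/\ell!$, and the signs $(-1)^\ell \cdot (-1)^\ell$ cancel. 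What remains is $(1/\ell!) \cdot \pr_V(\xi_\sweedler{-\ell}) \tensor \cdots \tensor \pr_V(\xi_\sweedler{-1})$ times $\counit(\xi_\sweedler{0})$, which by the explicit formula for iterated shuffle coproduct (as written out in the proof of \autoref{lem:exponentialproj}) gives $(1/\ell!) \sum_{\sigma \in \Perm_\ell} \sign(\sigma) \, \xi_{\sigma(1)} \tensor \cdots \tensor \xi_{\sigma(\ell)}$, matching \eqref{eq:FormulaVanEstInt}.

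The main obstacle is simply the careful bookkeeping of signs, Sweedler indices, and the identification of the iterated coproducts: the counit collapse in the $\VanEstDiff$ computation must be justified at $n = k$ by noting that at that step the middle factor $(X_1)_\sweedler{0}$ is precisely the one produced by the last application of the infinitesimal Lie coaction in $\partial$, so the counit property applies directly. No perturbative summing is required beyond what the two preceding lemmas already give, which is why the final formulas come out so cleanly despite $\Pmap$ and $\Qmap$ being defined by infinite series.
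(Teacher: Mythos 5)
Your proof is correct and follows essentially the same route as the paper: both evaluate the perturbative series for $\Pmap \circ \jmap$ and $\Qmap \circ \imap$, use the bidegree argument to isolate the single surviving term of each geometric series, and then invoke \autoref{lem:partialh}, \autoref{lem:deltaKPowerN} and \autoref{lem:exponentialproj} to identify the result (your added remarks on the dg-algebra property, which the paper leaves implicit, are also fine). One small point: your final expression for $\VanEstInt$ carries a single overall factor $\frac{1}{\ell!}$, which is in fact the correct normalization --- it is forced by $\VanEstDiff \circ \VanEstInt = \id$ --- whereas the displayed formula \eqref{eq:FormulaVanEstInt} contains a duplicated $\frac{1}{\ell!}$ that appears to be a typo, so your computation does not literally "match" that display but rather corrects it.
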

\begin{proof}
    For the first part consider
    $X = X_1 \tensor \cdots \tensor X_k \in \CCa^k(V)$.  Then
    \begin{align*}
        \VanEstDiff(X)
        =
        \pmap \sum_{n=0}^{\infty} (-1)^n (\partial \hmap)^n \jmap(X)
        =
        \pmap (-1)^k (\partial \hmap)^k \jmap (X)
        =
        \pr_V(X_1) \wedge \cdots \wedge \pr_V(X_k),
    \end{align*}
    where we used \autoref{lem:partialh}~\ref{item:partialHPowerN}.
    For the second part let
    $\xi = \xi_1 \wedge \cdots \wedge \xi_\ell \in \Anti^\ell V$ be
    given.  Then
    \begin{align*}
        \VanEstInt(\xi)
        &=
        \qmap \sum_{n=0}^{\infty}(-1)^n(\delta \kmap)^n
        \imap(\xi)
        \\
        &=
        (-1)^\ell \qmap (\delta \kmap)^\ell
        \imap(\xi)
        \\
        &=
        \frac{1}{\ell!}
        \pr_V(\xi_\sweedler{-\ell})
        \tensor
        \cdots
        \tensor
        \pr_V(\xi_\sweedler{-1})
    \end{align*}
    by \autoref{lem:deltaKPowerN}~\ref{item:FormularDeltaKPowerN} and
    \autoref{lem:exponentialproj}. The result follows since $\xi$ is
    antisymmetric.
\end{proof}

\subsection{Functoriality}
\label{sec:Functoriality}

We consider now two different rings of scalars, $\ring{R}$ and
$\ring{S}$ containing $\field{Q}$, together with a unital ring
morphism $\vartheta\colon \ring{R} \to \ring{S}$. In some
applications, we have $\ring{S} = \ring{R}$ and $\vartheta$ is a ring
automorphism, but for the time being this will not be assumed.

Next, let $V$ be a module over $\ring{R}$ and $W$ a module over
$\ring{S}$. We are then interested in a module morphism
$\Phi\colon V \to W$ along $\vartheta$,
i.e. we have $\Phi(\lambda v) = \vartheta(\lambda) \Phi(v)$ for
$\lambda \in \ring{R}$ and $v \in V$.

This is enough to make the map $\Phi$ well-defined on tensor powers of
$V$ mapping them into the corresponding tensor powers of $W$ by
applying $\Phi$ to each tensor factor separately. For
$\ring{R} = V^{\tensor 0}$ we use $\vartheta$ to map this into
$\ring{S} = W^{\tensor 0}$. This way, we arrive at algebra morphisms
$\Phi \colon \Sym V \to \Sym W$ and $\Phi \colon \Anti V \to \Anti W$,
both over $\vartheta$.  Similarly, $\Phi$ extends to
$\Phi \colon \Tensor \Sym V \to \Tensor \Sym W$, again over
$\vartheta$.  To lighten notation a little we will denote all these
maps with the same symbol $\Phi$.

Our goal is now to show that $\Phi$ induces morphisms between the
various homotopy retracts introduced in
\autoref{sec:VanEstDoubleComplex}.  By the definition of $\delta$ and
$\partial$ it is easy to see that $\Phi$ commutes with these
differentials.  Similarly, one can check quickly that $\Phi$ also
commutes with the homotopies $\hmap$ and $\kmap$.  Hence for every
$\ell \in \mathbb{N}_0$ it induces a morphism
\begin{equation}
    \label{eq:PhiMorphAlongVarthetaCCE}
    \begin{tikzcd}[row sep = large]
        \CCE^\ell(V)
        \arrow[r,"\imap^V", shift left = 3pt]
        \arrow[d,"\Phi"{swap}]
        &\bigl( \CVanEst^{\bullet,\ell}(V),\delta^V \bigr)
        \arrow[l,"\pmap^V", shift left = 3pt]
        \arrow[loop,
        out = -30,
        in = 30,
        distance = 30pt,
        start anchor = {[yshift = -7pt]east},
        end anchor = {[yshift = 7pt]east},
        "\hmap^V"{swap}
        ]
        \arrow[d,"\Phi"]
        \\
        \CCE^\ell(W)
        \arrow[r,"\imap^W", shift left = 3pt]
        &\bigl( \CVanEst^{\bullet,\ell}(W),\delta^W \bigr)
        \arrow[l,"\pmap^W", shift left = 3pt]
        \arrow[loop,
        out = -30,
        in = 30,
        distance = 30pt,
        start anchor = {[yshift = -7pt]east},
        end anchor = {[yshift = 7pt]east},
        "\hmap^W"{swap}
        ]
    \end{tikzcd}
\end{equation}
of homotopy retracts, and for every $k \in \mathbb{N}_0$ it induces a
morphism
\begin{equation}
    \label{eq:PhiMorphAlongVarthetaCca}
    \begin{tikzcd}[row sep = large]
        \CCa^k(V)
        \arrow[r,"\jmap^V", shift left = 3pt]
        \arrow[d,"\Phi"{swap}]
        &\bigl( \CVanEst^{k,\bullet}(V),\partial^V \bigr)
        \arrow[l,"\qmap^V", shift left = 3pt]
        \arrow[loop,
        out = -30,
        in = 30,
        distance = 30pt,
        start anchor = {[yshift = -7pt]east},
        end anchor = {[yshift = 7pt]east},
        "\kmap^V"{swap}
        ]
        \arrow[d,"\Phi"]
        \\
        \CCa^k(W)
        \arrow[r,"\jmap^W", shift left = 3pt]
        &\bigl( \CVanEst^{k,\bullet}(W),\partial^W \bigr)
        \arrow[l,"\qmap^W", shift left = 3pt]
        \arrow[loop,
        out = -30,
        in = 30,
        distance = 30pt,
        start anchor = {[yshift = -7pt]east},
        end anchor = {[yshift = 7pt]east},
        "\kmap^W"{swap}
        ]
    \end{tikzcd}
\end{equation}
of homotopy retracts, always over $\vartheta$.  See
\autoref{sec:HomotopyRetracts} for more on morphisms of homotopy
retracts.  Note that all involved squares commute, i.e. we have
\begin{align}
    \label{eq:PhiCommutesWithStuff}
    \Phi \circ \imap^V
    &=
    \imap^W \circ \Phi,
    &
    \Phi \circ \pmap^V
    &=
    \pmap^W \circ \Phi,
    \\
    \label{eq:PhiCommutesWithMoreStuff}
    \Phi \circ \jmap^V
    &=
    \jmap^W \circ \Phi,
    &
    \Phi \circ \qmap^V
    &= \qmap^W \circ \Phi.
\end{align}
\begin{proposition}
    \label{prop:FunctorialityCEandGrRetracts}%
    Let $\Phi\colon V \to W$ be a module morphism over a unital ring
    morphism $\vartheta\colon \ring{R} \to \ring{S}$ for an
    $\ring{R}$-module $V$ and a $\ring{S}$-module $W$.
    \begin{propositionlist}
    \item The morphism $\Phi \colon V \to W$ induces a morphism
        \begin{equation}
            \begin{tikzcd}[row sep = large, column sep = large]
                \bigl( \CCE^\bullet(V), 0 \bigr)
                \arrow[r,"\imap^V", shift left = 3pt]
                \arrow[d,"\Phi"{swap}]
                &\bigl( \CVanEst^{\bullet}(V),\Double^V \bigr)
                \arrow[l,"\Pmap^V", shift left = 3pt]
                \arrow[loop,
                out = -30,
                in = 30,
                distance = 30pt,
                start anchor = {[yshift = -7pt]east},
                end anchor = {[yshift = 7pt]east},
                "\Hmap^V"{swap}
                ]
                \arrow[d,"\Phi"]
                \\
                \bigl( \CCE^\bullet(W), 0 \bigr)
                \arrow[r,"\imap^W", shift left = 3pt]
                &\bigl( \CVanEst^{\bullet}(W),\Double^W \bigr)
                \arrow[l,"\Pmap^W", shift left = 3pt]
                \arrow[loop,
                out = -30,
                in = 30,
                distance = 30pt,
                start anchor = {[yshift = -7pt]east},
                end anchor = {[yshift = 7pt]east},
                "\Hmap^W"{swap}
                ]
            \end{tikzcd}
        \end{equation}
        of homotopy retracts over $\vartheta$.
    \item The morphism $\Phi \colon V \to W$ induces a morphism
        \begin{equation}
            \begin{tikzcd}[row sep = large, column sep = large]
                \bigl( \CCa^\bullet(V), \delta_\Ca \bigr)
                \arrow[r,"\jmap^V", shift left = 3pt]
                \arrow[d,"\Phi"{swap}]
                &\bigl( \CVanEst^{\bullet}(V),\Double^V \bigr)
                \arrow[l,"\Qmap^V", shift left = 3pt]
                \arrow[loop,
                out = -30,
                in = 30,
                distance = 30pt,
                start anchor = {[yshift = -7pt]east},
                end anchor = {[yshift = 7pt]east},
                "\Kmap^V"{swap}
                ]
                \arrow[d,"\Phi"]
                \\
                \bigl( \CCa^\bullet(W), \delta_\Ca \bigr)
                \arrow[r,"\jmap^W", shift left = 3pt]
                &\bigl( \CVanEst^{\bullet}(W),\Double^W \bigr)
                \arrow[l,"\Qmap^W", shift left = 3pt]
                \arrow[loop,
                out = -30,
                in = 30,
                distance = 30pt,
                start anchor = {[yshift = -7pt]east},
                end anchor = {[yshift = 7pt]east},
                "\Kmap^W"{swap}
                ]
            \end{tikzcd}
        \end{equation}
        of homotopy retracts over $\vartheta$.
    \end{propositionlist}
\end{proposition}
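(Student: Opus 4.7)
My plan is to deduce both statements from the naturality of the homological perturbation lemma, applied to the two unperturbed homotopy retracts \eqref{diag:CEdeformationRetract} and the one in \autoref{lem:CoalgebraDeformationRetract}. Indeed, \autoref{prop:CEDeformationRetract} and \autoref{prop:coalgebraDeformationRetract} are obtained by perturbing these retracts with $\partial$ and $\delta$, respectively, via \autoref{cor:PerturbationLemmaII}, and the diagrams \eqref{eq:PhiMorphAlongVarthetaCCE} and \eqref{eq:PhiMorphAlongVarthetaCca} already exhibit $\Phi$ as a morphism of the unperturbed retracts.

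Thus the only thing to verify is that $\Phi$ also intertwines the perturbations:
\begin{equation*}
    \Phi \circ \partial^V = \partial^W \circ \Phi
    \qquad \text{and} \qquad
    \Phi \circ \delta^V = \delta^W \circ \Phi
\end{equation*}
on $\CVanEst^{\bullet,\bullet}$. Both identities follow from general principles: $\Phi$ is, by construction, a graded algebra morphism with respect to $\vee$ and $\wedge$, and therefore compatible with the shuffle coproducts $\shcoprod$ (which are determined by their values on generators as in \eqref{eq:shcoproductOnGenerators} and \eqref{eq:AntiShCoproductOnGenerators}, where $\Phi$ acts through the underlying module morphism $V \to W$). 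Moreover, $\Phi$ preserves the symmetric grading, so it commutes with the projection $\pr_V$ onto $\Sym^1 V = V$ in the sense that $\Phi \circ \pr_V = \pr_W \circ \Phi$. Combining these with the formulas \eqref{eq:VerticalVEDifferential} and \eqref{eq:HorizontalVEDifferential} gives the two commutation relations.

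Once this is done, naturality of the perturbation series is immediate. Using the explicit expressions
\begin{equation*}
    \Pmap = \pmap \sum_{n=0}^\infty (-1)^n (\partial \hmap)^n,
    \qquad
    \Hmap = \hmap \sum_{n=0}^\infty (-1)^n (\partial \hmap)^n,
\end{equation*}
and the analogous ones for $\Qmap$, $\Kmap$, together with \eqref{eq:PhiCommutesWithStuff}, \eqref{eq:PhiCommutesWithMoreStuff} and the perturbation compatibility just established, I can push $\Phi$ through each factor term by term. This yields
\begin{equation*}
    \Phi \circ \Pmap^V = \Pmap^W \circ \Phi,
    \qquad
    \Phi \circ \Hmap^V = \Hmap^W \circ \Phi,
\end{equation*}
and similarly for $\Qmap$ and $\Kmap$. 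These, combined with $\Phi \circ \imap^V = \imap^W \circ \Phi$ (respectively $\Phi \circ \jmap^V = \jmap^W \circ \Phi$) and $\Phi \circ \Double^V = \Double^W \circ \Phi$, give the required morphisms of deformation retracts over $\vartheta$.

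There is no serious obstacle: the whole argument is a naturality check. The only point requiring minor care is the $\vartheta$-semilinearity of $\Phi$ on the scalar component $\ring{R} = \Sym^0 V$, but all the structural maps appearing in $\delta, \partial, \hmap, \kmap, \imap, \jmap, \pmap, \qmap$ are constructed functorially from the module structure, so this causes no issue.
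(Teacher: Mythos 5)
Your proposal is correct and follows essentially the same route as the paper: one checks that $\Phi$ commutes with $\delta$, $\partial$ and the unperturbed structure maps $\imap$, $\pmap$, $\jmap$, $\qmap$, $\hmap$, $\kmap$, and then pushes $\Phi$ term by term through the geometric series defining $\Pmap$, $\Hmap$, $\Qmap$, $\Kmap$ (the paper phrases this as applying \autoref{cor:MorphismOfPerturbations}, slightly modified because $\Phi$ is only additive rather than $\ring{R}$-linear). Your remark on the $\vartheta$-semilinearity is exactly the caveat the paper also records.
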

\begin{proof}
    Since $\Phi$ commutes with $\delta$ and $\partial$, and $\Double$
    can be considered as a perturbation of $\delta$ by $\partial$ (or
    vice versa), we know from \autoref{cor:MorphismOfPerturbations}
    that $\Phi$ induces a morphism between the perturbed homotopy
    retracts. In fact, we have to use
    \autoref{cor:MorphismOfPerturbations} in a slightly modified
    way since the map $\Phi$ is only $\mathbb{Z}$-linear but not
    linear over $\ring{R}$ directly. However, it is clear that a
    merely additive map will also commute with the geometric series in
    the construction of the homotopies once it commutes with the
    individual summands.
\end{proof}

Composing these homotopy retracts yields the following compatibility
of $\Phi$ with the van Est maps.
\begin{proposition}
    \label{prop:GroupInvariantVanEst}%
    The morphism $\Phi \colon V \to W$ induces a morphism
    \begin{equation}
        \begin{tikzcd}[row sep = large, column sep = large]
            (\CCE^\bullet(V), 0)
            \arrow[r,"\VanEstInt", shift left = 3pt]
            \arrow[d,"\Phi"{swap}]
            &(\CCa^{\bullet}(V),\delta_\Ca^V)
            \arrow[l,"\VanEstDiff", shift left = 3pt]
            \arrow[loop,
            out = -30,
            in = 30,
            distance = 30pt,
            start anchor = {[yshift = -7pt]east},
            end anchor = {[yshift = 7pt]east},
            "\VanEstHomo^V"{swap}
            ]
            \arrow[d,"\Phi"]
            \\
            (\CCE^\bullet(W), 0)
            \arrow[r,"\VanEstInt", shift left = 3pt]
            &(\CCa^{\bullet}(W),\delta_\Ca^W)
            \arrow[l,"\VanEstDiff", shift left = 3pt]
            \arrow[loop,
            out = -30,
            in = 30,
            distance = 30pt,
            start anchor = {[yshift = -7pt]east},
            end anchor = {[yshift = 7pt]east},
            "\VanEstHomo^W"{swap}
            ]
        \end{tikzcd}
    \end{equation}
    of homotopy retracts over $\vartheta$.
\end{proposition}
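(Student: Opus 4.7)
The plan is to obtain this proposition as a direct consequence of \autoref{prop:FunctorialityCEandGrRetracts} together with the construction of the van Est deformation retract as a composition of the two homotopy retracts, exactly as carried out in the proof of \autoref{thm:VanEstDeformationRetract}. Recall that there we set $\VanEstDiff = \Pmap \circ \jmap$, $\VanEstInt = \Qmap \circ \imap$, and $\VanEstHomo = \Qmap \circ \Hmap \circ \jmap$, obtained by composing the homotopy retracts $(\jmap,\Qmap,\Kmap)$ and $(\imap,\Pmap,\Hmap)$ (with the off-diagonal homotopies collapsing by \autoref{lem:hkpk}).

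First I would invoke both parts of \autoref{prop:FunctorialityCEandGrRetracts} to record that the squares
\begin{equation*}
    \Phi \circ \imap^V = \imap^W \circ \Phi,\quad
    \Phi \circ \Pmap^V = \Pmap^W \circ \Phi,\quad
    \Phi \circ \Hmap^V = \Hmap^W \circ \Phi,
\end{equation*}
as well as
\begin{equation*}
    \Phi \circ \jmap^V = \jmap^W \circ \Phi,\quad
    \Phi \circ \Qmap^V = \Qmap^W \circ \Phi,\quad
    \Phi \circ \Kmap^V = \Kmap^W \circ \Phi,
\end{equation*}
commute over $\vartheta$. Composing the first triple of identities with $\jmap$ and $\imap$ from the second we obtain
\begin{equation*}
    \Phi \circ \VanEstDiff
    = \Phi \circ \Pmap^V \circ \jmap^V
    = \Pmap^W \circ \Phi \circ \jmap^V
    = \Pmap^W \circ \jmap^W \circ \Phi
    = \VanEstDiff \circ \Phi,
\end{equation*}
and analogously $\Phi \circ \VanEstInt = \VanEstInt \circ \Phi$ as well as
\begin{equation*}
    \Phi \circ \VanEstHomo
    = \Phi \circ \Qmap^V \circ \Hmap^V \circ \jmap^V
    = \Qmap^W \circ \Hmap^W \circ \jmap^W \circ \Phi
    = \VanEstHomo \circ \Phi
\end{equation*}
by pulling $\Phi$ through each factor in turn. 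Since $\Phi$ already commutes with $\delta_\Ca$ (as $\delta_\Ca$ is defined via the reduced shuffle coproduct and the tensor product, both of which $\Phi$ respects), the two squares formed by $\VanEstDiff$ and $\VanEstInt$ together with the differentials are also commutative.

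Conceptually there is no real obstacle: the statement is the functoriality of the construction "compose two homotopy retracts in which one of the off-diagonal homotopies is zero" and should, strictly speaking, be encapsulated in a general lemma in \autoref{sec:HomotopyRetracts} saying that morphisms of homotopy retracts compose. The only point requiring a small amount of attention is that $\Phi$ is $\mathbb{Z}$-linear but only $\vartheta$-semilinear, so all identities are to be read as commuting squares over $\vartheta$; this is, however, already handled in the proof of \autoref{prop:FunctorialityCEandGrRetracts}, and commuting with the finite compositions $\Pmap \circ \jmap$, $\Qmap \circ \imap$, and $\Qmap \circ \Hmap \circ \jmap$ is then automatic.
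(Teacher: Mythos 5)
Your proposal is correct and follows essentially the same route as the paper, which likewise derives the statement in one line from \autoref{prop:FunctorialityCEandGrRetracts} by noting that $\Phi$ commutes with all the constituent maps and hence with the compositions $\VanEstDiff = \Pmap\jmap$, $\VanEstInt = \Qmap\imap$ and $\VanEstHomo = \Qmap\Hmap\jmap$. Your version merely spells out the intermediate commutation identities explicitly; no substantive difference.
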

\begin{proof}
    Since $\Phi$ commutes with all involved maps by
    \autoref{prop:FunctorialityCEandGrRetracts} it also commutes with
    $\Theta$ and the van Est maps.
\end{proof}

An important application is obtained for the following situation: Let
$\group{G}$ be a group together with an action
$\phi\colon \group{G} \ni g \mapsto \phi_g \in \Aut(\ring{R})$ on
$\ring{R}$ by ring automorphisms. Moreover, let
$\Phi\colon \group{G} \ni g \mapsto \Phi_g \in \Aut_{\mathbb{Z}}(V)$
be a group action of the same group on the module by additive but not
necessarily $\ring{R}$-linear automorphisms of $V$. We want the
compatibility
\begin{equation}
    \label{eq:PhigphigPhig}
    \Phi_g(\lambda v)
    =
    \phi_g(\lambda) \Phi_g(v)
\end{equation}
for all $\lambda \in \ring{R}$, all $v \in V$, and all
$g \in \group{G}$.  Then the van Est maps restrict to the subcomplexes
$\CCE^\bullet(V)^\group{G}$ and $\CCa^\bullet(V)^\group{G}$ of
cochains invariant under the action of $\group{G}$.  It is easy to see
that $\VanEstHomo$ is well-defined on $\CCa^\bullet(V)^\group{G}$ and
hence we obtain a deformation retract
\begin{equation}
    \label{diag:GinvVanEst}
    \begin{tikzcd}[column sep = large]
        \bigl( \CCE^\bullet(V)^\group{G}, 0 \bigr)
        \arrow[r,"\VanEstInt", shift left = 3pt]
        &\bigl( \CCa^{\bullet}(V)^\group{G},\delta_\Ca \bigr)
        \arrow[l,"\VanEstDiff", shift left = 3pt]
        \arrow[loop,
        out = -30,
        in = 30,
        distance = 30pt,
        start anchor = {[yshift = -7pt]east},
        end anchor = {[yshift = 7pt]east},
        "\VanEstHomo"{swap}
        ]
    \end{tikzcd}
    .
\end{equation}
In fact, the van Est maps between $\CCE^\bullet(V)$ and
$\CCa^\bullet(V)$ simply commute with the action of $\group{G}$,
i.e. are $\group{G}$-\emph{equivariant} maps. This is a stronger
statement which implies \eqref{diag:GinvVanEst}.

Note that the cohomology of the invariant Chevalley-Eilenberg
subcomplex is simply given by
$\CCE^\bullet(V)^\group{G} = (\Anti^\bullet V)^\group{G}$.  For the
coalgebra complex we will denote the cohomology of the invariant
subcomplex by $\HCainv{\group{G}}^\bullet(V)$.  Since every
automorphism $g \in \group{G}$ induces an automorphism of the cochain
complex $\CCa^\bullet(V)$, and thus in cohomology, we could also
consider the invariant cohomology $\HCa^\bullet(V)^\group{G}$.  These
cohomologies agree, since
\begin{equation}
    \label{eq:InvariantCohomologiesGeneral}
    \HCainv{\group{G}}^\bullet(V)
    \simeq
    (\Anti^\bullet V)^\group{G}
    \simeq
    (\HCa^\bullet(V))^\group{G},
\end{equation}
via the $\group{G}$-equivariant van Est integration and
differentiation maps.
\begin{example}[Particular scenarios]
    \label{example:ModuleIsDerR}%
    \
    \begin{examplelist}
    \item \label{item:VisDerR} A case of particular interest is
        obtained for $V = \Der(\ring{R})$, the left $\ring{R}$-module
        of derivations of the ring $\ring{R}$. In this case, every
        ring automorphism canonically acts on derivations by
        conjugation. Clearly, the compatibility
        \eqref{eq:PhigphigPhig} is satisfied here, where $\phi$ is the
        identity map on $\Aut(\ring{R})$ and $\Phi$ is the conjugation
        map.
    \item \label{item:RlinearActionOnV} The other extreme case is
        where we have $\ring{R}$-linear automorphisms of the module
        $V$. Here
        $\Phi\colon \Aut_{\ring{R}}(V) \longrightarrow
        \Aut_{\mathbb{Z}}(V)$ is the inclusion and
        $\phi\colon \Aut_{\ring{R}}(V) \longrightarrow \Aut(\ring{R})$
        is the trivial group morphism, i.e. \eqref{eq:PhigphigPhig}
        becomes simplified to $\Phi(\lambda v) = \lambda \Phi(v)$.
        Geometrically, this will correspond to fiberwise group actions
        on bundles.
    \item \label{item:ThingsForSubgroups} The geometrically
        interesting situations will arise not from these two maximal
        groups but for non-trivial subgroups $\group{G}$ in both two
        scenarios.
    \end{examplelist}
\end{example}

\subsection{Infinitesimal Functoriality}
\label{subsec:InfinitesimalFunctoriality}

Closely related to the previous functoriality is the following notion:
we will replace automorphisms by derivations everywhere and obtain
lifts to all of our complexes as well. Morally, this can be obtained
by extending the previous results to formal power series in an
auxiliary parameter $t$ and consider $\ring{R}\formal{t}$ etc. Then
automorphisms starting with the identity in zeroth order are
exponentials of derivations for which the following results can be
recovered from the above ones. However, it is perhaps easier to
directly formulate things infinitesimally.

We consider a derivation $\vartheta \in \Der(\ring{R})$ and a module
derivation $\Phi \in \End_{\mathbb{Z}}(V)$ over $\vartheta$, i.e. an
additive map satisfying the Leibniz rule
\begin{equation}
    \label{eq:LeibnizModuleDerivationOverDerivation}
    \Phi(\lambda v)
    =
    \vartheta(\lambda) v + \lambda \Phi(v)
\end{equation}
for $\lambda \in \ring{R}$ and $v \in V$. We can then extend $\Phi$ to
tensor powers of $V$ by a Leibniz rule with respect to the tensor
product, i.e. by requiring
\begin{equation}
    \label{eq:ExtendDtoTensorPowersV}
    \Phi(v_1 \tensor \cdots \tensor v_n)
    =
    \sum_{i=1}^n
    v_1 \tensor \cdots \tensor v_{i-1}
    \tensor \Phi(v_i) \tensor
    v_{i+1} \tensor \cdots \tensor v_n
\end{equation}
for $v_1, \ldots, v_n \in V$. Even though $\Phi$ is not
$\ring{R}$-linear unless $\vartheta = 0$, this gives a well-defined
extension of $D$ to $\Tensor^n V$. Moreover, this extension respects
the symmetry properties of tensors and hence maps (anti-)symmetric
tensors to (anti-)symmetric ones. It is then a simple verification on
factorizing tensors that this extension $\Phi$ commutes with all
structure maps $\partial$, $\delta$, $\imap$, $\pmap$, $\jmap$,
$\qmap$, $\hmap$, and $\kmap$ of the previous complexes.  From here we
can proceed as for the functoriality statements in
\autoref{sec:Functoriality} and arrive at the fact that also the
perturbed homotopies $\Hmap$ and $\Kmap$ commute with $\Phi$. Finally,
the van Est maps commute with $\Phi$ as well, leading to the following
statement:
\begin{proposition}
    \label{proposition:VanEstInfinitesimalFunctorial}%
    The module derivation $\Phi\colon V \to V$ over the derivation
    $\vartheta \in \Der(\ring{R})$ induces a morphism
    \begin{equation}
        \begin{tikzcd}[row sep = large, column sep = large]
            (\CCE^\bullet(V), 0)
            \arrow[r,"\VanEstInt", shift left = 3pt]
            \arrow[d,"\Phi"{swap}]
            &(\CCa^{\bullet}(V),\delta_\Ca)
            \arrow[l,"\VanEstDiff", shift left = 3pt]
            \arrow[loop,
            out = -30,
            in = 30,
            distance = 30pt,
            start anchor = {[yshift = -7pt]east},
            end anchor = {[yshift = 7pt]east},
            "\VanEstHomo"{swap}
            ]
            \arrow[d,"\Phi"]
            \\
            (\CCE^\bullet(V), 0)
            \arrow[r,"\VanEstInt", shift left = 3pt]
            &(\CCa^{\bullet}(V),\delta_\Ca)
            \arrow[l,"\VanEstDiff", shift left = 3pt]
            \arrow[loop,
            out = -30,
            in = 30,
            distance = 30pt,
            start anchor = {[yshift = -7pt]east},
            end anchor = {[yshift = 7pt]east},
            "\VanEstHomo"{swap}
            ]
        \end{tikzcd}
    \end{equation}
    of homotopy retracts over $\vartheta$.
\end{proposition}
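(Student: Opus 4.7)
The plan is to repeat verbatim the argument of \autoref{sec:Functoriality}, only replacing "module morphism" by "module derivation" and "commuting square" by "square commuting with the derivation" throughout. Since the text has already reduced the functoriality statements to the perturbation lemma applied to maps that commute with $\delta$, $\partial$, and the base homotopies, it suffices to check these basic commutation relations for the Leibniz extension of $\Phi$.

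Concretely, I would first record that the Leibniz extension of $\Phi$ to $\Sym V$ and $\Anti V$ is a coderivation of the shuffle coproducts, i.e.
\begin{equation*}
    \shcoprod \circ \Phi
    = (\Phi \tensor \id + \id \tensor \Phi) \circ \shcoprod,
\end{equation*}
and likewise for $\redshcoprod$. Both sides of this identity are coderivations along $\shcoprod$ (in the first case because $\Phi$ is a derivation of $\vee$ and $\shcoprod$ is an algebra morphism; in the second case because both sides agree with the first up to the coderivation $\Phi \tensor \id + \id \tensor \Phi$ applied to the correction term $\Unit \tensor \phi + \phi \tensor \Unit$, which is killed by $\Phi(\Unit) = 0$), so they agree once they agree on the generators $x \in V$, where both sides reduce to $\Phi(x) \tensor \Unit + \Unit \tensor \Phi(x)$ by \eqref{eq:shcoproductOnGenerators}. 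In the same vein, $\Phi$ commutes with $\pr_V$ and $\counit$ because it preserves symmetric degree and kills $\Unit$.

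Second, I would extend $\Phi$ to the van Est bicomplex $\CVanEst^{\bullet,\bullet}(V) = \Tensor \Sym V \tensor \Sym V \tensor \Anti V$ as a derivation with respect to each of the three outer tensor factors. Inspecting the defining formulas \eqref{eq:dDefTheRealThing}, \eqref{eq:VerticalVEDifferential}, \eqref{eq:HorizontalVEDifferential}, \eqref{eq:VanEstiDef}, \eqref{eq:pDef}, \eqref{eq:hDef}, \eqref{eq:jNullDef}, \eqref{eq:qDef}, \eqref{eq:kNullDef}, each of the structure maps $\delta$, $\partial$, $\imap$, $\pmap$, $\jmap$, $\qmap$, $\hmap$, $\kmap$ is a sum of compositions built exclusively from $\shcoprod$, $\redshcoprod$, $\pr_V$, $\counit$, and $\vee$. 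The coderivation/derivation identities from the previous paragraph, together with the outer Leibniz rule, then give strict commutation of $\Phi$ with each of these maps; this is the infinitesimal analogue of \eqref{eq:PhiCommutesWithStuff} and \eqref{eq:PhiCommutesWithMoreStuff} and is a routine factorizing-tensor check.

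Third, I would invoke the perturbation lemma in the functorial form \autoref{cor:MorphismOfPerturbations}, exactly as in the proof of \autoref{prop:FunctorialityCEandGrRetracts}, noting as in that proof that although $\Phi$ is only additive it still commutes termwise with the geometric series defining $\Pmap$, $\Qmap$, $\Hmap$, $\Kmap$, hence commutes with these perturbed maps as well. Composing the two perturbed retracts precisely as in the proof of \autoref{thm:VanEstDeformationRetract} then produces the claimed morphism of homotopy retracts over $\vartheta$, with $\Phi$ commuting strictly with $\VanEstDiff$, $\VanEstInt$, and $\VanEstHomo$.

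The only real subtlety is bookkeeping: since $\Phi$ is only $\mathbb{Z}$-linear, one must check that the relevant squares commute on the nose rather than up to a $\vartheta$-twist. This works out because $\vartheta$ has already been absorbed into the Leibniz extension on tensor powers, and because all structure maps are built from $\ring{R}$-linear operations on $V$ whose $\Phi$-compatibility is controlled by the Leibniz rule on generators alone. No new argument beyond that of \autoref{sec:Functoriality} is needed.
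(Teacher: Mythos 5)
Your proposal is correct and follows essentially the same route as the paper: the paper likewise extends $\Phi$ by the Leibniz rule, asserts that a verification on factorizing tensors shows $\Phi$ commutes with $\partial$, $\delta$, $\imap$, $\pmap$, $\jmap$, $\qmap$, $\hmap$, $\kmap$, and then reruns the perturbation-lemma argument of \autoref{sec:Functoriality} (including the remark that additivity suffices to commute with the geometric series) to get commutation with $\Hmap$, $\Kmap$ and the van Est maps. Your coderivation identity for the Leibniz extension and the observation that $\Phi$ acts on $\Sym^0 V$ by $\vartheta$ (hence kills $\Unit$) are exactly the content the paper leaves as the ``simple verification,'' so this is a faithful, slightly more detailed version of the same proof.
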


Again, this results in an equivariance property of the van Est maps
for Lie algebra actions, being the infinitesimal analog of the
equivariance properties discussed in the previous section. More
precisely, let $\liealg{g}$ be a Lie algebra over $\mathbb{Z}$ and
consider a Lie algebra action
$\phi\colon \liealg{g} \ni \xi \mapsto \phi_\xi \in \Der(\ring{R})$
together with a Lie algebra action
$\Phi\colon \liealg{g} \ni \xi \mapsto \Phi_\xi \in
\End_{\mathbb{Z}}(V)$ satisfying the additional Leibniz rule
\begin{equation}
    \label{eq:LeibnizForLieAlgebraAction}
    \Phi_\xi (\lambda v)
    =
    \phi_\xi(\lambda) v + \lambda \Phi_\xi(v)
\end{equation}
for all $\lambda \in \ring{R}$, all $v \in V$ and all
$\xi \in \liealg{g}$. Then we get subcomplexes of
$\liealg{g}$-invariant elements, i.e. those being annihilated by
$\Phi_\xi$ for all $\xi \in \liealg{g}$. The van Est maps including
the homotopy restrict to invariant elements thanks to their
equivariance from
\autoref{proposition:VanEstInfinitesimalFunctorial}. This
results in a deformation retract
\begin{equation}
    \label{diag:GinvVanEstLieAlg}
    \begin{tikzcd}[column sep = large]
        \bigl( \CCE^\bullet(V)^\liealg{g}, 0 \bigr)
        \arrow[r,"\VanEstInt", shift left = 3pt]
        &\bigl( \CCa^{\bullet}(V)^\liealg{g},\delta_\Ca \bigr)
        \arrow[l,"\VanEstDiff", shift left = 3pt]
        \arrow[loop,
        out = -30,
        in = 30,
        distance = 30pt,
        start anchor = {[yshift = -7pt]east},
        end anchor = {[yshift = 7pt]east},
        "\VanEstHomo"{swap}
        ]
    \end{tikzcd}
    .
\end{equation}
As before, the equivariance of all involved maps is a stronger
statement which implies \eqref{diag:GinvVanEstLieAlg}.  As before the
cohomology of the invariant Chevalley-Eilenberg subcomplex is simply
given by $\CCE^\bullet(V)^\liealg{g} = (\Anti^\bullet V)^\liealg{g}$.
For the coalgebra complex we will denote the cohomology of the
invariant subcomplex by $\HCainv{\liealg{g}}^\bullet(V)$.  Finally, we
have the invariant cohomology $\HCa^\bullet(V)^\liealg{g}$ also in
this case.  Again the cohomologies agree, since
\begin{equation}
    \label{eq:gInvariantCohomologiesGeneral}
    \HCainv{\liealg{g}}^\bullet(V)
    \simeq
    (\Anti^\bullet V)^\liealg{g}
    \simeq
    (\HCa^\bullet(V))^\liealg{g},
\end{equation}
via the $\liealg{g}$-equivariant van Est integration and
differentiation maps.
\begin{example}[Particular scenarios: infinitesimal case]
    \label{example:ModuleIsDerRLieAlg}%
    \
    \begin{examplelist}
    \item \label{item:VisDerRInfinitesimal} Again we can consider
        $V = \Der(\ring{R})$, viewed as Lie algebra over $\mathbb{Z}$
        as usual. As action on itself we use the adjoint action.  The
        compatibility \eqref{eq:PhigphigPhig} clearly is satisfied.
    \item \label{item:RlinearActionOnVLieAlg} The other extreme case
        is the Lie algebra $\End_{\ring{R}}(V)$ of all
        $\ring{R}$-linear endomorphisms of the module $V$ acting on
        $V$ canonically. With the trivial action on $\ring{R}$ the
        Leibniz rule \eqref{eq:LeibnizModuleDerivationOverDerivation}
        becomes $\Phi(\lambda v) = \lambda \Phi(v)$. In particular,
        all our structure maps are $\End_{\ring{R}}(V)$-equivariant.
    \item \label{item:ThingsForLieSubalgebras} Again, it will be most
        interesting to consider non-trivial Lie subalgebras
        $\liealg{g}$ in these two scenarios. Then the cohomology
        \eqref{eq:gInvariantCohomologiesGeneral} becomes specific to
        $\liealg{g}$.
    \end{examplelist}
\end{example}

\section{The Van Est Complex with Coefficients}
\label{sec:VanEstComplexCoeff}

The van Est complex as introduced in \autoref{sec:VanEstDoubleComplex}
allowed us to compute the coalgebra cohomology $\CCa^\bullet(V)$ with
trivial coefficients.  Moreover, we have been able to compute the
coalgebra cohomology $\CCa^\bullet(V,\module{M})$ with coefficient
comodule $\module{M}$ in certain cases without the help of the van Est
double complex, see
\autoref{ex:CoalgebraCohomology}~\ref{item:CoalgebraCohomology_SymmetricAlgebra}
for $\module{M} = \Sym V$ and
\autoref{ex:CoalgebraCohomology}~\ref{item:CoalgebraCohomology_Submodule}
for $\module{M} = \Sym V \tensor \Sym V^\perp$.  Nevertheless, in
order to compute the coalgebra cohomology for other coefficients,
e.g. for $\module{M} = \Sym U$, see
\autoref{ex:Comodules}~\ref{item:SymUComoduleV}, we need to generalize
the van Est double complex to incorporate this coefficient comodule.

\subsection{Perturbing the Van Est Double Complex}
\label{subsec:PerturbungVanEstDoubleComplex}

To include coefficients assume we have given a left $\Sym V$-comodule
$\module{M}$.  We define the van Est double complex with coefficients
as
\begin{equation}
    \CVanEst^{\bullet,\bullet}(V,\module{M})
    \coloneqq
    \Tensor^\bullet\Sym V
    \tensor
    \Sym V
    \tensor
    \module{M}
    \tensor
    \Anti^\bullet V,
\end{equation}
and denote as before the total grading by
\begin{equation}
    \CVanEst^\bullet(V,\module{M})
    \coloneqq
    \bigoplus_{i=0}^\infty
    \bigoplus_{k + \ell = i}
    \CVanEst^{k,\ell}(V,\module{M}).
\end{equation}
By extending all maps to $\module{M}$ as the identity we obtain the
following two homotopy retracts:
\begin{equation}
    \begin{tikzcd}
        \CCa^\bullet(V) \tensor \module{M}
        \arrow[r,"\jmap",shift left = 3pt]
        & \CVanEst^{\bullet,\bullet}(V,\module{M})
        \arrow[l,"\Qmap", shift left = 3pt]
        \arrow[r,"\Pmap", shift left = 3pt]
        &\module{M} \tensor \CCE^\bullet(V)
        \arrow[l,"\imap",shift left = 3pt]
    \end{tikzcd}
\end{equation}
In order to include the module $\module{M}$ as coefficients for the
various complexes consider
$\Bmap \colon \CVanEst^\bullet(V,\module{M}) \to
\CVanEst^{\bullet+1}(V,\module{M})$ defined by
\begin{equation}
    \Bmap(X \tensor \phi \tensor m \tensor \xi)
    \coloneqq (-1)^{k} \cdot
    X
    \tensor
    \phi
    \tensor
    m_\sweedler{0}
    \tensor
    (\pr_V(m_\sweedler{-1}) \wedge \xi),
\end{equation}
where $X \tensor \phi \tensor m \tensor \xi \in \CVanEst^{k,\ell}(V,\module{M})$,
and set
\begin{equation}
    \Double_\module{M} \coloneqq \Double + \Bmap.
\end{equation}
\begin{lemma}
    \label{lem:BSquared}%
    \
    \begin{lemmalist}
    \item It holds $\Bmap^2 = 0$.
    \item It holds $\Double_\module{M}^2 = 0$.
    \end{lemmalist}
\end{lemma}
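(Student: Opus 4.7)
The plan is to handle the two parts separately by direct computation. For part (i) I would unfold $\Bmap^2$ and reduce the vanishing to cocommutativity of the $\Sym V$-coaction combined with antisymmetry of $\wedge$. For part (ii) I would expand $(\Double + \Bmap)^2$, reduce to showing $\Double\Bmap + \Bmap\Double = 0$, and handle the $\delta$- and $\partial$-parts separately.

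For (i), note that $\Bmap$ preserves the tensor degree $k$, so both applications contribute the same sign $(-1)^k$ and these cancel. Writing $\Left^2(m) = m_\sweedler{-2} \tensor m_\sweedler{-1} \tensor m_\sweedler{0}$ (well-defined by coassociativity \eqref{eq:LeftComoduleStructure}), a direct unfolding gives
\begin{equation*}
    \Bmap^2(X \tensor \phi \tensor m \tensor \xi)
    = X \tensor \phi \tensor m_\sweedler{0} \tensor \pr_V(m_\sweedler{-1}) \wedge \pr_V(m_\sweedler{-2}) \wedge \xi.
\end{equation*}
Using coassociativity, $m_\sweedler{-2} \tensor m_\sweedler{-1} = (m_\sweedler{-1})_\sweedler{1} \tensor (m_\sweedler{-1})_\sweedler{2}$, and cocommutativity of $\shcoprod$ yields $m_\sweedler{-2} \tensor m_\sweedler{-1} = m_\sweedler{-1} \tensor m_\sweedler{-2}$. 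Combined with the antisymmetry $\pr_V(m_\sweedler{-1}) \wedge \pr_V(m_\sweedler{-2}) = -\pr_V(m_\sweedler{-2}) \wedge \pr_V(m_\sweedler{-1})$ and the fact that $\tfrac{1}{2} \in \ring{R}$, the right-hand side vanishes. This cocommutativity-versus-antisymmetry balancing is the only structural input, and is essentially the same argument that gives $\partial_\module{M}^2 = 0$ in \autoref{prop:dgModuleStructureCCEVM}; I do not expect obstacles here.

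For (ii), I expand
\begin{equation*}
    \Double_\module{M}^2 = \Double^2 + \Double \Bmap + \Bmap \Double + \Bmap^2.
\end{equation*}
By \autoref{proposition:VanEstDoubleComplex} the first term vanishes, and by (i) the last term vanishes, so it remains to check $\Double \Bmap + \Bmap \Double = 0$. Splitting $\Double = \delta + \partial$, the identity $\delta \Bmap + \Bmap \delta = 0$ is immediate sign-chasing: $\delta$ acts only on the tensor-algebra and symmetric factors while $\Bmap$ touches only $\module{M}$ and $\Anti V$, so they commute on the underlying data; the sign $(-1)^k$ of $\Bmap$ on an element of tensor degree $k$ flips to $(-1)^{k+1}$ after $\delta$ raises the tensor degree, producing the required anticommutation. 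The identity $\partial \Bmap + \Bmap \partial = 0$ is where the genuine content lies: both $\partial$ and $\Bmap$ append a new element of $V$ to the left of $\xi$, one drawn from the self-coaction of $\phi$ and the other from the $\Sym V$-coaction of $m$; the prefactors $(-1)^k$ match in both orders, so the two compositions produce the same expression except for the order of the two newly inserted wedge factors, and antisymmetry of $\wedge$ again gives cancellation. I expect no real obstacle, since both identities ultimately rest on the same antisymmetry-of-$\wedge$ mechanism as in (i).
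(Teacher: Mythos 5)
Your proposal is correct and follows essentially the same route as the paper: part (i) reduces to $\pr_V(m_\sweedler{-1})\wedge\pr_V(m_\sweedler{-2})=0$, which holds by coassociativity of the coaction plus cocommutativity of $\shcoprod$ against the antisymmetry of $\wedge$, and part (ii) is exactly the ``straightforward computation'' the paper alludes to, with the sign bookkeeping for $\delta\Bmap+\Bmap\delta=0$ and the wedge-antisymmetry for $\partial\Bmap+\Bmap\partial=0$ both checking out. The only cosmetic slip is saying the two $(-1)^k$ signs in $\Bmap^2$ ``cancel'' when they in fact multiply to $+1$; this is immaterial since the wedge factor already vanishes.
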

\begin{proof}
    Since $\module{M}$ is a $\Sym V$-comodule we have
    $\pr_V(m_\sweedler{-1}) \wedge \pr_V(m_\sweedler{-2}) = 0$,
    showing the first part.  The second part is a straightforward
    computation.
\end{proof}

From now on we consider $\CVanEst^\bullet(V,\module{M})$ to be
equipped with the differential $\Double_\module{M}$ and denote its
cohomology by $\HVanEst^\bullet(V,\module{M})$.  The bimodule
structure on $\CVanEst^\bullet(V)$ as introduced in
\autoref{prop:BimoduleCVanEst} can be extended to
$\CVanEst^\bullet(V,\module{M})$ by acting trivially on $\module{M}$.
\begin{lemma}
    The complex $(\CVanEst^\bullet(V,\module{M}),\Double_\module{M})$
    is a differential graded
    $(\CCa^\bullet(V),\CCE^\bullet(V))$-bimodule.
\end{lemma}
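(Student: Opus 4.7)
The plan is to reduce the statement to the already-established dg bimodule structure for the unperturbed differential $\Double$ and then verify the required compatibilities for the perturbation $\Bmap$ separately.

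First I would observe that the left and right actions commute on the nose, because the left $\CCa^\bullet(V)$-action only modifies the first tensor slot (the $\Tensor^\bullet\Sym V$ factor) while the right $\CCE^\bullet(V)$-action only modifies the last slot (the $\Anti^\bullet V$ factor); neither touches $\module{M}$. Moreover, the grading conventions of \autoref{prop:BimoduleCVanEst} extend verbatim: the left action raises the tensor-algebra degree and the right action raises the antisymmetric degree, so the total degree behaves as expected.

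Next, $\Double_\module{M} = \Double + \Bmap$ and \autoref{prop:BimoduleCVanEst} (together with the trivial observation that the coefficient module $\module{M}$ just rides along) already gives that $\Double$ is a left-graded derivation for the $\CCa^\bullet(V)$-action and right $\CCE^\bullet(V)$-linear. It therefore suffices to show the same two properties for $\Bmap$. For the left action, with $A = X \tensor \phi \tensor m \tensor \xi \in \CVanEst^{k,\ell}(V,\module{M})$ and $Y \in \CCa^{k'}(V)$, the element $Y \acts A$ lives in tensor degree $k+k'$, so by definition of $\Bmap$
\begin{equation*}
    \Bmap(Y \acts A)
    =
    (-1)^{k+k'} (Y \tensor X) \tensor \phi \tensor m_\sweedler{0}
    \tensor (\pr_V(m_\sweedler{-1}) \wedge \xi)
    =
    (-1)^{k'} Y \acts \Bmap(A),
\end{equation*}
which is exactly the graded Leibniz rule for $\Bmap$ together with the fact that $\Bmap$ does not see $Y$ (so no $\delta_\Ca(Y)$-term appears). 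For the right action the computation is even simpler: since $\Bmap$ only appends $\pr_V(m_\sweedler{-1})$ on the left of the Grassmann factor, associativity of $\wedge$ gives $\Bmap(A \racts \eta) = \Bmap(A) \racts \eta$, i.e. right $\CCE^\bullet(V)$-linearity.

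Adding the contributions from $\Double$ and $\Bmap$ yields the desired identities
\begin{equation*}
    \Double_\module{M}(Y \acts A) = \delta_\Ca(Y) \acts A + (-1)^{k'} Y \acts \Double_\module{M}(A),
    \qquad
    \Double_\module{M}(A \racts \eta) = \Double_\module{M}(A) \racts \eta,
\end{equation*}
which together with $\Double_\module{M}^2 = 0$ from \autoref{lem:BSquared} give the dg bimodule structure. There is really no obstacle here; the only point that needs care is the sign $(-1)^k$ in the definition of $\Bmap$, which is precisely what makes it a left graded derivation of the correct degree $+1$ compatible with the tensor-degree grading used for the $\CCa^\bullet(V)$-action.
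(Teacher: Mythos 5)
Your argument is correct and is precisely the verification the paper leaves implicit: the authors state this lemma without proof, relying on the decomposition $\Double_\module{M} = \Double + \Bmap$, the trivial extension of \autoref{prop:BimoduleCVanEst} over the coefficient factor $\module{M}$, and the fact that $\Bmap$ only touches the $\module{M}$ and $\Anti^\bullet V$ slots. Your sign bookkeeping for $\Bmap(Y \acts A) = (-1)^{k'}\, Y \acts \Bmap(A)$ and the strict right $\CCE^\bullet(V)$-linearity of $\Bmap$ match the conventions of \eqref{eq:dgModuleStructureCCaVM} and \eqref{eq:dgModuleStructureCCEVM}, so nothing is missing.
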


We now want to use $\Bmap$ as a perturbation for both directions.  We
start by perturbing the coalgebra complex side.
\begin{lemma}
    For every
    $X \tensor \phi \tensor m \tensor \xi \in \CVanEst^\bullet(V,
    \module{M})$ there exists $n \in \mathbb{N}_0$ such that
    $(\Bmap \Kmap)^n(X \tensor \phi \tensor m \tensor \xi) = 0$.
\end{lemma}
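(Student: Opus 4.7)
The plan is to show that each iteration of $\Bmap\Kmap$ eats one layer of the coaction $\Left$ on the $\module{M}$-slot, and that after sufficiently many iterations the accumulated projections on $m$ either collide in a wedge product (killed by the Lie coaction symmetry) or assemble into a symmetric product of length exceeding the finite ``coaction depth'' of $m$.

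First, I observe that neither $\kmap$, nor $\delta_\Ca$, nor the $\delta$-part of the van Est differential touches the $\module{M}$-factor. Consequently $\Kmap = \kmap(\id + \delta \kmap)^{-1}$ acts as the identity on the $\module{M}$-slot, and only $\Bmap$ modifies $m$: each application sends $m$ to $m_\sweedler{0}$ and introduces one new projection $\pr_V(m_\sweedler{-1})$ into the wedge slot. Iterating $(\Bmap\Kmap)^n$ thus produces exactly $n$ projections $\pr_V(m_\sweedler{-1}), \dots, \pr_V(m_\sweedler{-n})$ (coassociatively interpreted from $\Left^n(m)$). The intervening $\Kmap$'s may redistribute these among the three slots $X$, $\phi$, $\xi$ via the $\kmap$-formula \eqref{eq:kNullDef} and the $\delta$-splittings, but they can neither destroy nor create any such projection, since they are polynomial in $\phi$ and $\xi$ only.

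Next, I invoke the Lie coaction property of \autoref{lemma:CoactionInfinitesimal}: the tensor $\pr_V(m_\sweedler{-i}) \tensor \pr_V(m_\sweedler{-j})$ is symmetric in $i \leftrightarrow j$. Consequently, any summand of $(\Bmap\Kmap)^n(X \tensor \phi \tensor m \tensor \xi)$ containing two distinct $\pr_V(m_\sweedler{-k})$-factors in the wedge slot vanishes automatically. Every surviving summand therefore carries at most one such factor in $\xi$, and the remaining $p \geq n-1$ projections are assembled, using the same symmetry, as a symmetric product $\pr_V(m_\sweedler{-i_1}) \vee \cdots \vee \pr_V(m_\sweedler{-i_p})$ in the $\Sym V$-slot $\phi$ (possibly after fragments are moved into $X$ by $\delta$; those fragments are themselves symmetric products of the $\pr_V(m_\sweedler{-k})$'s, so the same symmetrization applies). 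By \autoref{lem:exponentialproj}, any such symmetric product of length $q$ equals $q! \cdot \pr_{\Sym^q V}(m_\sweedler{-1})$, where $m_\sweedler{-1} \in \Sym V$ now denotes the single-step Sweedler factor of $\Left(m)$.

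Finally, since $\Left(m)$ is one fixed element of $\Sym V \tensor \module{M}$, any decomposition $\Left(m) = \sum_i a_i \tensor b_i$ has $a_i \in \Sym V$ of bounded symmetric degree, so there exists $N = N(m)$ with $\pr_{\Sym^q V}(m_\sweedler{-1}) = 0$ for all $q \geq N$. Choosing $n \geq N + \ell$ (with $\ell$ the antisymmetric degree of $\xi$; the extra $\ell$ handles the shift components where $\delta\kmap$ has been applied, which are killed by \autoref{lem:deltaKPowerN} once the total shift exceeds $\ell$) ensures that every surviving summand in $(\Bmap\Kmap)^n$ either has wiped the antisymmetric degree below zero or carries a vanishing symmetric product, and hence $(\Bmap\Kmap)^n(X \tensor \phi \tensor m \tensor \xi) = 0$. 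The main obstacle is the combinatorial bookkeeping inside the geometric series $\Kmap = \kmap(\id + \delta\kmap)^{-1}$: one must confirm that every $\delta$-splitting of $\phi$ preserves both the count of $\pr_V(m_\sweedler{-k})$-projections and the Lie coaction symmetry between them, so that the invariant ``$p$ projections formed into a symmetric product'' remains available to detect vanishing.
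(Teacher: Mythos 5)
Your proof is correct, and its core is exactly the paper's argument: $\Kmap$ acts as the identity on the $\module{M}$-slot, so only $\Bmap$ consumes the coaction, and after $n$ iterations the result depends on $m$ only through the iterated projected coaction $\pr_V(m_\sweedler{-n}) \tensor \cdots \tensor \pr_V(m_\sweedler{-1}) \tensor m_\sweedler{0}$, which vanishes for $n$ large by \autoref{lem:exponentialproj} (cf.\ the discussion preceding \eqref{eq:SummingUpInfinitesimalStuff}). However, everything you add on top of this --- the vanishing of wedge products of two projections via the Lie coaction symmetry, the reassembly of the surviving projections into symmetric products distributed over the slots, and the padding of $n$ by the antisymmetric degree $\ell$ --- is unnecessary, and the ``combinatorial bookkeeping'' you flag as the main obstacle never needs to be done. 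The point is that $(\Bmap\Kmap)^n(X \tensor \phi \tensor m \tensor \xi)$ is the image of $X \tensor \phi \tensor \bigl((\pr_V^{\tensor n} \tensor \id_{\module{M}})\circ \Left^{(n)}(m)\bigr) \tensor \xi$ under some further linear map built from $\kmap$, $\delta$, wedges and signs; once the displayed tensor is zero, it is irrelevant where the individual factors $\pr_V(m_\sweedler{-k})$ would have been placed or whether they would have cancelled for an additional reason. Stripping your argument down to this observation gives the paper's two-line proof.
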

\begin{proof}
    For every $m \in \module{M}$ there exists $n \in \mathbb{N}_0$
    such that
    $m_\sweedler{-1} \tensor m_\sweedler{0}
    = \sum_{i=0}^{n-1} \pr_V(m_\sweedler{-i}) \vee
    \dots \vee \pr_V(m_\sweedler{-1}) \tensor m_\sweedler{0}$, see
    \autoref{lem:exponentialproj}.  Since $\Kmap$ is the identity on
    the tensor factor $\module{M}$ it follows that
    $(\Bmap\Kmap)^n = 0$.
\end{proof}

\begin{lemma}
    \label{lem:KBj}%
    \
    \begin{lemmalist}
    \item It holds $\Kmap \Bmap \jmap = \kmap \Bmap \jmap$.
    \item It holds
        \begin{equation}
            \label{eq:kBnj}
            (\kmap \Bmap)^n \jmap (X \tensor m)
            = \frac{1}{n!}
            X
            \tensor
            ( \pr_V(m_\sweedler{-n}) \vee \dots \vee \pr_V(m_\sweedler{-1}) )
            \tensor
            m_\sweedler{0}
            \tensor
            1.
        \end{equation}
    \end{lemmalist}
\end{lemma}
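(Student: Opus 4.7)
The plan is to compute both sides directly from the definitions, leveraging the explicit formula $\Kmap = \kmap\sum_{n\geq 0}(-1)^n(\delta\kmap)^n$ from \autoref{prop:coalgebraDeformationRetract}. First I would expand
\begin{equation*}
    \Bmap\jmap(X \tensor m)
    =
    (-1)^k \, X \tensor \Unit \tensor m_\sweedler{0} \tensor \pr_V(m_\sweedler{-1}).
\end{equation*}
Since $\shcoprod(v) = 1 \tensor v + v \tensor 1$ in $\Anti^\bullet V$ for every $v \in V$, the identity $\pr_V(v_\sweedler{-1}) \tensor v_\sweedler{0} = v \tensor 1$ holds. Applying $\kmap$ with $r = 0$, $\ell = 1$ (and hence denominator $1$) then yields the base case
\begin{equation*}
    \kmap\Bmap\jmap(X \tensor m)
    =
    X \tensor \pr_V(m_\sweedler{-1}) \tensor m_\sweedler{0} \tensor 1
\end{equation*}
of \eqref{eq:kBnj} and simultaneously the $n = 0$ term of the $\Kmap$-series.

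For part (ii), I would proceed by induction on $n$. The inductive step starts by applying $\Bmap$ to the assumed expression for $(\kmap\Bmap)^n\jmap(X \tensor m)$, which introduces an additional antisymmetric factor $\pr_V(m'_\sweedler{-1})$ with $m' = m_\sweedler{0}$. Coassociativity of $\Left$ allows a relabelling of the Sweedler slots so that the old string $\pr_V(m_\sweedler{-n}) \vee \cdots \vee \pr_V(m_\sweedler{-1})$ becomes $\pr_V(m_\sweedler{-(n+1)}) \vee \cdots \vee \pr_V(m_\sweedler{-2})$, while the freshly produced factors are a new $\pr_V(m_\sweedler{-1})$ and $m_\sweedler{0}$. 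A second application of $\kmap$, now with $r = n$ and $\ell = 1$ and therefore denominator $n+1$, transports this extra $\pr_V(m_\sweedler{-1})$ from the $\Anti$-side to the $\Sym$-side via the same identity $\pr_V(\xi_\sweedler{-1}) \tensor \xi_\sweedler{0} = \xi \tensor 1$ for $\xi \in V$. The two factors $(-1)^k$ from $\Bmap$ and $\kmap$ cancel, and the prefactor $\frac{1}{n!}$ turns into $\frac{1}{(n+1)!}$, completing the induction.

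Part (i) is then a short consequence. The element $\kmap\Bmap\jmap(X \tensor m)$ lies in antisymmetric degree $0$, and $\delta$ preserves antisymmetric degree. Hence any subsequent $\kmap$ requires $\pr_V(\xi_\sweedler{-1})$ with $\xi = 1 \in \Anti^0 V$; but $\shcoprod(1) = 1 \tensor 1$ and $\pr_V(1) = 0$, so $\kmap\delta\kmap\Bmap\jmap = 0$. By inductively factoring out a trailing $\delta(\kmap\delta\kmap\Bmap\jmap)$ one obtains $\kmap(\delta\kmap)^n\Bmap\jmap = 0$ for every $n \geq 1$, so only the $n = 0$ summand of $\Kmap = \kmap\sum_{n\geq 0}(-1)^n(\delta\kmap)^n$ survives, giving $\Kmap\Bmap\jmap = \kmap\Bmap\jmap$.

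The main obstacle is purely bookkeeping: keeping track of the iterated Sweedler indices under coassociativity of $\Left$ as the symmetric leg grows, together with the interplay of the shuffle coproducts on $\Sym V$ and $\Anti V$ that produces the key identity $\pr_V(\xi_\sweedler{-1}) \tensor \xi_\sweedler{0} = \xi \tensor 1$ for $\xi \in V$. No algebraic ingredient beyond what is already assembled in \autoref{sec:CoalgebrasComodules} and \autoref{sec:VanEstDoubleComplex} is needed.
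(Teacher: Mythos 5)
Your proposal is correct and follows essentially the same route as the paper: compute $\Bmap\jmap$ explicitly, run the induction for \eqref{eq:kBnj} using the identity $\pr_V(v_\sweedler{-1})\tensor v_\sweedler{0} = v\tensor 1$ for $v \in V$ together with coassociativity of the coaction and the cancellation of the two $(-1)^k$ signs, and deduce part (i) by counting antisymmetric degree so that only the $n=0$ term of the series defining $\Kmap$ survives. The paper's proof of part (i) is just a terser version of your degree argument, and its inductive step for part (ii) is the same computation.
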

\begin{proof}
    The first part follows since $\Bmap\jmap(X \tensor m)$ is of
    antisymmetric degree $1$ and $\kmap$ lowers the antisymmetric
    degree by $1$.  For the second part note that for $n = 0$ the
    equality obviously holds.  Thus assume that \eqref{eq:kBnj} holds
    for some $n \in \mathbb{N}_0$, then for
    $X \tensor m \in \CCa^k(V,\module{M})$ it holds
    \begin{align*}
        (\kmap \Bmap)^{n+1}\jmap(X \tensor m)
        &=
        \frac{1}{n!} \kmap \Bmap \bigl(
        X
        \tensor
        ( \pr_V(m_\sweedler{-n}) \vee \dots \vee \pr_V(m_\sweedler{-1}) )
        \tensor
        m_\sweedler{0}
        \tensor
        1
        \bigr)
        \\
        &=
        \frac{(-1)^{k}}{n!}
        \kmap \bigl(
        X
        \tensor
        ( \pr_V(m_\sweedler{-(n+1)}) \vee \dots \vee \pr_V(m_\sweedler{-2}) )
        \tensor
        m_\sweedler{0}
        \tensor
        \pr_V(m_\sweedler{-1})
        \bigr)
        \\
        &=
        \frac{1}{(n+1)!}
        X
        \tensor
        ( \pr_V(m_\sweedler{-(n+1)}) \vee \dots \vee \pr_V(m_\sweedler{-1}) )
        \tensor
        m_\sweedler{0}
        \tensor
        1.
    \end{align*}
\end{proof}

\begin{proposition}
    Let $\module{M}$ be a $\Sym V$-comodule.  Then
    \begin{equation}
        \begin{tikzcd}[column sep = large]
            \bigl( \CCa^\bullet(V,\module{M}), \delta_\module{M} \bigr)
            \arrow[r,"\jmap_\module{M}", shift left = 3pt]
            &\bigl( \CVanEst^{\bullet}(V,\module{M}),\Double_\module{M} \bigr)
            \arrow[l,"\Qmap_\module{M}", shift left = 3pt]
            \arrow[loop,
            out = -30,
            in = 30,
            distance = 30pt,
            start anchor = {[yshift = -7pt]east},
            end anchor = {[yshift = 7pt]east},
            "\Kmap_\module{M}"{swap}
            ]
        \end{tikzcd}
    \end{equation}
    is a special deformation retract, with
    \begin{align}
        \label{eq:FormulajM}
        \jmap_\module{M}(X \tensor m)
        &=
        X \tensor s(m_\sweedler{-1}) \tensor m_\sweedler{0} \tensor 1,
        \\
        \Qmap_\module{M}
        &=
        \Qmap \sum_{n=0}^{\infty} (-1)^n (\Bmap \Kmap)^n,
        \\
        \Kmap_\module{M}
        &=
        \Kmap \sum_{n=0}^{\infty} (-1)^n (\Bmap \Kmap)^n,
    \end{align}
    with $s$ the antipode of $\Sym V$, see \eqref{eq:AntipodeSV}. Moreover, the map $\jmap_\module{M}$ is a map of $\CCa^\bullet(V)$ left modules.
\end{proposition}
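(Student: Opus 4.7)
My plan is to realise this deformation retract as a perturbation of the special deformation retract of \autoref{prop:coalgebraDeformationRetract}, extended trivially by $\id_\module{M}$. This gives a special deformation retract between $(\CCa^\bullet(V)\tensor\module{M},\,\delta_\Ca\tensor\id)$ and $(\CVanEst^\bullet(V,\module{M}),\,\Double)$, with inclusion $\jmap$, projection $\Qmap$ and homotopy $\Kmap$, all tensored with $\id_\module{M}$. Viewing $\Bmap$ as a perturbation, the preceding lemma ensures that $\Bmap\Kmap$ is locally nilpotent, so \autoref{cor:PerturbationLemmaII} applies and produces a new deformation retract. The perturbed homotopy and projection are read off directly from the perturbation formulas and give the claimed expressions for $\Kmap_\module{M}$ and $\Qmap_\module{M}$, while the perturbed inclusion is $\jmap_\module{M}=\sum_{n\geq 0}(-1)^n(\Kmap\Bmap)^n\jmap$.

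To bring this series into the claimed closed form I would proceed in two steps. By \autoref{lem:KBj}(i), $\Kmap\Bmap\jmap=\kmap\Bmap\jmap$; and an easy induction shows that $(\Kmap\Bmap)^n\jmap=(\kmap\Bmap)^n\jmap$ for all $n$. Indeed, $(\kmap\Bmap)^n\jmap(X\tensor m)$ sits in antisymmetric degree zero, so $\Bmap(\kmap\Bmap)^n\jmap(X\tensor m)$ sits in antisymmetric degree one, and on such elements $\Kmap$ reduces to $\kmap$ by the same degree argument used in the proof of \autoref{lem:KBj}(i), because $(\delta\kmap)^r$ strictly lowers the antisymmetric degree while $\kmap$ itself vanishes in antisymmetric degree zero. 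Substituting the explicit formula from \autoref{lem:KBj}(ii) and summing with alternating signs, the resulting series reorganises via \eqref{eq:SummingUpInfinitesimalStuff} into $s(m_\sweedler{-1})$, giving the stated formula for $\jmap_\module{M}$.

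It then remains to identify the perturbed small-complex differential with $\delta_\module{M}$. The cleanest route is to verify directly that $\jmap_\module{M}$ intertwines $\delta_\module{M}$ with $\Double_\module{M}$; since $\jmap_\module{M}$ is visibly injective, this pins down the perturbed small differential uniquely as $\delta_\module{M}$. Expanding $\Double_\module{M}\jmap_\module{M}(X\tensor m)$ into its $\delta$-, $\partial$- and $\Bmap$-contributions and using coassociativity of the $\Sym V$-coaction on $\module{M}$ together with the antipode identities on $\Sym V$ (in particular $s(\phi_\sweedler{1})\vee\phi_\sweedler{2}=\counit(\phi)\Unit$ and $\shcoprod\circ s=(s\tensor s)\circ\shcoprod$) collapses everything to $\jmap_\module{M}(\delta_\module{M}(X\tensor m))$. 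I expect this Sweedler-index bookkeeping to be the main obstacle; once it is settled, specialness ($\Kmap_\module{M}^2=0$, $\Qmap_\module{M}\Kmap_\module{M}=0$, $\Kmap_\module{M}\jmap_\module{M}=0$) follows from \autoref{proposition:KQSpecialDefRetract} via \autoref{cor:SpecialDeformationRetract}, and the left $\CCa^\bullet(V)$-linearity of $\jmap_\module{M}$ is immediate from the closed formula, since $Y\acts(X\tensor m)=(Y\tensor X)\tensor m$ passes through without touching the $\module{M}$-factor.
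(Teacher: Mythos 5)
Your proposal is correct and follows the paper's strategy almost step for step: you perturb the special deformation retract of \autoref{prop:coalgebraDeformationRetract}, tensored with $\id_\module{M}$, by the locally nilpotent perturbation $\Bmap$, read off $\Qmap_\module{M}$ and $\Kmap_\module{M}$ from the perturbation formulas, resum the series $\sum_n(-1)^n(\Kmap\Bmap)^n\jmap$ into the antipode formula via \autoref{lem:KBj} and \eqref{eq:SummingUpInfinitesimalStuff}, and get specialness and left $\CCa^\bullet(V)$-linearity for free. (Your inductive degree argument showing $(\Kmap\Bmap)^n\jmap=(\kmap\Bmap)^n\jmap$ for all $n$, not just $n=1$, is a point the paper leaves implicit, so that is a welcome addition; the citation should however be to \autoref{cor:SpecialDeformationRetract} rather than \autoref{cor:PerturbationLemmaII}, which concerns row and column augmentations of a double complex.) The one genuinely different step is the identification of the perturbed small differential with $\delta_\module{M}$: the paper evaluates $\Qmap(\id+\Bmap\Kmap)^{-1}\Bmap\jmap$ head-on, using that $\Bmap\jmap_\module{M}$ lands in antisymmetric degree one so that only the $n=1$ term of the geometric series survives under $\qmap$, whereas you verify directly that $\jmap_\module{M}$ intertwines $\delta_\module{M}$ with $\Double_\module{M}$ and conclude by injectivity. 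This is a legitimate alternative: $\jmap_\module{M}$ is indeed split injective, since applying $\id\tensor\counit\tensor\id\tensor\counit$ returns $X\tensor\counit(m_\sweedler{-1})m_\sweedler{0}=X\tensor m$, and the perturbation lemma already guarantees that $\jmap_\module{M}$ is a chain map for the perturbed differential, so the two differentials must coincide. The Sweedler bookkeeping you defer is comparable in weight to the paper's own computation (in your version the $\partial$- and $\Bmap$-contributions to $\Double_\module{M}\jmap_\module{M}$ cancel against each other, leaving only the $\delta$-contribution to match), so neither route is easier; yours trades the geometric-series manipulation for the injectivity observation.
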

\begin{proof}
    We perturb the special deformation retract
    \begin{equation*}
        \begin{tikzcd}[column sep = large]
            \bigl( \CCa^\bullet(V) \tensor \module{M}, \delta_\Ca \bigr)
            \arrow[r,"\jmap", shift left = 3pt]
            &\bigl( \CVanEst^{\bullet}(V) \tensor \module{M},\Double \bigr)
            \arrow[l,"\Qmap", shift left = 3pt]
            \arrow[loop,
            out = -30,
            in = 30,
            distance = 30pt,
            start anchor = {[yshift = -7pt]east},
            end anchor = {[yshift = 7pt]east},
            "\Kmap"{swap}
            ]
        \end{tikzcd}
    \end{equation*}
    by the perturbation $\Bmap$.  Then by
    \autoref{cor:SpecialDeformationRetract} this yields a special
    deformation retract.  The perturbations of $\Qmap$ and $\Kmap$
    follow directly from \autoref{prop:HomologicalPerturbation}.  It
    remains to show that the perturbations of $\jmap$ and $\delta_\Ca$
    are given as claimed.  The perturbation $\jmap_\module{M}$ of
    $\jmap$ is given by
    \begin{align*}
        \jmap_\module{M}(X \tensor m)
        &=
        \sum_{n=0}^{\infty}(-1)^n
        (\Kmap \Bmap)^n
        (X \tensor \Unit \tensor m \tensor 1)
        \\
        &=
        \sum_{n=0}^\infty \frac{(-1)^n}{n!}
        X
        \tensor
        (
        \pr_V(m_\sweedler{-n}) \vee \cdots \vee \pr_V(m_\sweedler{-1})
        )
        \tensor m_\sweedler{0} \tensor 1
        \\
        &=
        X \tensor s(m_\sweedler{-1}) \tensor m_\sweedler{0} \tensor 1,
    \end{align*}
    according to \eqref{eq:SummingUpInfinitesimalStuff}.
    Note that with the exact shape of $\jmap_\module{M}$ it is easy
    to see that it is a map of $\CCa^\bullet(V)$ left modules.
    Moreover,
    the perturbation of $\delta_\Ca$ is given by
    \begin{equation*}
        \Qmap (\id + \Bmap \Kmap)^{-1} \Bmap \jmap
        =
        \qmap \sum_{n=0}^{\infty} (-1)^n (\delta \kmap)^n
        \Bmap \jmap_\module{M}.
    \end{equation*}
    Thus on $X \tensor m \in \CCa^\bullet(V) \tensor \module{M}$ we
    obtain
    \begin{equation*}
        \label{eq:PerturbationDeltaCa}
        \tag{$*$}
        \Qmap (\id + \Bmap \Kmap)^{-1} \Bmap \jmap(X \tensor m)
        =
        (-1)^{k+1}
        \qmap \sum_{n=0}^{\infty} (-1)^n (\delta \kmap)^n
        \bigl(
        X
        \tensor
        m_\sweedler{-2}
        \tensor
        m_\sweedler{0}
        \tensor
        \pr_V(m_\sweedler{-1})
        \bigr).
    \end{equation*}
    Recall that $\qmap$ projects onto anti-symmetric degree $0$ and
    $\delta \kmap$ reduces the anti-symmetric degree by $1$.  Thus
    only the $n=1$ term in \eqref{eq:PerturbationDeltaCa} does not
    vanish, and hence we are left with
    \begin{align*}
        &(-1)^{k} \cdot \qmap \delta \kmap
        \bigl(
        X
        \tensor
        s(m_\sweedler{-2})
        \tensor
        m_\sweedler{0}
        \tensor
        \pr_V(m_\sweedler{-1})
        \bigr)
        \\
        &=
        (-1)^{k}
        \qmap \delta \kmap
        \biggl(
        X
        \tensor
        \sum_{n=0}^\infty
        \frac{(-1)^n}{n!}
        ( \pr_V(m_\sweedler{-(n+1)}) \vee \dots \vee \pr_V(m_\sweedler{-2}) )
        \tensor
        m_\sweedler{0}
        \tensor
        \pr_V(m_\sweedler{-1})
        \biggr)
        \\
        &=
        - \qmap \delta
        \biggl(
        X
        \tensor
        \sum_{n=0}^\infty
        \frac{(-1)^n}{(n+1)!}
        ( \pr_V(m_\sweedler{-(n+1)}) \vee \dots \vee \pr_V(m_\sweedler{-2}) \vee \pr_V(m_\sweedler{-1}) )
        \tensor
        m_\sweedler{0}
        \tensor
        1
        \biggr)
        \\
        &=
        \qmap \delta
        \bigl(
        X
        \tensor
        s(\pr_+(m_\sweedler{-1}))
        \tensor
        1
        \tensor
        m_\sweedler{0}
        \bigr)
        \\
        &=
        \bigl(X \tensor s(\pr_+(m_\sweedler{-1}))\bigr)
        \tensor
        m_\sweedler{0}.
    \end{align*}
    Hence the perturbed differential is exactly $\delta_\module{M}$.
\end{proof}
\begin{corollary}
    \label{cor:CgrandVEcohomologyWithCoeff}%
    Let $\module{M}$ be an $\Sym V$-comodule.  Then
    $\jmap_\module{M} \colon \HCa^\bullet(V,\module{M}) \to
    \HVanEst^\bullet(V,\module{M})$ is an isomorphism of differential
    graded left $\HCa^\bullet(V)$-modules with inverse
    $\Qmap_\module{M}$.
\end{corollary}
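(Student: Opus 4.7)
The plan is to observe that the corollary is essentially a direct consequence of the proposition that immediately precedes it, where we already established the special deformation retract
\begin{equation*}
    \begin{tikzcd}[column sep = large]
        \bigl( \CCa^\bullet(V,\module{M}), \delta_\module{M} \bigr)
        \arrow[r,"\jmap_\module{M}", shift left = 3pt]
        &\bigl( \CVanEst^{\bullet}(V,\module{M}),\Double_\module{M} \bigr)
        \arrow[l,"\Qmap_\module{M}", shift left = 3pt]
        \arrow[loop,
        out = -30, in = 30, distance = 30pt,
        start anchor = {[yshift = -7pt]east},
        end anchor = {[yshift = 7pt]east},
        "\Kmap_\module{M}"{swap}
        ]
    \end{tikzcd}
    ,
\end{equation*}
together with the fact that $\jmap_\module{M}$ is a morphism of left $\CCa^\bullet(V)$-modules.

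The first step is to extract the isomorphism in cohomology. Because the above is a (special) deformation retract we have $\Qmap_\module{M} \circ \jmap_\module{M} = \id$ and
$\id - \jmap_\module{M}\circ\Qmap_\module{M} = \Double_\module{M} \Kmap_\module{M} + \Kmap_\module{M} \Double_\module{M}$. Passing to cohomology, the first identity shows that $[\jmap_\module{M}]$ has $[\Qmap_\module{M}]$ as a left inverse, while the second identity shows that $\jmap_\module{M} \circ \Qmap_\module{M}$ is chain homotopic to $\id$, hence also a right inverse on cohomology. Therefore $[\jmap_\module{M}]$ and $[\Qmap_\module{M}]$ are mutually inverse isomorphisms between $\HCa^\bullet(V,\module{M})$ and $\HVanEst^\bullet(V,\module{M})$.

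The second step is to check the compatibility with the left $\HCa^\bullet(V)$-module structure. Since $\jmap_\module{M}$ is $\CCa^\bullet(V)$-linear at the level of chain complexes (as noted in the preceding proposition thanks to the explicit formula \eqref{eq:FormulajM}), it descends to a left $\HCa^\bullet(V)$-module morphism in cohomology. Its inverse $[\Qmap_\module{M}]$ is then automatically a module morphism as well, by a standard argument: for $[X] \in \HCa^\bullet(V)$ and $[A] \in \HVanEst^\bullet(V,\module{M})$ one has
\begin{equation*}
    [\Qmap_\module{M}]\bigl([X] \acts [A]\bigr)
    =
    [\Qmap_\module{M}]\bigl([X] \acts [\jmap_\module{M}]\circ[\Qmap_\module{M}][A]\bigr)
    =
    [\Qmap_\module{M}]\circ[\jmap_\module{M}]\bigl([X] \acts [\Qmap_\module{M}][A]\bigr)
    =
    [X] \acts [\Qmap_\module{M}][A].
\end{equation*}

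There is no real obstacle here since all the hard work has already been done in setting up the perturbed retract. The only thing to be a little careful about is checking that the module structure actually makes sense at the level of cohomology, which follows from the fact that $\CCa^\bullet(V)$ acts on $\CVanEst^\bullet(V,\module{M})$ through the left action inherited from \autoref{prop:BimoduleCVanEst} extended trivially on $\module{M}$ and this action commutes with $\Double_\module{M}$ up to the usual graded Leibniz sign, so that it descends to a well-defined action of $\HCa^\bullet(V)$ on $\HVanEst^\bullet(V,\module{M})$.
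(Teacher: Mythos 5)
Your proposal is correct and follows essentially the same route as the paper, which states this corollary without further proof precisely because it is the immediate consequence of the preceding proposition: the special deformation retract gives mutually inverse maps in cohomology, and the module compatibility is inherited from the chain-level statement that $\jmap_\module{M}$ is a morphism of left $\CCa^\bullet(V)$-modules. Your additional remarks (the inverse of a module isomorphism is automatically a module morphism, and the action descends to cohomology by the graded Leibniz rule) are the standard facts being used implicitly.
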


Let us now consider the Chevalley-Eilenberg side.
\begin{lemma}
    \label{lem:BProps}%
    \
    \begin{lemmalist}
    \item \label{item:hBBh}%
        It holds $\Bmap \hmap = \hmap \Bmap$.
    \item \label{item:IdPlusPartialhInVBi}%
        It holds
        $(\id + \partial \hmap)^{-1}\Bmap \imap = \Bmap \imap$.
    \item \label{item:HBiNull}%
        It holds $\Hmap\Bmap\imap = 0$.
    \item \label{item:IdPlusBHInvBi}%
        It holds $(\id + \Bmap \Hmap)^{-1} \Bmap \imap = \Bmap \imap$.
    \end{lemmalist}
\end{lemma}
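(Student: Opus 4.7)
The plan is to prove the four parts in the order given, noting that parts \ref{item:IdPlusPartialhInVBi}, \ref{item:HBiNull}, and \ref{item:IdPlusBHInvBi} form a cascade that reduces essentially to a single observation: $\Bmap \imap(m \tensor \xi)$ equals $1 \tensor \Unit \tensor m_\sweedler{0} \tensor (\pr_V(m_\sweedler{-1}) \wedge \xi)$, which sits in tensor degree $0$, and $\hmap$ vanishes on tensor degree $0$ by its definition.

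For part \ref{item:hBBh}, I would evaluate both sides on a general element $(X_1 \tensor \cdots \tensor X_k) \tensor \phi \tensor m \tensor \xi$. The key point is that $\hmap$ acts only on the first two tensor positions (extracting the last factor $X_k$ from the tensor algebra into the symmetric algebra slot after a counit-evaluation on $\phi$), while $\Bmap$ acts only on the last two positions (moving a $V$-component of $m_\sweedler{-1}$ into the exterior algebra). Since the two maps touch disjoint tensor factors, the content of the statement is really the interplay of the $(-1)^k$ signs in both definitions with the fact that $\hmap$ changes the tensor degree from $k$ to $k - 1$ while $\Bmap$ preserves it. A careful direct computation on factorizing elements then yields the identity. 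This is the part requiring the most bookkeeping; the rest is formal.

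For part \ref{item:IdPlusPartialhInVBi}, one first computes $\Bmap \imap(m \tensor \xi)$ and observes it lies in tensor degree $0$, hence $\hmap \Bmap \imap = 0$ and therefore $(\partial \hmap)^n \Bmap \imap = 0$ for all $n \geq 1$. Writing $(\id + \partial\hmap)^{-1} = \sum_{n=0}^{\infty}(-1)^n(\partial\hmap)^n$, only the $n=0$ term contributes on $\Bmap\imap$, giving part \ref{item:IdPlusPartialhInVBi}. Part \ref{item:HBiNull} is immediate from $\Hmap = \hmap (\id + \partial\hmap)^{-1}$: using part \ref{item:IdPlusPartialhInVBi} we get $\Hmap \Bmap \imap = \hmap \Bmap \imap$, which is zero by the same tensor-degree argument. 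Finally, for part \ref{item:IdPlusBHInvBi}, expand $(\id + \Bmap\Hmap)^{-1} = \sum_{n=0}^{\infty}(-1)^n(\Bmap\Hmap)^n$; part \ref{item:HBiNull} yields $\Hmap \Bmap \imap = 0$, so $(\Bmap\Hmap)^n \Bmap\imap = 0$ for all $n \geq 1$ and the series collapses to the identity on $\Bmap\imap$. The main obstacle throughout is the sign convention in part \ref{item:hBBh}; the three remaining assertions are purely formal consequences of the tensor-degree-$0$ observation together with the geometric-series form of the perturbed homotopies $\Hmap$ and $(\id + \Bmap\Hmap)^{-1}$.
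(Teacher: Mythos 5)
Your overall strategy coincides with the paper's: part (i) is left to a direct computation, and parts (ii)--(iv) cascade from the single fact that $\hmap$ annihilates $\Bmap\imap$, combined with the geometric-series expansions of $(\id+\partial\hmap)^{-1}$, $\Hmap$ and $(\id+\Bmap\Hmap)^{-1}$. The one place where you genuinely diverge is in how you obtain $\hmap\Bmap\imap=0$: you observe that $\Bmap\imap(m\tensor\xi) = 1\tensor\Unit\tensor m_\sweedler{0}\tensor(\pr_V(m_\sweedler{-1})\wedge\xi)$ lies in tensor degree $0$, where $\hmap$ vanishes by definition, whereas the paper writes $\hmap\Bmap\imap=\Bmap\hmap\imap=0$, routing the argument through part (i) and $\hmap\imap=0$. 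Your version is slightly cleaner in that parts (ii)--(iv) then do not depend on part (i) at all, which turns out to matter.

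The weak point is part (i), which you --- like the paper --- do not actually carry out. You correctly identify that the entire content is the interplay of the two $(-1)^k$ prefactors with the fact that $\hmap$ lowers the tensor degree while $\Bmap$ preserves it, but you then assert the outcome instead of computing it. If one does the computation with the definitions as printed, $\hmap\Bmap$ acting on $\CVanEst^{k,\ell}(V,\module{M})$ picks up $(-1)^k$ from $\Bmap$ (which preserves tensor degree) and then $(-1)^k$ from $\hmap$, giving $(-1)^{2k}=+1$, while $\Bmap\hmap$ picks up $(-1)^k$ from $\hmap$ and then $(-1)^{k-1}$ from $\Bmap$ applied in tensor degree $k-1$, giving $(-1)^{2k-1}=-1$; the unsigned expressions agree since the two maps act on disjoint tensor slots. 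So the bookkeeping you defer is precisely where a sign discrepancy with the stated identity surfaces, and you should resolve it explicitly rather than asserting that the computation \emph{yields the identity}. The later uses of part (i) are insensitive to this sign because they terminate in $\Bmap^2=0$, and your parts (ii)--(iv) bypass part (i) entirely, so the remainder of your argument stands either way.
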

\begin{proof}
    The first part is a direct computation.  Then
    $\Bmap \imap = \Bmap \imap + \partial \hmap \Bmap \imap = (\id +
    \partial \hmap )\Bmap \imap$ and thus
    $(\id + \partial \hmap)^{-1} \Bmap \imap = \Bmap \imap$.
    Moreover,
    \begin{equation*}
        \Hmap \Bmap \imap
        =
        \hmap (\id + \partial \hmap)^{-1} \Bmap \imap
        =
        \hmap \Bmap \imap
        =
        \Bmap \hmap \imap
        =
        0.
    \end{equation*}
    This implies the fourth part as well.
\end{proof}
\begin{lemma}
    It holds $(\Bmap \Hmap)^2 = 0$.
\end{lemma}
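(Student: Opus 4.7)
The plan is to reduce $(\Bmap\Hmap)^2 = 0$ to the identity $\Bmap^2 = 0$ from Lemma~\ref{lem:BSquared}\,(i) by pushing $\Bmap$ through the geometric series defining $\Hmap$. Recall from Proposition~\ref{prop:CEDeformationRetract} that $\Hmap = \hmap\sum_{n\ge 0}(-1)^n(\partial\hmap)^n = \hmap(\id+\partial\hmap)^{-1}$, the inverse interpreted in the locally nilpotent sense.

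The three ingredients I would collect are: (1)~$\Bmap\hmap = \hmap\Bmap$, which is Lemma~\ref{lem:BProps}\ref{item:hBBh}; (2)~$\Bmap^2 = 0$, from Lemma~\ref{lem:BSquared}\,(i); and (3)~the anticommutation $\Bmap\partial = -\partial\Bmap$. The third is the only new computation, and it is short: both maps wedge a single one-form into the $\Anti^\bullet V$-slot, one extracted via $\pr_V$ from $\phi_\sweedler{1}$ and the other from $m_\sweedler{-1}$, so applying them in opposite orders merely swaps the order of these two wedged elements, producing exactly the sign by graded commutativity of $\wedge$; the tensor-degree signs $(-1)^k$ appear on both compositions and cancel. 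Combining (1) and (3) yields $\Bmap(\partial\hmap) = -(\partial\hmap)\Bmap$, and inductively $\Bmap(\partial\hmap)^n = (-1)^n(\partial\hmap)^n\Bmap$.

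With these relations in place, the key intermediate identity is
\begin{equation*}
    \Bmap(\id+\partial\hmap)^{-1}\Bmap
    = \sum_{n\ge 0}(-1)^n\Bmap(\partial\hmap)^n\Bmap
    = \sum_{n\ge 0}(\partial\hmap)^n\Bmap^2
    = 0,
\end{equation*}
where the last equality uses (2). The conclusion then follows by expanding $\Hmap$ and using (1) to move the leftmost $\Bmap$ past the leftmost $\hmap$:
\begin{equation*}
    (\Bmap\Hmap)^2
    = \Bmap\hmap(\id+\partial\hmap)^{-1}\Bmap\hmap(\id+\partial\hmap)^{-1}
    = \hmap\bigl[\Bmap(\id+\partial\hmap)^{-1}\Bmap\bigr]\hmap(\id+\partial\hmap)^{-1}
    = 0.
\end{equation*}

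The only genuine obstacle is verifying the sign in $\Bmap\partial = -\partial\Bmap$; once this is in hand, everything else is purely formal manipulation of the geometric series together with the facts already recorded in Lemmas~\ref{lem:BSquared} and~\ref{lem:BProps}.
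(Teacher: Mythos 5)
Your proof is correct and follows essentially the same route as the paper: commute $\Bmap$ past $\hmap$ via \autoref{lem:BProps}, push $\Bmap$ through the geometric series by anticommutation with the differential, and conclude from $\Bmap^2=0$. You are in fact slightly more careful than the paper's own argument, which invokes anticommutation with $\delta$ and writes $(\delta\hmap)^n$ where the series defining $\Hmap$ involves $(\partial\hmap)^n$; your explicit check of $\Bmap\partial=-\partial\Bmap$ (two wedged one-forms swapping, the $(-1)^k$ factors cancelling) supplies exactly the relation that is actually needed.
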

\begin{proof}
    Recall from \autoref{lem:BSquared} that $\Bmap^2 = 0$ and $\Bmap$
    anti-commutes with $\delta$.  Thus we get
    \begin{equation*}
        (\Bmap \Hmap)^2
        =
        \Bmap \hmap \sum_{n=0}^{\infty}(-1)^n (\delta \hmap)^n \Bmap \Hmap
        =
        \hmap \sum_{n=0}^{\infty} (\delta \hmap)^n \Bmap^2 \Hmap
        =
        0.
    \end{equation*}
\end{proof}
\begin{proposition}
    Let $\module{M}$ be a $\Sym V$-comodule.  Then
    \begin{equation}
        \begin{tikzcd}
            \bigl(\CCE^\bullet(V, \module{M}), \partial_\module{M}\bigr)
            \arrow[r,"\imap", shift left = 3pt]
            &\bigl(\CVanEst^{\bullet}(V, \module{M}),\Double_\module{M}\bigr)
            \arrow[l,"\Pmap_\module{M}", shift left = 3pt]
            \arrow[loop,
            out = -30,
            in = 30,
            distance = 30pt,
            start anchor = {[yshift = -7pt]east},
            end anchor = {[yshift = 7pt]east},
            "\Hmap"{swap}
            ]
        \end{tikzcd}
    \end{equation}
    is a homotopy retract, with
    \begin{align}
        \Pmap_\module{M}
        &=
        \Pmap - \Pmap \Bmap \Hmap.
    \end{align}
Moreover, the map $\imap$ is a morphism of $\CCE(V)$ right modules.
\end{proposition}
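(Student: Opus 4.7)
The plan is to apply the homological perturbation lemma (\autoref{prop:HomologicalPerturbation}) to the deformation retract of \autoref{prop:CEDeformationRetract}, extended by the identity on $\module{M}$, with $\Bmap$ viewed as a perturbation of $\Double$. First I would verify the prerequisites: $\Bmap$ raises the total degree by one, $(\Double+\Bmap)^2 = \Double_\module{M}^2 = 0$ holds by \autoref{lem:BSquared}, and $\Bmap\Hmap$ is locally nilpotent, which in fact already follows from $(\Bmap\Hmap)^2 = 0$ proved just before the proposition. The perturbation lemma then produces new structure maps
\[
\imap_\module{M} = (\id + \Hmap\Bmap)^{-1}\imap,
\qquad
\Pmap_\module{M} = \Pmap\,(\id + \Bmap\Hmap)^{-1},
\qquad
\Hmap_\module{M} = \Hmap\,(\id + \Bmap\Hmap)^{-1},
\]
together with the perturbed differential $\Pmap\,(\id + \Bmap\Hmap)^{-1}\,\Bmap\,\imap$ on $\CCE^\bullet(V) \tensor \module{M}$.

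Next I would simplify these using \autoref{lem:BProps}. Part~\ref{item:HBiNull} gives $\Hmap\Bmap\imap = 0$, whence $(\Hmap\Bmap)^n\imap = 0$ for all $n \geq 1$ and therefore $\imap_\module{M} = \imap$. Since $(\Bmap\Hmap)^2 = 0$, the geometric series truncates to $(\id + \Bmap\Hmap)^{-1} = \id - \Bmap\Hmap$, yielding immediately $\Pmap_\module{M} = \Pmap - \Pmap\Bmap\Hmap$ as claimed in the statement. The same truncation combined again with $\Hmap\Bmap\imap = 0$ collapses the perturbed differential to $\Pmap\Bmap\imap$.

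The main calculation, and the step I expect to be the most delicate, is to identify this perturbed differential with $\partial_\module{M}$ from \eqref{eq:CEDifferentialInfinitesimal}. For $m \tensor \xi \in \module{M} \tensor \Anti^\bullet V$ a direct computation gives
\[
\Bmap\,\imap(m \tensor \xi)
= \Bmap(1 \tensor \Unit \tensor m \tensor \xi)
= 1 \tensor \Unit \tensor m_\sweedler{0} \tensor \bigl(\pr_V(m_\sweedler{-1}) \wedge \xi\bigr),
\]
which has tensor degree zero in the first factor. Since $\hmap$ vanishes on tensor degree zero, $\Pmap = \pmap\sum_{n=0}^\infty(-1)^n(\partial\hmap)^n$ reduces to $\pmap$, which strips off $1 \tensor \Unit$ and returns precisely $\partial_\module{M}(m \tensor \xi)$, completing the identification.

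Finally, the right $\CCE^\bullet(V)$-module morphism property of $\imap$ is straightforward: extending the right action of \autoref{prop:BimoduleCVanEst} to $\CVanEst^\bullet(V,\module{M})$ by acting trivially on $\module{M}$ and via $\wedge$ on the $\Anti^\bullet V$-factor, and noting that $\imap$ simply prefixes $1 \tensor \Unit$ to $m \tensor \xi$ without touching the $\Anti^\bullet V$-factor, one has $\imap((m \tensor \xi) \racts \eta) = 1 \tensor \Unit \tensor m \tensor (\xi \wedge \eta) = \imap(m \tensor \xi) \racts \eta$. The conceptual obstacle that the perturbation ostensibly produces a complicated non-zero contribution to the Chevalley-Eilenberg differential on $\module{M} \tensor \Anti^\bullet V$ dissolves entirely because of the degree vanishing $\hmap\imap = 0$ combined with the one-step coaction appearing in $\Bmap$.
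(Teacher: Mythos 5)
Your strategy is the same as the paper's: perturb the Chevalley--Eilenberg-side retract (tensored with $\module{M}$) by $\Bmap$, use $\Hmap\Bmap\imap = 0$ to keep $\imap$ unperturbed, use $(\Bmap\Hmap)^2 = 0$ to truncate the geometric series and obtain $\Pmap_\module{M} = \Pmap - \Pmap\Bmap\Hmap$, and identify the induced differential $\Pmap\Bmap\imap$ with $\partial_\module{M}$. Your degree argument for why $\Pmap$ collapses to $\pmap$ on the image of $\Bmap\imap$ is the same content as \autoref{lem:BProps}~\ref{item:IdPlusPartialhInVBi}, and the right-module statement is handled identically.

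There is, however, one step you skip that the statement forces you to address: the proposition asserts that the homotopy of the perturbed retract is the \emph{unperturbed} $\Hmap$, whereas the perturbation lemma hands you $\Hmap_\module{M} = \Hmap(\id + \Bmap\Hmap)^{-1} = \Hmap - \Hmap\Bmap\Hmap$. You write down this formula and then never return to it, so your argument as written only produces a homotopy retract whose homotopy is $\Hmap - \Hmap\Bmap\Hmap$. To match the statement you must show $\Hmap\Bmap\Hmap = 0$; this does not follow from $(\Bmap\Hmap)^2 = 0$ alone, but requires pulling the inner $\Bmap$ through $\Hmap$ using the commutation $\hmap\Bmap = \Bmap\hmap$ of \autoref{lem:BProps}~\ref{item:hBBh} together with the rule for moving $\Bmap$ past the powers of $\partial\hmap$ appearing in $\Hmap$. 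This is precisely the point the paper's proof makes explicitly when it notes that the homotopy $\Hmap$ stays unperturbed; without it your proof establishes a correct homotopy retract, but not the one claimed in the proposition.
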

\begin{proof}
    We perturb the homotopy retract
    \begin{equation}
        \begin{tikzcd}
            \bigl( \module{M} \tensor \CCE^\bullet(V), 0 \bigr)
            \arrow[r,"\imap", shift left = 3pt]
            &\bigl(\module{M} \tensor \CVanEst^{\bullet}(V),\Double \bigr)
            \arrow[l,"\Pmap", shift left = 3pt]
            \arrow[loop,
            out = -30,
            in = 30,
            distance = 30pt,
            start anchor = {[yshift = -7pt]east},
            end anchor = {[yshift = 7pt]east},
            "\Hmap"{swap}
            ]
        \end{tikzcd}
    \end{equation}
    by the perturbation $\Bmap$.  Then by
    \autoref{prop:HomologicalPerturbation} we obtain a homotopy
    retract.  Since $\hmap$ commutes with $\Bmap$ by
    \autoref{lem:BProps} it is easy to see that
    $\Hmap \Bmap \Hmap = 0$, and hence the homotopy $\Hmap$ stays
    unperturbed.  Moreover, since $(\Bmap \Hmap)^2 = 0$ the
    perturbation of $\Pmap$ is given by
    $\Pmap_\module{M} = \Pmap \sum_{n=0}^{1}(-1)^n(\Bmap\Hmap)^n =
    \Pmap - \Pmap \Bmap \Hmap$.  Using
    \autoref{lem:BProps}~\ref{item:HBiNull} the perturbation of
    $\imap$ is given by
    $\sum_{n=0}^{\infty} (-1)^n (\Hmap\Bmap)^n \imap = \imap$, so it
    is also clear that $\imap$ is a right module morphism.  Moreover,
    the perturbation of the zero differential on
    $\module{M} \tensor \CCE^\bullet(V)$ is given by
    \begin{equation*}
        \Pmap (\id + \Bmap \Hmap)^{-1} \Bmap \imap
        =
        \Pmap \Bmap \imap
        =
        \pmap (\id + \partial \hmap)^{-1} \Bmap \imap
        =
        \pmap \Bmap \imap.
    \end{equation*}
    On elements $m \tensor \xi \in \module{M} \tensor \CCE^\bullet(V)$
    the perturbed differential is given by
    \begin{equation*}
        \partial_\module{M}(m \tensor \xi)
        =
        m_\sweedler{0} \tensor \pr_V(m_\sweedler{-1}) \wedge \xi
    \end{equation*}
    and hence coincides with $\partial_\module{M}$ as introduced in
    \autoref{sec:ChevalleyEilenbergCohomologyDualPointView}, see
    \eqref{eq:CEDifferentialInfinitesimal}.
\end{proof}
\begin{corollary}
    \label{cor:CCEandVEcohomologyWithCoeff}%
    Let $\module{M}$ be a $\Sym V$-comodule.  Then
    $\imap \colon \HCE^\bullet(V,\module{M}) \to
    \HVanEst^\bullet(V,\module{M})$ is an isomorphism of differential
    graded right $\HCE^\bullet(V)$-modules with inverse
    $\Pmap_\module{M}$.
\end{corollary}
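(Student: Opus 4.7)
The plan is to deduce this corollary directly from the homotopy retract established in the preceding proposition, by passing to cohomology. I would first observe that $(\imap, \Pmap_\module{M}, \Hmap)$ being a homotopy retract between the complexes $(\CCE^\bullet(V,\module{M}),\partial_\module{M})$ and $(\CVanEst^\bullet(V,\module{M}),\Double_\module{M})$ yields two chain-level identities: $\Pmap_\module{M} \circ \imap = \id$ and $\imap \circ \Pmap_\module{M} - \id = \Double_\module{M} \Hmap + \Hmap \Double_\module{M}$. The first descends immediately to cohomology, while the second shows that $\imap \circ \Pmap_\module{M}$ and $\id$ induce the same map on cohomology. Hence $\imap$ and $\Pmap_\module{M}$ become mutually inverse isomorphisms between $\HCE^\bullet(V,\module{M})$ and $\HVanEst^\bullet(V,\module{M})$.

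Next I would address the right $\HCE^\bullet(V)$-module structure. By the lemma preceding, $\CVanEst^\bullet(V,\module{M})$ is a differential graded $(\CCa^\bullet(V),\CCE^\bullet(V))$-bimodule, so its cohomology inherits a right $\HCE^\bullet(V)$-module structure in the standard way. Since the preceding proposition records that $\imap$ is a chain morphism of right $\CCE^\bullet(V)$-modules, the induced map on cohomology is automatically a right $\HCE^\bullet(V)$-module morphism, and combined with being a bijection of graded modules this forces the inverse $\Pmap_\module{M}$ to also be a right $\HCE^\bullet(V)$-module morphism.

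The main work has already been carried out in the preceding proposition via the perturbation lemma, so I expect no substantive obstacle at this stage. The only point worth noting is that we only obtained a homotopy retract and not a special deformation retract, so we cannot rely on the sharper sidedness identities available for $\Qmap_\module{M}$; nevertheless, the homotopy identity above is precisely what is needed to get a cohomology isomorphism, and the module-theoretic part follows purely formally from $\imap$ being a right module chain map.
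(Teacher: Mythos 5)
Your route is the same as the paper's (the corollary is stated there without a separate proof, precisely because it is read off from the preceding proposition in the way you describe), and the module-theoretic part is handled correctly: $\imap$ being a chain map of right $\CCE^\bullet(V)$-modules passes to cohomology, and the inverse of a bijective module morphism is again one. The only point to tighten is your claim that $\Pmap_\module{M}\circ\imap=\id$ is one of the identities supplied by ``being a homotopy retract'': under the paper's conventions a homotopy retract only guarantees $\id-\imap\,\Pmap_\module{M}=\Double_\module{M}\Hmap+\Hmap\Double_\module{M}$, and that alone gives surjectivity of $\imap$ in cohomology but not injectivity, so the side condition $\Pmap_\module{M}\imap=\id$ does need its own (one-line) justification: $\Pmap_\module{M}\imap=(\Pmap-\Pmap\Bmap\Hmap)\imap=\Pmap\imap=\id$, using $\Hmap\imap=0$ and $\Pmap\imap=\id$ from the unperturbed deformation retract.
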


Together we obtain the following situation:
\begin{equation}
    \begin{tikzcd}
        \CCa^\bullet(V, \module{M})
        \arrow[r,"\jmap_\module{M}",shift left = 3pt]
        &\CVanEst^{\bullet}(V,\module{M})
        \arrow[l,"\Qmap_\module{M}", shift left = 3pt]
        \arrow[d,"\Pmap_\module{M}", shift left = 3pt] \\
        { }
        &\CCE^\bullet(V, \module{M})
        \arrow[u,"\imap",shift left = 3pt]
    \end{tikzcd}
\end{equation}
Using the above maps we can in fact find an explicit direct
quasi-isomorphism between $\CCa^\bullet(V, \module{M})$ and
$\CCE^\bullet(V, \module{M})$.  For this we define the chain maps
\begin{equation}
    \VanEstDiff_\module{M}
    \coloneqq
    \Pmap_\module{M} \jmap_\module{M}
    \colon
    \CCa^\bullet(V, \module{M})
    \to
    \CCE^\bullet(V, \module{M})
\end{equation}
and
\begin{equation}
    \VanEstInt_\module{M}
    \coloneqq
    \Qmap_\module{M} \imap
    \colon
    \CCE^\bullet(V, \module{M})
    \to
    \CCa^\bullet(V,\module{M}),
\end{equation}
as well as the map
\begin{equation}
    \VanEstHomo_\module{M}
    \coloneqq
    \Qmap_\module{M} \Hmap \jmap_\module{M}
    \colon
    \CCa^\bullet(V, \module{M})
    \to
    \CCa^{\bullet-1}(V, \module{M}).
\end{equation}
\begin{theorem}[Van Est deformation retract with coefficients]
    \label{thm:VanEstDeformationRetractCoeff}%
    Let $V$ be a $\ring{R}$-module and let $\module{M}$ be a
    $\Sym V$-comodule.  Then
    \begin{equation}
        \label{diag:VanEstMapsDeformationRetractCoeff}
        \begin{tikzcd}[column sep = large]
            \bigl( \CCE^\bullet(V, \module{M}), \partial_\module{M} \bigr)
            \arrow[r,"\VanEstInt_\module{M}", shift left = 3pt]
            &\bigl( \CCa^{\bullet}(V,\module{M}),\delta_\module{M} \bigr)
            \arrow[l,"\VanEstDiff_\module{M}", shift left = 3pt]
            \arrow[loop,
            out = -30,
            in = 30,
            distance = 30pt,
            start anchor = {[yshift = -7pt]east},
            end anchor = {[yshift = 7pt]east},
            "\VanEstHomo_\module{M}"{swap}
            ]
        \end{tikzcd}
    \end{equation}
    is a deformation retract, i.e. we the following hold:
    \begin{theoremlist}
    \item We have
        $\VanEstDiff_\module{M} \circ \VanEstInt_\module{M} = \id$.
    \item We have
        $\delta_\module{M} \VanEstHomo_\module{M} +
        \VanEstHomo_\module{M} \delta_\module{M} = \id -
        \VanEstInt_\module{M} \circ \VanEstDiff_\module{M}$.
    \end{theoremlist}
\end{theorem}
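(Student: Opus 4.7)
The plan is to imitate the proof of \autoref{thm:VanEstDeformationRetract} by composing the two perturbed retracts at our disposal: the special deformation retract $(\jmap_\module{M}, \Qmap_\module{M}, \Kmap_\module{M})$ between $(\CCa^\bullet(V,\module{M}), \delta_\module{M})$ and $(\CVanEst^\bullet(V,\module{M}), \Double_\module{M})$, together with the homotopy retract $(\imap, \Pmap_\module{M}, \Hmap)$ between $(\CCE^\bullet(V,\module{M}), \partial_\module{M})$ and $(\CVanEst^\bullet(V,\module{M}), \Double_\module{M})$. With $\VanEstDiff_\module{M} = \Pmap_\module{M}\jmap_\module{M}$, $\VanEstInt_\module{M} = \Qmap_\module{M}\imap$ and $\VanEstHomo_\module{M} = \Qmap_\module{M}\Hmap\jmap_\module{M}$ defined via these maps, the composition of retracts will formally deliver the claim once a small number of cancellations have been checked.

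The key technical input is the orthogonality $\Pmap_\module{M}\Kmap_\module{M} = 0$. To establish it, I would expand $\Kmap_\module{M} = \Kmap\sum_{n\geq 0}(-1)^n(\Bmap\Kmap)^n$ and $\Pmap_\module{M} = \Pmap - \Pmap\Bmap\Hmap$, and invoke \autoref{lem:hkpk}, which supplies both $\Pmap\Kmap = 0$ and $\Hmap\Kmap = 0$; every summand in the product then collapses. A parallel direct expansion — using $\Pmap\imap = \id$ and $\Hmap\imap = 0$, themselves consequences of $\pmap\imap = \id$ and $\hmap\imap = 0$ — yields $\Pmap_\module{M}\imap = \id$.

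With these preliminaries, the first identity follows from the relation $\jmap_\module{M}\Qmap_\module{M} = \id + \Double_\module{M}\Kmap_\module{M} + \Kmap_\module{M}\Double_\module{M}$: composing with $\Pmap_\module{M}$ on the left and $\imap$ on the right and moving the differentials through the chain maps $\Pmap_\module{M}$ and $\imap$ produces the error terms $\partial_\module{M}\Pmap_\module{M}\Kmap_\module{M}\imap + \Pmap_\module{M}\Kmap_\module{M}\imap\,\partial_\module{M}$, which vanish by the orthogonality relation, leaving $\VanEstDiff_\module{M}\VanEstInt_\module{M} = \Pmap_\module{M}\imap = \id$. For the second identity I would exploit the chain-map property of $\jmap_\module{M}$ and $\Qmap_\module{M}$ together with the homotopy relation $\Double_\module{M}\Hmap + \Hmap\Double_\module{M} = \id - \imap\Pmap_\module{M}$ of the right-hand retract to compute
\begin{align*}
    \delta_\module{M}\VanEstHomo_\module{M} + \VanEstHomo_\module{M}\delta_\module{M}
    &= \Qmap_\module{M}(\Double_\module{M}\Hmap + \Hmap\Double_\module{M})\jmap_\module{M} \\
    &= \Qmap_\module{M}(\id - \imap\Pmap_\module{M})\jmap_\module{M} \\
    &= \id - \VanEstInt_\module{M}\VanEstDiff_\module{M},
\end{align*}
using $\Qmap_\module{M}\jmap_\module{M} = \id$ at the final step.

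The only step that requires genuine checking is the orthogonality $\Pmap_\module{M}\Kmap_\module{M} = 0$; once this is in place, everything else is a formal transcription of the argument from the coefficient-free case, with the perturbation $\Bmap$ entering only through the explicit formulas for $\Pmap_\module{M}$ and $\Kmap_\module{M}$ that were already recorded when those perturbed retracts were constructed.
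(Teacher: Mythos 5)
Your proposal is correct and follows essentially the same route as the paper: the paper also obtains the result by composing the perturbed retracts $(\jmap_\module{M},\Qmap_\module{M},\Kmap_\module{M})$ and $(\imap,\Pmap_\module{M},\Hmap)$, with the only nontrivial cancellation being $\Pmap_\module{M}\Kmap_\module{M}=0$, deduced exactly as you do from $\Pmap\Kmap=0$ and $\Hmap\Kmap=0$ in \autoref{lem:hkpk}. Your more explicit unwinding of the composition identities is just a spelled-out version of the paper's appeal to \autoref{prop:CompHomotopyRetracts}.
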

\begin{proof}
    The proof is analogous to that of
    \autoref{thm:VanEstDeformationRetract}.  Consider the composition
    \begin{equation*}
        (\Pmap_\module{M},\imap,0)
        \circ (\jmap_\module{M},\Qmap_\module{M},\Kmap_\module{M})
        =
        (\Pmap_\module{M} \jmap_\module{M},
        \Qmap_\module{M} \imap,
        \Pmap_\module{M} \Kmap_\module{M} \imap)
        =
        (\VanEstDiff_\module{M},
        \VanEstInt_\module{M},
        \Pmap_\module{M} \Kmap_\module{M} \imap).
    \end{equation*}
    By \autoref{lem:hkpk} we know that $\Hmap \Kmap = 0 = \Pmap \Kmap$
    and thus
    \begin{equation*}
        \Pmap_\module{M} \Kmap_\module{M}
        =
        \Pmap
        \sum_{i=0}^{\infty}(-1)^i
        (\Bmap \Hmap)^i
        \Kmap
        \sum_{j=0}^{\infty} (-1)^j
        (\Bmap \Kmap)^j
        =
        0.
    \end{equation*}
    This shows the first part.  For the second part we consider the
    composition
    \begin{equation*}
        (\Qmap_\module{M},\jmap_\module{M},0) \circ
        (\imap,\Pmap_\module{M}, \Hmap_\module{M})
        =
        (\Qmap_\module{M} \imap,
        \Pmap_\module{M} \jmap_\module{M},
        \Qmap_\module{M} \Hmap \jmap)
        =
        (\VanEstInt_\module{M},
        \VanEstDiff_\module{M},
        \VanEstHomo_\module{M}).
    \end{equation*}
    Hence we have
    \begin{equation*}
        \delta_\module{M} \VanEstHomo_\module{M}
        + \VanEstHomo_\module{M} \delta_\module{M}
        =
        \id - \VanEstInt_\module{M} \circ \VanEstDiff_\module{M}.
    \end{equation*}
\end{proof}
\begin{remark}
        As already in the case of the deformation retract
        without coefficients, see \autoref{rem:nontrivialLiecoalg},
        we are very certain that one
        can use the deformation retract
        \eqref{diag:VanEstMapsDeformationRetractCoeff} and a suitable
        perturbation of the differential $\delta_\module{M}$ in order to
        obtain a deformation retract connecting the Chevalley-Eilenberg
        complex of a locally finite Lie coalgebra with values in a Lie comodule and the coalgebra complex of
        its universal enveloping coalgebra complex with values in the same
        comodule. This will be part of a future work.
\end{remark}
\begin{corollary}
    Let $V$ be a $\ring{R}$-module and let $\module{M}$ be a
    $\Sym V$-comodule. Then
    $\VanEstDiff_\module{M} \colon \HCa^\bullet(V,\module{M}) \to
    \HCE^\bullet(V,\module{M})$ is an isomorphism with inverse
    $\VanEstInt_\module{M}$.
\end{corollary}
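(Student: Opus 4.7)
The corollary is an immediate consequence of \autoref{thm:VanEstDeformationRetractCoeff}, so my plan is essentially a cohomology-level unpacking of the two identities stated there. First I would recall that since $\VanEstDiff_\module{M}$ and $\VanEstInt_\module{M}$ are chain maps between the complexes $(\CCa^\bullet(V,\module{M}),\delta_\module{M})$ and $(\CCE^\bullet(V,\module{M}),\partial_\module{M})$, they descend to well-defined linear maps on the respective cohomologies $\HCa^\bullet(V,\module{M})$ and $\HCE^\bullet(V,\module{M})$ as defined in \eqref{eq:CohomSVM} and \eqref{eq:HCEDef}.

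Next I would invoke the first identity of \autoref{thm:VanEstDeformationRetractCoeff}, namely $\VanEstDiff_\module{M} \circ \VanEstInt_\module{M} = \id$ on the nose at the level of cochains. Passing to cohomology, this gives immediately that $[\VanEstDiff_\module{M}] \circ [\VanEstInt_\module{M}] = \id$ on $\HCE^\bullet(V,\module{M})$. For the other composition I would use the second identity, which says
\begin{equation*}
    \id - \VanEstInt_\module{M} \circ \VanEstDiff_\module{M}
    =
    \delta_\module{M} \VanEstHomo_\module{M}
    + \VanEstHomo_\module{M} \delta_\module{M}.
\end{equation*}
Evaluating both sides on a cocycle $X \in \CCa^\bullet(V,\module{M})$ with $\delta_\module{M} X = 0$ yields $X - \VanEstInt_\module{M}\VanEstDiff_\module{M}(X) = \delta_\module{M}\VanEstHomo_\module{M}(X)$, which is exact. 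Hence on the level of cohomology classes we get $[\VanEstInt_\module{M}] \circ [\VanEstDiff_\module{M}] = \id$ on $\HCa^\bullet(V,\module{M})$.

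Combining the two shows that the induced maps are mutually inverse isomorphisms. There is no real obstacle here, as the entire content has been front-loaded into the construction of the deformation retract \eqref{diag:VanEstMapsDeformationRetractCoeff}; this corollary merely records its cohomological shadow.
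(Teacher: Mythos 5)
Your argument is correct and coincides with the paper's (implicit) reasoning: the corollary is stated without proof precisely because it is the standard cohomological consequence of the deformation retract identities in \autoref{thm:VanEstDeformationRetractCoeff}, exactly as you spell out. Nothing is missing.
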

\begin{proposition}
    \label{prop:ExplicitFormVE_M}%
    For factorizing
    $X \tensor m = X_1 \tensor \cdots \tensor X_k \tensor m \in
    \CCa^k(V, \module{M})$ it holds
    \begin{equation}
        \begin{split}
            \VanEstDiff_\module{M}(X \tensor m)
            &=
            m \tensor \VanEstDiff(X)
            \\
            &\quad+
            \counit(X_1) \cdot
            m_\sweedler{0}
            \tensor
            (
            \pr_V(m_\sweedler{-1})
            \wedge
            \VanEstDiff(X_2 \tensor \cdots \tensor X_{k})
            ).
        \end{split}
    \end{equation}
\end{proposition}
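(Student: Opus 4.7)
The plan is to expand $\VanEstDiff_\module{M} = \Pmap_\module{M} \circ \jmap_\module{M} = (\Pmap - \Pmap \Bmap \Hmap) \circ \jmap_\module{M}$ using the explicit Sweedler formulas from \autoref{sec:VanEstDoubleComplex} and \autoref{sec:VanEstComplexCoeff}, and verify the two resulting contributions separately.

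For the first piece, I would start from $\jmap_\module{M}(X \tensor m) = X \tensor s(m_\sweedler{-1}) \tensor m_\sweedler{0} \tensor 1$ and apply $\Pmap = \pmap \sum_{n \geq 0} (-1)^n (\partial \hmap)^n$. Since $(\partial \hmap)^n$ drops the tensor degree by $n$ while $\pmap$ projects to tensor degree zero, only the summand with $n = k$ survives. The very first application of $\hmap$ introduces $\counit(s(m_\sweedler{-1})) = \counit(m_\sweedler{-1})$, which collapses with $m_\sweedler{0}$ by the comodule counit law $\counit(m_\sweedler{-1}) m_\sweedler{0} = m$, so the module slot becomes simply $m$. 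From that point on, the iteration reproduces exactly the derivation of \autoref{prop:VanEstMaps}\ref{eq:FormulaVanEstDiff} applied to $X$, with $m$ riding along passively, which after sign cancellation yields $\Pmap \jmap_\module{M}(X \tensor m) = m \tensor \VanEstDiff(X)$.

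For the second piece, I would first evaluate $\Hmap \jmap_\module{M}(X \tensor m)$ by the same iteration, but retaining the final $\hmap$ rather than the final $\pmap$; this leaves one tensor factor in the middle symmetric slot at each order. Applying $\Bmap$ then inserts $\pr_V(m_\sweedler{-1})$ into the antisymmetric slot while replacing $m$ by $m_\sweedler{0}$. Running $\Pmap$ on this expression iterates $(\partial\hmap)^{j-1}$ on each $j$-summand until the tensor degree drops to zero, forcing the counit $\counit(X_j)$ to appear from the $\hmap$ step and a telescoping chain $\counit((X_i)_\sweedler{1}) \pr_V((X_i)_\sweedler{2}) = \pr_V(X_i)$ to contract the intermediate Sweedler indices into the wedge $\pr_V(X_1) \wedge \cdots \wedge \pr_V(X_{j-1})$. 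Collecting signs and identifying the surviving contribution that matches the stated form gives $-\Pmap \Bmap \Hmap \jmap_\module{M}(X \tensor m) = \counit(X_1) \cdot m_\sweedler{0} \tensor (\pr_V(m_\sweedler{-1}) \wedge \VanEstDiff(X_2 \tensor \cdots \tensor X_k))$.

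The main obstacle is the careful bookkeeping of the alternating signs $(-1)^k$ produced by $\hmap$, $\partial$, and $\Bmap$ at each iteration step, combined with the repeated use of the Sweedler counit identities to telescope the nested comultiplications on both the module factor and the successive symmetric factors $X_i$. A clean route is to first prove an auxiliary lemma analogous to \autoref{lem:partialh}\ref{item:partialHPowerN} but with the middle slot $\phi = s(m_\sweedler{-1})$ instead of $\phi = \Unit$, which packages the Sweedler collapse once and for all, and then to run a direct induction on the tensor degree $k$ for both pieces.
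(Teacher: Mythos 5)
Your proposal is correct and follows essentially the same route as the paper's own proof: the same splitting $\Pmap_\module{M}\jmap_\module{M} = \Pmap\jmap_\module{M} - \Pmap\Bmap\Hmap\jmap_\module{M}$, the same collapse of the middle symmetric slot via $\counit(s(m_\sweedler{-1}))\, m_\sweedler{0} = m$, and the same iteration of $(\partial\hmap)^n$ based on \autoref{lem:partialh} for both summands. One small caveat: since $\hmap$ consumes tensor factors from the right, the surviving term carries $\counit(X_1)$ with the wedge collecting $\pr_V(X_2), \ldots, \pr_V(X_k)$, so your intermediate indexing ($\counit(X_j)$ together with $\pr_V(X_1)\wedge\cdots\wedge\pr_V(X_{j-1})$) is reversed — but your final formula is the correct one, matching the paper.
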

\begin{proof}
    For $X \tensor m \in \CCa^k(V,\module{M})$ we have
    \begin{align*}
        \VanEstDiff_\module{M}(X \tensor m)
        &=
        \Pmap_\module{M} \jmap_\module{M} (X \tensor m)
        \\
        &=
        \Pmap_\module{M}
        (X \tensor s(m_\sweedler{-1}) \tensor m_\sweedler{0} \tensor 1)
        \\
        &=
        \Pmap
        (X \tensor s(m_\sweedler{-1}) \tensor m_\sweedler{0} \tensor 1)
        -
        \Pmap\Bmap\Hmap
        (X \tensor s(m_\sweedler{-1}) \tensor m_\sweedler{0} \tensor 1).
    \end{align*}
    We compute the two summands separately: For the first term we get
    \begin{align*}
        \Pmap(X \tensor s(m_\sweedler{-1}) \tensor m_\sweedler{0} \tensor 1)
        &=
        (-1)^k \pmap (\partial \hmap)^k
        (X \tensor s(m_\sweedler{-1}) \tensor m_\sweedler{0} \tensor 1)
        \\
        &=
        \varepsilon(s(m_\sweedler{-1})) (-1)^k \cdot
        \pmap (\partial \hmap)^k
        (X \tensor \Unit \tensor m_\sweedler{0} \tensor 1)
        \\
        &=
        (-1)^k \pmap (\partial \hmap)^k
        (X \tensor \Unit \tensor m \tensor 1)
        \\
        &=
        p(1 \tensor (X_1)_\sweedler{0} \tensor m \tensor
        \pr_V((X_1)_\sweedler{1})
        \wedge
        \pr_V(X_2)
        \wedge \cdots \wedge
        \pr_V(X_k))
        \\
        &=
        m \tensor
        \pr_V(X_1)
        \wedge \cdots \wedge
        \pr_V(X_k)
        \\
        &=
        m \tensor \VanEstDiff(X),
    \end{align*}
    where we used \autoref{lem:partialh}.  For the second part we get
    \begin{align*}
        \Pmap \Bmap \Hmap(
        X
        \tensor
        s(m_\sweedler{-1})
        \tensor
        m_\sweedler{0}
        \tensor
        1)
        &=
        \pmap
        \sum_{n=0}^{\infty} (-1)^n
        (\partial \hmap)^n
        \Bmap \hmap
        \sum_{j=0}^{\infty} (-1)^j
        (\partial \hmap)^j
        (X \tensor s(m_\sweedler{-1}) \tensor m_\sweedler{0} \tensor 1)
        \\
        &=
        \pmap \Bmap \hmap
        \sum_{j=0}^{\infty} (-1)^j
        (\partial \hmap)^j
        (X \tensor s(m_\sweedler{-1}) \tensor m_\sweedler{0} \tensor 1)
        \\
        &=
        \pmap \Bmap \hmap (-1)^{k-1}
        (\partial \hmap)^{k-1}
        (X \tensor s(m_\sweedler{-1}) \tensor m_\sweedler{0} \tensor 1)
        \\
        &=
        \pmap \Bmap \hmap (
        X_1
        \tensor
        (X_2)_\sweedler{0}
        \tensor
        m
        \tensor
        \pr_V((X_2)_\sweedler{1})
        \wedge
        \pr_V(X_3)
        \wedge \cdots \wedge
        \pr_V(X_k)
        )
        \\
        &=
        - \pmap \Bmap (
        1
        \tensor
        X_1
        \tensor
        m
        \tensor
        \pr_V(X_2)
        \wedge \cdots \wedge
        \pr_V(X_k)
        )
        \\
        &=
        - \pmap (
        1
        \tensor
        X_1
        \tensor
        m_\sweedler{0}
        \tensor
        \pr_V(m_\sweedler{-1})
        \wedge
        \VanEstDiff(X_2 \wedge \cdots \wedge X_k)
        )
        \\
        &=
        - \counit(X_1) \cdot m_\sweedler{0}
        \tensor
        \pr_V(m_\sweedler{-1})
        \wedge
        \VanEstDiff(X_2 \wedge \dots \wedge X_k).
    \end{align*}
\end{proof}

\begin{remark}
Using \autoref{prop:ExplicitFormVE_M} and the deformation retract
without coefficients, see \autoref{thm:VanEstDeformationRetract},
one can show that
$\VanEstDiff_\module{M}$ is actually a morphism of left modules along
$\VanEstDiff\colon \HCa(V) \to \HCE(V)$,
where the left module
structure of $\HCE(V)$ on $\HCE(V,M)$ is given by the right module structure
adding the correct signs.
\end{remark}

\begin{example}[Computing coalgebra cohomology]
    \label{ex:VanEstWithCoeff}%
    \
    \begin{examplelist}
    \item In the case $\module{M} = \ring{R}$ equipped with its
        canonical trivial coaction we have
        \begin{equation}
            \CCa^\bullet(V,\ring{R}) = \CCa^\bullet(V),
            \quad
            \CVanEst^{\bullet}(V,\ring{R}) = \CVanEst^{\bullet}(V),
            \quad
            \CCE^\bullet(V,\ring{R}) = \CCE^\bullet(V).
        \end{equation}
        Similarly, all maps reduce to the case without coefficients as
        considered in \autoref{sec:VanEstDoubleComplex} and
        \autoref{sec:VanEstMaps}.
    \item \label{item:CoalgebraCohomologySubmoduleUV} Consider a
        complemented submodule $U \subseteq V$ such that
        $V = U \oplus U^\perp$, with the canonical $\Sym V$ coaction
        on $\Sym U$ and the trivial coaction on $\Sym U^\perp$ as in
        \autoref{ex:Comodules}~\ref{item:SymUComoduleV}.  By composing
        the van Est deformation retract
        \eqref{diag:VanEstMapsDeformationRetractCoeff} with the
        homotopy retract \eqref{diag:CCESUhomotopRetract} computing
        $\CCE^\bullet(V,\Sym U)$ we obtain a homotopy retract
        \begin{equation}
            \begin{tikzcd}[column sep = large]
                \Anti^\bullet U^\perp
                \arrow[rr,"\VanEstInt_{\Sym U} \circ \iota \tensor \id", shift left = 3pt]
                &
                &\CCa^\bullet(V,\Sym U)
                \arrow[ll,"\pi \tensor \id \circ \VanEstDiff_{\Sym U}", shift left = 3pt]
                \arrow[loop,
                out = -30,
                in = 30,
                distance = 30pt,
                start anchor = {[yshift = -7pt]east},
                end anchor = {[yshift = 7pt]east}
                ]
            \end{tikzcd}
        \end{equation}
        with homotopy given by
        $\VanEstHomo_{\Sym U} + \VanEstInt_{\Sym U} \circ
        \partial^{-1}_{\Sym U} \tensor \id \circ \VanEstDiff_{\Sym
          U}$.  Thus, in particular, we get
        \begin{equation}
            \HCa^\bullet(V,\Sym U) \simeq \Anti^\bullet U^\perp.
        \end{equation}
    \end{examplelist}
\end{example}

\subsection{Coalgebra Cohomology for the Reduced Symmetric Coalgebra}
\label{sec:ReducedSymCoalgebra}

We know that $\redSym^\bullet(V) = \bigoplus_{i=1}^\infty \Sym^i V$
is a sub-coalgebra of $\Sym^\bullet V$ without counit with respect to the reduced shuffle coproduct
$\redshcoprod$.
Moreover, every $\redSym^\bullet V$ comodule $\module{M}$
is a $\Sym V$-comodule in a canonical way.
Now let us consider
\begin{equation}
    \redCCa^\bullet(V,\module{M}) \coloneqq \Tensor^\bullet \redSym V \tensor \module{M}.
\end{equation}
By the definition of $\delta_\Ca$ and $\delta_\module{M}$, see \eqref{eq:dDefTheRealThing} and \eqref{eq:DifferentialDefOnMValued}, this is a subcomplex of
$\CCa^\bullet(V,\module{M})$ and we denote the inclusion by $\iota \colon \redCCa^\bullet(V,\module{M}) \to \CCa^\bullet(V,\module{M})$.
We will call $\redCCa^\bullet(V,\module{M})$ the \emph{reduced coalgebra complex} of $V$ with values in $\module{M}$.

To compute the reduced coalgebra cohomology one can now repeat our construction of the van Est deformation retract by introducing the \emph{reduced van Est double complex}
\begin{equation}
    \redCVanEst^{\bullet,\bullet}(V,\module{M})
    \coloneqq \Tensor^\bullet\redSym V \tensor \Sym V \tensor \module{M} \tensor \Anti V.
\end{equation}
By the concrete formulas \eqref{eq:VerticalVEDifferential} and \eqref{eq:HorizontalVEDifferential} we see that $\redCVanEst^{\bullet,\bullet}$
is in fact a double complex.
Moreover, the homotopies $\hmap$, $\kmap$ as well as the other maps relating $\redCVanEst^{\bullet,\bullet}(V,\module{M})$ with $\redCCa^\bullet(V,\module{M})$
and $\CCE^\bullet(V,\module{M})$ stay well-defined and still satisfy all required algebraic relations.
Additionally, the homotopy retract relating the reduced coalgebra complex to the reduced van Est double complex
becomes a special deformation retract.
Thus by repeating the perturbation arguments we obtain a special deformation retract
\begin{equation}
\begin{tikzcd}
    \CCE^\bullet(V, \module{M})
        \arrow[r,"", shift left = 3pt]
    & \redCCa^\bullet(V,\module{M})
        \arrow[l,"", shift left = 3pt]
        \arrow[loop,
        out = -30,
        in = 30,
        distance = 30pt,
        start anchor = {[yshift = -7pt]east},
        end anchor = {[yshift = 7pt]east},
        ""{swap}
        ]
\end{tikzcd}.
\end{equation}
This shows in particular $\redHCa^\bullet(V,\module{M})
\simeq \Anti^\bullet(V,\module{M})
\simeq \HCa^\bullet(V,\module{M})$,
and therefore the the reduced coalgebra cohomology agrees with the usual coalgebra cohomology.

\subsection{Functoriality with Coefficients}
\label{sec:FunctorialityWithCoeff}

Let $\ring{S}$ be another ring of scalars with a module $W$ over it.
Let $\Phi \colon V \to W$ be a module morphism along a unital ring
morphism $\vartheta \colon \ring{R} \to \ring{S}$.  Moreover, consider
a comodule morphism $\Xi \colon \module{M} \to \module{N}$ along
$\vartheta$ between an $\Sym V$-comodule $\module{M}$ and an
$\Sym W$-comodule $\module{N}$ along $\Phi \colon \Sym V \to \Sym W$,
i.e. we have
\begin{equation}
    \label{eq:ComoduleMorphismAlong}
    \Delta_{\module{N}} \circ \Xi
    =
    (\Phi \tensor \Xi) \circ \Delta_{\module{M}}
    \quad
    \textrm{and}
    \quad
    \Xi(\lambda m)
    =
    \vartheta(\lambda) \Xi(m)
\end{equation}
for all $\lambda \in \ring{R}$ and $m \in \module{M}$.  Note that the
first condition is consistent even though we use tensor products over
different rings thanks to the fact that both maps $\Phi$ and $\Xi$ are
along the same $\vartheta$. Here we extend $\Phi \colon V \to W$ as an
algebra homomorphism to the symmetric algebras along $\vartheta$ as in
\autoref{sec:Functoriality}.

Taking the tensor product with $\module{M}$ and $\module{N}$ of the
deformation retracts in \autoref{prop:CEDeformationRetract} and
\autoref{prop:coalgebraDeformationRetract} yields morphisms
\begin{equation}
    \label{eq:PhiTensorXiRetractCCEvE}
    \begin{tikzcd}[column sep = large, row sep = large]
        \bigl( \CCE^\bullet(V) \tensor \module{M}, 0 \bigr)
        \arrow[r,"\imap^V", shift left = 3pt]
        \arrow[d,"\Phi \tensor \Xi"{swap}]
        &\bigl( \CVanEst^{\bullet}(V) \tensor \module{M},\Double^V \bigr)
        \arrow[l,"\Pmap^V", shift left = 3pt]
        \arrow[loop,
        out = -30,
        in = 30,
        distance = 30pt,
        start anchor = {[yshift = -7pt]east},
        end anchor = {[yshift = 7pt]east},
        "\Hmap^V"{swap}
        ]
        \arrow[d,"\Phi \tensor \Xi"]
        \\
        \bigl( \CCE^\bullet(W) \tensor \module{N}, 0 \bigr)
        \arrow[r,"\imap^W", shift left = 3pt]
        &\bigl( \CVanEst^{\bullet}(W) \tensor \module{N},\Double^W \bigr)
        \arrow[l,"\Pmap^W", shift left = 3pt]
        \arrow[loop,
        out = -30,
        in = 30,
        distance = 30pt,
        start anchor = {[yshift = -7pt]east},
        end anchor = {[yshift = 7pt]east},
        "\Hmap^W"{swap}
        ]
    \end{tikzcd}
\end{equation}
and
\begin{equation}
    \label{eq:PhiTensorXiRetractCcavE}
    \begin{tikzcd}[column sep = large, row sep = large]
        \bigl( \CCa^\bullet(V) \tensor \module{M}, \delta_\Ca \bigr)
        \arrow[r,"\jmap^V", shift left = 3pt]
        \arrow[d,"\Phi \tensor \Xi"{swap}]
        &\bigl( \CVanEst^{\bullet}(V) \tensor \module{M},\Double^V \bigr)
        \arrow[l,"\Qmap^V", shift left = 3pt]
        \arrow[loop,
        out = -30,
        in = 30,
        distance = 30pt,
        start anchor = {[yshift = -7pt]east},
        end anchor = {[yshift = 7pt]east},
        "\Kmap^V"{swap}
        ]
        \arrow[d,"\Phi \tensor \Xi"]
        \\
        \bigl(\CCa^\bullet(W) \tensor \module{N}, \delta_\Ca \bigr)
        \arrow[r,"\jmap^W", shift left = 3pt]
        &\bigl( \CVanEst^{\bullet}(W) \tensor \module{N},\Double^W \bigr)
        \arrow[l,"\Qmap^W", shift left = 3pt]
        \arrow[loop,
        out = -30,
        in = 30,
        distance = 30pt,
        start anchor = {[yshift = -7pt]east},
        end anchor = {[yshift = 7pt]east},
        "\Kmap^W"{swap}
        ]
    \end{tikzcd}
\end{equation}
of homotopy retracts. As before, the map $\Phi \tensor \Xi$ is
well-defined since both are morphisms along the same $\vartheta$.
With this observation the verification of
\eqref{eq:PhiTensorXiRetractCCEvE} and
\eqref{eq:PhiTensorXiRetractCcavE} is clear since all involved maps
commute, as one can check on factorizing tensors using additivity
alone.
Composing these homotopy retracts yields the following compatibility
of $\Xi$ with the van Est maps:
\begin{proposition}
    \label{prop:VanEstfunctorialCoeffs}%
    Let $\Phi\colon V \to W$ be a morphism along
    $\vartheta\colon \ring{R} \to \ring{S}$ and let
    $\Xi \colon \module{M} \to \module{N}$ be a comodule morphism
    along $\Phi$.  Then
    \begin{equation}
        \begin{tikzcd}[column sep = large, row sep = large]
            \bigl( \CCE^\bullet(V,\module{M}), \partial_\module{M} \bigr)
            \arrow[r,"\VanEstInt_\module{M}", shift left = 3pt]
            \arrow[d,"\Phi \tensor \Xi"{swap}]
            &\bigl( \CCa^{\bullet}(V,\module{M}),\delta_\module{M}^V \bigr)
            \arrow[l,"\VanEstDiff_\module{M}", shift left = 3pt]
            \arrow[loop,
            out = -30,
            in = 30,
            distance = 30pt,
            start anchor = {[yshift = -7pt]east},
            end anchor = {[yshift = 7pt]east},"\VanEstHomo_\module{M}^V"{swap}
            ]
            \arrow[d,"\Phi \tensor \Xi"]
            \\
            \bigl( \CCE^\bullet(W,\module{N}), \partial_\module{N} \bigr)
            \arrow[r,"\VanEstInt_\module{N}", shift left = 3pt]
            &\bigl( \CCa^{\bullet}(W,\module{N}),\delta_\module{N}^W \bigr)
            \arrow[l,"\VanEstDiff_\module{N}", shift left = 3pt]
            \arrow[loop,
            out = -30,
            in = 30,
            distance = 30pt,
            start anchor = {[yshift = -7pt]east},
            end anchor = {[yshift = 7pt]east},"\VanEstHomo_\module{N}^W"{swap}
            ]
        \end{tikzcd}
    \end{equation}
    is a morphism of homotopy retracts.
\end{proposition}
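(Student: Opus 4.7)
The strategy is to reduce the claim to the functoriality already obtained in the coefficient-free case (\autoref{prop:FunctorialityCEandGrRetracts}, \autoref{prop:GroupInvariantVanEst}) combined with naturality of the two perturbation steps that produced \autoref{thm:VanEstDeformationRetractCoeff}. Concretely, recall that the van Est deformation retract with coefficients was built in three stages: tensor the coefficient-free homotopy retracts $(\imap,\Pmap,\Hmap)$ and $(\jmap,\Qmap,\Kmap)$ with the comodule, then perturb by $\Bmap$, and finally compose the resulting retracts to obtain $(\VanEstInt_\module{M},\VanEstDiff_\module{M},\VanEstHomo_\module{M})$. I will check that $\Phi \tensor \Xi$ is a morphism of retracts at each of these three stages.

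The first stage is immediate: taking the tensor product of the coefficient-free retracts with $\module{M}$ and $\module{N}$, the diagrams \eqref{eq:PhiTensorXiRetractCCEvE} and \eqref{eq:PhiTensorXiRetractCcavE} commute because $\Phi$ is a bialgebra morphism along $\vartheta$ and $\Xi$ is additive and $\vartheta$-linear; this is the direct analogue of \autoref{prop:FunctorialityCEandGrRetracts} with a passive coefficient slot.

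The crucial new step is to verify $(\Phi \tensor \Xi) \circ \Bmap^V = \Bmap^W \circ (\Phi \tensor \Xi)$. Unfolding the definition of $\Bmap$ and using that $\Xi$ is a comodule morphism along $\Phi$, \eqref{eq:ComoduleMorphismAlong} reads in Sweedler notation
\begin{equation*}
    \Phi(m_\sweedler{-1}) \tensor \Xi(m_\sweedler{0})
    =
    \Xi(m)_\sweedler{-1} \tensor \Xi(m)_\sweedler{0},
\end{equation*}
while $\Phi$ preserves the symmetric grading so that $\Phi \circ \pr_V = \pr_W \circ \Phi$ on $\Sym V$. Combining these gives the desired intertwining. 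Since $\Phi \tensor \Xi$ commutes with $\Double$ by stage one and with $\Bmap$ by the above, it commutes with $\Double_\module{M}$ and, as a merely additive map commutes with any locally finite geometric series provided it commutes with each summand, it commutes with $(\id + \Bmap\Kmap)^{-1}$ and with $(\id + \Bmap\Hmap)^{-1}$. Applying the naturality of the perturbation lemma in the form of \autoref{cor:MorphismOfPerturbations} (as already used in the proof of \autoref{prop:FunctorialityCEandGrRetracts}) therefore yields morphisms of the perturbed retracts $(\jmap_{\bullet},\Qmap_{\bullet},\Kmap_{\bullet})$ and $(\imap,\Pmap_{\bullet},\Hmap)$ from the $V,\module{M}$ side to the $W,\module{N}$ side along $\vartheta$.

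For the final stage, \autoref{thm:VanEstDeformationRetractCoeff} writes each of $\VanEstDiff_\module{M}$, $\VanEstInt_\module{M}$, $\VanEstHomo_\module{M}$ as a composition of maps from the two perturbed retracts, and $\Phi \tensor \Xi$ commutes with every factor by the previous step. Squaring off the diagram then produces the claimed morphism of homotopy retracts. The only step that is not essentially formal is the commutation with $\Bmap$; once this is in hand, the rest is an exact copy of the coefficient-free argument, so I expect no further obstacle.
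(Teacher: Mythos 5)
Your proposal is correct and follows essentially the same route as the paper: the paper's proof is the one-line observation that $\Phi \tensor \Xi$ commutes with all structure maps (being additive suffices), hence with the perturbed homotopies and the van Est maps, which is exactly your three-stage reduction made explicit. Your identification of the commutation with $\Bmap$ via the comodule-morphism identity $\Xi(m)_\sweedler{-1} \tensor \Xi(m)_\sweedler{0} = \Phi(m_\sweedler{-1}) \tensor \Xi(m_\sweedler{0})$ together with $\Phi \circ \pr_V = \pr_W \circ \Phi$ is precisely the one non-formal ingredient the paper leaves implicit.
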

\begin{proof}
    Since $\Phi \tensor \Xi$ commutes with all involved structure maps
    it also commutes with the homotopy and the van Est maps. Again,
    being additive is sufficient to show this.
\end{proof}

The important situation we are interested in is again the case of a
group action by some group $\group{G}$. As before, we need an action
$\phi\colon \group{G} \ni g \mapsto \phi_g \in \Aut(\ring{R})$ on
$\ring{R}$ by ring automorphisms and a corresponding action
$\Phi\colon \group{G} \ni g \mapsto \Phi_g \in \Aut_{\mathbb{Z}}(V)$
of the same group on the module by additive but not necessarily
$\ring{R}$-linear automorphisms of $V$ satisfying
\eqref{eq:PhigphigPhig} as before. In addition, we want a group action
$\Phi^{\module{M}}\colon G \ni g \mapsto \Phi^{\module{M}}_g
\in \End_{\mathbb{Z}}(\module{M})$ on the comodule $\module{M}$ by
comodule morphisms $\Phi^{\module{M}}_g$ along $\Phi_g$,
i.e. satisfying
\begin{equation}
    \label{eq:GactsComoduleM}
    \Delta_{\module{M}} \circ \Phi^{\module{M}}_g
    =
    (\Phi_g \tensor \Phi^{\module{M}}_g) \circ \Delta_{\module{M}}
    \quad
    \textrm{and}
    \quad
    \Phi^{\module{M}}_g (\lambda m) = \phi_g(\lambda) \Phi^{\module{M}}_g(m)
\end{equation}
for all $g \in \group{G}$ and $m \in \module{M}$. All the structure
maps including the van Est maps and the corresponding homotopies will
then be $\group{G}$-equivariant. Hence we still get a deformation
retract
\begin{equation}
    \label{diag:GinvVanEstCoeff}
    \begin{tikzcd}[column sep = large]
        \bigl( \CCE^\bullet(V, \module{M})^\group{G}, \partial_{\module{M}} \bigr)
        \arrow[r,"\VanEstInt_\module{M}", shift left = 3pt]
        &\bigl( \CCa^{\bullet}(V, \module{M})^\group{G},\delta_\Ca \bigr)
        \arrow[l,"\VanEstDiff_\module{M}", shift left = 3pt]
        \arrow[loop,
        out = -30,
        in = 30,
        distance = 30pt,
        start anchor = {[yshift = -7pt]east},
        end anchor = {[yshift = 7pt]east},
        "\VanEstHomo_\module{M}"{swap}
        ]
    \end{tikzcd}
    .
\end{equation}
Again, the invariant cohomologies are isomorphic via the
$\group{G}$-equivariant van Est integration and differentiation maps
since the homotopy $\VanEstHomo$ is $\group{G}$-equivariant, too.

\subsection{Infinitesimal Functoriality with Coefficients}
\label{subsec:InfinitesimalFunctorialityCoeff}

As for the functoriality we also are interested in the infinitesimal
counterpart for the case with coefficients. Hence let again
$\vartheta \in \Der(\ring{R})$ be a derivation of $\ring{R}$ with a
module derivation $\Phi \in \End_{\mathbb{Z}}(V)$ of $V$ along $d$ as
in \eqref{eq:LeibnizModuleDerivationOverDerivation}. In addition we
need now a module derivation
$\Phi_{\module{M}}\colon \module{M} \to \module{M}$ along $\vartheta$
which satisfies
\begin{equation}
    \label{eq:XiInfinitesimalCoMorph}
    \Delta \circ \Phi_{\module{M}}
    =
    \big(
    \Phi \tensor \mathord{\id}_{\module{M}}
    +
    \mathord{\id}_{\Sym V} \tensor \Phi_{\module{M}}
    \big)
    \circ \Delta.
\end{equation}
Again, we note that though neither $\Phi$ nor $\Phi_{\module{M}}$ are
$\ring{R}$-linear, this condition makes sense since we have the same
Leibniz rule for $\Phi$ and $\Phi_{\module{M}}$ along the same
derivation $\vartheta$. Here we have extended $\Phi$ to $\Sym V$ as in
\eqref{eq:ExtendDtoTensorPowersV}.

We extend $\Phi_{\module{M}}$ now to tensor products like
$\Sym V \tensor \module{M}$ by means of the Leibniz rule
\begin{equation}
    \label{eq:DMonTensorProducts}
    \Phi_{\module{M}}(v_1 \vee \cdots \vee v_n \tensor m)
    =
    \Phi(v_1 \vee \cdots \vee v_n) \tensor m
    +
    v_1 \vee \cdots \vee v_n \tensor \Phi_{\module{M}}(m),
\end{equation}
which yields again a well-defined map on the tensor product thanks to
the assumption that $\Phi$ and $\Phi_{\module{M}}$ are module
derivations along the same $\vartheta$. For other combinations of
tensor powers of $V$ with $\module{M}$ we use the analogous Leibniz
rules.

It is then a simple verification on factorizing tensors that this
$\Phi_{\module{M}}$ commutes with all other structure maps of our
complexes. As consequence we arrive at the analog of
\autoref{proposition:VanEstInfinitesimalFunctorial}:
\begin{proposition}
    \label{prop:VanEstfunctorialCoeffsInfinitesimal}%
    Let $\vartheta \in \Der(\ring{R})$ and let $\Phi\colon V \to V$ be
    a module derivation along $\vartheta$. Moreover, let
    $\Phi_{\module{M}}\colon \module{M} \to \module{M}$ be a module
    derivation along $\vartheta$ such that
    \eqref{eq:ComoduleMorphismAlong} is satisfied.  Then
    \begin{equation}
        \begin{tikzcd}[column sep = large, row sep = large]
            \bigl( \CCE^\bullet(V,\module{M}), \partial_\module{M} \bigr)
            \arrow[r,"\VanEstInt_\module{M}", shift left = 3pt]
            \arrow[d,"\Phi_{\module{M}}"{swap}]
            &\bigl( \CCa^{\bullet}(V,\module{M}),\delta_\module{M}^V \bigr)
            \arrow[l,"\VanEstDiff_\module{M}", shift left = 3pt]
            \arrow[loop,
            out = -30,
            in = 30,
            distance = 30pt,
            start anchor = {[yshift = -7pt]east},
            end anchor = {[yshift = 7pt]east},"\VanEstHomo_\module{M}^V"{swap}
            ]
            \arrow[d,"\Phi_{\module{M}}"]
            \\
            \bigl( \CCE^\bullet(W,\module{M}), \partial_\module{M} \bigr)
            \arrow[r,"\VanEstInt_\module{M}", shift left = 3pt]
            &\bigl( \CCa^{\bullet}(W,\module{M}),\delta_\module{M}^W \bigr)
            \arrow[l,"\VanEstDiff_\module{M}", shift left = 3pt]
            \arrow[loop,
            out = -30,
            in = 30,
            distance = 30pt,
            start anchor = {[yshift = -7pt]east},
            end anchor = {[yshift = 7pt]east},"\VanEstHomo_\module{M}^W"{swap}
            ]
        \end{tikzcd}
    \end{equation}
    is a morphism of homotopy retracts along $\vartheta$.
\end{proposition}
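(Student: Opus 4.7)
The plan is to mimic the proof of \autoref{proposition:VanEstInfinitesimalFunctorial}, extended to the coefficient setting, and then invoke the perturbation-lemma functoriality already used in \autoref{prop:VanEstfunctorialCoeffs}. Concretely, I first extend $\Phi_{\module{M}}$ to the full van Est double complex with coefficients $\CVanEst^{\bullet,\bullet}(V,\module{M}) = \Tensor^\bullet \Sym V \tensor \Sym V \tensor \module{M} \tensor \Anti^\bullet V$ by the Leibniz rule \eqref{eq:DMonTensorProducts}, i.e. via
\[
    \Phi_{\module{M}}(X \tensor \phi \tensor m \tensor \xi)
    =
    \Phi(X) \tensor \phi \tensor m \tensor \xi
    + X \tensor \Phi(\phi) \tensor m \tensor \xi
    + X \tensor \phi \tensor \Phi_\module{M}(m) \tensor \xi
    + X \tensor \phi \tensor m \tensor \Phi(\xi),
\]
which is well defined since all four extensions of $\Phi$ and $\Phi_{\module{M}}$ are derivations along the same $\vartheta$.

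The core of the proof is then an entirely mechanical check that $\Phi_{\module{M}}$ commutes with the structure maps $\delta$, $\partial$, $\imap$, $\pmap$, $\jmap$, $\qmap$, $\hmap$, $\kmap$ from \autoref{sec:VanEstDoubleComplex}. For the structure maps that do not involve the coaction on $\module{M}$ this is exactly the computation done (in the coefficient-free case) for \autoref{proposition:VanEstInfinitesimalFunctorial}, tensored with $\mathord{\id}_{\module{M}}$. The only genuinely new ingredient is the perturbation $\Bmap$, which involves the $\Sym V$-coaction $\Delta_{\module{M}}$. Here commutativity of $\Phi_{\module{M}}$ with $\Bmap$ is precisely the infinitesimal comodule-morphism condition \eqref{eq:XiInfinitesimalCoMorph}: applying $\Phi_{\module{M}}$ after $\Bmap$ produces terms where the derivation hits either $\pr_V(m_\sweedler{-1})$ or $m_\sweedler{0}$, and \eqref{eq:XiInfinitesimalCoMorph} is exactly what is needed to identify this with $\Bmap \circ \Phi_{\module{M}}$.

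Once these commutation identities are in place, the rest follows formally. The extended maps $\Pmap_\module{M}$, $\Qmap_\module{M}$, $\Hmap$ and $\Kmap_\module{M}$ arise from $\pmap$, $\qmap$, $\hmap$, $\kmap$ via the locally-nilpotent geometric series in the perturbations $\partial$, $\delta$ and $\Bmap$; since $\Phi_{\module{M}}$ is additive and commutes with each individual summand, it commutes with the resulting series as well (this is the same observation made in the proof of \autoref{prop:FunctorialityCEandGrRetracts}). Consequently $\Phi_{\module{M}}$ commutes with the composed homotopy retracts built in \autoref{thm:VanEstDeformationRetractCoeff}, and in particular with $\VanEstInt_{\module{M}} = \Qmap_{\module{M}} \imap$, $\VanEstDiff_{\module{M}} = \Pmap_{\module{M}} \jmap_{\module{M}}$ and $\VanEstHomo_{\module{M}} = \Qmap_{\module{M}} \Hmap \jmap_{\module{M}}$, which is the claimed morphism of homotopy retracts along $\vartheta$.

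The main obstacle, and the only place where something nontrivial must be verified, is the identity $\Phi_{\module{M}} \Bmap = \Bmap \Phi_{\module{M}}$, because $\Bmap$ is the one map that couples the coalgebra structure with the coaction on $\module{M}$ and because $\Phi$ and $\Phi_{\module{M}}$ fail to be $\ring{R}$-linear. It should nonetheless reduce, after applying Sweedler notation and carefully distributing the derivation over the tensor factors, to exactly \eqref{eq:XiInfinitesimalCoMorph} combined with the fact that $\pr_V$ is compatible with $\Phi$. Everything else is a bookkeeping exercise of the same flavour as in the group-action case \autoref{prop:VanEstfunctorialCoeffs}, with the Leibniz rule replacing multiplicativity throughout.
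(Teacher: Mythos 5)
Your proposal is correct and follows essentially the same route as the paper: extend $\Phi_{\module{M}}$ by the Leibniz rule to the coefficient van Est double complex, verify commutation with all structure maps on factorizing tensors (with the compatibility \eqref{eq:XiInfinitesimalCoMorph} handling the one new map $\Bmap$), and then pass to the perturbed maps via the geometric series using additivity alone. Your explicit identification of $\Phi_{\module{M}}\Bmap = \Bmap\Phi_{\module{M}}$ as the only genuinely new verification is exactly the point the paper leaves implicit.
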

\begin{proof}
    Since $\Phi_{\module{M}}$ commutes with all structure maps of the
    complexes, it also commutes with the homotopy and the van Est
    maps.
\end{proof}

The analog of the group action is now again a Lie algebra action of
some Lie algebra $\liealg{g}$ over $\mathbb{Z}$. As in
\autoref{subsec:InfinitesimalFunctoriality} we require a Lie
algebra action $\phi\colon \liealg{g} \to \Der(\ring{R})$ together
with Lie algebra actions $\Phi\colon \liealg{g}
\to \End_{\mathbb{Z}}(V)$ and $\Phi^{\module{M}}\colon \liealg{g}
\to \End_{\mathbb{Z}}(\module{M})$ satisfying the Leibniz rule
\eqref{eq:LeibnizForLieAlgebraAction} and
\begin{equation}
    \label{eq:LeibnizRulesgActsOnVandOnM}
    \Phi^{\module{M}}_\xi(\lambda m)
    =
    \phi_\xi(\lambda) m + \lambda \Phi^{\module{M}}_\xi(m)
\end{equation}
for $\xi \in \liealg{g}$ and $m \in \module{M}$. In addition we need
the compatibility
\begin{equation}
    \label{eq:LieAlgCompatibleComoduleStructure}
    \Delta \circ \Phi^{\module{M}}_\xi
    =
    (\Phi_\xi \tensor \mathord{\id}
    +
    \mathord{\id} \tensor \Phi^{\module{M}}_\xi)
    \circ
    \Delta
\end{equation}
with the $\Sym V$-comodule structure of $\module{M}$, being the
infinitesimal counterpart of the compatibility
\eqref{eq:ComoduleMorphismAlong}. Then all the structure maps
including the van Est maps and the corresponding homotopies will be
$\liealg{g}$-equivariant. Hence we still get a deformation retract
\begin{equation}
    \label{diag:ginvVanEstCoeff}
    \begin{tikzcd}[column sep = large]
        \bigl( \CCE^\bullet(V, \module{M})^\liealg{g}, \partial_{\module{M}} \bigr)
        \arrow[r,"\VanEstInt_\module{M}", shift left = 3pt]
        &\bigl( \CCa^{\bullet}(V, \module{M})^\liealg{g},\delta_\Ca \bigr)
        \arrow[l,"\VanEstDiff_\module{M}", shift left = 3pt]
        \arrow[loop,
        out = -30,
        in = 30,
        distance = 30pt,
        start anchor = {[yshift = -7pt]east},
        end anchor = {[yshift = 7pt]east},
        "\VanEstHomo_\module{M}"{swap}
        ]
    \end{tikzcd}
    .
\end{equation}
Again, the invariant cohomologies are isomorphic via the
$\liealg{g}$-equivariant van Est integration and differentiation maps
since the homotopy $\VanEstHomo$ is $\liealg{g}$-equivariant, too.

\part{Hochschild Cohomology in Differential Geometry}
\label{part:HochschildCohomology}

In \autoref{part:CoalgebraCohomology} we discussed the relation of
coalgebra cohomology $\CCa^\bullet(V)$ to Chevalley-Eilenberg
cohomology $\CCE^\bullet(V)$ for a given $\ring{R}$-module $V$.  We
constructed the van Est deformation retract connecting
$\CCa^\bullet(V)$ and $\CCE^\bullet(V)$, which in particular yielded a
quasi-isomorphism $\VanEstDiff$.  In this second part we want to use
the van Est deformation retract to compute differential Hochschild
cohomologies of various geometric situations.  In fact, in all these
cases we will actually obtain again a deformation retract.  This is
achieved by using an adapted global symbol calculus to identify the
respective Hochschild complex with a specific coalgebra complex.

In particular, by choosing $\ring{R} = \Cinfty(M)$ and
$V = \Secinfty(TM)$, with $M$ a smooth manifold, we will obtain the
classical Hochschild-Kostant-Rosenberg Theorem for smooth manifolds
and differentiable cochains, see e.g. \cite{dewilde.lecomte:1995a,
  vey:1975a, gutt:1980b}.

\section{Differential Hochschild Complex}
\label{sec:HochschildComplex}

For the algebraic background we recall the well-known definition: let
$\algebra{A}$ be an associative algebra over an auxiliary ring
$\field{k}$ of scalars, typically with the extra assumption that
$\field{Q} \subseteq \field{k}$. Moreover, let $\module{M}$ be an
$(\algebra{A}, \algebra{A})$-bimodule. We always assume that modules
over $\algebra{A}$ have an underlying structure of a
$\field{k}$-module such that the $\algebra{A}$-module operations are
$\field{k}$-bilinear. Then the algebraic Hochschild complex of
$\algebra{A}$ with values in the bimodule $\module{M}$ is given by
\begin{equation}
    \label{eq:HCAlgebraically}
    \HC^\bullet(\algebra{A}, \module{M})
    \coloneqq
    \bigoplus_{k=0}^\infty \Hom_{\field{k}}
    \big(
    \algebra{A}^{\tensor k}, \module{M}
    \big),
\end{equation}
where all tensor products are taken over $\field{k}$. By convention,
$\algebra{A}^{\tensor 0} = \field{k}$ and hence
$\HC^0(\algebra{A}, \module{M}) = \module{M}$.  We will identify
linear maps in the tensor products with the corresponding multilinear
maps on the Cartesian products in the following.  The Hochschild
differential
$\delta\colon \HC^\bullet(\algebra{A}, \module{M}) \to
\HC^{\bullet+1}(\algebra{A}, \module{M})$ is then given by
\begin{equation}
    \label{eq:HochschildDifferentialAlgebraically}
    \begin{split}
        &(\delta \phi)(a_1, \ldots, a_{k+1})
        =
        a_1 \cdot \phi(a_2, \ldots, a_{k+1})
        \\
        &\quad
        +
        \sum_{i=1}^k
        (-1)^i \phi(a_1, \ldots, a_ia_{i+1}, \ldots, a_{k+1})
        +
        (-1)^{k+1} \phi(a_1, \ldots, a_k) \cdot a_{k+1},
    \end{split}
\end{equation}
where $a_1, \ldots, a_{k+1} \in \algebra{A}$ and we use the left and
right module structure on the values in $\module{M}$ in the first and
last term.

In the case $\module{M} = \algebra{A}$ the Hochschild complex
$\HC^\bullet(\algebra{A}) \coloneqq
\HC^\bullet(\algebra{A},\algebra{A})$ can be equipped with the
so-called \emph{cup product}
\begin{equation}
    \phi \cup \psi \coloneqq \mu \circ (\phi \tensor \psi),
\end{equation}
for $\phi,\psi \in \HC^\bullet(\algebra{A})$ and $\mu$ denoting the
multiplication of $\algebra{A}$.  With this product
$\HC^\bullet(\algebra{A})$ becomes a differential graded algebra,
i.e. it holds
\begin{equation}
    \delta(\phi \cup \psi)
    =
    \delta \phi \cup \psi
    +
    (-1)^n \phi \cup \delta \psi
\end{equation}
for $\phi \in \HC^n(\algebra{A})$ and
$\psi \in \HC^\bullet(\algebra{A})$.  Note that for an
$\algebra{A}$-bimodule $\module{M}$ there is also a pairing
\begin{equation}
    \cup_\module{M}
    \colon
    \HC^k (\algebra{A}) \tensor \HC^\ell (\algebra{A},\module{M})
    \to
    \HC^{k+\ell} (\algebra{A},\module{M})
\end{equation}
defined by
\begin{equation}
    (\phi\cup_\module{M} \psi)(a_1,\dots,a_{k+l})
    =
    \phi(a_1,\dots,a_k) \cdot \psi(a_{k+1},\dots, a_{k+\ell})
\end{equation}
which turns $\HC^\bullet(\algebra{A},\module{M})$ into a differential
graded $\HC^\bullet(\algebra{A})$-left module, i.e. for
$\phi\in \HC^k(\algebra{A})$, $\xi\in \HC^\ell$ and
$\psi\in \HC^m(\algebra{A},\module{M})$, we have
\begin{equation}
    \delta(\phi\cup_\module{M}\Psi)
    =
    (\delta\phi)\cup_\module{M}\Psi +(-1)^{k}\phi\cup_\module{M}(\delta \psi)
\end{equation}
and
\begin{equation}
    \phi\cup_{\module{M}}(\xi\cup_\module{M}\psi)
    =
    (\phi\cup\xi) \cup_\module{M} \psi.
\end{equation}

From now on let $M$ be a smooth manifold.  In differential geometry
the \emph{algebraic} Hochschild complex of $\algebra{A} = \Cinfty(M)$
is hardly of any interest. Instead, one requires the Hochschild
cochains to be multidifferential operators.

\subsection{Differential Operators and their Symbols}

In this and the following section we recall some well-known facts on
the global symbol calculus for (multi-)differential operators based on
the use of a covariant derivative. A systematic treatment is hard to
find in the literature, see \cite[Chap IV, §9]{palais:1965a}, we refer
to e.g. \cite{bordemann.neumaier.pflaum.waldmann:2003a} or
\cite[Appendix~A]{waldmann:2007a} for a formulation for differential
operators matching our needs rather closely.

Differential operators on sections of a vector bundle $E \to M$ with
values in sections of another vector bundle $F \to M$ can be defined
in many equivalent ways.  The perhaps most elegant and intrinsic
definition is by Grothendieck's inductive and algebraic
characterization \cite[Sect.~16.8]{grothendieck:1967a} that
$D \colon \Secinfty(E) \to \Secinfty(M)$ is of order zero if it is
$\Cinfty(M)$-linear, and of order $n \in \mathbb{N}$ if the commutator
$[D, f]$ with all left multiplications by functions $f \in \Cinfty(M)$
is of order $k-1$.  Then the differential operators $\Diffop^n(E;F)$
of order $n$ become a $\Cinfty(M)$-module from the left yielding the
filtered space of all differential operators
$\Diffop^\bullet(E;F) = \bigcup_{n=0}^\infty \Diffop^n(E;F)$ as
subspace of all linear maps between the sections.  It can then be
shown that a differential operator localizes to differential operators
$D_U\colon \Secinfty(E\at{U}) \to \Secinfty(F\at{U})$ for all open
$U \subseteq M$, i.e. $D$ becomes a sheaf morphism.  Moreover, in
local coordinates a differential operator is what one expects.
However, we are interested in a global symbol calculus.

In the following we will always consider $\Diffop(E;F)$ as a
$\Cinfty(M)$-bimodule with left action given by multiplication
$(f \acts D)(s) \coloneqq f \cdot D(s)$ and right action given by
$(D \racts f)(s) \coloneqq D(f \cdot s)$.

To study the symbol calculus fix a torsion-free covariant derivative
$\nabla$ on $M$.  Moreover, let $\nabla^E$ be a covariant derivative
for $E$.  On the symmetric $E$-valued $n$-forms
$\Secinfty(\Sym^n T^*M \tensor E)$ on $M$ one defines the symmetrized
covariant derivative
\begin{equation}
    \label{eq:SymDEvaluedDef}
    \SymD^E
    \colon
    \Secinfty(\Sym^n T^*M \tensor E)
    \to
    \Secinfty(\Sym^{n+1} T^*M \tensor E)
\end{equation}
by
\begin{equation}
    \label{eq:SymDDef}
    (\SymD^E \xi)(X_0, \ldots, X_n)
    =
    \sum_{i=0}^n
    \nabla^E_{X_i} \xi(X_0, \stackrel{i}{\ldots}, X_n)
    -
    \sum_{i \ne j}
    \xi(\nabla_{X_i} X_j,
    X_0, \stackrel{i}{\ldots} \, \stackrel{j}{\ldots}, X_n),
\end{equation}
where $X_0, \ldots, X_n \in \Secinfty(TM)$ are tangent vector fields
on $M$ and $\stackrel{i}{\ldots}$ means to omit the $i$-th
term. Indeed, a simple check shows that the right hand side is
$\Cinfty(M)$-linear in all arguments and hence a tensor field
$\SymD^E \xi \in \Secinfty(\Sym^{n+1} T^*M \tensor E)$ as claimed.  As
a particular but important case we can also omit the additional vector
bundle $E$ and obtain the symmetrized covariant derivative
\begin{equation}
    \label{eq:SymDWithoutE}
    \SymD
    \colon
    \Secinfty(\Sym^n T^*M)
    \to
    \Secinfty(\Sym^{n+1} T^*M),
\end{equation}
now only making use of $\nabla$.  The pre-factors in
\eqref{eq:SymDDef} are chosen in such a way that $\SymD^E$ becomes a
derivation of the symmetric tensor product. More precisely, we extend
$\SymD^E$ to a linear map
\begin{equation}
    \label{eq:SymDEOnCartProduct}
    \SymD^E
    \colon
    \prod_{n=0}^\infty \Secinfty(\Sym^n T^*M \tensor E)
    \to
    \prod_{n=0}^\infty \Secinfty(\Sym^n T^*M \tensor E).
\end{equation}
Then we have the Leibniz rule
\begin{equation}
    \label{eq:LeibnizRuleSymDE}
    \SymD^E(\xi \vee \eta)
    =
    \SymD \xi \vee \eta + \xi \vee \SymD^E \eta
\end{equation}
for $\xi \in \prod_{n=0}^\infty \Secinfty(\Sym^n T^*M)$ and
$\eta \in \prod_{n=0}^\infty \Secinfty(\Sym^n T^*M \tensor E)$.  On a
function $f \in \Cinfty(M)$ we have $\SymD f = \D f$, viewed as a
symmetric one-form.

In a next step we need symmetric multivector fields, i.e. sections
$X \in \Secinfty(\Sym^k TM)$. More generally, we consider
$X \tensor A \in \Secinfty(\Sym^k TM \tensor \Hom(E, F))$.  We have a
natural pairing
\begin{equation}
    \label{eq:SymmetricInsertion}
    \inss
    \colon
    \Secinfty(\Sym^n TM \tensor \Hom(E, F))
    \times
    \Secinfty(\Sym^m T^*M \tensor E)
    \to
    \Secinfty(\Sym^{m - n} T^*M \tensor F)
\end{equation}
defined by symmetrically inserting $X$ into the $\Sym^m T^*M$-part and
applying $A$ to the $E$-part. More precisely, the pre-factors are
fixed by specifying $\inss$ on factorizing sections
$X = X_1 \vee \cdots \vee X_n$ with
$X_1, \ldots, X_n \in \Secinfty(TM)$ and $A \in \Secinfty(\Hom(E, F))$
and $\xi \in \Secinfty(\Sym^m T^*M)$ and $s \in \Secinfty(E)$ by
\begin{equation}
    \label{eq:inssXalphas}
    \inss(X_1 \vee \cdots \vee X_n \tensor A) (\xi \tensor s)
    =
    \inss(X_1) \cdots \inss(X_n) \xi \tensor A(s),
\end{equation}
with the usual insertion derivation $\inss(X_i)\xi = \xi(X_i, \cdots)$
into the first argument. Of course, if $n > m$ then $\inss(X)$
vanishes on $\Secinfty(\Sym^m T^*M)$.  With this we can finally
introduce the global symbol calculus:
\begin{proposition}[Symbol calculus for differential operators]
    \label{proposition:SymbolCalculusDiffops}%
    Let $\pr_E\colon E \to M$ be a vector bundle over $M$ with
    covariant derivative $\nabla^E$.  Moreover, let
    $\pr_F\colon F \to M$ be another vector bundle, and let $\nabla$
    be a torsion-free covariant derivative on $M$.
    \begin{propositionlist}
    \item \label{item:OpXDef} For every fixed degree $n$ and every
        $X \tensor A \in \Secinfty(\Sym^n TM \tensor \Hom(E, F))$ the
        map
        \begin{equation}
            \label{eq:OpXDef}
            \Op(X \tensor A) \colon \Secinfty(E) \to \Secinfty(F)
        \end{equation}
        defined by
        \begin{equation}
            \label{eq:OpXOnSection}
            \Op(X \tensor A)s
            =
            \frac {1}{n!} \inss(X \tensor A) (\SymD^E)^n s
        \end{equation}
        is a differential operator of order $n \in \mathbb{N}_0$.
    \item \label{item:OpIsIso} The combined map for all degrees
        \begin{equation}
            \label{eq:OpSymToDiffop}
            \Op
            \colon
            \Secinfty(\Sym^\bullet TM \tensor \Hom(E, F))
            \to
            \Diffop^\bullet(E; F)
        \end{equation}
        is a left $\Cinfty(M)$-linear isomorphism of filtered modules,
        where on the left hand side we take the induced filtration
        from the grading.
    \end{propositionlist}
\end{proposition}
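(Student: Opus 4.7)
The plan is to prove both parts together by induction on the order $n$, with the symmetric Leibniz rule \eqref{eq:LeibnizRuleSymDE} doing most of the work. The base case $n=0$ is immediate: $\Op(A)s = A(s)$ is $\Cinfty(M)$-linear, hence a differential operator of order zero, and $\Op\colon \Secinfty(\Hom(E,F)) \to \Diffop^0(E;F)$ is a well-known isomorphism. The $\Cinfty(M)$-linearity from the left in the statement of \ref{item:OpIsIso} is clear on generators of the form $X \tensor A$ since $\SymD^E$ depends $\Cinfty(M)$-linearly on nothing in the tensor factor $X \tensor A$ and $\inss$ is left $\Cinfty(M)$-linear in its first argument.

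For \ref{item:OpXDef}, I would first show that the symmetric Leibniz rule \eqref{eq:LeibnizRuleSymDE}, applied iteratively to $(\SymD^E)^n(fs)$, yields an expansion
\begin{equation*}
    (\SymD^E)^n(fs)
    =
    \sum_{k=0}^{n}
    \binom{n}{k}
    (\SymD^k f) \vee (\SymD^E)^{n-k} s,
\end{equation*}
which is the binomial expansion one expects since $\SymD$ and $\SymD^E$ are both graded derivations of the symmetric tensor product. Inserting $X \tensor A$ via $\inss$ and using that $\inss(X)$ distributes over the symmetric factors then gives an explicit formula for $\Op(X \tensor A)(fs)$ in terms of lower-order operators. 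Taking the commutator with left multiplication by $f$ removes the $k=0$ term, so $[\Op(X \tensor A), f]$ is a finite sum of $\Op$ applied to sections in $\Secinfty(\Sym^{n-k}TM \tensor \Hom(E,F))$ with $k\geq 1$. By Grothendieck's inductive definition this shows that $\Op(X \tensor A)$ has order $\leq n$.

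For \ref{item:OpIsIso}, the crucial observation is that the above expansion identifies the principal symbol: the top-order piece of $\Op(X \tensor A)(fs)$ in the variable $f$, computed by inserting $f^n/n!$ and extracting the contribution containing $(\D f)^{\vee n}$, is exactly $\inss(X)(\D f)^{\vee n} \tensor A(s) = X(\D f, \ldots, \D f) \cdot A(s)$. Thus the classical principal symbol of $\Op(X \tensor A)$ in $\Secinfty(\Sym^n TM \tensor \Hom(E,F))$ is $X \tensor A$ itself. This immediately gives injectivity by induction on $n$: if $\Op(X \tensor A) = 0$, its principal symbol vanishes, so $X \tensor A = 0$ (in the top degree, reducing to lower degrees by induction). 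For surjectivity, given $D \in \Diffop^n(E;F)$, take its principal symbol $\sigma \in \Secinfty(\Sym^n TM \tensor \Hom(E,F))$ and observe that $D - \Op(\sigma)$ has strictly lower order; induction completes the argument.

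The main obstacle I expect is the clean computation showing that the principal symbol of $\Op(X \tensor A)$ equals $X \tensor A$; the factor $1/n!$ in \eqref{eq:OpXOnSection} is chosen precisely so this works out, but verifying it requires care in organising the binomial expansion and tracking how $\inss$ interacts with the symmetric product. Once this key identification is in place the bijectivity of $\Op$ reduces to a routine induction on the filtration degree, and all the covariant-derivative contributions in lower orders can be absorbed into the inductive hypothesis without further computation.
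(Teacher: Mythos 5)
Your proposal is correct and follows essentially the same route as the paper's (much terser) proof: identify the leading symbol of $\Op(X \tensor A)$ as $X \tensor A$ itself and then construct a preimage order by order, with injectivity following from the same symbol computation. The only difference is cosmetic — you derive the key binomial expansion globally from the Leibniz rule \eqref{eq:LeibnizRuleSymDE}, whereas the paper appeals to the local description of $\Op(X)$ — and your version is in fact the more self-contained write-up.
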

\begin{proof}
    The idea of the proof is that the \emph{leading symbol} of
    $\Op(X)$ reproduces the highest degree of $X$, a fact which can be
    inferred from the local description of $\Op(X)$. This results in
    an order-by-order construction of a preimage $X$ for a given
    differential operator, thus showing surjectivity. The injectivity
    is clear.
\end{proof}

For the scalar case $E = F = M \times \mathbb{R}$ we simply denote the
differential operators by $\Diffop^\bullet(M)$.  In this case the
above isomorphism is given by
\begin{equation}
    \Op
    \colon
    \Secinfty(\Sym^\bullet TM)
    \to
    \Diffop^\bullet(M).
\end{equation}

\begin{remark}[Sections of tensor products]
    \label{remark:SectionsTensorProducts}%
    To make our algebraic considerations available we have to use now
    the Serre-Swan theorem for vector bundles over manifolds: all the
    section spaces $\Secinfty(E)$ for any vector bundle $E$ are
    finitely generated projective modules over the functions. With
    this statement, sections of tensor products of bundles become
    tensor products of the sections of the individual bundles, i.e. we
    have
    \begin{equation}
        \label{eq:SerreSwanIsCool}
        \Secinfty(E \tensor F)
        =
        \Secinfty(E) \tensor[\Cinfty(M)] \Secinfty(F).
    \end{equation}
    Locally, this is obvious but we are interested in a global
    equality and therefore one needs the Serre-Swan theorem to see
    that the tensor products on the right indeed span the left hand
    side globally.  We will use this observation throughout the
    following without further mentioning.
\end{remark}

\subsection{Global Symbol Calculus for Multidifferential Operators}
\label{sec:GlobalSymbolCalculusMultiDiffops}

For multidifferential operators one proceeds essentially the same way.
Again, one has an intrinsic algebraic characterization of
multidifferential operators with a corresponding local description,
see e.g. \cite[App.~A]{waldmann:2007a} for a textbook.  Let us start
with the following simple situation: We consider multidifferential
operators of the form
\begin{equation}
    \label{eq:MultiDiffopScalar}
    D
    \colon
    \Cinfty(M) \times \cdots \times \Cinfty(M)
    \to
    \Cinfty(M),
\end{equation}
with $k$ arguments.  We have now a multi-order
$N = (n_1, \ldots, n_k) \in \mathbb{N}_0^k$ indicating the number of
differentiations in each argument.  We denote the multidifferential
operators on $M$ with $k$ arguments and multi-order $N$ by
$\Diffop^N(M)$, leading to a filtered $\Cinfty(M)$-submodule
\begin{equation}
    \label{eq:HCndiff}
    \HCdiff^k(M)
    \coloneqq
    \bigcup_{N \in \mathbb{N}_0^k}
    \Diffop^N(M)
\end{equation}
of the algebraic Hochschild complex $\HC^k(\Cinfty(M))$.  Note that we
suppress the symbol of smooth functions in our notation.  The needed
symbols will consist of a copy of the symmetric contravariant tensor
fields for each argument.  Thus we consider
\begin{equation}
    \label{eq:MultiSymbols}
    \Tensor^\bullet \Sym \Secinfty(TM)
    \coloneqq
    \bigoplus_{k=0}^\infty
    \bigoplus_{N \in \mathbb{N}_0^k}
    \Secinfty(\Sym^{n_1} TM \tensor \cdots \tensor \Sym^{n_k} TM).
\end{equation}
The pairing $\inss$ can then be extended to a pairing of
$\Tensor^k\Sym\Secinfty(TM)$ with $\Tensor^k\Sym \Secinfty(T^*M)$ by
specifying
\begin{equation}
    \label{eq:inssXonetoXn}
    \inss(X_1 \tensor \cdots \tensor X_k)
    =
    \inss(X_1) \tensor \cdots \tensor \inss(X_k)
\end{equation}
for
$X_1 \in \Sym^\bullet\Secinfty(TM), \dotsc, X_k \in
\Sym^\bullet\Secinfty(TM)$.
Putting things together we obtain the following symbol calculus:
\begin{proposition}[Symbol calculus for multidifferential operators I]
    \label{proposition:SymbolCalculusMultiDiffopsScalar}%
    Let $\nabla$ be a torsion-free covariant derivative on $M$.
    \begin{propositionlist}
    \item \label{item:OpXDefScalar} For every
        $X \in \Sym^{n_1} \Secinfty(TM) \tensor \cdots \tensor
        \Sym^{n_k} \Secinfty(TM) \subseteq \Tensor^k \Sym
        \Secinfty(TM)$ with $N = (n_1, \dotsc, n_k)$ the map
        \begin{equation}
            \label{eq:MultiOpXDef}
            \Op(X)
            \colon
            \Cinfty(M) \times \dots \times \Cinfty(M)
            \to
            \Cinfty(M)
        \end{equation}
        defined by
        \begin{equation}
            \label{eq:OpX}
            \Op(X)(f_1, \ldots, f_k)
            =
            \frac{1}{n_1! \cdots n_k!}
            \inss(X) \big(
            \SymD^{n_1} f_1 \tensor \cdots \tensor \SymD^{n_k} f_k
            \big),
        \end{equation}
        is a multidifferential operator of multi-order
        $N \in \mathbb{N}_0^k$.
    \item \label{item:MultiOpIsIso} The combined map
        \begin{equation}
            \label{eq:OpHCdiff}
            \Op
            \colon
            \Tensor^{k}\Sym\Secinfty(TM)
            \to
            \HCdiff^k(M)
        \end{equation}
        is an isomorphism of filtered $\Cinfty(M)$-modules, where on
        the left hand side we take the induced filtration from the
        grading.
    \end{propositionlist}
\end{proposition}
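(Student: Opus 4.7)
The plan is to reduce the multiargument statement to the single-argument case already proven in \autoref{proposition:SymbolCalculusDiffops}. The key observation is that on factorizing symbols $X = X_1 \tensor \cdots \tensor X_k$ with $X_i \in \Sym^{n_i}\Secinfty(TM)$ the defining formula \eqref{eq:OpX} factors as
\begin{equation*}
    \Op(X)(f_1,\dots,f_k)
    =
    \Op(X_1)(f_1) \cdot \Op(X_2)(f_2) \cdots \Op(X_k)(f_k),
\end{equation*}
where on the right $\Op(X_i)$ is the scalar symbol operator of \autoref{proposition:SymbolCalculusDiffops} built from the bundle $E = F = M \times \mathbb{R}$ with the flat connection. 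Since $\inss$ and $\SymD$ act independently on distinct tensor slots, this identity is a direct unpacking of \eqref{eq:inssXonetoXn} together with \eqref{eq:OpXOnSection}. From here part \ref{item:OpXDefScalar} is immediate: each $\Op(X_i)(\argument)$ is a scalar differential operator of order $n_i$ by \autoref{proposition:SymbolCalculusDiffops}~\ref{item:OpXDef}, and slotwise multiplication of differential operators is again multidifferential of multi-order $N$. Linearity in $X$ extends the conclusion to all of $\Sym^{n_1}\Secinfty(TM) \tensor \cdots \tensor \Sym^{n_k}\Secinfty(TM)$.

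For part \ref{item:MultiOpIsIso} I would argue inductively on $k$. The left $\Cinfty(M)$-linearity is clear from the formula. For bijectivity I would extract a multi-leading-symbol map: a multidifferential operator $D$ of multi-order $\leq N$ has a well-defined leading symbol $\sigma_N(D) \in \Secinfty(\Sym^{n_1}TM \tensor \cdots \tensor \Sym^{n_k}TM)$ obtained by applying the scalar leading-symbol construction slot by slot. By the scalar case, $\sigma_N \circ \Op$ is the identity on the top graded piece, hence $\Op$ preserves filtrations and induces the identity on associated graded modules. The usual order-by-order inductive argument (as in the proof of \autoref{proposition:SymbolCalculusDiffops}) then yields both injectivity and surjectivity: given $D \in \HCdiff^k(M)$ of multi-order $\leq N$, subtract $\Op(\sigma_N(D))$ to lower one component of the multi-order and iterate on the finitely many nontrivial components.

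The main obstacle I expect is bookkeeping the multi-order filtration carefully enough that the inductive step terminates: unlike the scalar filtration indexed by $\mathbb{N}_0$, the multi-order lives in $\mathbb{N}_0^k$ with only a partial order, so one must choose a termination statement such as ``each individual $n_i$ strictly decreases'' and ensure the subtraction step indeed kills the top piece in the chosen slot without raising the order in any other slot. This is a routine verification once one notes that $\Op$ of a symbol concentrated in multidegree $N$ produces an operator whose multi-order is exactly $N$, which in turn follows from the local coordinate expression of $\SymD$ combined with the fact that $\nabla$ introduces only lower-order correction terms. Finally, $\Cinfty(M)$-linearity of $\Op$ together with the fact that both sides are sheaves on $M$ allows all verifications to be carried out locally, where \eqref{eq:OpX} reduces to the familiar polynomial symbol calculus.
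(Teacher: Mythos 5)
Your proposal is correct and matches the paper's intended argument: the paper states this proposition without a written proof, deferring to the order-by-order leading-symbol construction sketched for \autoref{proposition:SymbolCalculusDiffops}, and your reduction via the factorization $\Op(X_1 \tensor \cdots \tensor X_k)(f_1,\ldots,f_k) = \Op(X_1)(f_1)\cdots\Op(X_k)(f_k)$ followed by a slotwise leading-symbol induction is exactly that argument (the same factorization is used implicitly later in the proof of \autoref{thm:SymbolCalculusIsoDifferential}). The multi-order bookkeeping you flag is real but routine --- inducting on the total order $|N|$ and subtracting all leading symbols of maximal total degree terminates, since locally an operator in $\Diffop^N(M)$ with vanishing top symbol decomposes into pieces of strictly smaller multi-order.
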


We can generalize the above situation.  For this consider a smooth map
$\phi \colon N \to M$ and let $E,F \to N$ be vector bundles over $N$.
Now we are interested in multidifferential operators of the form
\begin{equation}
    \label{eq:MultiDiffopVB}
    D
    \colon
    \Cinfty(M) \times \cdots \times \Cinfty(M)
    \to
    \Diffop(E; F),
\end{equation}
with $k$ arguments and values in the differential operators
$\Diffop(E; F)$ between vector bundles $E$ and $F$ over $N$, where we
consider $\Diffop(E; F)$ as a $\Cinfty(M)$-bimodule using the
pull-back $\phi^*\colon \Cinfty(M) \to \Cinfty(N)$.  We denote the
$\Diffop(E; F)$-valued multidifferential operators on $M$ with $k$
arguments and multi-order $N$ by $\Diffop^N(M; \Diffop(E; F))$,
leading to a filtered $\Cinfty(M)$-submodule
\begin{equation}
    \label{eq:HCndiff_VB}
    \HCdiff^k(M, \Diffop(E; F))
    \coloneqq
    \bigcup_{N \in \mathbb{N}_0^k}
    \Diffop^N(M; \Diffop(E; F))
\end{equation}
of the algebraic Hochschild complex $\HC^k(\Cinfty(M), \Diffop(E; F))$.

\begin{proposition}[Symbol calculus for multidifferential operators II]
    \label{prop:SymbolCalculusMultiDiffopsEF}%
    Let $M$ be a manifold and $\nabla$ a torsion-free covariant
    derivative on $M$.  Moreover, let $E,F \to M$ be vector bundles
    over $M$, with $\nabla^E$ a covariant derivative for $E$.
    \begin{propositionlist}
    \item \label{item:MultiOpXDef_VB} For every
        $X \in \Sym^{n_1} \Secinfty(TM)\tensor \cdots
        \tensor\Sym^{n_k} \Secinfty(TM) \subseteq \Tensor^k \Sym
        \Secinfty(TM)$, every $Y \in \Sym^n\Secinfty(TM) $ and all
        $A \in \Secinfty(\Hom(E,F))$ the map
        \begin{equation}
            \label{eq:MultiOpXDef_VB}
            \Op(X \tensor Y \tensor A )
            \colon
            \Cinfty(M) \times \cdots \times \Cinfty(M)
            \to
            \Diffop(E;F)
        \end{equation}
        defined by
        \begin{equation}
            \label{eq:OpXYN}
            \Op(X \tensor Y \tensor A)(f_1, \ldots, f_k)(s)
            =
            \frac{1}{n_1! \cdots n_k!}
            \inss(X) \big(
            \SymD^{n_1} f_1 \tensor \cdots \tensor \SymD^{n_k} f_k
            \big)
            \cdot
            \inss(Y \tensor A) (\SymD^E)^n s,
        \end{equation}
        is a multidifferential operator of multi-order
        $N = (n_1 \ldots, n_k) \in \mathbb{N}_0^k$ with values in
        $\Diffop^n(E; F)$.
    \item \label{item:OpIsIsodiffMan} The combined map
        \begin{equation}
            \label{eq:OpHCdiffnN}
            \Op
            \colon
            \Tensor^{k+1}\Sym\Secinfty(TM) \tensor \Secinfty(\Hom(E, F))
            \to
            \HCdiff^k(M, \Diffop(E; F))
        \end{equation}
        is a $\Cinfty(M)$-linear isomorphism of filtered modules,
        where on the left hand side we take the induced filtration
        from the grading.
    \end{propositionlist}
\end{proposition}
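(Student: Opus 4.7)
The plan is to reduce this result to the two previously established symbol calculi, namely the one for scalar multidifferential operators in \autoref{proposition:SymbolCalculusMultiDiffopsScalar} and the one for $\Diffop(E;F)$-valued differential operators in a single argument from \autoref{proposition:SymbolCalculusDiffops}. The starting observation is that the defining formula \eqref{eq:OpXYN} factors as a pointwise product
\[
    \Op(X \tensor Y \tensor A)(f_1, \ldots, f_k)(s)
    =
    \Op(X)(f_1, \ldots, f_k) \cdot \Op(Y \tensor A)(s),
\]
where the first factor is the scalar symbol from \autoref{proposition:SymbolCalculusMultiDiffopsScalar} applied to $X$, and the second is the differential operator from \autoref{proposition:SymbolCalculusDiffops} applied to $Y \tensor A$. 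The product is to be understood as multiplication of the smooth function $\Op(X)(f_1,\ldots,f_k) \in \Cinfty(M)$ into the section $\Op(Y \tensor A)(s) \in \Secinfty(F)$.

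From this factorization, part \ref{item:MultiOpXDef_VB} is immediate: the first factor is multidifferential of multi-order $N$ in the $f_i$'s by \autoref{proposition:SymbolCalculusMultiDiffopsScalar}, and pointwise left multiplication by a smooth function sends $\Diffop^n(E;F)$ into itself. Hence $\Op(X \tensor Y \tensor A)$ is multidifferential of multi-order $N$ with values in $\Diffop^n(E;F)$, and it is manifestly $\Cinfty(M)$-linear in each tensor slot including the $\Secinfty(\Hom(E,F))$ factor.

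For part \ref{item:OpIsIsodiffMan} I would proceed by an induction on the total order $n_1 + \cdots + n_k + n$, controlling the filtration step by step. The key point is to show that the leading symbol of $\Op(X \tensor Y \tensor A)$ recovers exactly the top symmetric degrees of its preimage, which can be verified either by a direct computation in a local chart --- where $\SymD^{n_i} f_i$ contributes $\partial^{n_i} f_i$ in leading order and the covariant derivative corrections lie strictly below the filtration degree --- or more abstractly by combining the two leading-symbol assertions from the already established propositions. Injectivity follows at once from this leading symbol statement by the usual filtered argument. For surjectivity, given a multidifferential operator $D \in \Diffop^N(M, \Diffop^n(E;F))$, one constructs a preimage order by order: the leading symbol of $D$ (in all $k+1$ multi-indices simultaneously) is a well-defined element of $\Sym^{n_1}\Secinfty(TM) \tensor \cdots \tensor \Sym^{n_k}\Secinfty(TM) \tensor \Sym^n\Secinfty(TM) \tensor \Secinfty(\Hom(E,F))$, and subtracting its image under $\Op$ strictly lowers the filtration degree, allowing the induction to proceed.

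The main obstacle is establishing that the leading symbol is simultaneously well-defined in all $k+1$ argument slots and that it lives in the tensor product of symmetric powers (rather than something weaker); this is where the absence of mixed derivatives between distinct arguments, together with the fact that the symmetrized covariant derivative $\SymD$ genuinely produces symmetric tensors, is essential. Once the leading symbol is identified, the argument reduces to the two single-variable symbol calculi already at our disposal, and the filtered $\Cinfty(M)$-linear isomorphism follows.
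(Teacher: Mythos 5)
Your proposal is correct and matches the paper's approach: the paper in fact gives no separate proof of this proposition, relying on the same leading-symbol, order-by-order argument it sketches for \autoref{proposition:SymbolCalculusDiffops}, and your factorization of \eqref{eq:OpXYN} into the scalar multidifferential symbol calculus times the $\Diffop(E;F)$-valued one-argument calculus is exactly the intended reduction. Nothing in your argument deviates from or adds a gap to what the paper implicitly does.
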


In the scalar case $E = F = M \times \mathbb{R}$ we have
\begin{equation}
    \Op
    \colon
    \Tensor^{k+1} \Sym \Secinfty(TM)
    \to
    \HCdiff^k(M,\Diffop(M)).
\end{equation}

As a last incarnation of the global symbol calculus, we consider a
smooth map $\phi \colon N \to M$ and differential operators
\begin{equation}
    \Cinfty(M)\times \cdots \times \Cinfty(M)
    \to
    \Diffop(N)
\end{equation}
as a left $\Cinfty(M)$-module using $\phi$. Moreover, we use the
obvious $\Cinfty(M)$-bimodule structure on $\Diffop(N)$ by pre- and
post-multiplications with the corresponding pull-back along $\phi$.
Not very surprisingly, these operators also have a symbol calculus.
\begin{proposition}[Symbol calculus for multidifferential operators III]
    \label{prop:SymbolCalculusMultiDiffopsdifferentMan}%
    Let $M$ and $N$ be manifolds with torsion-free covariant
    derivatives $\nabla^M$ and $\nabla^N$, respectively.
    Moreover, let $\phi \colon N \to M$ be a smooth map.
    \begin{propositionlist}
    \item \label{item:MultiOpXDef_MN} For every
        $X \in \Sym^{n_1} \Secinfty(TM) \tensor \cdots
        \tensor\Sym^{n_k} \Secinfty(TM)$ and
        $Y \in \Sym^n\Secinfty(TN)$ the map
        \begin{equation}
            \label{eq:MultiOpXDef_MN}
            \Op(X \tensor_{\Cinfty(M)} Y)
            \colon
            \Cinfty(M) \times \cdots \times \Cinfty(M)
            \to
            \Diffop(N)
        \end{equation}
        defined by
        \begin{equation}
            \label{eq:OpXYDiffopN}
            \Op(X \tensor[\Cinfty(M)] Y )(f_1, \ldots, f_k)(g)
            =
            \frac{1}{n_1! \cdots n_k!}
            \phi^*(\inss(X) \big(
            \SymD_M^{n_1} f_1 \tensor \cdots \tensor \SymD_M^{n_k} f_k
            \big))
            \inss(Y ) (\SymD_N)^n g,
        \end{equation}
        is a multidifferential operator of multi-order
        $(n_1, \ldots, n_k)\in \mathbb{N}_0^k$ with values in
        $\Diffop(N)$.
    \item \label{item:MultiOpIsIso_MN} The combined map
        \begin{equation}
            \label{eq:OpHCdiffnHomEF}
            \Op
            \colon
            \Tensor^\bullet\Sym\Secinfty(TM)
            \tensor[\Cinfty(M)]
            \Sym\Secinfty(TN)
            \to
            \HCdiff^\bullet(M, \Diffop(N))
        \end{equation}
        is a $\Cinfty(M)$-linear isomorphism of filtered modules,
        where on the left hand side we take the induced filtration
        from the grading.
    \end{propositionlist}
\end{proposition}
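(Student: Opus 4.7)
The plan is to reduce the statement to the previous two symbol calculus propositions by carefully handling the bookkeeping of the balanced tensor product. First I would check that \eqref{eq:OpXYDiffopN} is well-defined on $\Tensor^\bullet \Sym \Secinfty(TM) \tensor[\Cinfty(M)] \Sym \Secinfty(TN)$. For $h \in \Cinfty(M)$, using that $\inss$ is $\Cinfty(M)$-linear and that $\phi^*$ is a ring morphism, a direct computation shows
\begin{equation*}
    \Op((h \acts X) \tensor[\Cinfty(M)] Y)
    =
    \Op(X \tensor[\Cinfty(M)] (\phi^* h \acts Y))
\end{equation*}
as elements of $\HCdiff^\bullet(M, \Diffop(N))$; the factor $h$ produced by insertion into the $M$-slot is pulled back via $\phi^*$ into $\Cinfty(N)$ and absorbed into scalar multiplication in the $N$-slot. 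This is exactly the balancing condition needed.

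Next I would verify that each $\Op(X \tensor[\Cinfty(M)] Y)$ is a multidifferential operator of the claimed multi-order with values in $\Diffop(N)$. By \autoref{proposition:SymbolCalculusDiffops} applied on $N$, the map $g \mapsto \inss(Y)(\SymD_N)^n g / n!$ is an element of $\Diffop^n(N)$. By \autoref{proposition:SymbolCalculusMultiDiffopsScalar}, the map
$(f_1, \dotsc, f_k) \mapsto \inss(X)(\SymD_M^{n_1} f_1 \tensor \cdots \tensor \SymD_M^{n_k} f_k)/(n_1! \cdots n_k!)$ belongs to $\HCdiff^k(M)$ of multi-order $(n_1, \dotsc, n_k)$. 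Composing its pullback along $\phi^*$ with the differential operator on $N$ gives an element of $\HCdiff^k(M, \Diffop(N))$ of the stated order, since $\Diffop(N)$ is closed under multiplication by pulled-back functions.

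To establish bijectivity I would proceed by the same order-by-order leading-symbol construction as in the previous two propositions. Given $D \in \HCdiff^k(M, \Diffop(N))$ of multi-order $(n_1, \dotsc, n_k)$ with values in $\Diffop^n(N)$, the top-order symbol in each $M$-slot together with the top-order principal symbol in the $N$-slot defines, pointwise on $N$, an element of $\Sym^{n_1} T_{\phi(q)}M \tensor \cdots \tensor \Sym^{n_k} T_{\phi(q)}M \tensor \Sym^n T_q N$ at each $q\in N$. Globalizing this to a section and, thanks to the compatibility checked in the first step, interpreting it inside the balanced tensor product produces a preimage up to terms of strictly smaller total order. Iterating the argument terminates in finitely many steps. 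Injectivity follows from the fact that the leading symbol of $\Op(X \tensor[\Cinfty(M)] Y)$ recovers the top-degree part of $X \tensor Y$, which can be checked pointwise in geodesic normal coordinates for $\nabla^M$ and $\nabla^N$, where $\SymD_M$ and $\SymD_N$ reduce to iterated ordinary partial differentiation.

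The main obstacle I anticipate is precisely the identification of the leading symbol as a \emph{balanced} rather than free tensor: one must verify that the natural pointwise section assembled from the principal symbols of $D$ descends to a well-defined element of $\Tensor^k \Sym \Secinfty(TM) \tensor[\Cinfty(M)] \Sym \Secinfty(TN)$, and that this identification is precisely the one inverting $\Op$. Once this module-theoretic bookkeeping is in place, the remaining filtration and induction arguments run entirely in parallel to those of \autoref{proposition:SymbolCalculusMultiDiffopsScalar} and \autoref{prop:SymbolCalculusMultiDiffopsEF}.
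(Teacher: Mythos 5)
Your proposal is correct and follows essentially the same route the paper takes: the paper proves only \autoref{proposition:SymbolCalculusDiffops} explicitly, via the leading-symbol, order-by-order construction of a preimage, and leaves Propositions II and III to the same argument, which is exactly what you carry out, augmented by the (necessary) check that \eqref{eq:OpXYDiffopN} is balanced over $\Cinfty(M)$. The \emph{main obstacle} you anticipate at the end is resolved by the identification $\Tensor^{n}\Sym\Secinfty(TM) \tensor[\Cinfty(M)] \Sym\Secinfty(TN) \cong \Tensor^{n}\Sym\Secinfty(\phi^\# TM) \tensor \Sym\Secinfty(TN)$ stated in the paper immediately after the proposition: it turns the balanced tensor product into sections of an honest vector bundle over $N$, so the pointwise assembled leading symbol is automatically a well-defined element of the source module.
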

Note that
\begin{equation}
    \Tensor^{n}\Sym\Secinfty(TM) \tensor[\Cinfty(M)] \Sym\Secinfty(TN)
    \cong
    \Tensor^{n}\Sym\Secinfty(\phi^\# TM) \tensor \Sym\Secinfty(TN)
\end{equation}
as $\Cinfty(M)$-modules, where $\phi^\# TM \to N$ denotes the
pull-back bundle via $\phi$.  Moreover, using the map
\begin{equation}
    \phi_*\colon \Secinfty(TN)\ni
    Y \; \mapsto \;
    (p \; \mapsto \; (p,T_p\phi Y(p)))
    \in \Secinfty(\phi^\# TM)
\end{equation}
we can turn $\Sym\Secinfty(TN)$ into a
$\Sym\Secinfty(\phi^\# TM)$-comodule by
\begin{equation}
    \label{eq:leftcomodulechangeManifold}
    \Left (Y_1\vee\cdots\vee Y_k)
    =
    \sum_{i=0}^k \sum_{\sigma\in\Shuffle(i,k-i)}
    \phi_*Y_{\sigma(1)}\vee\cdots \vee\phi_*Y_{\sigma(i)}
    \tensor
    Y_{\sigma(i+1)}\vee\cdots\vee Y_{\sigma(k)}
    =
    \phi_*Y_\sweedler{-1}\tensor Y_\sweedler{0}.
\end{equation}
This means that we have a coalgebra differential on
$\Tensor^\bullet\Sym\Secinfty(\phi^\# TM) \tensor \Sym\Secinfty(TN)$
induced by the comodule structure.

The next and last step of this section is to compare the coalgebra
differentials with the Hochschild differentials of the corresponding
complexes.
\begin{theorem}[Differential Hochschild complex]
    \label{thm:SymbolCalculusIsoDifferential}
    Let $M$ and $N$ be smooth manifolds, let $\phi\colon N\to M$ be a
    smooth map, let $\nabla^M$ and $\nabla^N$ be covariant derivatives
    adapted to $\phi$, see \autoref{sec:AdaptedCovariantDerivatives},
    and let $E,F\to M$ be two vector bundles over $M$.
    \begin{theoremlist}
    \item \label{item:SymbolCalcIso_M} The map
        \begin{equation}
            \Op
            \colon
            \CCa^\bullet(\Secinfty(TM))
            \to
            \HCdiff^\bullet(M)
        \end{equation}
        from \autoref{proposition:SymbolCalculusMultiDiffopsScalar} is
        an isomorphism of differential graded algebras.
    \item \label{item:SymbolCalcIso_EF} The map
        \begin{equation}
            \Op
            \colon
            \CCa^\bullet(\Secinfty(TM),\Sym\Secinfty(TM)
            \tensor
            \Secinfty(\Hom(E, F)))
            \to
            \HCdiff^\bullet(M, \Diffop(E; F))
        \end{equation}
        from \autoref{prop:SymbolCalculusMultiDiffopsEF} is an
        isomorphism of differential graded modules along $\Op$ from
        \ref{item:SymbolCalcIso_M}, where the comodule structure on
        $\Sym\Secinfty(TM) \tensor \Secinfty(\Hom(E, F))$ is given by
        the formula from \ref{item:SymVTensorTrivialComoduleM} from
        \autoref{ex:Comodules}.
    \item \label{item:SymbolCalcIso_MN}
        The map
        \begin{equation}
            \Op
            \colon
            \CCa^\bullet(\Secinfty(\phi^\#TM),\Sym\Secinfty(TN))
            \to
            \HCdiff^\bullet(M, \Diffop(N))
        \end{equation}
        from \autoref{prop:SymbolCalculusMultiDiffopsdifferentMan} is
        an isomorphism of differential graded modules along $\Op$ from
        \ref{item:SymbolCalcIso_M}, where the comodule structure on
        $\Sym\Secinfty(TN)$ is given by
        \eqref{eq:leftcomodulechangeManifold}.
    \end{theoremlist}
    The differentials on the left hand sides are the coalgebra
    differentials from \autoref{prop:dgModuleStructureCCaVM}.
\end{theorem}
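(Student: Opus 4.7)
The plan is to reduce the theorem to a single bialgebra-type identity relating the shuffle coproduct on symbols to the pointwise multiplication of functions, and then to combine this with the already-established module isomorphisms from \autoref{proposition:SymbolCalculusMultiDiffopsScalar}, \autoref{prop:SymbolCalculusMultiDiffopsEF}, and \autoref{prop:SymbolCalculusMultiDiffopsdifferentMan}.

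First, I would settle part \ref{item:SymbolCalcIso_M}. The map $\Op$ is already known to be a left $\Cinfty(M)$-linear filtered isomorphism, so the content of the claim is that $\Op$ intertwines the tensor product with the cup product, and the coalgebra differential $\delta_\Ca$ with the Hochschild differential $\delta$. Compatibility with the products is immediate from the definition \eqref{eq:OpX}: for $X\in \Tensor^k\Sym\Secinfty(TM)$ and $Y\in \Tensor^\ell\Sym\Secinfty(TM)$, the operator $\Op(X\tensor Y)$ feeds the first $k$ functions into $\Op(X)$ and the last $\ell$ into $\Op(Y)$, since distinct tensor factors act on distinct arguments. Differential compatibility reduces, via the explicit Sweedler formula from \autoref{prop:dIsGradedDifferential}~\ref{item:dSweedler} and the formula \eqref{eq:HochschildDifferentialAlgebraically}, to the following central lemma: for every $X\in \Sym\Secinfty(TM)$ and all $f,g\in \Cinfty(M)$,
\begin{equation*}
    \Op(X)(fg)
    =
    \Op(X_\sweedler{1})(f)\cdot \Op(X_\sweedler{2})(g),
\end{equation*}
with $\shcoprod(X) = X_\sweedler{1}\tensor X_\sweedler{2}$. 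Together with $\Op(\Unit)=\id_{\Cinfty(M)}$ this exactly matches the three types of terms in $\delta_\Ca$ (the $\Unit\tensor X$ boundary term with $f_1\cdot \Op(X)(f_2,\dots)$, the reduced coproduct terms with the inner multiplications $f_if_{i+1}$, and the $X\tensor \Unit$ term with $\Op(X)(f_1,\dots,f_k)\cdot f_{k+1}$).

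The heart of the proof is therefore the lemma above, which I expect to be the main obstacle. I would prove it by iterating the Leibniz rule \eqref{eq:LeibnizRuleSymDE} for $\SymD$ with respect to $\vee$: for $X\in\Sym^n\Secinfty(TM)$ we get $\SymD^n(fg) = \sum_{k=0}^n \binom{n}{k}\SymD^k f\vee \SymD^{n-k}g$, and then the pairing $\inss$ on a factorizable $X=X_1\vee\cdots\vee X_n$ splits along $(k,n-k)$-shuffles as
\begin{equation*}
    \inss(X)(\SymD^k f \vee \SymD^{n-k}g)
    =
    \sum_{\sigma\in\Shuffle(k,n-k)}
    \inss\bigl(X_{\sigma(1)}\vee\cdots\vee X_{\sigma(k)}\bigr)\SymD^k f \cdot \inss\bigl(X_{\sigma(k+1)}\vee\cdots\vee X_{\sigma(n)}\bigr)\SymD^{n-k}g.
\end{equation*}
Combined with the $\frac{1}{n!}$ normalization and $\binom{n}{k}=\frac{n!}{k!(n-k)!}$, the result is precisely the shuffle coproduct from \eqref{eq:shcoproductOnGenerators}. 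Care is needed at this step because in principle curvature terms could appear through mixed derivatives; however, since \eqref{eq:OpXOnSection} is applied only to symmetric tensors and only the symmetric part of iterated covariant derivatives enters, torsion-freeness of $\nabla$ is enough to make the argument work purely algebraically on factorizable symbols, and $\Cinfty(M)$-linearity then extends it to all of $\Sym\Secinfty(TM)$.

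For part \ref{item:SymbolCalcIso_EF} the same strategy applies, with the single additional ingredient that the rightmost tensor factor $Y\tensor A\in \Sym\Secinfty(TM)\tensor \Secinfty(\Hom(E,F))$ plays the role of the ``coefficient'' acting on a section $s\in\Secinfty(E)$ via \eqref{eq:OpXYN}. The Hochschild differential has two extra boundary terms using the $\Cinfty(M)$-bimodule structure on $\Diffop(E;F)$; these correspond on the symbol side exactly to the extra summand in $\delta_\module{M}$ from \eqref{eq:DifferentialDefOnMValued} coming from the coaction $\Left_{\Sym V\tensor \module{M}}$ of \autoref{ex:Comodules}~\ref{item:SymVTensorTrivialComoduleM}, since the trivial coaction on $\Secinfty(\Hom(E,F))$ means only the $\Sym\Secinfty(TM)$-factor contributes, reproducing the same shuffle/Leibniz argument. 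Part \ref{item:SymbolCalcIso_MN} is analogous: the adaptedness of $\nabla^M$ and $\nabla^N$ to $\phi$ (see \autoref{sec:AdaptedCovariantDerivatives}) is what guarantees that the iterated symmetrized derivatives $\SymD_M^n(\phi^*f)$ pull back correctly from $M$ to $N$, so that the comodule structure \eqref{eq:leftcomodulechangeManifold} along $\phi_*\colon\Secinfty(TN)\to\Secinfty(\phi^\#TM)$ produces exactly the boundary terms coming from the bimodule structure of $\Diffop(N)$ via $\phi^*$. In all three cases, compatibility with the algebra/module product on the Hochschild side follows from the fact that $\Op$ distributes tensor factors across disjoint argument slots, and compatibility with the differential reduces, one slot at a time, to the single bialgebra lemma above.
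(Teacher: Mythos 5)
Your proposal is correct and follows essentially the same route as the paper: the same central identity $\Op(X)(fg)=\Op(X_\sweedler{1})(f)\cdot\Op(X_\sweedler{2})(g)$, proved by the same binomial/Leibniz expansion of $\SymD^n(fg)$ and shuffle splitting of $\inss$, combined with $\Op(X\tensor Y)=\Op(X)\cup\Op(Y)$ and extension of the differentials as derivations on generators; parts (ii) and (iii) are handled identically via the trivial coaction on $\Secinfty(\Hom(E,F))$ and the adaptedness relation $\SymD_N\phi^*=\phi^*\SymD_M$.
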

\begin{proof}
    Let us start with the key observation. Let
    $X_1\vee\cdots\vee X_n \in \Sym^n\Secinfty(TM)$ and let
    $f,g\in \Cinfty(M)$, then
    \begin{align*}
        &\Op(X)(fg)
        \\
        &\quad=
        \frac{1}{n!}\inss(X)(\SymD^nfg)
        \\
        &\quad=
        \frac{1}{n!}
        \inss(X)\bigg(
        \sum_{k=0}^n\binom{n}{k} \SymD^k f\vee \SymD^{n-k}g
        \bigg)
        \\
        &\quad=
        \sum_{k=0}^n\sum_{\sigma\in\Shuffle(k,n-k)}
        \frac{1}{(n-k)!k!}
        \inss( X_{\sigma(1)}\vee\cdots\vee X_{\sigma(k)})
        (\SymD^k f)
        \inss( X_{\sigma(k+1)}\vee\cdots\vee X_{\sigma(n)})
        (\SymD^{n-k}g)
        \\
        &\quad=
        \Op(\shcoprod X)(f,g).
    \end{align*}
    This means in particular that for the Hochschild differential we
    get
    \begin{align*}
        \delta\Op(X)(f,g)
        &=
        f\Op(X)(g) - \Op(X)(fg) + \Op(X)(f)g
        \\
        &=
        \Op(-\redshcoprod X)(f,g)
        \\
        &=
        \Op(\delta_\Ca(X)) (f,g).
    \end{align*}
    Note that it is obvious that $\Op(X\tensor Y)=\Op(X)\cup \Op(Y)$
    and \ref{item:SymbolCalcIso_M} is proven, since in both cases the
    differential is extended as a derivation of the products.  The
    proofs of \ref{item:SymbolCalcIso_EF} and
    \ref{item:SymbolCalcIso_MN} are very similar, so we only prove
    \ref{item:SymbolCalcIso_MN} for convenience.  Let
    $\alpha \in \Sym^n\Secinfty(T^*M)$, then we have
    \begin{equation*}
        \SymD_N \phi^*\alpha
        =
        \phi^* \SymD_M \alpha,
    \end{equation*}
    see \autoref{sec:AdaptedCovariantDerivatives}.  This means in
    particular that $\SymD_N^n \phi^*f = \phi^* \SymD_M^nf$ for all
    $f \in \Cinfty(M)$.  For $Y \in \Sym^n\Secinfty(TN)$, we get
    \begin{equation*}
        \Op(Y)(\phi^*f)
        =
        \frac{1}{n!}\inss(Y)(\phi^* \SymD_M^n f)
        =
        \frac{1}{n!}\inss(\phi_*Y)(\SymD_M^n f),
    \end{equation*}
    where we interpret $\phi_* Y \in \Secinfty(\phi^\# TM)$ via the
    tangent map of $\phi$.  Let now $Y \in \Sym^n\Secinfty(TN)$, let
    $f \in \Cinfty(M)$ and let $g \in \Cinfty(N)$, then
    \begin{equation*}
        \Op(Y)(\phi^*fg)
        =
        \Op(Y_\sweedler{1})(\phi^*f) \Op(Y_\sweedler{2})(g)
        =
        \Op(\phi_*Y_\sweedler{1} \tensor Y_\sweedler{2})(f)(g).
    \end{equation*}
    Applying the Hochschild differential to $\Op(Y)$ gives now exactly
    the correct formula for the coalgebra differential with values in
    a comodule. Now one can easily check that for
    $X \in \Tensor^\bullet\Sym\Secinfty(TM)$ and
    $Y \in \Sym\Secinfty(TN)$ one gets
    \begin{equation*}
        \Op(X \tensor[\Cinfty(M)] Y)
        =
        \Op(X) \cup_{\Diffop(N)} \Op(Y)
    \end{equation*}
    and the claim is proven, since we checked that the differentials
    coincide on generators.
\end{proof}
\begin{remark}
    One can also combine all the notions of geometrically relevant
    Hochschild cohomologies to get differential operators with values
    in differential operators between two vector bundles over a
    different base.  We decided not to include it in detail, even
    though the proofs look exactly the same, in case one has adapted
    covariant derivatives. If needed then the following techniques to
    compute Hochschild cohomologies can easily be adapted also to
    these more general situations.
\end{remark}

\section{The HKR Theorem in Differential Geometry}
\label{sec:HKRDifferentialGeometry}

For any manifold $M$ and torsion-free covariant derivative $\nabla$ on
it we know from
\autoref{thm:SymbolCalculusIsoDifferential}~\ref{item:SymbolCalcIso_M}
that $\Op \colon \CCa^\bullet(\Secinfty(TM)) \to \HCdiff^\bullet(M)$
is an isomorphism of complexes.
Moreover, using the van Est maps from \autoref{sec:VanEstMaps} we can define
\begin{align}
    \hkr \coloneqq \Op \circ \VanEstInt
    &\colon
    \Anti^\bullet \Secinfty(TM)
    \to
    \HCdiff^\bullet(M)
    \\
    \shortintertext{and}
    \hkrInv_\nabla \coloneqq \VanEstDiff \circ (\Op)^{-1}
    &\colon
    \HCdiff^\bullet(M)
    \to
    \Anti^\bullet \Secinfty(TM).
\end{align}
Then \autoref{thm:VanEstDeformationRetract} yields directly the
classical Hochschild-Kostant-Rosenberg (HKR) Theorem together with a global homotopy:
\begin{theorem}[HKR Theorem]
    \label{thm:classicalHKR}%
    Let $M$ be a manifold and let $\nabla$ be a torsion-free covariant
    derivative on $M$.  Then
    \begin{equation}
        \label{eq:HKRDeformationRetract}
        \begin{tikzcd}[column sep = large]
            \Anti^\bullet \Secinfty(TM)
            \arrow[r,"\hkr", shift left = 3pt]
            &\bigl( \HCdiff^{\bullet}(M),\delta \bigr)
            \arrow[l,"\hkrInv_\nabla", shift left = 3pt]
            \arrow[loop,
            out = -30,
            in = 30,
            distance = 30pt,
            start anchor = {[yshift = -7pt]east},
            end anchor = {[yshift = 7pt]east},
            "\hkrHomo"{swap}
            ]
        \end{tikzcd}
    \end{equation}
    with
    \begin{equation}
        \hkrHomo \coloneqq \mathord{\Op} \circ \VanEstHomo \circ (\Op)^{-1}
    \end{equation}
    is a deformation retract, i.e. the following holds:
    \begin{theoremlist}
    \item We have $\hkrInv_\nabla \circ \hkr = \id$.
    \item We have
        $\delta \hkrHomo + \hkrHomo \delta = \id - \hkr \circ
        \hkrInv_\nabla$.
    \end{theoremlist}
\end{theorem}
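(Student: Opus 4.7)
The plan is to prove the theorem by pure transport of structure: since $\Op$ is an isomorphism of complexes and the van Est deformation retract is already established, conjugating one by the other yields the desired deformation retract on the Hochschild side. No new homological work is needed, only a careful bookkeeping.

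First I would invoke \autoref{thm:SymbolCalculusIsoDifferential}~\ref{item:SymbolCalcIso_M} to identify $\bigl(\HCdiff^\bullet(M),\delta\bigr)$ with the coalgebra complex $\bigl(\CCa^\bullet(\Secinfty(TM)),\delta_\Ca\bigr)$ via $\Op$. In particular $\Op$ is an isomorphism of cochain complexes, so $\Op^{-1}$ exists and satisfies $\Op \circ \delta_\Ca = \delta \circ \Op$ as well as $\delta_\Ca \circ \Op^{-1} = \Op^{-1} \circ \delta$. Then I would apply \autoref{thm:VanEstDeformationRetract} with $V = \Secinfty(TM)$ (viewed as a $\Cinfty(M)$-module), noting that the Chevalley-Eilenberg complex is $\CCE^\bullet(\Secinfty(TM)) = \Anti^\bullet \Secinfty(TM)$ with vanishing differential, which yields the deformation retract data $(\VanEstInt,\VanEstDiff,\VanEstHomo)$ satisfying $\VanEstDiff \circ \VanEstInt = \id$ and $\delta_\Ca \VanEstHomo + \VanEstHomo \delta_\Ca = \id - \VanEstInt \circ \VanEstDiff$.

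For the first assertion of the theorem, I would simply compute
\begin{equation*}
    \hkrInv_\nabla \circ \hkr
    = \VanEstDiff \circ \Op^{-1} \circ \Op \circ \VanEstInt
    = \VanEstDiff \circ \VanEstInt
    = \id.
\end{equation*}
For the second assertion, using that $\Op$ intertwines $\delta_\Ca$ and $\delta$, I would compute
\begin{align*}
    \delta \hkrHomo + \hkrHomo \delta
    &= \delta\, \Op\, \VanEstHomo\, \Op^{-1} + \Op\, \VanEstHomo\, \Op^{-1}\, \delta \\
    &= \Op\, \delta_\Ca\, \VanEstHomo\, \Op^{-1} + \Op\, \VanEstHomo\, \delta_\Ca\, \Op^{-1} \\
    &= \Op\, (\delta_\Ca \VanEstHomo + \VanEstHomo \delta_\Ca)\, \Op^{-1} \\
    &= \Op\, (\id - \VanEstInt \circ \VanEstDiff)\, \Op^{-1} \\
    &= \id - \hkr \circ \hkrInv_\nabla.
\end{align*}

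There is no real obstacle: the entire content has been packaged in \autoref{thm:SymbolCalculusIsoDifferential} and \autoref{thm:VanEstDeformationRetract}. The only point that requires attention is that the symbol calculus $\Op$ is genuinely an isomorphism of cochain complexes (not merely a quasi-isomorphism), so its inverse can be used as an honest cochain map to conjugate the van Est homotopy. Once this is observed, the proof is a two-line calculation. If desired, one could additionally remark that $\hkr$ on a multivector field $\xi_1 \wedge \cdots \wedge \xi_\ell$ reproduces the classical HKR formula as an antisymmetrization, which follows directly from the explicit expression for $\VanEstInt$ in \autoref{prop:VanEstMaps} combined with the formula \eqref{eq:OpX} for $\Op$.
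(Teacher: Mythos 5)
Your proposal is correct and is exactly the argument the paper intends: the theorem is stated as following "directly" from combining the symbol calculus isomorphism of \autoref{thm:SymbolCalculusIsoDifferential}~\ref{item:SymbolCalcIso_M} with the van Est deformation retract of \autoref{thm:VanEstDeformationRetract}, and your two displayed computations are precisely the transport-of-structure bookkeeping the paper leaves implicit. Nothing is missing.
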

Indeed, \autoref{cor:CaCohomologyComputed} shows that in cohomology
the HKR-maps are compatible with the multiplications:
\begin{corollary}
    Let $M$ be manifold and let $\nabla$ be a torsion-free covariant
    derivative on $M$.  In cohomology
    $\hkr \colon \Anti^\bullet \Secinfty(TM) \to \HHdiff^\bullet(M)$
    is an isomorphism of graded algebras, with respect to $\wedge$ and
    $\cup$, with inverse $\hkrInv_\nabla$.
\end{corollary}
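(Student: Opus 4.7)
My plan is to reduce the statement to two already-established algebra isomorphisms whose composition equals $\hkrInv_\nabla$, exploiting the factorization $\hkrInv_\nabla = \VanEstDiff \circ (\Op)^{-1}$ and its mirror image $\hkr = \Op \circ \VanEstInt$.

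First, I would invoke \autoref{thm:SymbolCalculusIsoDifferential}~\ref{item:SymbolCalcIso_M}, which asserts that $\Op \colon \CCa^\bullet(\Secinfty(TM)) \to \HCdiff^\bullet(M)$ is an isomorphism of differential graded algebras, matching the tensor product $\tensor$ on the coalgebra complex with the cup product $\cup$ on the Hochschild side. Passing to cohomology, the induced map on $\HCa^\bullet(\Secinfty(TM)) \to \HHdiff^\bullet(M)$ and its inverse are then isomorphisms of graded algebras.

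Next, I would appeal to \autoref{cor:CaCohomologyComputed}, which tells us that $\VanEstDiff \colon \HCa^\bullet(V) \to \Anti^\bullet V$ is an isomorphism of graded algebras, sending $\tensor$ to $\wedge$, with inverse $\VanEstInt$. Specializing this to $V = \Secinfty(TM)$ and composing with the algebra isomorphism from the previous paragraph, the map $\hkrInv_\nabla = \VanEstDiff \circ (\Op)^{-1}$ in cohomology is a composition of graded algebra isomorphisms, hence itself a graded algebra isomorphism from $\HHdiff^\bullet(M)$ to $\Anti^\bullet \Secinfty(TM)$.

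Finally, \autoref{thm:classicalHKR} already guarantees $\hkrInv_\nabla \circ \hkr = \id$ and that $\hkr$ is a quasi-isomorphism (as part of the deformation retract), so on cohomology $\hkr$ is the two-sided inverse of $\hkrInv_\nabla$ and therefore automatically an isomorphism of graded algebras as well. There is really no obstacle here: the whole argument is a formal consequence of functoriality of cohomology applied to two factorization results we have already proved; the only mild care is to keep track of which product corresponds to which under each of the two factor isomorphisms, but this is precisely what is encoded in \ref{item:SymbolCalcIso_M} and \autoref{cor:CaCohomologyComputed}.
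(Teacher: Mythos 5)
Your proposal is correct and follows essentially the same route as the paper, which likewise deduces the statement by combining \autoref{thm:SymbolCalculusIsoDifferential}~\ref{item:SymbolCalcIso_M} (that $\Op$ is an isomorphism of differential graded algebras) with \autoref{cor:CaCohomologyComputed} (that $\VanEstDiff$ is a graded algebra isomorphism in cohomology with inverse $\VanEstInt$). The paper merely states this as an immediate consequence without spelling out the composition; your write-up fills in exactly those details.
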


By \autoref{prop:VanEstMaps} we know that $\hkr$ is explicitly given
by
\begin{equation}
    \begin{split}
        \hkr(\xi_1 \wedge \dots \wedge \xi_\ell)
        &=
        \Op\bigg(
        \frac{1}{\ell!}\sum_{\sigma\in \Perm_\ell}\sign(\sigma) \cdot
        \xi_{\sigma(1)}\tensor\cdots\tensor \xi_{\sigma(\ell)}
        \bigg)
        \\
        &=
        \frac{1}{\ell!}\sum_{\sigma\in \Perm_\ell}\sign(\sigma) \cdot
        \Lie_{\xi_{\sigma(1)}}\cup\cdots\cup \Lie_{\xi_{\sigma(\ell)}},
    \end{split}
\end{equation}
where we used that $\Op$ is an algebra morphism turning tensor
products into cup-products and that for a vector field $X$ we get
$\Op(X) = \Lie_X$.  Note that this is exactly the classical definition
of the Hochschild-Kostant-Rosenberg map, which is \emph{independent}
of the chosen covariant derivative.
\begin{remark}[Dependence on $\nabla$]
    \label{remark:WhatIsNew}%
    The quasi-inverse $\hkrInv_\nabla$ and the homotopy $\hkrHomo$
    depend heavily on the chosen covariant derivative. While the
    actual cohomology is well-known the construction of the homotopy
    $\hkrHomo$ and the quasi-inverse $\hkrInv_\nabla$ as \emph{global}
    maps is new. They are completely canonical once a covariant
    derivative $\nabla$ is chosen for the symbol calculus. Moreover,
    by construction they can be split into a differential part
    determined by $\Op$ and a \emph{pointwise} part given by
    the van Est maps $\VanEstInt$ and $\VanEstDiff$ as well as the
    homotopy $\VanEstHomo$. Indeed, since $\VanEstInt$, $\VanEstDiff$
    and $\VanEstHomo$ are $\Cinfty(M)$-linear, they provide tensorial
    operators acting on the symbols.
\end{remark}
\begin{remark}[Differential operators vanishing on constants]
    Instead of considering the differential Hochschild complex we
    could have restricted ourselves to (multi-)differential operators
    vanishing on constants from the beginning.  In this case the
    symbols would be given by the tensor algebra
    $\Tensor^\bullet\redSym\Secinfty(TM)$ of the reduced symmetric
    algebra of $\Secinfty(TM)$.  By \autoref{sec:ReducedSymCoalgebra}
    we would still obtain a deformation retract computing the
    Hochschild cohomology with cochains vanishing on constants to be
    the multivector fields $\Anti^\bullet \Secinfty(TM)$.
\end{remark}
\begin{remark}
    Instead of interpreting $\Tensor^\bullet\Sym\Secinfty(TM)$
    as the coalgebra complex of $\Secinfty(M)$
    we could also interpret it as polynomial functions on
    $T^*M \oplus \dots \oplus T^*M$.
    In other words, this is the complex of polynomial (or homogeneous)
    Lie groupoid cochains for $T^*M \to M$ considered as a VB groupoid.
    The \cite{cabrera.drummond:2017a} yields the HKR isomorphism in cohomology.
    But it should be noted that \cite{cabrera.drummond:2017a} does not provide global homotopies.
\end{remark}

\subsection{Hochschild Cohomologies Associated to Vector Bundles}
\label{subsec:HochschildAssociatedVectorBundles}

As a first scenario with coefficients we consider two vector bundles
$\pr_E\colon E \to M$ and $\pr_F\colon F \to M$ over $M$. The sections
$\Secinfty(E)$ and $\Secinfty(F)$ are then $\Cinfty(M)$-modules, in
fact finitely generated projective modules by the Serre-Swan
theorem.

The first variant is a vector-valued version of the classical HKR
theorem based on the (symmetric) bimodule $\Secinfty(E)$. We consider
$E$-valued multidifferential operators
$\HCdiff^\bullet(M; E) = \HCdiff^\bullet(M) \tensor \Secinfty(E)$.
\begin{theorem}[Vector-Valued HKR Theorem]
    \label{thm:HKR_VectorValued}%
    Let $\pr_E\colon E \to M$ be a vector bundle over $M$. For a
    torsion-free covariant derivative $\nabla$ one obtains a
    deformation retract
    \begin{equation}
        \label{eq:VectorValuedHKR}
        \begin{tikzcd}[column sep = large]
            \Anti^\bullet \Secinfty(TM) \tensor \Secinfty(E)
            \arrow[r,"\hkr_E", shift left = 3pt]
            &\bigl( \HCdiff^{\bullet}(M, E),\delta \bigr)
            \arrow[l,"\hkrInv_E", shift left = 3pt]
            \arrow[loop,
            out = -30,
            in = 30,
            distance = 30pt,
            start anchor = {[yshift = -7pt]east},
            end anchor = {[yshift = 7pt]east},
            "\hkrHomo_E"{swap}
            ]
        \end{tikzcd}
        ,
    \end{equation}
    where $\hkr_E \coloneqq \hkr \tensor \id_{\Secinfty(E)}$,
    $\hkrInv_E \coloneqq \hkrInv_\nabla \tensor \id_{\Secinfty(E)}$ and
    $\hkrHomo_E \coloneqq \hkrHomo \tensor \id_{\Secinfty(E)}$.
    In particular, it holds
    \begin{equation}
        \HHdiff^\bullet(M,E) \simeq \Anti^\bullet \Secinfty(TM) \tensor \Secinfty(E)^.
    \end{equation}
\end{theorem}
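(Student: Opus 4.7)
The plan is to reduce the statement to the classical HKR deformation retract \eqref{eq:HKRDeformationRetract} by first extending the symbol calculus to $E$-valued cochains and then tensoring the scalar retract with the complex $\Secinfty(E)$ (concentrated in degree zero with zero differential). The main obstacle is the first step: globalizing the symbol calculus without introducing a covariant derivative on $E$. Once the isomorphism $\HCdiff^\bullet(M, E) \simeq \Tensor^\bullet \Sym \Secinfty(TM) \tensor \Secinfty(E)$ is in place, the rest is formal.

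For the symbol calculus I would define a $\Cinfty(M)$-linear map
\begin{equation*}
    \Op_E \colon
    \Tensor^\bullet \Sym \Secinfty(TM) \tensor \Secinfty(E)
    \longrightarrow
    \HCdiff^\bullet(M, E)
\end{equation*}
on factorizing elements by $\Op_E(X \tensor s)(f_1, \ldots, f_k) = \Op(X)(f_1, \ldots, f_k) \cdot s$, where $\Op$ is the scalar symbol calculus of \autoref{proposition:SymbolCalculusMultiDiffopsScalar}. No covariant derivative on $E$ itself is needed, because the values of the cochains are plain sections rather than operators. Bijectivity can be proved order by order in exactly the same spirit as in the scalar case; alternatively, by Serre-Swan $\Secinfty(E)$ is a direct $\Cinfty(M)$-summand of some $\Cinfty(M)^n$, and the construction reduces to $n$ parallel copies of the scalar $\Op$.

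Next I would verify that under $\Op_E$ the Hochschild differential on $\HCdiff^\bullet(M, E)$ corresponds to $\delta_\Ca \tensor \id_{\Secinfty(E)}$. This repeats the key identity in the proof of \autoref{thm:SymbolCalculusIsoDifferential}: because $\Secinfty(E)$ is a symmetric $\Cinfty(M)$-bimodule, the first and last boundary terms combine through the shuffle coproduct exactly as in the scalar case, with no contribution from the coefficients. Equivalently, this is the coalgebra differential obtained by viewing $\Secinfty(E)$ as a $\Sym \Secinfty(TM)$-comodule with the trivial coaction \autoref{ex:Comodules}~\ref{item:TrivialComoduleStructure}, whose differential reduces to $\delta_\Ca \tensor \id$ since $\pr_+(\Unit) = 0$.

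Finally I would tensor the classical HKR deformation retract of \autoref{thm:classicalHKR} with $\Secinfty(E)$ over $\ring{R} = \Cinfty(M)$. The identities $\hkrInv_\nabla \circ \hkr = \id$ and $\delta \hkrHomo + \hkrHomo \delta = \id - \hkr \circ \hkrInv_\nabla$ are preserved verbatim after applying $(-)\tensor \id_{\Secinfty(E)}$, so $(\hkr \tensor \id, \hkrInv_\nabla \tensor \id, \hkrHomo \tensor \id)$ is automatically a deformation retract of $\Anti^\bullet \Secinfty(TM) \tensor \Secinfty(E)$ onto $\HCdiff^\bullet(M, E)$; passing to cohomology yields $\HHdiff^\bullet(M, E) \simeq \Anti^\bullet \Secinfty(TM) \tensor \Secinfty(E)$. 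As a sanity check, the same retract can be obtained by invoking \autoref{thm:VanEstDeformationRetractCoeff} on $\Secinfty(E)$ with the trivial coaction: the perturbation $\Bmap$ vanishes identically, so the van Est deformation retract with coefficients collapses to precisely the tensor product above.
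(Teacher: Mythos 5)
Your proposal is correct and follows essentially the same route as the paper, which simply observes that this is the trivial-coefficients situation (\autoref{ex:CECohomology}~\ref{item:TrivialCE}) and tensors the scalar HKR retract with $\Secinfty(E)$. Note that the paper \emph{defines} $\HCdiff^\bullet(M,E) = \HCdiff^\bullet(M) \tensor \Secinfty(E)$ just before the theorem, so your order-by-order bijectivity argument for $\Op_E$ is not actually needed, though it does no harm.
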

\begin{proof}
    Indeed, this is the situation of trivial coefficients from
    \autoref{ex:CECohomology}, \ref{item:TrivialCE}.
\end{proof}

Slightly more interesting is the case where we consider differential
operators $\Diffop(E; F)$ mapping sections of $E$ to sections of $F$
as values, i.e. $\HCdiff^\bullet(M; \Diffop(E; F))$. Here we get the
following result:
\begin{theorem}[HKR Theorem with $\Diffop(E; F)$ as values]
    \label{thm:HKR_DiffopEFValues}%
    Let $\pr_E\colon E \to M$ and $\pr_F\colon F \to M$ be two vector
    bundles over $M$. Moreover, let $\nabla$ be a torsion-free
    covariant derivative on $M$ and let $\nabla^E$ be a covariant
    derivative for $E$. The global symbol calculus yields a
    deformation retract
    \begin{equation}
        \label{eq:HKRDiffopEtoF}
        \begin{tikzcd}[column sep = 3cm]
            \Secinfty(\Hom(E, F))
            \arrow[r,"\hkr_{\Diffop(E; F)}", shift left = 3pt]
            &\bigl( \HCdiff^{\bullet}(M, \Diffop(E; F)),\delta \bigr)
            \arrow[l,"\hkrInv_{\Diffop(E; F)}", shift left = 3pt]
            \arrow[loop,
            out = -30,
            in = 30,
            distance = 30pt,
            start anchor = {[yshift = -7pt]east},
            end anchor = {[yshift = 7pt]east},
            "\hkrHomo_{\Diffop(E; F)}"{swap}
            ]
        \end{tikzcd}
        ,
    \end{equation}
    where the left hand side is concentrated in degree zero. Here
    \begin{equation}
        \label{eq:HKRDiffopEF}
        \hkr_{\Diffop(E; F)}
        \coloneqq
        \Op \circ \VanEstInt
        \quad
        \textrm{and}
        \quad
        \hkrInv_{\Diffop(E; F)}
        \coloneqq
        \VanEstDiff \circ (\Op)^{-1}
    \end{equation}
    as well as
    $\hkrHomo_{\Diffop(E; F)} \coloneqq \mathord{\Op} \circ
    \VanEstHomo \circ (\Op)^{-1}$ with $\Op$ from
    \autoref{prop:SymbolCalculusMultiDiffopsEF}.
    In particular, we get
    \begin{equation}
        \HHdiff^{\bullet}(M, \Diffop(E; F))
        \simeq \Secinfty(\Hom(E, F))
    \end{equation}
    concentrated in degree $0$.
\end{theorem}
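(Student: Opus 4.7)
The plan is to factor the proof into a geometric step followed by a purely algebraic step, mirroring the overall strategy of the paper. In the geometric step, I invoke \autoref{thm:SymbolCalculusIsoDifferential}~\ref{item:SymbolCalcIso_EF}, which (using the pair of covariant derivatives $\nabla$ and $\nabla^E$ via \autoref{prop:SymbolCalculusMultiDiffopsEF}) supplies an isomorphism of differential graded modules
\[
\Op\colon \CCa^\bullet\bigl(\Secinfty(TM),\, \Sym\Secinfty(TM) \tensor \Secinfty(\Hom(E,F))\bigr) \xrightarrow{\;\simeq\;} \HCdiff^\bullet(M, \Diffop(E;F)).
\]
The coefficient comodule on the left carries the canonical shuffle coaction on $\Sym\Secinfty(TM)$ and the trivial one on $\Secinfty(\Hom(E,F))$, as in \autoref{ex:Comodules}~\ref{item:SymVTensorTrivialComoduleM}. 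This reduces the problem to computing the coalgebra cohomology of $V = \Secinfty(TM)$ with coefficients in the comodule $\Sym V \tensor \Secinfty(\Hom(E,F))$.

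For the algebraic step, the required computation is exactly the content of \autoref{ex:CoalgebraCohomology}~\ref{item:CoalgebraCohomology_SVtensorTrivCoeff}: because the coaction on $\Secinfty(\Hom(E,F))$ is trivial, the coefficient differential factors as $\delta_{\Sym V} \tensor \id$, so tensoring the explicit homotopy retract from \autoref{ex:CoalgebraCohomology}~\ref{item:CoalgebraCohomology_SymmetricAlgebra} with $\id_{\Secinfty(\Hom(E,F))}$ produces a deformation retract between $\Secinfty(\Hom(E,F))$ in degree zero and the coalgebra complex. The inclusion is $\iota \tensor \id$, the projection is $\pi \tensor \id$, and the contracting homotopy is $\delta^{-1} \tensor \id$.

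Composing these two pieces yields the theorem. I set $\hkr_{\Diffop(E;F)} = \Op \circ (\iota \tensor \id)$, $\hkrInv_{\Diffop(E;F)} = (\pi \tensor \id) \circ \Op^{-1}$, and $\hkrHomo_{\Diffop(E;F)} = \Op \circ (\delta^{-1} \tensor \id) \circ \Op^{-1}$. Since $\Op$ is a strict isomorphism of complexes intertwining the Hochschild and coalgebra differentials, the retract identities $\hkrInv \circ \hkr = \id$ and $\delta\, \hkrHomo + \hkrHomo\, \delta = \id - \hkr \circ \hkrInv$ descend immediately from their algebraic counterparts, and the statement $\HHdiff^\bullet(M, \Diffop(E;F)) \simeq \Secinfty(\Hom(E,F))$ concentrated in degree zero is a direct consequence. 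The notation $\VanEstInt, \VanEstDiff, \VanEstHomo$ in the statement is to be interpreted as the composition of the van Est deformation retract of \autoref{thm:VanEstDeformationRetractCoeff} with the Chevalley--Eilenberg Poincar\'e-type reduction of \autoref{ex:CECohomology}~\ref{item:SVwithVLieCoaction}; equivalently, and more directly, it reduces to the triple $(\iota \tensor \id, \pi \tensor \id, \delta^{-1} \tensor \id)$ above. I anticipate no genuine obstacle, since every ingredient has been prepared in Part~\ref{part:CoalgebraCohomology} and in the symbol calculus developed in the first half of Part~\ref{part:HochschildCohomology}; the only tiny bookkeeping point is to verify that the composition of retracts used here is compatible with whichever concrete choice of $(\VanEstInt, \VanEstDiff, \VanEstHomo)$ one adopts, which is a formal consequence of the composition rule for deformation retracts recalled in the appendix.
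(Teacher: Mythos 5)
Your proposal is correct and assembles exactly the ingredients the paper has prepared for this statement: the symbol calculus isomorphism of \autoref{thm:SymbolCalculusIsoDifferential}~\ref{item:SymbolCalcIso_EF} followed by the explicit homotopy retract of \autoref{ex:CoalgebraCohomology}~\ref{item:CoalgebraCohomology_SVtensorTrivCoeff} for the comodule $\Sym V \tensor \Secinfty(\Hom(E,F))$ with trivial coaction on the second factor, composed via the rule for deformation retracts. Your reading of the maps $\VanEstInt$, $\VanEstDiff$, $\VanEstHomo$ as the composed retract $(\iota\tensor\id,\,\pi\tensor\id,\,\delta^{-1}\tensor\id)$ is also the intended one.
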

In particular, the higher Hochschild cohomologies with values in
$\Diffop(E; F)$ vanish. This result has important applications in the
deformation quantization of vector bundles, see
\cite{bursztyn.waldmann:2000b}, leading to a good understanding of
Morita theory of star products
\cite{bursztyn.dolgushev.waldmann:2012a, bursztyn.waldmann:2002a,
  bursztyn.waldmann:2004a}. In fact, the above statement can
alternatively be obtained from the fact that sections of vector
bundles form a finitely generated projective module over $\Cinfty(M)$.

\subsection{Hochschild Cohomologies Associated to a Submanifold}
\label{sec:Tangential}

We will use our results now to compute two versions of differential
Hochschild cohomologies associated to a closed embedded submanifold
$C \subseteq M$ with inclusion denoted by $\iota$.  We call
$\vanishing_C \coloneqq \ker \iota^* \subseteq \Cinfty(M)$ the
vanishing ideal of $C$, i.e. the set of all functions on $M$ vanishing
on $C$.  By \autoref{sec:AdaptedCovDer_Submanifolds} we can always
find a covariant derivative $\nabla$ on $M$ adapted to the submanifold
$C$, i.e. such that $\nabla_X Y \in \Secinfty_C(TM)$ for all
$X, Y \in \Secinfty_C(TM)$.  Here we denote by
\begin{equation}
    \Secinfty_C(TM)
    \coloneqq
    \left\{
        X \in \Secinfty(TM)
        \; \big| \;
        X\at{C} \in \Secinfty(TC)
    \right\}
\end{equation}
the vector fields on $M$ which are tangent to the submanifold
$C$. Phrased differently, $X \in \Secinfty_C(TM)$ iff there is a
vector field $X' \in \Secinfty(TC)$ such that $X'$ is $\iota$-related
to $X$, i.e. $X' \sim_\iota X$.

\subsubsection{Tangential Hochschild Cohomology}
\label{subsubsec:TangentialHochschildCohomology}%

A multidifferential operator $D \in \Diffop^k(M)$ is called
\emph{tangential} if $D(f_1, \dotsc, f_k) \in \vanishing_C$ whenever
there exists $i \in \{1, \dotsc, k\}$ such that
$f_i \in \vanishing_C$.  The set of all tangential multidifferential
operators is clearly a $\Cinfty(M)$-submodule of $\Diffop^\bullet(M)$,
which is preserved by the Hochschild differential $\delta$ and the cup
product $\cup$.  We will denote this subcomplex by
$\HCdiff^\bullet(M)_C \subseteq \HCdiff^\bullet (M)$. Note that this
is slightly different from the situation considered in
\cite{bordemann.et.al:2005a:pre, hurle:2018a:pre} where the cochains
$D$ are required to vanish if in the last argument a function of
$\vanishing_C$ is inserted.
\begin{proposition}
    \label{proposition:TangentialSymbolCalculus}%
    Let $C \subseteq M$ be a closed embedded submanifold.  Moreover,
    let $\nabla$ be a covariant derivative on $M$ adapted to the
    submanifold $C$.  Then $\Op$ yields an isomorphism
    \begin{equation}
        \label{eq:TangentialSymbolCalculus}
        \Op
        \colon
        \CCa^\bullet\big(\Secinfty_C(TM)\big)
        \to
        \HCdiff^\bullet(M)_C
    \end{equation}
    of differential graded algebras.
\end{proposition}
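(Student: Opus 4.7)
The plan is to verify that the global symbol isomorphism $\Op \colon \CCa^\bullet(\Secinfty(TM)) \to \HCdiff^\bullet(M)$ from \autoref{thm:SymbolCalculusIsoDifferential}~\ref{item:SymbolCalcIso_M} restricts to an isomorphism between the two subcomplexes in the statement. I need two things: that $\Op$ sends $\CCa^\bullet(\Secinfty_C(TM))$ into $\HCdiff^\bullet(M)_C$, and that every tangential multidifferential operator arises as the image of such a tangent symbol. Compatibility with differentials and with the tensor and cup products is then automatic, since the relevant structures on $\CCa^\bullet(\Secinfty_C(TM))$ and $\HCdiff^\bullet(M)_C$ are simply the restrictions of the corresponding structures on the ambient complexes.

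The core observation for the inclusion $\Op\bigl(\CCa^\bullet(\Secinfty_C(TM))\bigr) \subseteq \HCdiff^\bullet(M)_C$ is the following lemma: \emph{if $\nabla$ is adapted to $C$, if $X_1, \ldots, X_n \in \Secinfty_C(TM)$, and if $f \in \vanishing_C$, then $\inss(X_1 \vee \cdots \vee X_n)(\SymD^n f) \in \vanishing_C$.} I would prove this by induction on $n$. The base case $n=1$ is the tautology that a vector field tangent to $C$ sends $\vanishing_C$ into itself. The inductive step expands the pairing using the recursion \eqref{eq:SymDDef}: every resulting term is either of the form $X_i$ applied to a function already in $\vanishing_C$ by the inductive hypothesis, or it involves the replacement of some pair $(X_i, X_j)$ by $\nabla_{X_i} X_j$, which stays in $\Secinfty_C(TM)$ because $\nabla$ is adapted. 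The multi-argument case then follows from the product formula \eqref{eq:OpX}: if any $f_i \in \vanishing_C$, the corresponding tensor factor is $\vanishing_C$-valued by the lemma, so the full product is as well.

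For surjectivity I would proceed by induction on the multi-order $N=(n_1, \ldots, n_k)$. Given a tangential operator $D \in \HCdiff^k(M)_C$ of multi-order at most $N$, its principal symbol in the associated graded $\Secinfty(\Sym^{n_1}TM \tensor \cdots \tensor \Sym^{n_k}TM)$ is intrinsic and does not depend on $\nabla$. The crucial claim is that tangentiality of $D$ forces this principal symbol to lie in the $\Cinfty(M)$-submodule $\Sym^{n_1}\Secinfty_C(TM) \tensor \cdots \tensor \Sym^{n_k}\Secinfty_C(TM)$. I would verify this in local coordinates $(x^\alpha, y^a)$ adapted to $C$, in which $\Secinfty_C(TM)$ is locally generated by $\{\partial_{x^\alpha},\, y^b \partial_{y^a}\}$: testing tangentiality on inputs of the form $y^a h$ translates into the statement that the normal coefficients of the principal symbol carry enough factors of $y^a$ to be expressed through these generators. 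Once this is established, $\Op$ of the principal symbol is a tangential operator with the same principal symbol, so the difference $D - \Op(\sigma^N(D))$ is tangential of strictly lower multi-order and the induction closes.

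The main obstacle is precisely this principal-symbol analysis: the algebraic condition of membership in $\Sym^\bullet \Secinfty_C(TM)$ in each tensor slot is strictly finer than the pointwise condition $\sigma^N(D)\at{C} \in \Sym^\bullet TC$, and the adaptedness of $\nabla$ must be used carefully to extract the correct submodule generators from the tangentiality hypothesis. The analysis has to be performed slot by slot, exploiting that tangentiality in the $i$-th argument constrains the $\Sym^{n_i}$ factor independently. This is where the adapted covariant derivative from \autoref{sec:AdaptedCovariantDerivatives} plays an essential role, preventing the symbol calculus from generating spurious normal-direction components at any order.
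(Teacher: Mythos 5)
Your first half --- that $\Op$ maps $\CCa^\bullet(\Secinfty_C(TM))$ into $\HCdiff^\bullet(M)_C$ --- is correct and is essentially the paper's argument in unwound form: the paper packages your term-by-term induction into the single intertwining relation $\iota^*\circ \SymD = \SymD_C\circ\iota^*$ together with the fact that a symmetric product of tangent vector fields is $\iota$-related to a symbol on $C$, so that $\iota^*\Op(X)(f) = \tfrac{1}{n!}\inss(X')\SymD_C^n(\iota^*f) = 0$ for $f \in \vanishing_C$. Both versions use adaptedness of $\nabla$ in the same way, and the multi-argument case follows identically.

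The surjectivity step, however, contains a genuine gap, and it sits exactly where you place ``the main obstacle''. The claim that tangentiality forces the principal symbol into the submodule $\Sym^{n_1}\Secinfty_C(TM)\tensor\cdots\tensor\Sym^{n_k}\Secinfty_C(TM)$ cannot be established by the coordinate test you describe, because for the definition of tangentiality used here it fails. Take $M=\mathbb{R}$, $C=\{0\}$, the flat (torsion-free, adapted) covariant derivative, and $D(f) = x f''$. Then $D(xg)=x(2g'+xg'')\in\vanishing_C$, so $D$ is tangential; but $\Op(x\,\partial_x\vee\partial_x)=D$ exactly, and $x\,\partial_x\vee\partial_x$ does not lie in $\Sym^2\bigl(\Secinfty_C(T\mathbb{R})\bigr)=\Sym^2\bigl(x\Cinfty(\mathbb{R})\,\partial_x\bigr)$, since every element of that submodule has its $\partial_x\vee\partial_x$-coefficient in $\vanishing_C\cdot\vanishing_C$. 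What tangentiality actually yields --- via the iterated commutators $[\cdots[[D,f_1],f_2]\cdots,f_n]$ with $f_i\in\vanishing_C$, which all map into $\vanishing_C$ --- is only \emph{one} order of vanishing of the normal components of the principal symbol along $C$, i.e.\ your pointwise condition, not the $n$ orders required for membership in $\Sym^{n}\Secinfty_C(TM)$. Consequently $\Op(\sigma^N(D))$ need not be in the image of the restricted map and your induction does not close. You are in good company: the paper's own proof disposes of surjectivity with the sentence ``it is easy to see that its principal symbol is in $\Tensor^\bullet\Sym\Secinfty_C(TM)$'', which is precisely this unproved (and, as stated, incorrect) step. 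To make the argument work one has to strengthen the notion of tangentiality, for instance by requiring that all iterated commutators of $D$ with functions remain tangential (as holds for the subalgebra generated by $\Cinfty(M)$ and $\Secinfty_C(TM)$); under such a hypothesis your slot-by-slot principal-symbol induction does go through.
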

\begin{proof}
    Since $\nabla$ is adapted to $C$, i.e. there exists a covariant
    derivative $\nabla^C$ on $C$, such that the pair $\nabla$ and
    $\nabla^C$ are adapted with respect to $\iota$, and we have
    $\iota^*\circ \SymD = \SymD_C \circ \iota^*$.  Let now
    $X \in \Sym^k\Secinfty_C(TM)$ together with
    $X' \in \Sym^n\Secinfty(TC)$ such that $X' \sim_\iota X$
    and let
    $f \in \ker\iota^*\subseteq \Cinfty(M)$, then
    \begin{equation*}
        \iota^*\Op(X)(f)
        =
        \frac{1}{n!}\iota^*\inss(X)(\SymD^n f)
        =
        \frac{1}{n!}\inss(X')\iota^*(\SymD^n f)
        =
        \frac{1}{n!}\inss(X')(\SymD_C^n \iota^*f)
        =
        0.
    \end{equation*}
    Since $\Op$ is a morphism of differential graded algebras, we get
    that $\Op$ restricts to a map
    $\Op \colon \Tensor^\bullet (\Sym \Secinfty_C(TM)) \to
    \HCdiff^\bullet(M)_C$.  On the other hand, if a multidifferential
    operator $D$ is tangential, it is easy to see that its principal
    symbol is in $\Tensor^\bullet \Sym\Secinfty_C(TM)$ and the claim
    is proven.
\end{proof}
\begin{theorem}[Tangential HKR Theorem]
    \label{thm:HKR_Submanifold}%
    Let $C \subseteq M$ be a closed embedded submanifold and let
    $\nabla$ be a torsion-free covariant derivative on $M$ adapted to
    $C$, cf. \autoref{sec:AdaptedCovDer_Submanifolds}.  Then
    \begin{equation}
        \label{eq:TangentialDeformationRetract}
        \begin{tikzcd}[column sep = large]
            \Anti^\bullet \Secinfty_C(TM)
            \arrow[r,"\hkr", shift left = 3pt]
            &\bigl( \HCdiff^{\bullet}(M)_C,\delta \bigr)
            \arrow[l,"\hkrInv_\nabla", shift left = 3pt]
            \arrow[loop,
            out = -30,
            in = 30,
            distance = 30pt,
            start anchor = {[yshift = -7pt]east},
            end anchor = {[yshift = 7pt]east},
            "\hkrHomo"{swap}
            ]
        \end{tikzcd}
    \end{equation}
    is a deformation retract.  In particular, on cohomology we obtain an isomorphism
    \begin{equation*}
        \Anti^\bullet \Secinfty_C(TM) \to
    \HHdiff^\bullet(M)_C
    \end{equation*}
    of graded algebras.
\end{theorem}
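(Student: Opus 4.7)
The plan is to combine the two main ingredients already at our disposal: the symbol calculus isomorphism of \autoref{proposition:TangentialSymbolCalculus}, which uses crucially that $\nabla$ is adapted to $C$, and the algebraic van Est deformation retract of \autoref{thm:VanEstDeformationRetract}. The theorem will then follow essentially by transporting the algebraic deformation retract through the symbol calculus to the tangential Hochschild complex.

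First I would observe that $\Secinfty_C(TM) \subseteq \Secinfty(TM)$ is a $\Cinfty(M)$-submodule, since tangency at points of $C$ is preserved by multiplication with arbitrary smooth functions on $M$. Hence $V = \Secinfty_C(TM)$ is a legitimate input for \autoref{thm:VanEstDeformationRetract}, which delivers the deformation retract
\begin{equation*}
    \begin{tikzcd}[column sep = large]
        \bigl( \CCE^\bullet(\Secinfty_C(TM)), 0 \bigr)
        \arrow[r,"\VanEstInt", shift left = 3pt]
        & \bigl( \CCa^\bullet(\Secinfty_C(TM)), \delta_\Ca \bigr)
        \arrow[l,"\VanEstDiff", shift left = 3pt]
        \arrow[loop,
        out = -30, in = 30, distance = 30pt,
        start anchor = {[yshift = -7pt]east},
        end anchor = {[yshift = 7pt]east},
        "\VanEstHomo"{swap}]
    \end{tikzcd}.
\end{equation*}
Since the Chevalley-Eilenberg differential is trivial we have $\CCE^\bullet(\Secinfty_C(TM)) = \Anti^\bullet \Secinfty_C(TM)$.

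Next, I invoke \autoref{proposition:TangentialSymbolCalculus}: the assumption that $\nabla$ is adapted to $C$ ensures that $\Op$ restricts to an isomorphism $\CCa^\bullet(\Secinfty_C(TM)) \to \HCdiff^\bullet(M)_C$ of differential graded algebras. Conjugating the above deformation retract by $\Op$ produces the claimed deformation retract \eqref{eq:TangentialDeformationRetract}, with $\hkr = \Op \circ \VanEstInt$, $\hkrInv_\nabla = \VanEstDiff \circ \Op^{-1}$, and $\hkrHomo = \Op \circ \VanEstHomo \circ \Op^{-1}$. The reuse of the notation from \autoref{thm:classicalHKR} is consistent: by the functoriality of the van Est maps under the inclusion $\Secinfty_C(TM) \hookrightarrow \Secinfty(TM)$ (\autoref{prop:FunctorialityCEandGrRetracts} and \autoref{prop:GroupInvariantVanEst}), the van Est integration, differentiation, and homotopy built from $\Secinfty_C(TM)$ agree with the restrictions of those built from $\Secinfty(TM)$ to the tangential subcomplex. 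The induced algebra isomorphism on cohomology is then immediate from \autoref{cor:CaCohomologyComputed}, as both $\Op$ and $\VanEstDiff$ are morphisms of differential graded algebras.

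The only genuinely geometric step is \autoref{proposition:TangentialSymbolCalculus}, where the adapted covariant derivative intertwines $\SymD$ with the induced symmetrized derivative on $C$ and thereby guarantees that $\Op$ both maps tangential symbols to tangential operators and hits every tangential operator. Once this is in hand the remainder of the argument is purely formal transport of the algebraic deformation retract, so I expect no further obstacle; the heavy lifting has already been done in \autoref{part:CoalgebraCohomology} and \autoref{sec:AdaptedCovariantDerivatives}.
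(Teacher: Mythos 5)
Your proposal is correct and follows essentially the same route as the paper: the paper's (very terse) proof likewise applies the algebraic van Est machinery to the $\Cinfty(M)$-module $V = \Secinfty_C(TM)$ and transports the resulting deformation retract through the tangential symbol calculus of \autoref{proposition:TangentialSymbolCalculus}. Your additional remarks on the submodule property and on the consistency of the notation $\hkr$, $\hkrInv_\nabla$, $\hkrHomo$ via functoriality are correct and, if anything, make the argument more explicit than the one in the paper.
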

\begin{proof}
    Applying \autoref{cor:CaCohomologyComputed} to the
    $\Cinfty(M)$-module $V = \Secinfty_C(TM)$ yields the result.
\end{proof}

The tangential Hochschild cohomology is a special case of constraint
Hochschild cohomology as introduced and discussed in
\cite{dippell.esposito.waldmann:2022a}, see also \cite{dippell:2023a}.
In particular, the above computes the Hochschild cohomology associated
to the constraint algebra
$\algebra{A} = (\Cinfty(M),\Cinfty(M),\vanishing_C)$.

\subsubsection{Coefficients in $\Diffop(C)$}
\label{subsubsec:CoefficientsDiffopC}

There is another Hochschild cohomology associated to a submanifold,
namely $\HCdiff(M,\Diffop(C))$, i.e. multidifferential operators on
$M$ with values in differential operators on $C$. These differential
operators become a $\Cinfty(M)$-bimodule via the pull-back $\iota^*$
with the inclusion map, i.e. the restriction of functions from $M$ to
$C$.  In \autoref{thm:SymbolCalculusIsoDifferential}, we already
discussed the isomorphism
\begin{align}
    \Op\colon \CCa\bigl(\Secinfty(TM),\Sym\Secinfty(TC)\bigr)
    \to
    \HCdiff\bigl(M,\Diffop(C)\bigr)
\end{align}
and thus we can directly use the techniques from
\autoref{part:CoalgebraCohomology}.

\begin{theorem}[HKR Theorem with values in $\Diffop(C)$]
    \label{thm:HKR_ValuesDiffopC}%
    Let $C$ be an embedded submanifold of $M$, such that there exist
    adapted covariant derivatives.  Moreover, let $TC^\perp \to C$ be
    a complementary vector bundle to $TC$ inside $TM\at{C}$, i.e.
    $TM\at{C} = TC \oplus TC^\perp$.  Then there exists a deformation
    retract
    \begin{equation}
        \begin{tikzcd}
            \Anti^\bullet \Secinfty\bigl(TC^\perp\bigr)
            \arrow[r, shift left = 3pt]
            &\bigl( \HCdiff^{\bullet}(M,\Diffop(C)),\delta \bigr)
            \arrow[l, shift left = 3pt]
            \arrow[loop,
            out = -30,
            in = 30,
            distance = 30pt,
            start anchor = {[yshift = -7pt]east},
            end anchor = {[yshift = 7pt]east}
            ]
        \end{tikzcd}
        ,
    \end{equation}
    depending on the covariant derivatives and the choice of the
    complementary vector bundle.
    In particular, we obtain an isomorphism
    \begin{equation*}
        \HCdiff^{\bullet}(M,\Diffop(C)) \simeq \Anti^\bullet \Secinfty\bigl(TC^\perp\bigr)
    \end{equation*}
    of left modules along
    $\hkr \colon \Anti^\bullet\Secinfty(TM)
    \to \HHdiff^\bullet(M)$
\end{theorem}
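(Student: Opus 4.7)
The plan is to reduce the theorem, via the global symbol calculus, to the purely algebraic computation already carried out in \autoref{ex:VanEstWithCoeff}~\ref{item:CoalgebraCohomologySubmoduleUV}. Set $V = \Secinfty(\iota^\# TM)$ and, using the pushforward $\iota_* \colon \Secinfty(TC) \to \Secinfty(\iota^\# TM)$ induced by the tangent map of the inclusion, view $U \coloneqq \Secinfty(TC)$ as a submodule of $V$. The choice of the complementary subbundle $TC^\perp \to C$ with $TM\at{C} = TC \oplus TC^\perp$ provides a complementary $\Cinfty(C)$-submodule $U^\perp = \Secinfty(TC^\perp) \subseteq V$, so that $V = U \oplus U^\perp$.

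First I would invoke \autoref{thm:SymbolCalculusIsoDifferential}~\ref{item:SymbolCalcIso_MN}, which (given the adapted torsion-free covariant derivatives on $M$ and on $C$, whose existence is ensured by \autoref{sec:AdaptedCovariantDerivatives}) yields an isomorphism
\begin{equation*}
    \Op \colon \CCa^\bullet\bigl(V, \Sym U\bigr)
    \to
    \HCdiff^\bullet(M, \Diffop(C))
\end{equation*}
of differential graded left modules along the algebra isomorphism $\Op \colon \CCa^\bullet(\Secinfty(TM)) \to \HCdiff^\bullet(M)$. The key verification here is that the left $\Sym V$-comodule structure on $\Sym U$ induced by $\iota_*$ via formula \eqref{eq:leftcomodulechangeManifold} is exactly the canonical one from \autoref{ex:Comodules}~\ref{item:SymUComoduleV}; this is immediate from the defining formulas.

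Next, I would apply \autoref{ex:VanEstWithCoeff}~\ref{item:CoalgebraCohomologySubmoduleUV} to the pair $U \subseteq V$ with complement $U^\perp$. That example delivers an explicit homotopy retract
\begin{equation*}
    \begin{tikzcd}[column sep = large]
        \Anti^\bullet U^\perp
        \arrow[r, shift left = 3pt]
        & \CCa^\bullet(V, \Sym U)
        \arrow[l, shift left = 3pt]
        \arrow[loop, out = -30, in = 30, distance = 30pt,
        start anchor = {[yshift = -7pt]east},
        end anchor = {[yshift = 7pt]east}]
    \end{tikzcd},
\end{equation*}
obtained by composing the van Est deformation retract from \autoref{thm:VanEstDeformationRetractCoeff} with the tensor product of the Poincaré-type retract of \autoref{ex:CECohomology}~\ref{item:SVwithVLieCoaction} and the trivial retract of \autoref{ex:CECohomology}~\ref{item:TrivialCE}. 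Transporting this retract through the symbol isomorphism $\Op$ yields the claimed deformation retract between $\Anti^\bullet \Secinfty(TC^\perp)$ and $\HCdiff^\bullet(M, \Diffop(C))$, and identifies the cohomology.

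The only mildly delicate point is the module-theoretic compatibility stated at the end: the resulting isomorphism should be one of left modules along $\hkr \colon \Anti^\bullet \Secinfty(TM) \to \HHdiff^\bullet(M)$. I would verify this by noting that \autoref{cor:CgrandVEcohomologyWithCoeff} records $\jmap_\module{M}$, hence $\VanEstInt_\module{M}$, as a left $\HCa^\bullet(V)$-module map, and that the tensor product homotopy retract from \autoref{ex:CECohomology}~\ref{item:UintoVSymUCE} and \autoref{ex:VanEstWithCoeff}~\ref{item:CoalgebraCohomologySubmoduleUV} respects the right $\Anti^\bullet V$-action; passing through $\Op$ converts this into compatibility with $\cup_{\Diffop(C)}$ along $\hkr$. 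Everything else is routine assembly — the substantive content has already been absorbed into the van Est deformation retract with coefficients and the explicit computation of $\HCE^\bullet(V, \Sym U)$.
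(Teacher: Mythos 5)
Your proposal is correct and follows essentially the same route as the paper's proof: reduce via \autoref{thm:SymbolCalculusIsoDifferential}~\ref{item:SymbolCalcIso_MN} to the coalgebra complex $\CCa^\bullet(\Secinfty(\iota^\# TM), \Sym\Secinfty(TC))$ and then apply \autoref{ex:VanEstWithCoeff}~\ref{item:CoalgebraCohomologySubmoduleUV} with $U = \Secinfty(TC)$ and $U^\perp = \Secinfty(TC^\perp)$. Your additional checks (that the comodule structure from \eqref{eq:leftcomodulechangeManifold} is the canonical one, and the left-module compatibility along $\hkr$) are details the paper leaves implicit, but they do not change the argument.
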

\begin{proof}
    From \autoref{thm:SymbolCalculusIsoDifferential} we know that
    $\HCdiff(M,\Diffop(C))$ is isomorphic to the cochain complex
    $\CCa(\Secinfty(\iota^\#TM), \Sym\Secinfty(TC))$.  Using
    \autoref{ex:VanEstWithCoeff}~\ref{item:CoalgebraCohomologySubmoduleUV}
    for $U = \Secinfty(TC)$ and $U^\perp = \Secinfty(TC^\perp)$ we can
    therefore conclude that there is a deformation retract of the
    above form. Note that both vector bundles over $C$ become
    $\Cinfty(M)$-modules via the restriction $\iota^*$.
\end{proof}

\subsection{Hochschild Cohomologies Associated to a Surjective Submersion}
\label{sec:Surjective Submersions}

We compute two different Hochschild cohomologies associated to a
surjective submersion $\pr \colon P \to M$. Throughout this section we
assume that $\pr\colon P\to M$ has connected fibers.  We denote the
vertical subbundle by $F \coloneqq \ker T\pr$.  To establish a good
symbol calculus we fix an always existing adapted covariant derivative
$\nabla^P$ on $P$, see \autoref{sec:AdaptedCovDer_Submersions}, and a
horizontal subbundle $F^\perp$ such that $TP = F \oplus F^\perp$.  The
horizontal lift of vector fields $X \in \Secinfty(TM)$ will be denoted
by $X^\hor \in \Secinfty(F^\perp)^F$, where $\Secinfty(F^\perp)^F$
denotes the horizontal sections which are covariantly constant along
the fibers.  With this we obtain an isomorphism
\begin{equation}
    \label{eq:HCIsoToTensorSymFperpF}
    \HCdiff^\bullet(M)
    \simeq
    \Tensor^\bullet \Sym \Secinfty(F^\perp)^F,
\end{equation}
of $\Cinfty(M) \simeq \Cinfty(P)^F$-modules.  Note that
$\Tensor^\bullet\Sym \Secinfty(F^\perp)$ is (globally)
$\Cinfty(P)$-spanned by $\Cinfty(M)$-submodule
$\Tensor^\bullet\Sym\Secinfty(F^\perp)^F$.  This is again a
consequence of the Serre-Swan Theorem applied to $TM$.

\subsubsection{Coefficients in $\Diffop(P)$}
\label{subsubsec:CoefficientsDiffopP}

We consider the (scalar) differential operators $\Diffop^\ell(P)$ on
$P$ as a left $\Cinfty(M)$-module via the pull-back $\pr^*$ and
multiplication from the left.  Then we are interested in the
multidifferential operators on $M$ with values in $\Diffop^\ell(P)$
for all $\ell \in \mathbb{N}_0$, i.e. we consider
\begin{equation}
    \label{eq:HCdiffDiffopEllP}
    \begin{split}
        \HCdiff^n(M, \Diffop^\bullet(P))
        &=
        \bigcup_{\ell = 0}^\infty
        \HCdiff^n(M, \Diffop^\ell(P))
        \\
        &=
        \HCdiff^n\Big(
        M,
        \bigcup\nolimits_{\ell = 0}^\infty\Diffop^\ell(P)
        \Big),
    \end{split}
\end{equation}
see in particular
\cite[Lemma~4.4]{bordemann.neumaier.waldmann.weiss:2010a} for the
second equality.  We combine now the symbol calculi on $M$ and on $P$
as follows.
\begin{proposition}
    \label{proposition:SymbolCalcSurSub}%
    Let $\pr \colon P \to M$ be a surjective submersion with vertical
    bundle $F$.  Moreover, let $\nabla^P$ be a covariant derivative on
    $P$ adapted to a given torsion-free covariant derivative $\nabla$
    on $M$ and choose a horizontal bundle $F^\perp$.  Then
    \begin{equation}
        \label{eq:OpForDiffopP}
        \Op
        \colon
        \CCa^\bullet(\Secinfty(F^\perp), \Sym^\bullet \Secinfty(TP))
        \to
        \HCdiff^\bullet(M, \Diffop^\bullet(P))
    \end{equation}
    defined on factorizing sections by
    \begin{equation}
        \label{eq:OpValuesInDiffopPDef}
        \Op(X \tensor Y)(f_1, \ldots, f_k; g)
        =
        \iota^*\big(
        \inss(X) \big(
        \E^{\SymD_P} \pr^*f_1
        \tensor \cdots \tensor
        \E^{\SymD_P} \pr^*f_k
        \big)
        \tensor
        \inss(Y) \E^{\SymD_P} g
        \big)
    \end{equation}
    where $f_1, \ldots, f_k \in \Cinfty(M)$, $g \in \Cinfty(P)$,
    $X \in \Tensor^k\Sym\Secinfty(F^\perp)$ and
    $Y \in \Sym^\bullet \Secinfty(TP)$ is an isomorphism
    of differential graded modules along
    $\Op \colon \CCa^\bullet(\Secinfty(TM)) \to \HCdiff^\bullet(M)$.
\end{proposition}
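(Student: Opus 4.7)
The plan is to reduce this to \autoref{thm:SymbolCalculusIsoDifferential}~\ref{item:SymbolCalcIso_MN} applied to the surjective submersion $\pr \colon P \to M$ playing the role of the map $\phi \colon N \to M$ in that theorem. The key observation is that since $\pr$ is a submersion, the tangent map restricts to a fiberwise isomorphism $T\pr\at{F^\perp}\colon F^\perp \to \pr^\# TM$ of vector bundles over $P$, simply because $F^\perp$ complements the kernel $F = \ker T\pr$. Taking sections gives a canonical isomorphism
\begin{equation*}
    \Secinfty(F^\perp) \cong \Secinfty(\pr^\# TM)
\end{equation*}
of $\Cinfty(P)$-modules, which we use throughout to identify the coalgebra complex on the left hand side of \eqref{eq:OpForDiffopP} with $\CCa^\bullet(\Secinfty(\pr^\# TM), \Sym\Secinfty(TP))$ appearing in \autoref{thm:SymbolCalculusIsoDifferential}~\ref{item:SymbolCalcIso_MN}. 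Under this identification, the push-forward $\pr_* \colon \Secinfty(TP) \to \Secinfty(\pr^\# TM)$ used in the comodule structure \eqref{eq:leftcomodulechangeManifold} becomes the horizontal projection $Y \mapsto Y^\hor$ determined by the splitting $TP = F \oplus F^\perp$, while the horizontal lift $X \mapsto X^\hor$ from $\Secinfty(TM)$ lands in the $F$-invariant sections $\Secinfty(F^\perp)^F$ of $F^\perp$.

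Next I would verify that the chosen covariant derivatives qualify as adapted in the sense of \autoref{sec:AdaptedCovariantDerivatives}. By hypothesis $\nabla^P$ is adapted to $\nabla$ in the submersion sense of \autoref{sec:AdaptedCovDer_Submersions}, which is exactly the compatibility needed to conclude $\pr^* \SymD = \SymD_P \pr^*$ on sections of $\Sym^\bullet T^*M$, at least when the arguments are horizontally lifted. This is the one technical input beyond the identifications above; it is what makes the composed symbol calculus well-defined and ensures the factorization of $\Op(X \tensor Y) = \Op(X) \cup_{\Diffop(P)} \Op(Y)$.

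With these identifications in place, the formula \eqref{eq:OpValuesInDiffopPDef} should be read degree by degree: decomposing $X = X_1 \tensor \cdots \tensor X_k$ into homogeneous tensors $X_i \in \Sym^{n_i}\Secinfty(F^\perp)$ and $Y \in \Sym^m \Secinfty(TP)$, the formal exponential $\E^{\SymD_P}$ is just a compact way of writing the projections $(\SymD_P)^{n_i}/n_i!$ and $(\SymD_P)^m/m!$, so \eqref{eq:OpValuesInDiffopPDef} reproduces precisely formula \eqref{eq:OpXYDiffopN} after identifying $X_i$ with a section of $\pr^\# TM$ via $T\pr\at{F^\perp}$. The bijectivity of $\Op$ then follows directly from \autoref{prop:SymbolCalculusMultiDiffopsdifferentMan}, and the differential graded module structure from \autoref{thm:SymbolCalculusIsoDifferential}~\ref{item:SymbolCalcIso_MN}, where the module action is along $\Op \colon \CCa^\bullet(\Secinfty(TM)) \to \HCdiff^\bullet(M)$ once we further identify the $F$-invariant horizontal sections with $\Secinfty(TM)$ via horizontal lifting.

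The main obstacle is purely notational bookkeeping: one must track three different but related modules --- $\Secinfty(TM)$, $\Secinfty(F^\perp)^F$, and $\Secinfty(F^\perp) \cong \Secinfty(\pr^\# TM)$ --- and show that the comodule structure and the module action along $\Op$ from \autoref{thm:SymbolCalculusIsoDifferential}~\ref{item:SymbolCalcIso_M} are compatible under the composition of these identifications. No new calculation is required beyond verifying that $(Y \vee Y')^\hor = Y^\hor \vee Y'^\hor$ and that the shuffle coproduct defining $\Left$ in \eqref{eq:leftcomodulechangeManifold} is compatible with the algebra structure on $\Sym\Secinfty(F^\perp)$, both of which are immediate from the fact that $\pr_*$ is an algebra map annihilating the vertical bundle.
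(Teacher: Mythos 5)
Your proposal is correct, and at its core it runs on the same mechanism as the paper's proof: the pair (horizontal lift, horizontal projection) induced by the splitting $TP = F \oplus F^\perp$ is exactly the inverse pair of the bundle isomorphism $T\pr\at{F^\perp} \colon F^\perp \to \pr^\# TM$, and the adaptedness relation $\SymD_P \circ \pr^* = \pr^* \circ \SymD$ is the one analytic input in both arguments. The difference is organizational: the paper gives a short direct verification — it reduces to the case where $X$ is a horizontal lift by absorbing $\Cinfty(P)$-coefficients into the $Y$-factor, and then observes that $\inss(X^\hor)$ applied to $\E^{\SymD_P}\pr^*f_1 \tensor \cdots \tensor \E^{\SymD_P}\pr^*f_k$ is the pull-back of the scalar operator $\Op(X)(f_1,\ldots,f_k)$ — whereas you package the whole statement as a formal consequence of \autoref{thm:SymbolCalculusIsoDifferential}~\ref{item:SymbolCalcIso_MN} applied to $\phi = \pr$, transported along $\Secinfty(F^\perp) \simeq \Secinfty(\pr^\# TM)$. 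Your route makes the logical dependence on \autoref{prop:SymbolCalculusMultiDiffopsdifferentMan} explicit and localizes all the new content in the single check that the comodule structure \eqref{eq:leftcomodulechangeManifold} transports to the horizontal projection; the paper's route avoids the bookkeeping of the identification but leaves surjectivity asserted rather than inherited. Both are valid; yours is arguably the cleaner deduction given that the general theorem has already been established.
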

\begin{proof}
    Without restriction, $X$ is actually a horizontal lift from $M$
    since the $\Cinfty(P)$-linear combinations can be obtained by
    making use of general sections $Y \in \Secinfty(\Sym^\ell TP)$.
    \begin{equation*}
        \label{eq:InssHorizontalLift}
        \inss(X^\hor)
        \Big(
        \E^{\SymD_P} \pr^*f_1
        \tensor \cdots \tensor
        \E^{\SymD_P} \pr^*f_k
        \Big)
        =
        \pr^*\Big(
        \inss(X)
        \big(
        \E^{\SymD} f_1
        \tensor \cdots \tensor
        \E^{\SymD} f_k
        \big)
        \Big)
    \end{equation*}
    is just the pull-back of the scalar multidifferential operator
    $\Op(X)$ applied to the functions $f_1, \ldots, f_k$. Again, this
    calculus exhausts all elements of $\HCdiff^k(M, \Diffop(P))$,
    i.e. \eqref{eq:OpForDiffopP} becomes an isomorphism of left
    $\Cinfty(M)$-modules.
\end{proof}
\begin{theorem}[HKR Theorem for surjective submersions]
    \label{thm:HKR_surjSubmersions}%
    Let $\pr \colon P \to M$ be a surjective submersion and let
    $\nabla^P$ be a torsion-free covariant derivative on $P$ adapted
    to a given torsion-free covariant derivative $\nabla$ on $M$,
    cf. \autoref{sec:AdaptedCovDer_Submersions}.  Then there is a
    deformation retract
    \begin{equation}
        \label{eq:SurjectiveSubmersionRetract}
        \begin{tikzcd}[column sep = large]
            \Diffop_\ver(P)
            \arrow[r,"\hkr", shift left = 3pt]
            &\bigl( \HCdiff^{\bullet}(M,\Diffop(P)),\delta \bigr)
            \arrow[l,"\hkrInv", shift left = 3pt]
            \arrow[loop,
            out = -30,
            in = 30,
            distance = 30pt,
            start anchor = {[yshift = -7pt]east},
            end anchor = {[yshift = 7pt]east},
            "\hkrHomo"{swap}
            ]
        \end{tikzcd}
    \end{equation}
    depending on the choice of the covariant derivatives.  In
    particular, we have
    \begin{equation}
        \HHdiff^\bullet\big(
        M,\Diffop(P)
        \big)
        \simeq
        \Diffop_\ver(P),
    \end{equation}
    concentrated in degree $0$, where
    $\Diffop_\ver(P)$ denotes the differential operators which only
    derive in fiber direction.
\end{theorem}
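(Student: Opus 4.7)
The plan is to transport the Hochschild complex across the global symbol calculus developed in \autoref{sec:GlobalSymbolCalculusMultiDiffops} and then reduce the whole computation to one of the coalgebra cohomologies already worked out in Part~I. First, I would set $V \coloneqq \Secinfty(F^\perp)$, viewed as a $\Cinfty(P)$-module, and $\module{M} \coloneqq \Sym \Secinfty(TP)$, equipped with the $\Sym V$-comodule structure induced by $\pr_*$ as in \eqref{eq:leftcomodulechangeManifold}. \autoref{thm:SymbolCalculusIsoDifferential}~\ref{item:SymbolCalcIso_MN} (equivalently \autoref{proposition:SymbolCalcSurSub}) then furnishes an isomorphism
\begin{equation*}
    \Op \colon \CCa^\bullet(V,\module{M}) \xrightarrow{\simeq} \HCdiff^\bullet(M,\Diffop(P))
\end{equation*}
of differential graded modules along the scalar symbol calculus of \autoref{thm:classicalHKR}. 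Hence the problem reduces to computing the left-hand coalgebra cohomology \emph{with explicit homotopies}.

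Next, I would exploit the chosen horizontal splitting $TP = F \oplus F^\perp$ to decompose $W \coloneqq \Secinfty(TP)$ as $V \oplus V^\perp$, where $V^\perp \coloneqq \Secinfty(F)$. Since vertical vector fields lie precisely in $\ker T\pr$ while horizontal ones are intertwined by $\pr_*$ with sections of $\pr^\#TM$, this is exactly the scenario of \autoref{ex:Comodules}~\ref{item:OtherSymWisComodule}, producing a $\Sym V$-comodule isomorphism
\begin{equation*}
    \module{M} = \Sym W \simeq \Sym V \tensor \Sym \Secinfty(F)
\end{equation*}
with the canonical $\Sym V$-coaction on the first factor and the trivial coaction on the second. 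At this point, \autoref{ex:CoalgebraCohomology}~\ref{item:CoalgebraCohomology_SVtensorTrivCoeff}, applied to the trivial comodule $\Sym \Secinfty(F)$, directly provides a deformation retract between $\CCa^\bullet(V,\module{M})$ and $\Sym \Secinfty(F)$ concentrated in degree zero, with explicit homotopy $\delta^{-1} \tensor \id$.

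Finally, I would transport this deformation retract through $\Op$ and identify $\Sym \Secinfty(F) \simeq \Diffop_\ver(P)$. The identification is immediate from \eqref{eq:OpValuesInDiffopPDef}: an element $Y \in \Sym^n \Secinfty(F)$ is sent to $g \mapsto \frac{1}{n!}\inss(Y) \SymD_P^n g$, which by construction only uses vertical derivatives of $g$, and conversely every element of $\Diffop_\ver(P)$ arises this way. Naming the composites $\hkr$, $\hkrInv$, and $\hkrHomo$ as in the statement then yields the deformation retract \eqref{eq:SurjectiveSubmersionRetract}.

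The point that requires the most care is the comodule decomposition in the middle step: I will need to verify that under the horizontal identification $F^\perp \simeq \pr^\# TM$, the comodule structure \eqref{eq:leftcomodulechangeManifold} restricts to the canonical coaction on $\Sym V$ and vanishes on $\Sym \Secinfty(F)$, all at the $\Cinfty(P)$-module level. This rests on $T\pr$ being $\Cinfty(P)$-linear with kernel $F$, so vertical sections are annihilated by $\pr_*$ and horizontal ones are intertwined with the horizontal lift; hence the iterated coaction reproduces the shuffle coproduct on $\Sym V$ exactly. The hypothesis that $\pr$ has connected fibres enters only through the identifications $\Cinfty(P)^F \simeq \Cinfty(M)$ used implicitly in the scalar calculus; everything else is the machinery of Part~I applied verbatim.
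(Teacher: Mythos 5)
Your proposal is correct and follows essentially the same route as the paper: choose the horizontal splitting $TP = F^\perp \oplus F$, identify $\Sym\Secinfty(TP) \simeq \Sym\Secinfty(F^\perp) \tensor \Sym\Secinfty(F)$ as a comodule with canonical coaction on the first factor and trivial coaction on the second, invoke the coalgebra-cohomology computation of \autoref{ex:CoalgebraCohomology}~\ref{item:CoalgebraCohomology_Submodule} (the special case of \ref{item:CoalgebraCohomology_SVtensorTrivCoeff} you cite), and transport through the symbol calculus of \autoref{proposition:SymbolCalcSurSub}, identifying $\Sym\Secinfty(F) \simeq \Diffop_\ver(P)$. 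Your extra care point about verifying that the coaction \eqref{eq:leftcomodulechangeManifold} annihilates vertical sections and reproduces the shuffle coproduct on the horizontal factor is exactly the content the paper leaves implicit.
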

\begin{proof}
    Let us choose a horizontal bundle $F^\perp$, i.e. a splitting
    $TP = F^\perp \oplus F$.  Then
    $\Sym \Secinfty(TP) = \Sym \big(\Secinfty(F^\perp) \oplus
    \Secinfty(F) \big) \simeq \Sym\Secinfty(F^\perp) \tensor
    \Sym\Secinfty(F)$, with $\Sym\Secinfty(F^\perp)$-coaction on
    $\Sym \Secinfty(TP)$ given by the canonical coaction on
    $\Sym\Secinfty(F^\perp)$ and trivial coaction on
    $\Sym\Secinfty(F)$.  Using
    \autoref{ex:CoalgebraCohomology}~\ref{item:CoalgebraCohomology_Submodule}
    applied to $W = \Secinfty(TP)$ and $V^\perp = \Secinfty(F)$ and
    the fact that $\Sym\Secinfty(TF) \simeq \Diffop_\ver(P)$ we
    immediately obtain the above deformation retract.
\end{proof}

The computation of the cohomology was obtained already in
\cite{bordemann.neumaier.waldmann.weiss:2010a}. Here we have the
explicit and global homotopies as additional feature.

\subsubsection{Projectable Hochschild Cohomology}
\label{subsubsec:ProjectableHochschildCohomology}%

Another Hochschild complex associated to a surjective submersion
$\pr \colon P \to M$ with vertical bundle $F$ is given by
\begin{equation}
    \HCdiff^k(P)^F
    \coloneqq
    \left\{
        D \in \HCdiff^k(P)
        \; \Big| \;
        D(f_1, \dotsc, f_k) \in \Cinfty(P)^F
        \textrm{ for all }
        f_1, \dotsc, f_k \in \Cinfty(P)^F
    \right\},
\end{equation}
where $\Cinfty(P)^F$ denotes the functions constant along the fibers
of the submersion $\pr$.  We call these differential operators
\emph{projectable}, since to every $D \in \Diffop^k(P)^F$ we can
assign a differential operator $\check{D} \in \Diffop^k(M)$ by
defining $\check{D}(g_1, \dotsc, g_k)$ to be the uniquely defined
function $g \in \Cinfty(M)$ such that
$D(\pr^*g_1, \dotsc, \pr^*g_k) = \pr^*g$.  Moreover, the Hochschild
differential $\delta$ clearly preserves $\HCdiff^\bullet(P)^F$,
turning it into a subcomplex of $\HCdiff^\bullet(P)$.  To compute the
projectable Hochschild cohomology note that the map
$D \mapsto \check{D}$ is in fact a surjective morphism of cochain
complexes onto $\Diffop^\bullet(M)$.  If we consider its kernel
\begin{equation}
    \HCdiff^k(P)_0
    \coloneqq
    \left\{
        D \in \HCdiff^k(P)
        \; \Big| \;
        D(f_1, \dotsc, f_k) = 0
        \textrm{ for all }
        f_1, \dotsc, f_k \in \Cinfty(P)^F
    \right\},
\end{equation}
then we obtain an isomorphism
\begin{equation}
    \label{eq:SplittingHCPF}
    \HCdiff^\bullet(P)^F
    \simeq
    \HCdiff^\bullet(P)_0
    \oplus
    \HCdiff^\bullet(M)
\end{equation}
of complexes.  Thus it remains to compute the cohomology of
$\HCdiff^\bullet(P)_0$.

For this we use again a covariant derivative $\nabla^P$ on $P$ adapted
to $\pr$ and the choice of a horizontal bundle $F^\perp$.  Then using
\eqref{eq:SplittingHCPF} we obtain an isomorphism
\begin{equation}
    \label{eq:SplittingHCP}
    \HCdiff^\bullet(P)
    \simeq
    \HCdiff^\bullet(P)_0
    \oplus
    \Cinfty(P) \cdot \HCdiff^\bullet(M),
\end{equation}
here $\Cinfty(P) \cdot \HCdiff^\bullet(M)$ denotes the
$\Cinfty(P)$-module generated by $\HCdiff^\bullet(M)$.  We already
discussed that taking symbols of $\Cinfty(P) \cdot \HCdiff^\bullet(M)$
yields an isomorphism
$\Cinfty(P) \cdot \HCdiff^\bullet(M) \simeq \Tensor^\bullet\Sym
\Secinfty(F^\perp)$, and thus an associated homotopy retract
\begin{equation}
    \label{diag:DefRetractPgeneratedHCM}
    \begin{tikzcd}[column sep = large]
        \Anti^\bullet \Secinfty(TF^\perp)
        \arrow[r,"\hkr", shift left = 3pt]
        &\bigl(\Cinfty(P) \cdot \HCdiff^{\bullet}(M),\delta \bigr)
        \arrow[l,"\hkrInv_\nabla", shift left = 3pt]
        \arrow[loop,
        out = -30,
        in = 30,
        distance = 30pt,
        start anchor = {[yshift = -7pt]east},
        end anchor = {[yshift = 7pt]east},
        "\hkrHomo"{swap}
        ]
    \end{tikzcd}.
\end{equation}
With this we are finally able to compute the projectable Hochschild
cohomology:
\begin{theorem}[Projectable HKR]
    \label{thm:HKR_Projectable}%
    Let $\pr \colon P \to M$ be a surjective submersion and let
    $\nabla^P$ be a torsion-free covariant derivative on $P$ adapted
    to a given torsion-free covariant derivative $\nabla$ on $M$,
    cf. \autoref{sec:AdaptedCovDer_Submersions}.  Moreover, let
    $F^\perp$ be a horizontal bundle for $\pr$.
    \begin{theoremlist}
    \item There exists a deformation retract
        \begin{equation}
            \begin{tikzcd}
                \Anti^{\bullet-1} \Secinfty(TP)
                \wedge
                \Secinfty(F)
                \arrow[r, shift left = 3pt]
                &\bigl( \HCdiff^{\bullet}(P)_0,\delta \bigr)
                \arrow[l, shift left = 3pt]
                \arrow[loop,
                out = -30,
                in = 30,
                distance = 30pt,
                start anchor = {[yshift = -7pt]east},
                end anchor = {[yshift = 7pt]east}
                ]
            \end{tikzcd}
            .
        \end{equation}
        In particular, we have an isomorphism
        \begin{equation}
            \HHdiff^\bullet(P)_0
            \simeq
            \Anti^{\bullet-1} \Secinfty(TP)
            \wedge
            \Secinfty(F)
        \end{equation}
        as graded $\Cinfty(P)$-algebras.
    \item There exists a deformation retract
        \begin{equation}
            \begin{tikzcd}
                \Anti^{\bullet-1} \Secinfty(TP)
                \wedge
                \Secinfty(F)
                \oplus
                \Anti^\bullet\Secinfty(F^\perp)^F
                \arrow[r, shift left = 3pt]
                &\bigl( \HCdiff^{\bullet}(P)^F,\delta \bigr)
                \arrow[l, shift left = 3pt]
                \arrow[loop,
                out = -30,
                in = 30,
                distance = 30pt,
                start anchor = {[yshift = -7pt]east},
                end anchor = {[yshift = 7pt]east}
                ]
            \end{tikzcd}
            .
        \end{equation}
        In particular, we have an isomorphism
        \begin{equation}
            \HCdiff^\bullet(P)^F
            \simeq
            \Anti^{\bullet-1} \Secinfty(TP)
            \wedge
            \Secinfty(F)
            \oplus
            \Anti^\bullet\Secinfty(F^\perp)^F.
        \end{equation}
        of graded $\Cinfty(P)^F$-algebras.
    \end{theoremlist}
\end{theorem}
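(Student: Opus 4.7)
The strategy is to transport the van Est deformation retract of \autoref{thm:VanEstDeformationRetract} for $V = \Secinfty(TP)$ onto subcomplexes matching, via the symbol isomorphism $\Op$ of \autoref{thm:SymbolCalculusIsoDifferential}\ref{item:SymbolCalcIso_M}, the splitting \eqref{eq:SplittingHCP}. Part (ii) will then follow from part (i) combined with the splitting \eqref{eq:SplittingHCPF} and the classical HKR theorem for $M$.

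First I build a chain idempotent on $\CCa^\bullet(\Secinfty(TP))$ that commutes with all the van Est data. The horizontal decomposition yields a $\Cinfty(P)$-linear projection $\pi \colon \Secinfty(TP) \twoheadrightarrow \Secinfty(F^\perp)$ and inclusion $\iota \colon \Secinfty(F^\perp) \hookrightarrow \Secinfty(TP)$ with $\pi \circ \iota = \id$. Applying \autoref{prop:GroupInvariantVanEst} with $\vartheta = \id_{\Cinfty(P)}$, both $\iota$ and $\pi$ induce morphisms of the van Est deformation retract, so the chain idempotent $P \coloneqq \iota_* \pi_*$ commutes with $\VanEstInt$, $\VanEstDiff$ and $\VanEstHomo$. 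Hence the full van Est retract splits as a direct sum of deformation retracts on $\image P$ and $\ker P$, together with the corresponding splitting of $\Anti^\bullet\Secinfty(TP)$. One checks directly that $\image P = \CCa^\bullet(\Secinfty(F^\perp))$, while $\ker P = \mathcal{I}^\bullet$ consists of tensors $X_1 \tensor \cdots \tensor X_k$ with at least one $X_i$ in the ideal $\Secinfty(F) \vee \Sym\Secinfty(TP)$; dually, on $\Anti^\bullet\Secinfty(TP)$ the image is $\Anti^\bullet\Secinfty(F^\perp)$ and the kernel is $\Anti^{\bullet-1}\Secinfty(TP) \wedge \Secinfty(F)$.

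To finish part (i), I match this coalgebraic splitting with the Hochschild splitting \eqref{eq:SplittingHCP} under $\Op$. Adaptedness of $\nabla^P$ gives $\SymD_P \pr^* = \pr^* \SymD$, so that for purely horizontal $X \in \Tensor^\bullet \Sym\Secinfty(F^\perp)$ the operator $\Op(X)(\pr^*g_1, \dotsc, \pr^*g_k)$ is the pullback of a scalar multidifferential operator on $M$; this shows $\Op(\CCa^\bullet(\Secinfty(F^\perp))) = \Cinfty(P) \cdot \HCdiff^\bullet(M)$. Conversely, any factor of $X$ in the ideal contains a vertical vector field, whose contraction against the purely horizontal symbols $(\SymD_P)^n \pr^*g$ vanishes, giving $\Op(\mathcal{I}^\bullet) \subseteq \HCdiff^\bullet(P)_0$; combined with the direct-sum \eqref{eq:SplittingHCP} and the $\Cinfty(P)$-linearity of $\Op$, this forces equality. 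Transporting the restricted van Est deformation retract through $\Op$ now proves part (i).

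Part (ii) is a direct-sum assembly: \eqref{eq:SplittingHCPF} is an isomorphism of cochain complexes since both $\HCdiff^\bullet(P)_0$ and the image of $\pr^* \colon \HCdiff^\bullet(M) \hookrightarrow \HCdiff^\bullet(P)^F$ are $\delta$-stable. Combining the deformation retract from part (i) with the classical HKR retract of \autoref{thm:classicalHKR} applied to $M$, and using the horizontal-lift isomorphism $\Anti^\bullet\Secinfty(TM) \simeq \Anti^\bullet\Secinfty(F^\perp)^F$, yields the stated deformation retract. The main technical obstacle is the identification $\Op(\mathcal{I}^\bullet) = \HCdiff^\bullet(P)_0$; once this is in hand, everything else follows formally from functoriality of the van Est deformation retract.
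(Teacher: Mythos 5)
Your proposal is correct and follows essentially the same route as the paper: split $\HCdiff^\bullet(P)$ according to \eqref{eq:SplittingHCP}, obtain the retract on $\HCdiff^\bullet(P)_0$ as the complement of \eqref{diag:DefRetractPgeneratedHCM} inside the full HKR retract for $P$, and assemble part (ii) as a direct sum via \eqref{eq:SplittingHCPF} and \autoref{prop:SumTensorHomotopies}. Your explicit chain idempotent $\iota_*\pi_*$, commuting with all van Est data by functoriality, is a clean way of making precise the ``quotient of deformation retracts'' that the paper's proof leaves implicit, and your verification that $\Op$ carries the ideal $\mathcal{I}^\bullet$ onto $\HCdiff^\bullet(P)_0$ correctly supplies the matching of the two splittings.
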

\begin{proof}
    We can obtain the first part as a quotient of the HKR deformation
    retract for $\HCdiff^\bullet(P)$ by
    \eqref{diag:DefRetractPgeneratedHCM} using
    \eqref{eq:SplittingHCP}.  The second part then directly follows by
    taking the direct sum of deformation retracts, see
    \autoref{prop:SumTensorHomotopies} according to
    \eqref{eq:SplittingHCPF}.
\end{proof}

This is another instance of the coisotropic Hochschild cohomology used
in the study of Poisson structures which are compatible with
coisotropic reduction, see \cite{dippell.esposito.waldmann:2022a,
  dippell:2023a}.  In particular, the above computes the Hochschild
cohomology associated to the constraint algebra
$\algebra{A} = (\Cinfty(P),\Cinfty(P)^F,0)$.

\subsection{Hochschild Cohomology Associated to a Foliation}
\label{subsec:HochschildCohomologyAssociatedFoliation}%

Given a regular foliation on $M$ with distribution
$F \subseteq TM$ one can consider multidifferential operators which
derive in the direction of the foliation, i.e. $D\in \HCdiff^k(M)$
shall fulfil
\begin{equation}
    D(f_1, \ldots, f_{i-1}, g, f_{i}, \ldots, f_{k-1})
    -
    gD(f_1, \ldots, f_{i-1}, 1, f_{i}, \ldots, f_{k-1})
    =
    0
\end{equation}
for all $f_1, \ldots, f_{k-1} \in \Cinfty(M)$, and for all
$g \in \Cinfty(M)^F$ and all $i\in\{1, \ldots, k\}$. Here
$\Cinfty(M)^F \subseteq \Cinfty(M)$ denotes those functions $g$ which
are constant in direction of the distribution, i.e. $\Lie_X g = 0$ for
all $X \in \Secinfty(F)$.  We denote these multidifferential operators
by $\HCdiff^\bullet(M)_F$ and one can easily show that it is a subcomplex
of $\HCdiff^\bullet(M)$.

We call a covariant derivative $\nabla$ on $M$ which restricts to $\Secinfty(F)$
\emph{adapted to $F$}.
Note that such covariant derivatives always exist by choosing a complement of $F$
and projecting appropriately.
\begin{theorem}[Foliation HKR]
    \label{thm:HKR_Foliation}
    Let $F \subseteq TM$ be an integrable regular smooth distribution,
    and let $\nabla$ be a torsion-free covariant derivative adapted to $F$.
    Then there exists a deformation retract
    \begin{equation}
        \begin{tikzcd}
            \Anti^\bullet \Secinfty(F)
            \arrow[r, shift left = 3pt]
            &\bigl( \HCdiff^{\bullet}(M)_F,\delta \bigr)
            \arrow[l, shift left = 3pt]
            \arrow[loop,
            out = -30,
            in = 30,
            distance = 30pt,
            start anchor = {[yshift = -7pt]east},
            end anchor = {[yshift = 7pt]east}
            ]
        \end{tikzcd}
        .
    \end{equation}
    In particular, we have an isomorphism
    \begin{equation}
        \HHdiff^{\bullet}(M)_F \simeq \Anti^\bullet \Secinfty(F)
    \end{equation}
    of graded $\Cinfty(M)$-algebras.
\end{theorem}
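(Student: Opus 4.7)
The plan is to follow the same two-step strategy used for the previous HKR-type results in this section: first use the global symbol calculus to identify $\HCdiff^\bullet(M)_D$ with the coalgebra complex of $\Secinfty(D)$, and then apply the van Est deformation retract of \autoref{thm:VanEstDeformationRetract} to the $\Cinfty(M)$-module $V = \Secinfty(D)$.

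First I would show that the restriction of the global symbol calculus yields an isomorphism
\[
    \Op \colon \CCa^\bullet(\Secinfty(D)) \longrightarrow \HCdiff^\bullet(M)_D
\]
of differential graded $\Cinfty(M)$-algebras, where $\Secinfty(D) \subseteq \Secinfty(TM)$ is viewed as a submodule. This is a foliation analogue of \autoref{proposition:TangentialSymbolCalculus}. For well-definedness, the assumption that $\nabla$ is adapted to $D$ (so that $\nabla_X Y \in \Secinfty(D)$ whenever $X, Y \in \Secinfty(D)$) ensures that for $X \in \Tensor^k \Sym \Secinfty(D)$ every covariant derivative appearing in the defining formula \eqref{eq:OpX} of $\Op(X)$ differentiates only along $D$. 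Consequently $\Op(X)(f_1, \ldots, f_k)$ depends only on the restrictions of each $f_i$ to leaves, and in particular it multiplies $D$-constant arguments by a $D$-constant factor, so $\Op(X) \in \HCdiff^\bullet(M)_D$. Compatibility with the differentials and algebra structures is then inherited by restriction from \autoref{thm:SymbolCalculusIsoDifferential}~\ref{item:SymbolCalcIso_M}.

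With this identification available, the statement is immediate: applying \autoref{thm:VanEstDeformationRetract} to $V = \Secinfty(D)$ gives a deformation retract between $\CCE^\bullet(\Secinfty(D)) = \Anti^\bullet \Secinfty(D)$ and $\CCa^\bullet(\Secinfty(D))$, and transporting it along $\Op$ produces the claimed deformation retract
\[
    \Anti^\bullet \Secinfty(D) \; \rightleftarrows \; \HCdiff^\bullet(M)_D.
\]
The corresponding maps are $\hkr = \Op \circ \VanEstInt$, $\hkrInv_\nabla = \VanEstDiff \circ \Op^{-1}$ and $\hkrHomo = \Op \circ \VanEstHomo \circ \Op^{-1}$. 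That the induced isomorphism in cohomology is one of graded $\Cinfty(M)$-algebras is automatic from \autoref{cor:CaCohomologyComputed}.

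The main obstacle is the surjectivity of the symbol-calculus restriction, i.e.\ showing that every $\tilde D \in \HCdiff^k(M)_D$ has $\Op^{-1}(\tilde D) \in \Tensor^k \Sym \Secinfty(D)$. The argument should be pointwise: at any $p \in M$, use Frobenius to pick local coordinates $(x^1, \ldots, x^r, y^1, \ldots, y^s)$ adapted to $D$ so that $D$ is spanned by $\partial/\partial x^i$ and functions $y^j$ are locally $D$-constant. Plugging products of $y^j$'s into $\tilde D$ slot by slot and comparing leading symbols via \autoref{proposition:SymbolCalculusMultiDiffopsScalar} forces the $y$-directional components of the symbol in each tensor factor to vanish, yielding the required containment by induction on multi-order. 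Everything beyond this verification is a direct specialisation of the general coalgebraic machinery developed in \autoref{part:CoalgebraCohomology}.
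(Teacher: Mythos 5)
Your proposal follows exactly the paper's route: restrict the global symbol calculus to $\Tensor^\bullet\Sym\Secinfty(D)$ using the adapted covariant derivative to obtain an isomorphism $\Op\colon \CCa^\bullet(\Secinfty(D)) \to \HCdiff^\bullet(M)_D$ of differential graded algebras, then transport the van Est deformation retract for $V = \Secinfty(D)$ along it. In fact you supply more justification (the vanishing of $\inss(X)\SymD^n g$ for $D$-constant $g$, and the surjectivity argument in adapted coordinates) than the paper's own proof, which simply asserts that $\Op$ "turns out to be an isomorphism."
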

\begin{proof}
    Let us choose a covariant derivative $\nabla$ adapted to $F$,
    then we can define the map
    \begin{equation*}
        \Op \colon \CCa^\bullet(\Secinfty(F)) \to \HCdiff^\bullet(M)_F,
    \end{equation*}
    which turns out to be an isomorphism. Using
    \autoref{thm:VanEstDeformationRetract}, one gets the desired
    deformation retract.
\end{proof}

\subsection{Invariant HKR Theorems}
\label{sec:InvariantHKR}

We have already seen in the previous parts that our construction of
the deformation retract respects geometric structures such as
submanifolds and surjective submersions as long as the covariant
derivative respects them as well.  In this subsection, we exploit this
observation further and discuss the case of a Lie group (resp. Lie
algebra) acting on a manifold.

Recall that a covariant derivative $\nabla$ is
\emph{$\group{G}$-invariant} with respect to a smooth action
$\Phi\colon \group{G} \times M \to M$ of a Lie group $\group{G}$ if
\begin{equation}
    \label{eq:InvariantConnection}
    \Phi_g^*(\nabla_X Y)
    =
    \nabla_{\Phi_g^*X}\Phi_g^*Y
\end{equation}
for all $X, Y \in \Secinfty(TM)$ and all $g\in \group{G}$.  In case of
a Lie algebra action $\phi\colon \liealg{g} \to \Secinfty(TM)$ the
covariant derivative $\nabla$ is called \emph{$\liealg{g}$-invariant},
if
\begin{equation}
    \label{eq:InfinitesimalInvariantConnection}
    [\phi(\xi),\nabla_XY]
    =
    \nabla_{[\phi(\xi),X]}Y + \nabla_X[\phi(\xi),Y]
\end{equation}
for all $X, Y \in \Secinfty(TM)$ and all $\xi \in \liealg{g}$.  Note
that if a covariant derivative is $\group{G}$-invariant, then it is
also $\mathrm{Lie}(\group{G})=\liealg{g}$-invariant with respect to
the infinitesimal action of $\group{G}$. The converse is true if
$\group{G}$ is connected. Nevertheless, a Lie algebra action does not
need to integrate to a Lie group action.
\begin{remark}
    In general one cannot guarantee the existence of a
    $\group{G}$-invariant covariant derivative, but we have the
    following implications:
    \begin{equation}
        \label{eq:ProperActionStuff}
        \group{G} \textrm{ compact }
        \implies
        \group{G} \textrm{ acts properly }
        \implies
        \textrm{ Existence of a }
        \group{G}\textrm{-invariant covariant derivative}.
    \end{equation}
    This is usually shown by a standard averaging procedure for proper
    actions. Note that none of the above implications is actually an
    equivalence: the first one is clear and for the second one the
    linear action of $\operator{GL}_n$ on $\mathbb{R}^n$ admits an
    invariant covariant derivative (the canonical flat one), but the
    action is not proper.
\end{remark}

If $D \in \Diffop^k(M)$ is a multidifferential operator we can act on
$D$ with diffeomorphisms as well as with vector fields as follows. One
defines $\Phi^*D \in \Diffop^k(M)$ by
\begin{equation}
    \label{eq:PhiPullbackDiffop}
    (\Phi^*D)(f_1, \ldots, f_k)
    =
    \Phi^*(D(\Phi_* f_1, \ldots, \Phi_* f_k))
\end{equation}
where $\Phi^*$ is the usual pull-back of functions and
$\Phi_* = (\Phi^{-1})^*$. It requires only a little check that
$\Phi^*D$ is indeed again a multidifferential operator of the same
order as $D$. Similarly, one defines the Lie derivative
$\Lie_X D \in \Diffop^k(M)$ of $D$ with respect to a vector field
$X \in \Secinfty(TM)$ by
\begin{equation}
    \label{eq:LieXDDef}
    (\Lie_X D)(f_1, \ldots, f_k)
    =
    \Lie_X (D(f_1, \ldots, f_k))
    -
    \sum_{r=1}^k
    D(f_1, \ldots, \Lie_X f_r, \ldots, f_k).
\end{equation}
Again, $\Lie_X D$ is indeed a multidifferential operator of the same
order as $D$. In fact, $\Lie_X D$ is the Gerstenhaber bracket of the
derivation $\Lie_X$ with $D$.

In particular, in both cases we can speak of an \emph{invariant}
multidifferential operator. We denote these invariant elements of the
differential Hochschild complex by $\HCdiff(M)^\group{G}$ in the case
of a group action and by $\HCdiff(M)^\liealg{g}$ in the case of a Lie
algebra action, respectively.
\begin{proposition}
    \label{prop:invarianSymbCalc}%
    Let $\Phi\colon \group{G} \times M \to M$ be a Lie group action
    (resp.  $\phi\colon \liealg{g} \to \Secinfty(TM)$ a Lie algebra
    action) and let $\nabla$ be a torsion-free $\group{G}$-invariant
    (resp. $\liealg{g}$-invariant) covariant derivative.
    \begin{propositionlist}
    \item \label{item:OpIsEquivariant} The global symbol calculus is
        equivariant, i.e. we have
        \begin{equation}
            \label{eq:OpGequivariant}
            \mathord{\Op} \circ \Phi_g^*
            =
            \Phi_g^* \circ \Op
        \end{equation}
        for all $g \in \group{G}$ in the case of a group action. In
        the case of a Lie algebra action we have the infinitesimal
        equivariance
        \begin{equation}
            \label{eq:InfinitesimalEquivarianceOp}
            \mathord{\Op} \circ \Lie_{\phi(\xi)}
            =
            \Lie_{\phi(\xi)} \circ \Op
        \end{equation}
        for all $\xi \in \liealg{g}$.
    \item \label{item:InvariantSymbols} The equivariant symbol
        calculus restricts to an isomorphism
        \begin{equation}
            \label{eq:EquivSymbolCalInvariantSymbols}
            \Op\colon
            \bigl(\Tensor^\bullet\Secinfty(\Sym^\bullet TM)\bigr)^\group{G}
            \to
            \HCdiff(M)^\group{G}
        \end{equation}
        of differential graded algebras between $\group{G}$-invariant symbols and
        $\group{G}$-invariant multidifferential operators in the case
        of a group action. In the case of a Lie algebra action we get
        the isomorphism
        \begin{equation}
            \label{eq:LiealgInVSymbols}
            \Op\colon
            \bigl(\Tensor^\bullet \Secinfty(\Sym^\bullet TM)\bigr)^\liealg{g}
            \to
            \HCdiff(M)^\liealg{g}
        \end{equation}
        of differential graded algebras.
    \end{propositionlist}
\end{proposition}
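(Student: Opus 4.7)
The plan is to establish part \ref{item:OpIsEquivariant} directly from the naturality of the ingredients defining $\Op$, and then deduce part \ref{item:InvariantSymbols} as a formal consequence combined with \autoref{thm:SymbolCalculusIsoDifferential}.

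For part \ref{item:OpIsEquivariant} in the Lie group case, the first step is to show that $\group{G}$-invariance of $\nabla$ implies equivariance of the symmetrized covariant derivative, i.e. $\Phi_g^* \circ \SymD = \SymD \circ \Phi_g^*$ on symmetric covariant tensor fields. This is immediate from the defining formula \eqref{eq:SymDDef} together with \eqref{eq:InvariantConnection}, since pullback of a symmetric tensor field through $\Phi_g$ is compatible with the action of $\nabla$ on vector fields. Next, the insertion map $\inss$ is natural under diffeomorphisms, so $\Phi_g^*(\inss(X)\alpha) = \inss(\Phi_g^*X)(\Phi_g^*\alpha)$. Plugging these two facts into \eqref{eq:OpX} yields
\begin{equation*}
\Op(\Phi_g^* X)(\Phi_g^* f_1, \ldots, \Phi_g^* f_k)
=
\Phi_g^*\bigl( \Op(X)(f_1, \ldots, f_k) \bigr),
\end{equation*}
which by the definition \eqref{eq:PhiPullbackDiffop} of $\Phi_g^*$ on multidifferential operators is precisely \eqref{eq:OpGequivariant}. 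For the Lie algebra version I would either differentiate the group statement along one-parameter subgroups generated by $\phi(\xi)$ (whenever they integrate locally) or argue directly: \eqref{eq:InfinitesimalInvariantConnection} yields $\Lie_{\phi(\xi)} \circ \SymD = \SymD \circ \Lie_{\phi(\xi)}$ by a short computation on factorizing symmetric tensors, and the Leibniz rule for $\Lie_{\phi(\xi)}$ with respect to tensor contractions delivers the analogous compatibility with $\inss$. Combined with \eqref{eq:LieXDDef}, this produces \eqref{eq:InfinitesimalEquivarianceOp}.

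For part \ref{item:InvariantSymbols}, the equivariance just established shows that $\Op$ sends invariant symbols into invariant multidifferential operators; since $\Op$ is already a bijection by \autoref{thm:SymbolCalculusIsoDifferential}~\ref{item:SymbolCalcIso_M} and the inverse $\Op^{-1}$ is necessarily equivariant as well, the restriction to invariants is still a bijection. The compatibility with the Hochschild differential and the cup product (respectively the coalgebra differential $\delta_\Ca$ and the tensor product) on invariant subspaces is inherited from \autoref{thm:SymbolCalculusIsoDifferential}~\ref{item:SymbolCalcIso_M}, once one notes that both structures on $\Tensor^\bullet \Sym\Secinfty(TM)$ are built from the diagonal action of $\group{G}$ (resp.~$\liealg{g}$) via the coalgebra and tensor product of $\Sym\Secinfty(TM)$, and therefore preserve the corresponding subspaces of invariants.

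The main obstacle I anticipate is purely notational bookkeeping in the first step: the equivariance of $\SymD$ under pullbacks (and the analogous Lie derivative statement) is conceptually transparent but must be checked carefully because \eqref{eq:InvariantConnection} and \eqref{eq:InfinitesimalInvariantConnection} only give the compatibility at the level of $\nabla$, whereas $\SymD$ involves a symmetrization and an additional correction term. Once this commutation is in place, every subsequent step propagates by functoriality, and no further analytic or algebraic difficulty arises.
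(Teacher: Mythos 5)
Your proposal is correct and follows essentially the same route as the paper: establish $\Phi_g^* \circ \SymD = \SymD \circ \Phi_g^*$ (and its infinitesimal analogue) from the invariance of $\nabla$, deduce the equivariance of $\Op$ from the explicit symbol formula, and obtain the second part from the bijectivity of $\Op$. You merely spell out the intermediate naturality of $\inss$ and the bookkeeping with \eqref{eq:PhiPullbackDiffop}, which the paper leaves implicit.
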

\begin{proof}
    Note that the symmetrized covariant derivative from
    Equation~\eqref{eq:SymDWithoutE} fulfils
    \begin{equation*}
        \Phi_g^* \circ \SymD
        =
        \SymD \circ \Phi^*_g
    \end{equation*}
    and likewise for a Lie algebra action.  With this,
    \eqref{eq:OpGequivariant} and
    \eqref{eq:InfinitesimalEquivarianceOp}, respectively, are direct
    consequences. Since $\Op$ is a bijection between symbols and
    multidifferential operators, the second part follows from the
    first at once.
\end{proof}
\begin{theorem}[Invariant HKR]
    \label{thm:HKR_inv}%
    Let $\Phi \colon \group{G} \times M \to M$ be a Lie group action
    (resp.  $\phi\colon \liealg{g}\to \Secinfty(TM)$ a Lie algebra
    action).  Suppose there exists a torsion-free $\group{G}$-invariant
    (resp. $\liealg{g}$-invariant) connection $\nabla$.
    \begin{theoremlist}
    \item \label{item:EquivariantHomotopies} All structure maps in the
        deformation retract \eqref{eq:HKRDeformationRetract} from
        \autoref{thm:classicalHKR} are equivariant. In particular, the
        homotopy $\hkrHomo$ is equivariant.
    \item \label{item:InvariantDeformationRetract} The deformation
        retract \eqref{eq:HKRDeformationRetract} restricts to a
        deformation retract
        \begin{equation}
            \begin{tikzcd}[column sep = large]
                \bigl( \Anti^\bullet \Secinfty(TM) \bigr)^\group{G}
                \arrow[r,"\hkr", shift left = 3pt]
                &\bigl( \HCdiff^{\bullet}(M)^\group{G},\delta \bigr)
                \arrow[l,"\hkrInv_\nabla", shift left = 3pt]
                \arrow[loop,
                out = -30,
                in = 30,
                distance = 30pt,
                start anchor = {[yshift = -7pt]east},
                end anchor = {[yshift = 7pt]east},
                "\hkrHomo"{swap}
                ]
            \end{tikzcd}
        \end{equation}
        or
        \begin{equation}
            \begin{tikzcd}[column sep = large]
                \bigl( \Anti^\bullet \Secinfty(TM) \bigr)^\liealg{g}
                \arrow[r,"\hkr", shift left = 3pt]
                &\bigl( \HCdiff^{\bullet}(M)^\liealg{g},\delta \bigr)
                \arrow[l,"\hkrInv_\nabla", shift left = 3pt]
                \arrow[loop,
                out = -30,
                in = 30,
                distance = 30pt,
                start anchor = {[yshift = -7pt]east},
                end anchor = {[yshift = 7pt]east},
                "\hkrHomo"{swap}
                ]
            \end{tikzcd}
            ,
        \end{equation}
        respectively.
        In particular, we obtain isomorphisms
        \begin{equation}
            \HCdiff^{\bullet}(M)^\group{G}
            \simeq \bigl( \Anti^\bullet \Secinfty(TM) \bigr)^\group{G}
        \end{equation}
        and
        \begin{equation}
            \HCdiff^{\bullet}(M)^\liealg{g}
            \simeq \bigl( \Anti^\bullet \Secinfty(TM) \bigr)^\liealg{g}
        \end{equation}
        of differential graded algebras.
    \end{theoremlist}
\end{theorem}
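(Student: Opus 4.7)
The plan is to reduce the invariant HKR theorem to the equivariance properties already established for the two building blocks: the symbol calculus $\Op$ and the van~Est deformation retract. Recall that by construction
\begin{equation*}
    \hkr = \Op \circ \VanEstInt,
    \qquad
    \hkrInv_\nabla = \VanEstDiff \circ \Op^{-1},
    \qquad
    \hkrHomo = \Op \circ \VanEstHomo \circ \Op^{-1}.
\end{equation*}
Hence if every factor on the right is equivariant under the given action, so are $\hkr$, $\hkrInv_\nabla$ and $\hkrHomo$. Part~\ref{item:EquivariantHomotopies} then follows, and part~\ref{item:InvariantDeformationRetract} is essentially formal: equivariant chain maps and equivariant homotopies restrict to the invariant subcomplexes, and the deformation retract identities are inherited from \autoref{thm:classicalHKR}.

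For the group case I would proceed as follows. First, invoke \autoref{prop:invarianSymbCalc}~\ref{item:OpIsEquivariant} to conclude that $\Op$ (and hence $\Op^{-1}$) intertwines the $\group{G}$-actions on symbols and multidifferential operators. Next, I need to apply the functoriality statement \autoref{prop:GroupInvariantVanEst} to each $g \in \group{G}$ with $\vartheta = \Phi_g^*\colon \Cinfty(M) \to \Cinfty(M)$ and the induced map $\Phi = \Phi_g^*$ on $V = \Secinfty(TM)$; the compatibility \eqref{eq:PhigphigPhig} here is simply the Leibniz-type identity $\Phi_g^*(f X) = \Phi_g^*(f)\, \Phi_g^*(X)$. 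This delivers the equivariance of $\VanEstInt$, $\VanEstDiff$ and $\VanEstHomo$. Composing with the equivariance of $\Op$ yields the desired equivariance of $\hkr$, $\hkrInv_\nabla$ and $\hkrHomo$, and the restriction to $\group{G}$-invariant subcomplexes gives exactly the deformation retract claimed, with the identities $\hkrInv_\nabla \circ \hkr = \id$ and $\delta \hkrHomo + \hkrHomo \delta = \id - \hkr \circ \hkrInv_\nabla$ holding verbatim on the invariants.

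For the Lie algebra case the strategy is completely parallel but uses the infinitesimal functoriality of \autoref{subsec:InfinitesimalFunctoriality}. Here the relevant observation is that for each $\xi \in \liealg{g}$ the Lie derivative $\Lie_{\phi(\xi)}$ on $\Secinfty(TM)$ is a module derivation over the derivation $\Lie_{\phi(\xi)} \in \Der(\Cinfty(M))$, cf. \eqref{eq:LeibnizModuleDerivationOverDerivation}, which is just the usual Leibniz rule $\Lie_{\phi(\xi)}(f X) = \phi(\xi)(f) X + f \Lie_{\phi(\xi)} X$. \autoref{proposition:VanEstInfinitesimalFunctorial} then yields equivariance of all van~Est maps, and \autoref{prop:invarianSymbCalc}~\ref{item:OpIsEquivariant} provides the infinitesimal equivariance of $\Op$. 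Combining both yields the equivariance of the homotopy retract under the full $\liealg{g}$-action, which then restricts to $\HCdiff^\bullet(M)^\liealg{g}$.

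The only subtle point, and thus the main ``obstacle'' worth flagging explicitly, is to check that invariance of the covariant derivative $\nabla$ in the sense of \eqref{eq:InvariantConnection} (respectively \eqref{eq:InfinitesimalInvariantConnection}) is exactly what is needed for the symmetrized covariant derivative $\SymD$ to commute with $\Phi_g^*$ (respectively $\Lie_{\phi(\xi)}$), so that the defining formula of $\Op$ in \autoref{proposition:SymbolCalculusMultiDiffopsScalar} is manifestly equivariant; this is precisely the content already used in the proof of \autoref{prop:invarianSymbCalc}. Once this compatibility is in place, everything is a transparent composition of already established equivariant deformation retracts, and the cohomology isomorphisms $\HCdiff^\bullet(M)^\group{G} \simeq (\Anti^\bullet \Secinfty(TM))^\group{G}$ and $\HCdiff^\bullet(M)^\liealg{g} \simeq (\Anti^\bullet \Secinfty(TM))^\liealg{g}$ follow by passing to cohomology, with the graded algebra structure inherited from \autoref{cor:CaCohomologyComputed}.
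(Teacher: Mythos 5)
Your proposal is correct and follows essentially the same route as the paper's own proof: equivariance of $\Op$ via \autoref{prop:invarianSymbCalc} (resting on $\SymD$ commuting with $\Phi_g^*$ resp. $\Lie_{\phi(\xi)}$), equivariance of the van Est maps via the functoriality results of \autoref{sec:Functoriality} and \autoref{subsec:InfinitesimalFunctoriality}, and then restriction of the composed equivariant deformation retract to the invariant subcomplexes.
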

\begin{proof}
    From \autoref{prop:invarianSymbCalc} we know already that the
    symbol calculus $\Op$ is equivariant. Since in general all
    structure maps of the algebraic side are equivariant by
    \eqref{diag:GinvVanEst} and \eqref{diag:GinvVanEstLieAlg},
    respectively, the equivariance follows from the invariance of the
    covariant derivative. With this observation the second statement
    is clear.
\end{proof}

Note that \autoref{thm:HKR_inv} implies in particular that the
cohomology of the invariant Hochschild complex, the invariant
Hochschild cohomology and invariant multivector fields are all
isomorphic. This was so far only known for proper Lie group actions
with a proof based on averaging, see e.g. \cite{miaskiwskyi:2021a}.

We continue by using the observation of the equivariance of the global
symbol calculus and apply it to various invariant versions of HKR-like
theorems, which we discussed before.
\begin{theorem}[Invariant tangential HKR Theorem]
    \label{theorem:HKR_InvariantTangential}%
    Let $\iota\colon C\hookrightarrow M$ be a submanifold, let
    $\Phi\colon \group{G}\times M\to M$ (resp.
    $\phi\colon \liealg{g}\to \Secinfty(TM)$) be a Lie group
    (resp. Lie algebra) action on $M$ which restricts to $C$ and let
    $\nabla$ be a torsion-free $\group{G}$-invariant (resp. $\liealg{g}$-invariant)
    covariant derivative which is adapted to $C$.
    \begin{theoremlist}
    \item \label{item:TangentialStuffEquivariant} All structure maps
        in the deformation retract
        \eqref{eq:TangentialDeformationRetract} from
        \autoref{thm:HKR_Submanifold} are equivariant. In particular,
        the homotopy $\hkrHomo$ is equivariant.
    \item \label{item:InvariantTangentialDefRetract} The deformation
        retract \eqref{eq:TangentialDeformationRetract} restricts to a
        deformation retract
        \begin{equation}
            \label{eq:GroupInvariantTangentialRetract}
            \begin{tikzcd}[column sep = large]
                \bigl( \Anti^\bullet \Secinfty_C(TM) \bigr)^\group{G}
                \arrow[r,"\hkr", shift left = 3pt]
                &\bigl( \HCdiff^{\bullet}(M)_C^\group{G},\delta \bigr)
                \arrow[l,"\hkrInv_\nabla", shift left = 3pt]
                \arrow[loop,
                out = -30,
                in = 30,
                distance = 30pt,
                start anchor = {[yshift = -7pt]east},
                end anchor = {[yshift = 7pt]east},
                "\hkrHomo"{swap}
                ]
            \end{tikzcd}
        \end{equation}
        or
        \begin{equation}
            \label{eq:LieAlgebraInvariantTangentialRetract}
            \begin{tikzcd}[column sep = large]
                \bigl( \Anti^\bullet \Secinfty_C(TM) \bigr)^\liealg{g}
                \arrow[r,"\hkr", shift left = 3pt]
                &\bigl( \HCdiff^{\bullet}(M)_C^\liealg{g},\delta \bigr)
                \arrow[l,"\hkrInv_\nabla", shift left = 3pt]
                \arrow[loop,
                out = -30,
                in = 30,
                distance = 30pt,
                start anchor = {[yshift = -7pt]east},
                end anchor = {[yshift = 7pt]east},
                "\hkrHomo"{swap}
                ]
            \end{tikzcd}
        \end{equation}
        respectively.
        In particular, we obtain isomorphisms
        \begin{equation}
            \HCdiff^{\bullet}(M)_C^\group{G}
            \simeq \bigl( \Anti^\bullet \Secinfty_C(TM) \bigr)^\group{G}
        \end{equation}
        and
        \begin{equation}
            \HCdiff^{\bullet}(M)_C^\liealg{g}
            \simeq \bigl( \Anti^\bullet \Secinfty_C(TM) \bigr)^\liealg{g}
        \end{equation}
        of graded algebras.
    \end{theoremlist}
\end{theorem}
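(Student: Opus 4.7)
The plan is to mimic the strategy used for \autoref{thm:HKR_inv}, but now with the tangential symbol calculus from \autoref{proposition:TangentialSymbolCalculus} replacing the unrestricted one. Recall that by assumption the action $\Phi$ (resp.\ $\phi$) restricts to $C$, so it preserves the submodule $\Secinfty_C(TM)\subseteq \Secinfty(TM)$ of vector fields tangent to $C$. Hence the induced action on tensor, symmetric and antisymmetric powers descends to $\Tensor^\bullet\Sym\Secinfty_C(TM)$ and $\Anti^\bullet\Secinfty_C(TM)$, and it satisfies the Leibniz-type compatibility of Section~\ref{sec:Functoriality} (resp.\ Section~\ref{subsec:InfinitesimalFunctoriality}) along the canonical $\group{G}$- (resp.\ $\liealg{g}$-) action on $\ring{R}=\Cinfty(M)$.

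First I would establish \ref{item:TangentialStuffEquivariant}. The symbol map $\Op$ is equivariant by \autoref{prop:invarianSymbCalc}, and the hypothesis that $\nabla$ is adapted to $C$ guarantees, via \autoref{proposition:TangentialSymbolCalculus}, that it restricts to the isomorphism
\begin{equation*}
    \Op\colon \CCa^\bullet\bigl(\Secinfty_C(TM)\bigr)\longrightarrow \HCdiff^\bullet(M)_C
\end{equation*}
of differential graded algebras. Combining this with \autoref{prop:GroupInvariantVanEst} (resp.\ \autoref{proposition:VanEstInfinitesimalFunctorial}) applied to the module $V=\Secinfty_C(TM)$ with its $\group{G}$- (resp.\ $\liealg{g}$-) action, we obtain that $\VanEstDiff$, $\VanEstInt$ and $\VanEstHomo$ are equivariant. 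Conjugating/composing with the equivariant $\Op$ shows that $\hkr$, $\hkrInv_\nabla$ and $\hkrHomo$ are equivariant as well.

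For \ref{item:InvariantTangentialDefRetract} I would then simply pass to invariants. Since every map in \eqref{eq:TangentialDeformationRetract} is equivariant, each map sends invariants to invariants, and the two deformation retract identities $\hkrInv_\nabla\circ\hkr=\id$ and $\delta\hkrHomo+\hkrHomo\delta=\id-\hkr\circ\hkrInv_\nabla$ restrict verbatim to $\bigl(\Anti^\bullet\Secinfty_C(TM)\bigr)^\group{G}$ and $\HCdiff^\bullet(M)_C^\group{G}$ (resp.\ to the $\liealg{g}$-invariant subcomplexes). This yields the deformation retracts \eqref{eq:GroupInvariantTangentialRetract} and \eqref{eq:LieAlgebraInvariantTangentialRetract}, and in particular the claimed isomorphisms of graded algebras in cohomology follow by the same argument as for \eqref{eq:InvariantCohomologiesGeneral} and \eqref{eq:gInvariantCohomologiesGeneral}.

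The only step needing a genuine (but short) verification is that $\nabla$ being both $\group{G}$-invariant and adapted to $C$ really does make the tangential symbol calculus $\group{G}$-equivariant; this is where the two hypotheses interact. Once that is in place, everything else is a direct application of the functoriality of the van Est deformation retract and of compositions of equivariant morphisms of homotopy retracts, so no substantial obstacle remains.
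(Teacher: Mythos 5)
Your proposal is correct and takes essentially the same route the paper intends: the paper states this theorem without an explicit proof, relying (as announced just before it) on the equivariance of the global symbol calculus from \autoref{prop:invarianSymbCalc} together with the general equivariance of the algebraic van Est machinery, which — combined with the tangential symbol calculus of \autoref{proposition:TangentialSymbolCalculus} and passage to invariants — is precisely your argument. Your closing remark correctly isolates the one point where the two hypotheses on $\nabla$ (invariance and adaptedness to $C$) must interact to make the restricted symbol calculus equivariant.
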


Another interesting case is to consider a principal bundle
$\pi\colon P\to M$ with structure group $\group{G}$. Note that the
construction of the adapted covariant derivative constructed in
\autoref{sec:AdaptedCovDer_Submersions} using a
$\group{G}$-invariant splitting of $TP$ results in a
$\group{G}$-invariant (adapted) covariant derivative. Using the symbol
calculus with respect to it, gives the following statement:
\begin{theorem}[Invariant HKR for principal bundle]
    \label{thm:HKR_PrincipalBundle}%
    Let $\pi\colon P\to M$ be a principal bundle with structure group
    $\group{G}$. Choose a torsion-free covariant derivative $\nabla$
    for $M$ and a principal connection for $P$ and use the
    corresponding $\group{G}$-invariant adapted covariant derivative
    $\nabla^P$ for $P$.
    \begin{theoremlist}
    \item \label{item:PrincipalBundleEquivariance} All structure maps
        in the deformation retract
        \eqref{eq:SurjectiveSubmersionRetract} from
        \autoref{thm:HKR_surjSubmersions} are
        $\group{G}$-equivariant. In particular, $\hkrHomo$ is
        $\group{G}$-equivariant.
    \item \label{item:InvariantHKRPrincipalBundle} The deformation
        retract \eqref{eq:SurjectiveSubmersionRetract} restricts to a
        deformation retract
        \begin{equation}
            \begin{tikzcd}[column sep = large]
                \Diffop_\ver(P)^\group{G}
                \arrow[r,"\hkr", shift left = 3pt]
                &\bigl( \HCdiff^{\bullet}(M,\Diffop(P))^\group{G},\delta \bigr)
                \arrow[l,"\hkrInv_\nabla", shift left = 3pt]
                \arrow[loop,
                out = -30,
                in = 30,
                distance = 30pt,
                start anchor = {[yshift = -7pt]east},
                end anchor = {[yshift = 7pt]east},
                "\hkrHomo"{swap}
                ]
            \end{tikzcd}
            .
        \end{equation}
    \item \label{item:HKRPrincipal} In particular, we have
        \begin{equation}
            \HHdiff^\bullet\big(M,\Diffop(P)\big)^\group{G}
            \simeq
            \Diffop_\ver(P)^\group{G}
        \end{equation}
        as graded $\Cinfty(M)$-modules concentrated in degree $0$.
    \end{theoremlist}
\end{theorem}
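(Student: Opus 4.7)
The strategy is to combine the equivariance machinery from Section \ref{sec:FunctorialityWithCoeff} with the geometric observation that, on a principal $\group{G}$-bundle, a principal connection and the induced adapted covariant derivative are automatically $\group{G}$-invariant. Once equivariance of all structure maps is established, parts \ref{item:InvariantHKRPrincipalBundle} and \ref{item:HKRPrincipal} follow formally.

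First I would verify the geometric input. The right principal action of $\group{G}$ on $P$ preserves the vertical bundle $F = \ker T\pi$ tautologically. A principal connection, by definition, provides a $\group{G}$-invariant horizontal complement $F^\perp \subseteq TP$ with $TP = F \oplus F^\perp$. The adapted covariant derivative $\nabla^P$ constructed in \autoref{sec:AdaptedCovDer_Submersions} is built from $\nabla$ on $M$ (pulled back along $\pi$ using the horizontal lift) and from a torsion-free $\group{G}$-invariant connection on the fibers (coming from averaging against Haar measure of the compact-fiber situation, or directly from the Maurer--Cartan form in the general principal bundle case). Hence $\nabla^P$ satisfies $\Phi_g^*(\nabla^P_X Y) = \nabla^P_{\Phi_g^* X} \Phi_g^* Y$ for every $g \in \group{G}$. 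Consequently the symmetrized covariant derivative $\SymD_P$ commutes with $\Phi_g^*$ on both $\Cinfty(P)$ and on tensor powers of $T^*P$.

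Next I would establish part \ref{item:PrincipalBundleEquivariance}. The map $\Op$ from \autoref{proposition:SymbolCalcSurSub} is defined via \eqref{eq:OpValuesInDiffopPDef} in terms of horizontal lifts of vector fields on $M$, the symmetrized covariant derivative $\SymD_P$ and the canonical pairing $\inss$; since $\group{G}$ acts trivially on $\Cinfty(M)$ (via $\pi$) and $\group{G}$-equivariantly on $\Secinfty(F^\perp)$ and on $\Sym \Secinfty(TP)$, the same argument as in \autoref{prop:invarianSymbCalc} shows $\mathord{\Op} \circ \Phi_g^* = \Phi_g^* \circ \mathord{\Op}$ on both sides of \eqref{eq:OpForDiffopP}. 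The algebraic deformation retract used to compute $\HHdiff^\bullet(M,\Diffop(P))$ is built, via \autoref{thm:VanEstDeformationRetractCoeff} together with the splitting of \autoref{ex:CoalgebraCohomology}~\ref{item:CoalgebraCohomology_Submodule} for $\Secinfty(TP) = \Secinfty(F^\perp) \oplus \Secinfty(F)$, from canonical maps between the coalgebra and Chevalley--Eilenberg complexes. The functoriality statements \autoref{prop:VanEstfunctorialCoeffs} and \autoref{prop:FunctorialityCEandGrRetracts} then imply that the induced action of $\group{G}$ by additive automorphisms over ring automorphisms of $\Cinfty(P)$ commutes with the entire structure, i.e. with $\hkr$, $\hkrInv_\nabla$ and the homotopy $\hkrHomo$.

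Part \ref{item:InvariantHKRPrincipalBundle} is then immediate: taking $\group{G}$-invariants in a deformation retract between modules whose structure maps are equivariant again yields a deformation retract, since the identity $\delta \hkrHomo + \hkrHomo \delta = \id - \hkr \circ \hkrInv_\nabla$ and $\hkrInv_\nabla \circ \hkr = \id$ are preserved on the invariant subcomplexes. Finally, part \ref{item:HKRPrincipal} follows by specializing \autoref{thm:HKR_surjSubmersions}: the cohomology of $\HCdiff^\bullet(M,\Diffop(P))$ is concentrated in degree zero and identified with $\Diffop_\ver(P)$, and taking $\group{G}$-invariants commutes with passage to cohomology precisely because of the equivariance established in \ref{item:PrincipalBundleEquivariance}, giving $\HHdiff^\bullet(M,\Diffop(P))^{\group{G}} \simeq \Diffop_\ver(P)^{\group{G}}$.

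The main obstacle is the bookkeeping for part \ref{item:PrincipalBundleEquivariance}: while the general abstract equivariance theorem (\autoref{prop:VanEstfunctorialCoeffsInfinitesimal} and its integrated form) is available, one must carefully verify that the $\group{G}$-action on the chain complex $\CCa^\bullet(\Secinfty(F^\perp), \Sym\Secinfty(TP))$ induced by pull-back is of the form treated in \autoref{sec:FunctorialityWithCoeff}, with $V = \Secinfty(F^\perp)$ and comodule $\module{M} = \Sym\Secinfty(TP)$, and that its transport through $\Op$ matches the natural pull-back action on $\HCdiff^\bullet(M,\Diffop(P))$ given by \eqref{eq:PhiPullbackDiffop} combined with $\Phi_g^*$ on values in $\Diffop(P)$. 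Once this identification is in place, all invariance assertions reduce to previously proven functoriality results.
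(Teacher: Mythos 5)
Your proposal is correct and follows essentially the same route as the paper: invariance of the adapted covariant derivative coming from the principal connection, hence equivariance of the symbol calculus, combined with the general equivariance of the algebraic van Est machinery, after which parts (ii) and (iii) follow by restriction to invariants. One small correction: the $\group{G}$-invariant vertical connection is not obtained by Haar averaging (the fibers need not be compact); the paper uses the canonical $\tfrac{1}{2}$-commutator connection on $\Ver(P) \cong P \times \liealg{g}$ defined via fundamental vector fields, which is automatically invariant — this is the ``Maurer--Cartan'' alternative you also mention.
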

\begin{proof}
    Note that the Lie group action on $\Cinfty(M)$ is trivial, so it
    only acts on the values of the Hochschild cochains. Since the
    construction of the adapted covariant derivative on $P$ yields an
    invariant covariant derivative automatically, see
    \autoref{sec:AdaptedCovDer_Submersions}, the symbol calculus is
    $\group{G}$-equivariant. With the general equivariance of the
    algebraic part, the first part follows. Then the second and third
    statement is clear.
\end{proof}

The computation of this Hochschild cohomology was obtained in
\cite[Theorem~5.2]{bordemann.neumaier.waldmann.weiss:2010a} with a
more ad-hoc argument and without an equivariant global homotopy
$\hkrHomo$.

\begin{remark}[More invariant HKR theorems]
    \label{remark:InvariantHKRMachine}%
    It should be clear that various combinations of the previous HKR
    theorems have their invariant counterparts as soon as one has an
    invariant global symbol calculus. This is guaranteed as soon as
    one has invariant covariant derivatives, adapted to the needs of
    the previous geometric situations. It should be noted that
    invariant covariant derivatives are typically not too difficult to
    obtain, a proper group action is sufficient, but by far not
    necessary.
\end{remark}

\bookmarksetupnext{level=-1}
\appendix
\begin{appendices}

\section{Adapted Covariant Derivatives}
\label{sec:AdaptedCovariantDerivatives}

Two covariant derivatives $\nabla^M$ and $\nabla^N$ on the manifolds
$M$ and $N$ are called adapted with respect to a smooth map
$\phi\colon N\to M$, if whenever we have two pairs of $\phi$-related
vector fields $X\sim_\phi X'$ and $Y\sim_\phi Y'$ then also
\begin{equation}
    \nabla^N_X Y \sim_\phi \nabla^M_{X'}Y'.
\end{equation}
Note that if one additionally considers a $1$-form
$\alpha\in \Omega^1(M)$, then one gets
\begin{equation}
\begin{split}
    \phi^*(\nabla^M_{X'}\alpha)(Y)
    &=
    \phi^*[\nabla^M_{X'}\alpha(Y')]
    =
    X\phi^*(\alpha(Y'))-\phi^*(\alpha(\nabla^M_{X'}Y'))
    \\
    &=
    X\phi^*\alpha(Y)- \phi^*\alpha(\nabla^N_X Y)
    =
    \nabla^N_X\phi^*\alpha(Y).
\end{split}
\end{equation}
This means that $\phi^*\nabla^M_{X'}\alpha$ and
$\nabla^N_X\phi^*\alpha$ coincide on vector fields that are related to
a vector field on $M$. Using adapted charts of $\phi$ around regular
points, one can show that this is actually enough to show that they
coincide as differential forms.

Adapted covariant derivatives do not exist for every smooth map, but
there are examples of maps, where one can guarantee their existence
and the next two subsections provide a construction for two classes of
examples which are of interest for this note.

\subsection{Surjective Submersions}
\label{sec:AdaptedCovDer_Submersions}

Let $\pr\colon P \to M$ be a surjective submersion. As throughout the
paper, we fix a torsion-free covariant derivative $\nabla$ on $M$. The
aim is now to construct a nice covariant derivative on $P$.

Let $\Ver(P) = \ker T\pr \subseteq TP$ denote the vertical
subbundle. The map $\pr$ being submersive shows that $\Ver(P)$ is
indeed a smooth subbundle which in addition is involutive. In fact,
the connected components of the fibers $\pr^{-1}(\{p\}) \subseteq P$
with $p \in M$ provide the integral submanifolds of $\Ver(P)$.

We choose now a connection $\mathcal{P} \in \Secinfty(\End(TP))$ on
$P$, i.e. an idempotent $\mathcal{P}^2 = \mathcal{P}$ with
$\image \mathcal{P} = \Ver(P)$ pointwise. This gives a horizontal
subbundle $\Hor(P) = \ker \mathcal{P}$ such that
$TP = \Ver(P) \oplus \Hor(P)$. Moreover, we have an induced horizontal
lift of vector fields $X \in \Secinfty(TM)$ to horizontal vector
fields $X^\hor \in \Secinfty(\Hor(P)) \subseteq \Secinfty(TP)$
uniquely characterized by $T\pr \circ X^\hor = X \circ \pr$,
i.e. $X^\hor \sim_{\pr} X$.

Since $\Ver(P)$ is involutive, we find a partially defined covariant
derivative
\begin{equation}
    \label{eq:PartialCovDefForVer}
    \nabla^{\Ver}\colon
    \Secinfty(\Ver(P)) \times \Secinfty(\Ver(P))
    \to
    \Secinfty(\Ver(P))
\end{equation}
satisfying the Leibniz rule and which is torsion-free. Indeed,
starting with any torsion-free covariant derivative $\tilde{\nabla}$
on $P$ the definition
$\nabla^{\Ver}_V W = \mathcal{P} \tilde{\nabla}_V W$ for
$V, W \in \Secinfty(\Ver(P))$ will do the job.

Next we make use of the decomposition $TP = \Ver(P) \oplus \Hor(P)$ to
extend $\nabla^{\Ver}$ back to a covariant derivative for all tangent
vector fields on $P$. Since every tangent vector field is a sum of a
vertical and a horizontal one and since the horizontal vector fields
are (locally) $\Cinfty(P)$-spanned by the horizontal lifts, we need to
specify the covariant derivative $\nabla^P$ only on the combinations
$\nabla^P_{X^\hor} Y^\hor$, $\nabla^P_{X^\hor} V$,
$\nabla^P_V X^\hor$, and of course on $\nabla^P_V W$ where
$V, W \in \Secinfty(\Ver(P))$ and $X, Y \in \Secinfty(TM)$. We require
\begin{gather}
    \label{eq:nablaPVW}
    \nabla^P_V W
    =
    \nabla^{\Ver}_V W,
    \\
    \label{eq:nablaPXhorV}
    \nabla^P_{X^\hor} V
    =
    [X^\hor, V],
    \\
    \label{eq:nablaPVXhor}
    \nabla^P_V X^\hor
    =
    0,
    \\
    \label{eq:nablaPXhorYhor}
    \nabla^P_{X^\hor} Y^\hor
    =
    \big(\nabla_X Y\big)^\hor
    +
    \frac{1}{2}
    \big(
    [X^\hor, Y^\hor]
    -
    [X, Y]^\hor
    \big),
\end{gather}
and extend this by the necessary Leibniz rules to all horizontal
vector fields. A quick check shows that this is possible and indeed
yields a torsion-free covariant derivative $\nabla^P$. Since
$[X^\hor, Y^\hor] - [X, Y]^\hor$ is vertical, we have the
compatibility
\begin{equation}
    \label{eq:TprnablaPnablapr}
    T\pr \big(\nabla^P_{X^\hor} Y^\hor\big)
    =
    (\nabla_X Y) \circ \pr
\end{equation}
for all $X, Y \in \Secinfty(TM)$. A torsion-free covariant derivative
$\nabla^P$ on $P$ with $\nabla^P_Z V \in \Secinfty(\Ver(P))$ for all
vertical $V \in \Secinfty(\Ver(P))$ and all $Z \in \Secinfty(TP)$ is
then called \emph{adapted} to the covariant derivative $\nabla$ on $M$
and the connection $\mathcal{P}$ if \eqref{eq:TprnablaPnablapr}
holds. The above construction shows that one always has adapted
covariant derivatives for a given $\nabla$.

A particular case of interest is a $\group{G}$-principal fiber bundle
$\pr\colon P \to M$. Here we can choose a $\group{G}$-invariant
horizontal complement $\Hor(P)$, i.e. a principal connection. Then
$TP = \Ver(P) \oplus \Hor(P)$ splits into two $\group{G}$-equivariant
subbundles, since the vertical subbundle is $\group{G}$-equivariant
for the principal $\group{G}$-action on $P$ anyway. Moreover, in this
case $\Ver(P) \cong P \times \liealg{g}$ is a trivial bundle with
trivialization given by the fundamental vector fields. We can now use
as partially defined covariant derivative
\eqref{eq:PartialCovDefForVer} the $\frac{1}{2}$-commutator
connection. This is uniquely determined by
\begin{equation}
    \label{eq:HalfCommutatorConnection}
    \nabla^{\Ver}_{\xi_P} \eta_P
    =
    \frac{1}{2} [\xi, \eta]_P
\end{equation}
for $\xi, \eta \in \liealg{g}$ where $\xi_P \in \Secinfty(\Ver(P))$
denotes the fundamental vector field corresponding to
$\xi \in \liealg{g}$. Then $\nabla^{\Ver}$ is a $\group{G}$-invariant
covariant derivative which ultimately yields a $\group{G}$-invariant
covariant derivative $\nabla^P$ by the above construction.

\subsection{Submanifolds}
\label{sec:AdaptedCovDer_Submanifolds}

Let $\iota\colon C\hookrightarrow M$ be an injective immersion.
Recall that a vector field $X\in \Secinfty(TM)$ is called tangent to
$C$, if there exists a vector field $\tilde{X}\in \Secinfty(TC)$ such
that for all functions $f\in \Cinfty(C)$, we have
\begin{equation*}
    \iota^*X(f)
    =
    \tilde X (\iota^*f),
\end{equation*}
i.e. $\tilde{X} \sim_\iota X$.  Note that tangential vector fields
form a Lie subalgebra of all vector fields. We denote these vector
fields by $\Secinfty_C(TM)$.  The aim is now to construct an adapted
covariant derivative $\nabla$ on $M$, i.e.
\begin{equation*}
    \nabla_X Y \in \Secinfty_C(TM)
\end{equation*}
for all $X,Y\in \Secinfty_C(TM)$.  Note that this is exactly
equivalent to the definition of having a pair of covariant derivatives
such that it preserves related vector fields, since we can induce a
covariant derivative on $C$ using tangential vector fields.

Even though adapted covariant derivatives may exist for certain
injective immersions, they may fail to exist in general.
Nevertheless, we assume now that $\iota\colon C \hookrightarrow M$ is
an embedding and choose a complementary vector bundle
$p \colon TC^\perp \to C$ of $TC$ inside $TM\at{C}$, i.e.
$TM\at{C} = TC \oplus TC^\perp$.  Moreover, choose a tubular
neighbourhood
\begin{equation}
    \psi \colon TC^\perp \to U \subseteq_{\mathrm{open}} M.
\end{equation}
Finally, we choose a torsion- free covariant derivative $\nabla^C$ on
$C$ and a splitting
\begin{equation*}
    T(TC^\perp) \cong \Hor \oplus \ker Tp
\end{equation*}
with $\Hor\at{C}=T\iota(TC)$.

Note that for a vector field $X\in \Secinfty(TC)$ the horizontal lift
$X^\hor \in\Secinfty(T(TC^\perp))$ of it fulfils
\begin{equation*}
    \iota^* X^\hor(f)
    =
    X(\iota^*f).
\end{equation*}
and thus $X^\hor$ is tangential to the $C$.

By \autoref{sec:AdaptedCovDer_Submersions} we can find a covariant
derivative $\nabla$ adapted to the surjective submersion
$p \colon TC^\perp \to C$.  By the concrete formula of this covariant
derivative and using $\Hor\at{C}=T\iota(TC)$ on can see that it is
adapted to $C \hookrightarrow TC^\perp$.
\begin{lemma}
    \label{lemma:AdaptedCovariantDerivativeSubmanifold}%
    Let $C \hookrightarrow M$ be a closed embedded submanifold, then
    there exists a covariant derivative adapted to $C$.
\end{lemma}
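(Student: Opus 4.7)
The plan is to first build a covariant derivative adapted to $C$ on a tubular neighbourhood and then glue it to an arbitrary covariant derivative on the complement using a partition of unity, exploiting the fact that the \emph{adapted} property only needs to be tested on vector fields tangent to $C$ and therefore only sees values of the connection on an arbitrarily small neighbourhood of $C$.

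First I would apply the construction already sketched in the paragraph preceding the lemma: choose a complement $TC^\perp$ with $TM\at{C} = TC \oplus TC^\perp$, a tubular neighbourhood $\psi\colon TC^\perp \to U \subseteq_{\mathrm{open}} M$, a torsion-free covariant derivative $\nabla^C$ on $C$, and a splitting $T(TC^\perp) = \Hor \oplus \ker Tp$ with $\Hor\at{C} = T\iota(TC)$. Using \autoref{sec:AdaptedCovDer_Submersions} one obtains on $TC^\perp$ a torsion-free covariant derivative $\nabla^{TC^\perp}$ adapted to the surjective submersion $p\colon TC^\perp \to C$. The key observation is that by the explicit formulas \eqref{eq:nablaPVW}--\eqref{eq:nablaPXhorYhor}, together with $\Hor\at{C} = T\iota(TC)$, any vertical field $V$ with $V\at{C} = 0$ and any horizontal lift $X^\hor$ restrict nicely on the zero section, so that whenever $Z_1, Z_2 \in \Secinfty(T(TC^\perp))$ are tangential to the zero section (i.e.\ $Z_j\at{C} \in T\iota(TC)$), the combination $\nabla^{TC^\perp}_{Z_1}Z_2$ is again tangential to the zero section. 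Transporting via $\psi$ then yields a torsion-free covariant derivative $\nabla^U$ on $U$ that is adapted to $C \cap U = C$.

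Next I would globalise. Pick open sets $V \subseteq \overline{V} \subseteq W \subseteq \overline{W} \subseteq U$ with $C \subseteq V$; such $V, W$ exist because $C$ is closed in $M$. Fix any torsion-free covariant derivative $\nabla^M$ on $M$ (which exists by standard partition-of-unity arguments) and a smooth bump function $\chi\in \Cinfty(M)$ with $\chi\at{V}\equiv 1$ and $\supp\chi \subseteq W$. Define
\begin{equation*}
    \nabla \coloneqq \chi \nabla^U + (1-\chi)\nabla^M,
\end{equation*}
interpreting this as a convex combination on $W$ and as $\nabla^M$ on $M \setminus \supp\chi$. Since any convex combination of covariant derivatives with smooth coefficients is again a covariant derivative, and both summands are torsion-free, $\nabla$ is a well-defined torsion-free covariant derivative on $M$.

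Finally, to verify the adapted property, let $X, Y \in \Secinfty_C(TM)$. The condition we need is $(\nabla_X Y)\at{C} \in \Secinfty(TC)$, which depends only on $\nabla$ and its first derivatives along $C$. On the neighbourhood $V \supseteq C$ we have $\chi \equiv 1$ and hence $\nabla = \nabla^U$, which is adapted to $C$ by the first step. Therefore $(\nabla_X Y)\at{C} = (\nabla^U_X Y)\at{C}$ lies in $\Secinfty(TC)$, finishing the argument. The main obstacle in this strategy is the first step, namely the verification that the submersion-adapted covariant derivative from \autoref{sec:AdaptedCovDer_Submersions} actually restricts correctly on the zero section; everything after that is a routine gluing argument.
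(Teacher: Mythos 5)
Your proposal follows essentially the same route as the paper: construct an adapted connection on a tubular neighbourhood via the surjective-submersion construction of \autoref{sec:AdaptedCovDer_Submersions}, then glue it to an arbitrary covariant derivative using a cutoff function supported near the closed submanifold $C$, noting that the difference of two connections is tensorial so the glued object is again a covariant derivative. The only cosmetic difference is that you insist on a bump function identically equal to $1$ on a neighbourhood of $C$, whereas the paper only needs $f\at{C}=1$ (tensoriality of the difference already guarantees $\nabla_XY\at{C}=\nabla^U_XY\at{C}$); the verification that the submersion-adapted connection is tangential along the zero section, which you correctly flag as the main point, is carried out in the paper by reducing to horizontal lifts exactly as you sketch.
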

\begin{proof}
    Let us choose a tubular neighbourhood and construct the covariant
    derivative $\nabla$ for a surjective submersion as in
    \autoref{sec:AdaptedCovDer_Submersions} from a covariant derivative
    $\nabla^C$ on $C$ and a connection on $p\colon TC^\perp \to C$
    described above.  Note that we get for horizontal lifts of vector
    fields $X,Y\in \Secinfty(TC)$ and for every function
    $f\in \Cinfty(C)$
    \begin{align*}
        \iota^*\nabla_{X^\hor}Y^\hor (f)
        &=
        \iota^*((\nabla^C_X Y)^\hor f +
        \frac{1}{2}([X^\hor,Y^\hor]-[X,Y]^\hor)f)
        \\
        &=
        \nabla^C_X Y (\iota^*f).
    \end{align*}
    If arbitrary vector fields $X,Y\in \Secinfty(T(TC^\perp))$ are
    tangential to $C$ with corresponding vector fields
    $\tilde{X},\tilde{Y}\in \Secinfty(TC)$, then $X-\tilde{X}^\hor$
    and $Y-\tilde{Y}^\hor$ are vanishing on $C$ and therefore
    \begin{equation*}
        \iota^*(\nabla_X Y (f))
        =
        \iota^* (\nabla_{\tilde{X}^\hor} Y (f))
        =
        \iota^* (\nabla_{\tilde{X}^\hor} \tilde{Y}^\hor (f)),
    \end{equation*}
    where the first equality follows from the fact that
    $X-\tilde{X}^\hor$ vanishes along $C$ and the covariant derivative
    is a function linear in the first argument, and the second equality
    follows since $Y-\tilde{Y}^\hor$ vanishes along $C$ and
    $\tilde{X}^\hor$ is tangential to $C$ by the Leibniz rule in the
    second argument of the covariant derivative.  This shows that the
    connection is adapted to $C$ inside the tubular neighbourhood.  Since
    $C$ is closed in $M$, we can find a function $f\in \Cinfty(M)$,
    such that $f\at{C}=1$ and $f\at{M\setminus U}=0$.  Now we choose
    an arbitrary covariant derivative $\tilde{\nabla}$ on $M$ and
    define
    \begin{equation*}
        \nabla
        =
        \tilde{\nabla}+ f(\nabla^U-\tilde{\nabla}),
    \end{equation*}
    where we interpret
    $\nabla^U-\tilde{\nabla}\in \Secinfty(T^*U\tensor\End(TU))$ as
    tensor field, extended to $M$ by multiplying it by $f$. It is now
    easy to show that this covariant derivative does the job.
\end{proof}

\section{Homotopy Retracts and their Perturbations}
\label{sec:HomologicalAlgebra}

We collect some well-known results about homotopy retracts and the
homological perturbation lemma.  In the following we suppose that
$\ring{R}$ is a commutative ring.  In the main part we mostly use the
special case of $\ring{R}$ being some algebra of smooth functions on
a manifold.

\subsection{Homotopy Retracts}
\label{sec:HomotopyRetracts}

We introduce homotopy retracts and their morphisms as well as some
first constructions.  Note that what we call homotopy retracts
sometimes carries different names in the literature, e.g. homotopy
equivalence data (HE data) in \cite{crainic:2004a:pre}.
\begin{definition}[Homotopy retract]
    \label{def:HomotopyRetract}%
    Let $C^\bullet = \bigoplus_{n=0}^\infty C^n$ and
    $D^\bullet = \bigoplus_{n=0}^\infty D^n$ be cochain complexes of
    $\ring{R}$-modules with differentials
    $\D_C \colon C^{\bullet} \to C^{\bullet+1}$ and
    $\D_D \colon D^{\bullet} \to D^{\bullet+1}$, respectively.  A
    triple $(\imap,\pmap,\hmap)$ consisting of two cochain morphisms
    $\imap \colon C^\bullet \to D^\bullet$ and
    $\pmap \colon D^\bullet \to C^\bullet$ and a map
    $\hmap \colon D^\bullet \to D^{\bullet-1}$ fulfilling
    \begin{equation}
        \hmap \D_D + \D_D \hmap
        =
        \id - \imap \pmap
    \end{equation}
    is called \emph{homotopy retract}, see
    \autoref{fig:homotopyRetract}.
\end{definition}
\begin{figure}
    \centering
    \includegraphics{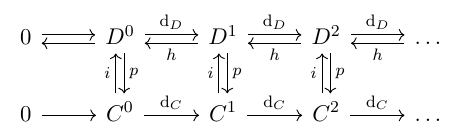}
    \caption{Homotopy retract $(C^\bullet,\D_C)$ and $(D^\bullet,\D_D)$.}
    \label{fig:homotopyRetract}
\end{figure}

We will also depict a homotopy retract as follows:
\begin{equation}
    \begin{tikzcd}
        (C^\bullet,\D_C)
        \arrow[r,"\imap", shift left = 3pt]
        &(D^\bullet,\D_D)
        \arrow[l,"\pmap", shift left = 3pt]
        \arrow[loop,
        out = -30,
        in = 30,
        distance = 30pt,
        start anchor = {[yshift = -7pt]east},
        end anchor = {[yshift = 7pt]east},
        "\hmap"{swap}
        ]
    \end{tikzcd}
\end{equation}
An immediate computation then gives the following composition
property:
\begin{proposition}[Composition of homotopy retracts]
    \label{prop:CompHomotopyRetracts}%
    Let
    \begin{equation}
        \begin{tikzcd}
            (C^\bullet,\D_C)
            \arrow[r,"\imap_1", shift left = 3pt]
            &(D^\bullet,\D_D)
            \arrow[l,"\pmap_1", shift left = 3pt]
            \arrow[loop,
            out = -30,
            in = 30,
            distance = 30pt,
            start anchor = {[yshift = -7pt]east},
            end anchor = {[yshift = 7pt]east},
            "\hmap_1"{swap}]
        \end{tikzcd}
    \end{equation}
    and
    \begin{equation}
        \begin{tikzcd}
            (D^\bullet,\D_D)
            \arrow[r,"\imap_2", shift left = 3pt]
            &(E^\bullet,\D_E)
            \arrow[l,"\pmap_2", shift left = 3pt]
            \arrow[loop,
            out = -30,
            in = 30,
            distance = 30pt,
            start anchor = {[yshift = -7pt]east},
            end anchor = {[yshift = 7pt]east},
            "\hmap_2"{swap}]
        \end{tikzcd}
    \end{equation}
    be homotopy retracts.  Then
    \begin{equation}
        \begin{tikzcd}
            (C^\bullet,\D_C)
            \arrow[r,"\imap", shift left = 3pt]
            &(E^\bullet,\D_E)
            \arrow[l,"\pmap", shift left = 3pt]
            \arrow[loop,
            out = -30,
            in = 30,
            distance = 30pt,
            start anchor = {[yshift = -7pt]east},
            end anchor = {[yshift = 7pt]east},
            "\hmap"{swap}]
        \end{tikzcd}
    \end{equation}
    with
    \begin{equation}
        \imap = \imap_2  \imap_1,
        \qquad
        \pmap = \pmap_1  \pmap_2,
        \qquad
        \hmap = \hmap_2 + \imap_2  \hmap_1  \pmap_2
    \end{equation}
    is a homotopy retract.
\end{proposition}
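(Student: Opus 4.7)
The plan is to verify the single nontrivial axiom directly, namely the homotopy identity $\hmap \D_E + \D_E \hmap = \id_E - \imap \pmap$, since the fact that $\imap = \imap_2 \imap_1$ and $\pmap = \pmap_1 \pmap_2$ are chain maps is immediate from the chain map property of the four constituent maps. All other data (the grading of $\hmap$ as a degree $-1$ map from $E^\bullet$ to $E^{\bullet-1}$) is manifest from the definition, so there is essentially only one computation to do.

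First I would expand $\hmap = \hmap_2 + \imap_2 \hmap_1 \pmap_2$ into the combination $\hmap \D_E + \D_E \hmap$ and group the four resulting terms as
\begin{equation*}
    \bigl(\hmap_2 \D_E + \D_E \hmap_2\bigr)
    +
    \bigl(\imap_2 \hmap_1 \pmap_2 \D_E + \D_E \imap_2 \hmap_1 \pmap_2\bigr).
\end{equation*}
The first bracket is exactly $\id_E - \imap_2 \pmap_2$ by the second homotopy retract. For the second bracket I would use that $\pmap_2$ and $\imap_2$ are chain maps to pass the differentials through, i.e. $\pmap_2 \D_E = \D_D \pmap_2$ and $\D_E \imap_2 = \imap_2 \D_D$, producing $\imap_2 (\hmap_1 \D_D + \D_D \hmap_1) \pmap_2$, which by the first homotopy retract equals $\imap_2 (\id_D - \imap_1 \pmap_1) \pmap_2 = \imap_2 \pmap_2 - \imap \pmap$.

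Adding the two pieces the middle terms $\imap_2 \pmap_2$ cancel and one is left with $\id_E - \imap \pmap$, as required. There is no genuine obstacle here: the whole computation is a chasing-through of definitions, and the only point that requires any care is making sure one uses the chain map property of $\imap_2$ and $\pmap_2$ in the correct direction. Consequently, the proof proposal amounts simply to presenting this four-line computation after observing that the chain-map and degree conditions are automatic.
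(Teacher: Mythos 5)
Your computation is correct and is precisely the ``immediate computation'' the paper alludes to without writing out (it gives no explicit proof of this proposition). Nothing is missing: splitting the homotopy identity into the $\hmap_2$-part and the conjugated $\hmap_1$-part, using that $\imap_2$ and $\pmap_2$ are chain maps, and cancelling $\imap_2\pmap_2$ is the intended argument.
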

\begin{definition}[Composition of homotopy retracts]
    \label{definition:CompositionRetracts}%
    Given homotopy retracts $(\imap_1,\pmap_1,\hmap_1)$ and
    $(\imap_2,\pmap_2,\hmap_2)$ as in
    \autoref{prop:CompHomotopyRetracts} we denote by
    \begin{equation}
        (\imap_2,\pmap_2,\hmap_2) \circ (\imap_1,\pmap_1,\hmap_1)
        \coloneqq (
        \imap_2 \imap_1,\,
        \pmap_1 \pmap_2,\,
        \hmap_2 + \imap_2 \hmap_1 \pmap_2
        )
    \end{equation}
    their composition.
\end{definition}
It is easy to see that the composition of homotopy retracts is in fact
associative.

\begin{definition}[Morphism of homotopy retracts]
    \label{def:MorphismHomotopyRetract}%
    A morphism from a homotopy retract
    \begin{equation}
        \begin{tikzcd}
            ( C^\bullet,\D_C )
            \arrow[r,"\imap", shift left = 3pt]
            &( D^\bullet,\D_D )
            \arrow[l,"\pmap", shift left = 3pt]
            \arrow[loop,
            out = -30,
            in = 30,
            distance = 30pt,
            start anchor = {[yshift = -7pt]east},
            end anchor = {[yshift = 7pt]east},
            "\hmap"{swap}
            ]
        \end{tikzcd}
    \end{equation}
    to a homotopy retract
    \begin{equation}
        \begin{tikzcd}
            ( C'^\bullet,\D_{C'} )
            \arrow[r,"\imap'", shift left = 3pt]
            &(D'^\bullet,\D_{D'} )
            \arrow[l,"\pmap'", shift left = 3pt]
            \arrow[loop,
            out = -30,
            in = 30,
            distance = 30pt,
            start anchor = {[yshift = -7pt]east},
            end anchor = {[yshift = 7pt]east},
            "\hmap'"{swap}
            ]
        \end{tikzcd}
    \end{equation}
    is given by a morphism $\Phi \colon D^\bullet \to D'^\bullet$ of
    complexes such that $\Phi \hmap = \hmap'\Phi$.
\end{definition}

Note that setting $\Psi \coloneqq \pmap'\Phi\imap$ the two squares in
\begin{equation}
    \begin{tikzcd}
        C^\bullet
        \arrow[r,"\imap", shift left = 3pt]
        \arrow[d,"\Psi"{swap}]
        & D^\bullet
        \arrow[l,"\pmap", shift left = 3pt]
        \arrow[d,"\Phi"]\\
        C'^\bullet
        \arrow[r,"\imap'", shift left = 3pt]
        & D'^\bullet
        \arrow[l,"\pmap'", shift left = 3pt] \\
    \end{tikzcd}
\end{equation}
commute, i.e. we have $\imap'\Psi = \Phi \imap$ and
$\Psi \pmap = \pmap'\Phi$.  Moreover, the above notion of morphism of
homotopy retracts can easily generalized to morphisms
$\Phi \colon D^\bullet \to D'^\bullet$ along a ring morphism
$\vartheta \colon \ring{R} \to \ring{R}'$.
\begin{proposition}
    \label{prop:SumTensorHomotopies}%
    Let $C^\bullet$, $D^\bullet$, $E^\bullet$ and $F^\bullet$ be
    complexes together with homotopy retracts $(\imap, \pmap, \hmap)$
    and $(\jmap, \qmap, \kmap)$ between $C^\bullet$ and $D^\bullet$
    and between $E^\bullet$ and $F^\bullet$, respectively.
    \begin{propositionlist}
    \item \label{item:SumHomotopyRetracts} Then the direct sum
        \begin{equation}
            \begin{tikzcd}[column sep = large]
                \bigl( (C \oplus E)^\bullet,\D_{C} + \D_E \bigr)
                \arrow[r,"\imap + \jmap", shift left = 3pt]
                &\bigl( (D \oplus F)^\bullet,\D_{D} + \D_{F} \bigr)
                \arrow[l,"\pmap + \qmap", shift left = 3pt]
                \arrow[loop,
                out = -30,
                in = 30,
                distance = 30pt,
                start anchor = {[yshift = -7pt]east},
                end anchor = {[yshift = 7pt]east},
                "\hmap + \kmap"{swap}
                ]
            \end{tikzcd}
        \end{equation}
        is a homotopy retract.
    \item \label{item:TensorHomotopyRetracts} Then the tensor product
        \begin{equation}
            \begin{tikzcd}[column sep = large]
                \bigl( (C \tensor E)^\bullet,\D_{C \tensor E} \bigr)
                \arrow[r,"\imap \tensor \jmap", shift left = 3pt]
                &\bigl( (D \tensor F)^\bullet,\D_{D \tensor F}\bigr)
                \arrow[l,"\pmap \tensor \qmap", shift left = 3pt]
                \arrow[loop,
                out = -30,
                in = 30,
                distance = 30pt,
                start anchor = {[yshift = -7pt]east},
                end anchor = {[yshift = 7pt]east},
                "\hmap_{D \tensor F}"{swap}
                ]
            \end{tikzcd}
        \end{equation}
        with
        $\hmap_{D \tensor F}(x \tensor y) \coloneqq \hmap(x) \tensor y
        + (-1)^k \pmap \imap (x) \tensor \kmap (y)$ for all
        $x \in D^k$ and $y \in F^\bullet$, is a homotopy retract.
    \end{propositionlist}
\end{proposition}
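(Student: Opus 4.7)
For the direct sum statement, my plan is to observe that all three structure maps decompose strictly componentwise on $(C\oplus E)^\bullet$ and $(D\oplus F)^\bullet$, so the chain-map properties of $\imap+\jmap$ and $\pmap+\qmap$ and the homotopy identity for $\hmap+\kmap$ all follow immediately from the corresponding identities on the two summands. This is routine and essentially a direct verification; I would not spell it out beyond noting that
\[
(\hmap+\kmap)(\D_D+\D_F) + (\D_D+\D_F)(\hmap+\kmap)
= (\id - \imap\pmap) + (\id - \jmap\qmap)
= \id - (\imap+\jmap)(\pmap+\qmap),
\]
since the mixed terms vanish in the direct sum.

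For the tensor product, the plan is to use the Koszul-signed differential $\D_{D\tensor F}(x\tensor y) = \D_D x\tensor y + (-1)^{|x|} x\tensor \D_F y$ and to verify the chain-map properties of $\imap\tensor\jmap$ and $\pmap\tensor\qmap$ (which are immediate) together with the homotopy identity for $\hmap_{D\tensor F}$. I would first flag one minor point: the printed formula $\pmap\imap(x)$ with $x\in D^k$ does not type-check, since $\pmap\imap\colon C^\bullet\to C^\bullet$, so I read this as a typo for $\imap\pmap(x)\in D^k$, which is the standard combination appearing in tensor products of SDR data.

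With that reading, I would compute $(\hmap_{D\tensor F}\D_{D\tensor F}+\D_{D\tensor F}\hmap_{D\tensor F})(x\tensor y)$ by expanding both compositions. The pieces that act only on the first tensor factor collect to $(\hmap\D_D+\D_D\hmap)(x)\tensor y = (\id - \imap\pmap)(x)\tensor y$. The cross-terms of the form $\hmap(x)\tensor \D_F y$ appear with signs $(-1)^k$ and $(-1)^{k-1}$ and cancel. The remaining terms all carry the factor $\imap\pmap(x)$; using that $\imap\pmap$ commutes with $\D_D$ (being a composition of chain maps), these collect into $\imap\pmap(x)\tensor(\kmap\D_F+\D_F\kmap)(y) = \imap\pmap(x)\tensor(\id - \jmap\qmap)(y)$. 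Summing everything gives
\[
x\tensor y - \imap\pmap(x)\tensor \jmap\qmap(y)
= \bigl(\id - (\imap\tensor\jmap)(\pmap\tensor\qmap)\bigr)(x\tensor y),
\]
which is exactly the homotopy identity.

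The only real obstacle is bookkeeping of the Koszul signs; however, no cancellation beyond those supplied by the two factor-wise homotopy identities and the chain-map property of $\imap\pmap$ is required, so the computation is a careful expansion rather than a genuinely difficult argument.
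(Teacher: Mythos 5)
Your verification is correct, and the paper in fact states \autoref{prop:SumTensorHomotopies} without proof, treating it as a routine direct computation of exactly the kind you carry out: the direct sum case is componentwise, and the tensor case reduces to the two factor-wise homotopy identities, the cancellation of the cross-terms $\hmap(x)\tensor\D_F y$ with opposite Koszul signs, and the fact that $\imap\pmap$ is a chain map. You are also right that the printed $\pmap\imap(x)$ for $x \in D^k$ is a typo for $\imap\pmap(x)$, since $\pmap\imap$ is an endomorphism of $C^\bullet$ rather than of $D^\bullet$; with that correction your expansion closes the identity $\hmap_{D\tensor F}\D_{D\tensor F}+\D_{D\tensor F}\hmap_{D\tensor F}=\id-(\imap\tensor\jmap)(\pmap\tensor\qmap)$ exactly as required.
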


\subsection{Homological Perturbation}
\label{sec:HomologicalPerturbation}

We recall the classical homological perturbation lemma.  More details
and proofs can be found e.g. in \cite{crainic:2004a:pre}.  Note that
we have chosen different sign conventions here.
\begin{definition}[Perturbation]
    \label{def:Perturbation}%
    Consider a homotopy retract
    \begin{equation}
        \begin{tikzcd}
            (C^\bullet,\D_C)
            \arrow[r,"\imap", shift left = 3pt]
            &(D^\bullet,\D_D)
            \arrow[l,"\pmap", shift left = 3pt]
            \arrow[loop,
            out = -30,
            in = 30,
            distance = 30pt,
            start anchor = {[yshift = -7pt]east},
            end anchor = {[yshift = 7pt]east},
            "\hmap"{swap}
            ]
        \end{tikzcd}
        .
    \end{equation}
    \begin{definitionlist}
    \item \label{item:Perturbation} A $\ring{R}$-linear map
        $\bmap \colon D^\bullet \to D^{\bullet+1}$ such that
        $(\D_D + \bmap)^2 = 0$ is called a \emph{perturbation} of
        $(\imap,\pmap,\hmap)$.
    \item \label{item:SmallPerturbation} A perturbation $\bmap$ is
        called \emph{small} if $\id + \bmap \hmap$ is invertible.
    \item \label{item:LocallyNilpotentPerturbation} A perturbation
        $\bmap$ is called \emph{locally nilpotent} if for every
        $a \in D^\bullet$ there exists $n \in \mathbb{N}_0$ such that
        $(\bmap \hmap)^n(a) = 0$.
    \end{definitionlist}
\end{definition}

Every locally nilpotent perturbation $b$ is small with
\begin{equation}
    \label{eq:LocallyNilpotentInverse}
    (\id + \bmap\hmap)^{-1}
    =
    \sum_{n=0}^{\infty} (-1)^n (\bmap\hmap)^n.
\end{equation}
The proof of the following perturbation lemma can be found in
\cite[\nopp 2.4]{crainic:2004a:pre}.
\begin{proposition}[Homological perturbation]
    \label{prop:HomologicalPerturbation}%
    Let a homotopy retract
    \begin{equation}
        \begin{tikzcd}
            (C^\bullet,\D_C)
            \arrow[r,"\imap", shift left = 3pt]
            &(D^\bullet,\D_D)
            \arrow[l,"\pmap", shift left = 3pt]
            \arrow[loop,
            out = -30,
            in = 30,
            distance = 30pt,
            start anchor = {[yshift = -7pt]east},
            end anchor = {[yshift = 7pt]east},
            "\hmap"{swap}
            ]
        \end{tikzcd}
    \end{equation}
    together with a small perturbation $\bmap$ be given.  Then
    \begin{equation}
        \begin{tikzcd}
            (C^\bullet,\Double_C)
            \arrow[r,"\Imap", shift left = 3pt]
            &(D^\bullet,\D_D + \bmap)
            \arrow[l,"\Pmap", shift left = 3pt]
            \arrow[loop,
            out = -30,
            in = 30,
            distance = 30pt,
            start anchor = {[yshift = -7pt]east},
            end anchor = {[yshift = 7pt]east},
            "\Hmap"{swap}
            ]
        \end{tikzcd}
    \end{equation}
    is a homotopy retract with
    \begin{alignat}{2}
        \Double_C &\coloneqq
        \D_C
        + \pmap(\id + \bmap\hmap)^{-1}\bmap \imap,
        \qquad\qquad
        &\Imap &\coloneqq
        (\id + \hmap \bmap)^{-1}\imap,
        \\
        \Hmap &\coloneqq
        (\id + \hmap\bmap)^{-1}\hmap,
        &\Pmap &\coloneqq
        \pmap(\id + \bmap\hmap)^{-1}.
    \end{alignat}
\end{proposition}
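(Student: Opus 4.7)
The plan is to verify the four defining properties of the perturbed homotopy retract by direct algebraic manipulation, exploiting the unperturbed structure $(\imap, \pmap, \hmap)$, the chain map properties of $\imap, \pmap$, the homotopy relation $\hmap\D_D + \D_D\hmap = \id - \imap\pmap$, and the perturbation identity $\D_D\bmap + \bmap\D_D + \bmap^2 = 0$ (equivalent to $(\D_D+\bmap)^2=0$).

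Before tackling the four properties, I would establish a preliminary commutation lemma that will be used repeatedly: since $\hmap(\id + \bmap\hmap) = \hmap + \hmap\bmap\hmap = (\id + \hmap\bmap)\hmap$, the operator $\id + \hmap\bmap$ is invertible (on the same elements where $\id + \bmap\hmap$ is, by the usual formal identity $(\id+AB)^{-1} = \id - A(\id+BA)^{-1}B$), and
\begin{equation*}
    (\id + \hmap\bmap)^{-1}\hmap = \hmap(\id + \bmap\hmap)^{-1}.
\end{equation*}
In particular, $\Hmap$ admits the two equivalent expressions used in the formulas for $\Imap$ and $\Pmap$, which lets one freely move $\hmap$ past the inverse factor. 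Similarly, $\imap$ satisfies $\imap \circ \pmap(\id + \bmap\hmap)^{-1}\bmap = \cdots$ type reshufflings via $\D_D\imap = \imap\D_C$.

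Next I would verify that $\Imap$ is a chain map, i.e.\ $(\D_D+\bmap)\Imap = \Imap \Double_C$. Using $\D_D\imap = \imap\D_C$ and the homotopy relation one computes $\D_D(\id+\hmap\bmap)^{-1}\imap$ by moving $\D_D$ through the series $\sum (-1)^n(\hmap\bmap)^n\imap$ using $\D_D\hmap = \id - \imap\pmap - \hmap\D_D$ and the perturbation identity to convert all $\D_D\bmap$ factors into $-\bmap\D_D - \bmap^2$. A telescoping then exhibits $(\D_D + \bmap)\Imap - \Imap\D_C$ as precisely $\Imap\pmap(\id+\bmap\hmap)^{-1}\bmap\imap = \Imap(\Double_C - \D_C)$. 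The verification that $\Pmap$ is a chain map is the dual computation and proceeds in the same telescoping fashion. From these two chain-map identities $\Double_C^2 = 0$ follows by computing $\Double_C^2 = \Pmap(\D_D+\bmap)^2\Imap$ after rewriting $\Double_C = \Pmap(\D_D+\bmap)\Imap$ (using $\pmap\imap = \id$ when $\bmap=0$ and the correction terms match), or alternatively by a direct computation.

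Finally, the homotopy equation $\Hmap(\D_D+\bmap) + (\D_D+\bmap)\Hmap = \id - \Imap\Pmap$ is obtained by inserting $\Hmap = (\id+\hmap\bmap)^{-1}\hmap = \hmap(\id+\bmap\hmap)^{-1}$, using the commutation lemma to sandwich $\hmap$ between the two geometric series, and then applying the original homotopy identity $\hmap\D_D + \D_D\hmap = \id - \imap\pmap$ term by term. The $\bmap$-linear pieces recombine into $-\Imap\Pmap + \imap\pmap(\id+\bmap\hmap)^{-1}$-type expressions that cancel against $\Imap\Pmap$ after using $\Imap\Pmap = (\id+\hmap\bmap)^{-1}\imap\pmap(\id+\bmap\hmap)^{-1}$. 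The main obstacle will be purely bookkeeping: organising the infinite geometric series so that the cancellations are manifest. This is best handled by working formally with the series expansion $(\id+\bmap\hmap)^{-1} = \sum_{n=0}^\infty (-1)^n (\bmap\hmap)^n$ (valid by smallness, or termwise in the locally nilpotent case relevant to the applications), where each identity reduces order by order to an instance of the original homotopy retract axioms and the perturbation equation.
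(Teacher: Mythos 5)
The paper does not actually prove this proposition: it is quoted verbatim from the literature and the proof is deferred to Crainic's note on the perturbation lemma (the reference given immediately above the statement). Your proposal is therefore a genuinely different route, namely the standard self-contained direct verification, and in outline it is correct. The commutation identity $(\id + \hmap\bmap)^{-1}\hmap = \hmap(\id + \bmap\hmap)^{-1}$ together with the invertibility transfer $(\id + AB)^{-1} = \id - A(\id + BA)^{-1}B$ is exactly the right starting point. The cleanest way to organise the telescoping you describe is to set $A \coloneqq (\id + \bmap\hmap)^{-1}\bmap = \bmap(\id + \hmap\bmap)^{-1}$, note $(\id+\bmap\hmap)^{-1} = \id - A\hmap$ and $(\id+\hmap\bmap)^{-1} = \id - \hmap A$, and prove the single key identity
\begin{equation*}
    \D_D A + A \D_D = - A\,\imap\pmap\,A,
\end{equation*}
which follows from $\D_D\bmap + \bmap\D_D = -\bmap^2$ and $\hmap\D_D + \D_D\hmap = \id - \imap\pmap$ by exactly the resummation you sketch. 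All four assertions (the two chain-map properties, $\Double_C^2 = 0$, and the perturbed homotopy identity) then reduce to short computations with this identity; what you buy over the paper's bare citation is a proof in the paper's own sign conventions, which the authors explicitly note differ from the cited source.

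One step is not correct as literally stated: you propose to get $\Double_C^2 = 0$ by rewriting $\Double_C = \Pmap(\D_D+\bmap)\Imap$ \emph{using} $\pmap\imap = \id$. The hypothesis here is only a homotopy retract, not a deformation retract, so $\pmap\imap = \id$ is not available; and even when it holds, $\Double_C = \Pmap(\D_D+\bmap)\Imap$ would require $\Pmap\Imap = \id$, which fails for a general homotopy retract. Your fallback (``or alternatively by a direct computation'') is the route that actually works: since $\imap$ and $\pmap$ are chain maps one gets
\begin{equation*}
    \Double_C^2
    =
    \pmap\bigl(\D_D A + A\D_D + A\,\imap\pmap\,A\bigr)\imap
    =
    0
\end{equation*}
directly from the key identity above. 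With that correction the proposal is a complete and workable proof strategy.
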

\begin{corollary}[Morphism of perturbed retracts]
    \label{cor:MorphismOfPerturbations}%
    Let $\Phi \colon D^\bullet \to D'^\bullet$ be a morphism of
    homotopy retracts $(\imap,\pmap,\hmap)$ and
    $(\imap',\pmap',\hmap')$.  Moreover, let $\bmap$ be a small
    perturbation of $(\imap,\pmap,\hmap)$ and let $\bmap'$ be small
    perturbation of $(\imap',\pmap',\hmap')$.  If $\Phi$ satisfies
    $\bmap'\Phi = \Phi \bmap$, then $\Phi$ is a morphism between the
    perturbed homotopy retracts, too.
\end{corollary}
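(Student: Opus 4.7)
The plan is to verify the two conditions required by \autoref{def:MorphismHomotopyRetract} of a morphism of homotopy retracts directly from the two compatibility hypotheses $\Phi \D_D = \D_{D'} \Phi$ (being a morphism of complexes) and $\Phi \bmap = \bmap' \Phi$. First I would check that $\Phi$ remains a chain map once the differentials are perturbed: adding the two commutation relations gives immediately $\Phi (\D_D + \bmap) = (\D_{D'} + \bmap') \Phi$, so this step is essentially a one-line computation.

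Second, I need to show $\Phi \Hmap = \Hmap' \Phi$, where $\Hmap = (\id + \hmap \bmap)^{-1} \hmap$ and analogously for $\Hmap'$ using $\hmap'$ and $\bmap'$. The key observation is that $\Phi$ commutes with $\hmap \bmap$: indeed one computes $\Phi \hmap \bmap = \hmap' \Phi \bmap = \hmap' \bmap' \Phi$, using first that $\Phi$ was already a morphism of the unperturbed retracts (so $\Phi \hmap = \hmap' \Phi$) and then the new compatibility $\Phi \bmap = \bmap' \Phi$. This yields on the nose the intertwining relation $\Phi (\id + \hmap \bmap) = (\id + \hmap' \bmap') \Phi$.

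The only slightly subtle step is passing from the intertwining of $\id + \hmap \bmap$ and $\id + \hmap' \bmap'$ to the intertwining of their inverses. This is where the smallness of both $\bmap$ and $\bmap'$ enters: by \autoref{def:Perturbation}~\ref{item:SmallPerturbation} both $\id + \hmap \bmap$ and $\id + \hmap' \bmap'$ are invertible. Given this, one multiplies the identity $\Phi(\id + \hmap \bmap) = (\id + \hmap' \bmap')\Phi$ by $(\id + \hmap \bmap)^{-1}$ on the right and by $(\id + \hmap' \bmap')^{-1}$ on the left to obtain $(\id + \hmap' \bmap')^{-1} \Phi = \Phi (\id + \hmap \bmap)^{-1}$. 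Combining this with $\Phi \hmap = \hmap' \Phi$ then gives
\[
    \Hmap' \Phi
    = (\id + \hmap' \bmap')^{-1} \hmap' \Phi
    = (\id + \hmap' \bmap')^{-1} \Phi \hmap
    = \Phi (\id + \hmap \bmap)^{-1} \hmap
    = \Phi \Hmap,
\]
which is exactly the required identity. No serious obstacle is expected; the whole argument is a formal manipulation of the two commutation hypotheses together with the explicit formulas from \autoref{prop:HomologicalPerturbation}. If desired, one can additionally observe that the analogous computations with $\pmap$, $\imap$ in place of $\hmap$ show that $\Phi$ is compatible with the perturbed $\Pmap$, $\Imap$, and with the perturbed differential $\Double_C$ on the small complex $C^\bullet$, but for the definition of a morphism of homotopy retracts only the two items above are required.
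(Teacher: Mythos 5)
Your proof is correct and is exactly the formal verification the paper has in mind: the corollary is stated without proof there, being treated as an immediate consequence of the two commutation hypotheses and the explicit formulas of \autoref{prop:HomologicalPerturbation}. The only point worth a passing remark is that smallness is defined via invertibility of $\id + \bmap\hmap$ while you invert $\id + \hmap\bmap$; these are equivalent by the standard identity $(\id + \hmap\bmap)^{-1} = \id - \hmap(\id+\bmap\hmap)^{-1}\bmap$, and the paper itself uses both forms interchangeably, so your argument stands as written.
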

This corollary still holds if we consider morphisms of homotopy
retracts over different rings of scalars along a ring homomorphism
between the scalars.  Often homotopy retracts satisfy additional
properties.  For us the following notions will be important.

\begin{definition}[Special deformation retract]
    \label{def:SpecialDeformationRetract}%
    Let
    \begin{equation}
        \begin{tikzcd}
            (C^\bullet,\D_C)
            \arrow[r,"\imap", shift left = 3pt]
            &(D^\bullet,\D_D)
            \arrow[l,"\pmap", shift left = 3pt]
            \arrow[loop,
            out = -30,
            in = 30,
            distance = 30pt,
            start anchor = {[yshift = -7pt]east},
            end anchor = {[yshift = 7pt]east},
            "\hmap"{swap}]
        \end{tikzcd}
    \end{equation}
    be a homotopy retract.
    \begin{definitionlist}
    \item \label{item:DeformationRetract} If additionally
        \begin{equation}
            \label{eq:DeformationRetract}
            \pmap\imap = \id
        \end{equation}
        holds, the above homotopy retract is called \emph{deformation
          retract}.
    \item \label{item:SpecialDeformationRetract} A deformation retract
        such that additionally
        \begin{equation}
            \hmap^2 = 0,
            \qquad
            \pmap \hmap = 0,
            \quad
            \textrm{ and }
            \quad
            \hmap \imap = 0
        \end{equation}
        hold is called \emph{special deformation retract}.
    \end{definitionlist}
\end{definition}

Note that \eqref{eq:DeformationRetract} is equivalent to
\begin{equation}
    \begin{tikzcd}
        (D^\bullet,\D_D)
        \arrow[r,"\pmap", shift left = 3pt]
        &(C^\bullet,\D_C)
        \arrow[l,"\imap", shift left = 3pt]
        \arrow[loop,
        out = -30,
        in = 30,
        distance = 30pt,
        start anchor = {[yshift = -7pt]east},
        end anchor = {[yshift = 7pt]east},
        "0"{swap}
        ]
    \end{tikzcd}
\end{equation}
being a homotopy retract.
Perturbations of deformation retracts will in general not be deformation retracts.
Nevertheless, special deformation retracts are preserved under perturbation
\cite[\nopp 3.1]{crainic:2004a:pre}:

\begin{corollary}[Perturbation of special deformation retract]
    \label{cor:SpecialDeformationRetract}%
    Let a special deformation retract
    \begin{equation}
        \begin{tikzcd}
            (C^\bullet,\D_C)
            \arrow[r,"\imap", shift left = 3pt]
            &(D^\bullet,\D_D)
            \arrow[l,"\pmap", shift left = 3pt]
            \arrow[loop,
            out = -30,
            in = 30,
            distance = 30pt,
            start anchor = {[yshift = -7pt]east},
            end anchor = {[yshift = 7pt]east},
            "\hmap"{swap}
            ]
        \end{tikzcd}
    \end{equation}
    together with a locally nilpotent perturbation $\bmap$ be given.
    Then
    \begin{equation}
        \begin{tikzcd}
            (C^\bullet,\Double_C)
            \arrow[r,"\Imap", shift left = 3pt]
            &(D^\bullet,\D_D + \bmap)
            \arrow[l,"\Pmap", shift left = 3pt]
            \arrow[loop,
            out = -30,
            in = 30,
            distance = 30pt,
            start anchor = {[yshift = -7pt]east},
            end anchor = {[yshift = 7pt]east},
            "\Hmap"{swap}
            ]
        \end{tikzcd}
    \end{equation}
    as in \autoref{prop:HomologicalPerturbation} is a special
    deformation retract.
\end{corollary}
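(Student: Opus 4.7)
The plan is to verify that the perturbed triple $(\Imap,\Pmap,\Hmap)$ produced by \autoref{prop:HomologicalPerturbation} satisfies the three extra identities characterizing a special deformation retract, namely $\Pmap\Imap = \id$, $\Hmap^2 = 0$, $\Pmap\Hmap = 0$, and $\Hmap\Imap = 0$. Since the homotopy retract property is already guaranteed by \autoref{prop:HomologicalPerturbation}, only these additional relations remain. As $\bmap$ is locally nilpotent, the operators $(\id + \hmap\bmap)^{-1}$ and $(\id + \bmap\hmap)^{-1}$ admit the convergent geometric series expansions given in \eqref{eq:LocallyNilpotentInverse}, and the strategy is to verify each of the four identities by expanding into these series and showing that every term vanishes by virtue of one of the three special-deformation-retract relations $\hmap^2 = 0$, $\pmap\hmap = 0$, $\hmap\imap = 0$.

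Concretely, I would proceed as follows. For $\Pmap\Imap$, write the product as the double sum $\pmap \sum_{m,n\ge 0}(-1)^{m+n}(\bmap\hmap)^m(\hmap\bmap)^n\imap$ and analyze the four cases separately: the term $m=n=0$ gives $\pmap\imap = \id$; for $m=0, n\geq 1$ the factor $\pmap\hmap$ at the left end forces vanishing; for $m\geq 1, n=0$ the factor $\hmap\imap$ at the right end forces vanishing; for $m,n\geq 1$ the two middle factors produce an $\hmap^2$, giving zero. For $\Hmap^2$ and $\Hmap\Imap$ the same pattern applies, where every term in the double series contains either an $\hmap^2$ in the middle or an $\hmap\imap$ on the right. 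For $\Pmap\Hmap$, I would use the elementary identity $(\id + \hmap\bmap)^{-1}\hmap = \hmap(\id + \bmap\hmap)^{-1}$, which follows from $\hmap(\id + \bmap\hmap) = (\id + \hmap\bmap)\hmap$, reducing the problem to showing $\pmap(\id+\bmap\hmap)^{-1}\hmap = 0$; expanding the latter, the $n=0$ term vanishes by $\pmap\hmap = 0$ and the $n\geq 1$ terms vanish because of the $\hmap^2$ at the right end.

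The calculation is essentially bookkeeping and I do not anticipate any real obstacle; the only thing to be mindful of is keeping careful track of which of the three identities ($\hmap^2 = 0$, $\pmap\hmap = 0$, $\hmap\imap = 0$) is being invoked at each step, and making sure the argument for vanishing of a given term uses an identity that applies at an endpoint (leftmost $\pmap\hmap$, rightmost $\hmap\imap$) or in the middle (adjacent $\hmap\hmap$). Since each term in the double series ends up containing at least one such pattern, the verification is uniform across all four required identities.
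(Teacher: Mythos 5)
Your verification is correct: all four identities ($\Pmap\Imap=\id$, $\Hmap^2=0$, $\Pmap\Hmap=0$, $\Hmap\Imap=0$) do follow by expanding the geometric series for $(\id+\hmap\bmap)^{-1}$ and $(\id+\bmap\hmap)^{-1}$ and observing that every term contains a leftmost $\pmap\hmap$, a rightmost $\hmap\imap$, or an internal $\hmap^2$, and the commutation identity $(\id+\hmap\bmap)^{-1}\hmap=\hmap(\id+\bmap\hmap)^{-1}$ you invoke for $\Pmap\Hmap$ is valid. The paper itself gives no proof here, deferring to \cite[\nopp 3.1]{crainic:2004a:pre}; your argument is precisely the standard direct verification found there.
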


Let us finally discuss special cases of the homological perturbation
lemma that will be used in the main part.
\begin{corollary}[Perturbation lemma I]
    \label{cor:PerturbationLemmaI}%
    Let $(D^\bullet,\D)$ be a cochain complex and consider a
    $\ring{R}$-module $C$ as a complex concentrated in degree zero.
    Moreover, let
    \begin{equation}
        \begin{tikzcd}
            C
            \arrow[r,"\imap", shift left = 3pt]
            &(D^\bullet,\D)
            \arrow[l,"\pmap", shift left = 3pt]
            \arrow[loop,
            out = -30,
            in = 30,
            distance = 30pt,
            start anchor = {[yshift = -7pt]east},
            end anchor = {[yshift = 7pt]east},
            "\hmap"{swap}
            ]
        \end{tikzcd}
    \end{equation}
    be a special deformation retract.  Here we consider $\imap$ and
    $\pmap$ to be $0$ outside of their domains.  If $\bmap$ is a
    locally nilpotent perturbation, then
    \begin{equation}
        \begin{tikzcd}
            C
            \arrow[r,"\Imap", shift left = 3pt]
            &(D^\bullet,\D + \bmap)
            \arrow[l,"\pmap", shift left = 3pt]
            \arrow[loop,
            out = -30,
            in = 30,
            distance = 30pt,
            start anchor = {[yshift = -7pt]east},
            end anchor = {[yshift = 7pt]east},
            "\Hmap"{swap}
            ]
        \end{tikzcd}
    \end{equation}
    is a special deformation retract with
    \begin{equation}
        \Imap \coloneqq (\id + \hmap \bmap)^{-1}\imap,
        \quad
        \textrm{and}
        \quad
        \Hmap \coloneqq (\id + \hmap \bmap)^{-1}\hmap.
    \end{equation}
\end{corollary}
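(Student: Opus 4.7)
The plan is to derive this as a direct specialization of the general perturbation result \autoref{cor:SpecialDeformationRetract}, using the degree concentration of $C$ to simplify the output. Since the hypotheses of \autoref{cor:SpecialDeformationRetract} are satisfied (the input is a special deformation retract and $\bmap$ is locally nilpotent, hence small with inverse given by the geometric series), I immediately obtain a special deformation retract on $(D^\bullet,\D+\bmap)$ with the formulas
\begin{equation*}
    \Double_C = \D_C + \pmap(\id + \bmap\hmap)^{-1}\bmap\imap, \qquad
    \Imap = (\id + \hmap\bmap)^{-1}\imap, \qquad
    \Hmap = (\id + \hmap\bmap)^{-1}\hmap,
\end{equation*}
and $\Pmap = \pmap(\id + \bmap\hmap)^{-1}$ from \autoref{prop:HomologicalPerturbation}. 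The remaining task is therefore to reduce the latter two formulas to the ones stated in the corollary.

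First I would address $\Double_C$. Since $C$ is concentrated in degree $0$, the original differential $\D_C$ vanishes, and any map $C \to C$ of degree $+1$ is forced to be zero. Alternatively, one can see this directly on the level of the formula: $\bmap\imap$ lands in $D^1$, the operator $(\id + \bmap\hmap)^{-1}$ preserves degree, and $\pmap$ is by assumption zero outside $D^0$. Either way, $\Double_C = 0$, so $C$ with the zero differential is indeed the perturbed base.

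Next I would verify that $\Pmap$ coincides with $\pmap$. Expanding $(\id + \bmap\hmap)^{-1} = \sum_{n=0}^{\infty}(-1)^n(\bmap\hmap)^n$, I need to show $\pmap(\bmap\hmap)^n = 0$ for every $n \geq 1$. Since $\bmap\hmap$ preserves degree and $\pmap$ is nonzero only on $D^0$, it suffices to check this after restricting to $D^0$. But $\hmap$ lowers degree by one, and by convention $D^{-1}=0$, hence $\hmap\at{D^0} = 0$ and consequently $(\bmap\hmap)^n\at{D^0} = 0$ for $n \geq 1$. Only the $n=0$ term survives, giving $\Pmap = \pmap$ as claimed. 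The formulas for $\Imap$ and $\Hmap$ are already in the required form, and the three special-deformation-retract identities $\Hmap^2 = 0$, $\pmap\Hmap = 0$, $\Hmap\Imap = 0$ come for free from \autoref{cor:SpecialDeformationRetract}.

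No step is genuinely difficult here; the only point that needs any real attention is the identification $\Pmap = \pmap$, and even there the argument is just a degree count exploiting that $C$ sits only in degree zero. The rest is bookkeeping with the formulas provided by the general perturbation lemma.
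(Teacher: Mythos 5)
Your proposal is correct and follows exactly the route the paper intends (the corollary is stated without a written proof, as an immediate specialization of \autoref{prop:HomologicalPerturbation} and \autoref{cor:SpecialDeformationRetract}): the only content beyond citing those results is the degree count showing $\Double_C = 0$ and $\Pmap = \pmap$, which you carry out correctly by observing that $(\bmap\hmap)^n\at{D^0} = 0$ for $n \geq 1$ since $\hmap$ maps $D^0$ into $D^{-1} = 0$, and that $\pmap$ kills the degree-one image of $\bmap\imap$.
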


In the following we denote by $D^{\bullet,\bullet}$ a double complex
with $\delta \colon D^{k,\bullet} \to D^{k+1,\bullet}$ the vertical
differential and
$\partial \colon D^{\bullet,\ell} \to D^{\bullet,\ell}$ the horizontal
differential.  Moreover, we denote by $\D = \delta + \partial$ the
total differential on the total complex
\begin{equation}
    D^m \coloneqq \bigoplus_{k + \ell = m} D^{k,\ell}.
\end{equation}
\begin{corollary}[Perturbation lemma II]
    \label{cor:PerturbationLemmaII}%
    Let $(D^{\bullet,\bullet},\delta,\partial)$ be a double complex.
    \begin{corollarylist}
    \item Assume that for each row there is a deformation retract
        \begin{equation}
            \begin{tikzcd}
                C^k
                \arrow[r,"\jmap", shift left = 3pt]
                &(D^{k,\bullet},\partial)
                \arrow[l,"\qmap", shift left = 3pt]
                \arrow[loop,
                out = -30,
                in = 30,
                distance = 30pt,
                start anchor = {[yshift = -7pt]east},
                end anchor = {[yshift = 7pt]east},
                "\kmap"{swap}
                ]
            \end{tikzcd}
        \end{equation}
        given.  Then
        \begin{equation}
            \begin{tikzcd}
                (C^\bullet,\delta_C)
                \arrow[r,"\jmap", shift left = 3pt]
                &(D^\bullet,\partial + \delta)
                \arrow[l,"\Qmap", shift left = 3pt]
                \arrow[loop,
                out = -30,
                in = 30,
                distance = 30pt,
                start anchor = {[yshift = -7pt]east},
                end anchor = {[yshift = 7pt]east},
                "\Kmap"{swap}
                ]
            \end{tikzcd}
        \end{equation}
        is a deformation retract with
        \begin{equation}
            \delta_C = \qmap \delta \jmap,
            \qquad
            \Qmap = \qmap(\id + \delta \kmap)^{-1},
            \qquad
            \textrm{and}
            \qquad
            \Kmap = \kmap(\id + \delta \kmap)^{-1}.
        \end{equation}
    \item Assume that for each column there is a deformation retract
        \begin{equation}
            \begin{tikzcd}
                C^\ell
                \arrow[r,"\imap", shift left = 3pt]
                &(D^{\bullet,\ell},\delta)
                \arrow[l,"\pmap", shift left = 3pt]
                \arrow[loop,
                out = -30,
                in = 30,
                distance = 30pt,
                start anchor = {[yshift = -7pt]east},
                end anchor = {[yshift = 7pt]east},
                "\hmap"{swap}
                ]
            \end{tikzcd}
        \end{equation}
        given.  Then
        \begin{equation}
            \begin{tikzcd}
                (C^\bullet,\partial_C)
                \arrow[r,"\imap", shift left = 3pt]
                &(D^\bullet,\delta + \partial)
                \arrow[l,"\Pmap", shift left = 3pt]
                \arrow[loop,
                out = -30,
                in = 30,
                distance = 30pt,
                start anchor = {[yshift = -7pt]east},
                end anchor = {[yshift = 7pt]east},
                "\Hmap"{swap}
                ]
            \end{tikzcd}
        \end{equation}
        is a deformation retract with
        \begin{equation}
            \partial_C = \pmap \partial \imap,
            \qquad
            \Pmap = \pmap(\id + \partial \hmap)^{-1},
            \qquad
            \textrm{and}
            \qquad
            \Hmap = \hmap(\id + \partial \hmap)^{-1}.
        \end{equation}
    \end{corollarylist}
\end{corollary}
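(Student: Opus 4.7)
The plan is to derive both parts of the corollary from the homological perturbation lemma (\autoref{prop:HomologicalPerturbation}). By symmetry only part~(i) needs detailed treatment; part~(ii) is obtained by exchanging the roles of rows and columns, of $\delta$ and $\partial$, and of the homotopy triples $(\jmap,\qmap,\kmap)$ and $(\imap,\pmap,\hmap)$.

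First I would assemble the row deformation retracts into one deformation retract for the total module endowed solely with the horizontal differential. Viewing $C^\bullet = \bigoplus_k C^k$ as a complex with zero differential, and $(D^\bullet, \partial)$ as the total complex with the vertical differential dropped, the direct sums of the row morphisms yield a deformation retract
\begin{equation*}
    \begin{tikzcd}
        (C^\bullet,0)
        \arrow[r,"\jmap", shift left = 3pt]
        &(D^\bullet,\partial)
        \arrow[l,"\qmap", shift left = 3pt]
        \arrow[loop,
        out = -30,
        in = 30,
        distance = 30pt,
        start anchor = {[yshift = -7pt]east},
        end anchor = {[yshift = 7pt]east},
        "\kmap"{swap}
        ]
    \end{tikzcd}
\end{equation*}
after the customary sign adjustments needed to turn a horizontal row contraction into a contraction of the total complex. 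The next step is to view $\delta$ as a perturbation of $\partial$. The double complex identities $\delta^2 = 0$ and $\delta\partial + \partial\delta = 0$ give $(\partial + \delta)^2 = 0$, so that $\bmap \coloneqq \delta$ satisfies \autoref{def:Perturbation}~\ref{item:Perturbation}.

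Applying \autoref{prop:HomologicalPerturbation} to this situation then produces a deformation retract between $(C^\bullet, \delta_C)$ and $(D^\bullet, \partial+\delta)$, with $\Qmap$ and $\Kmap$ given by the stated formulas and induced differential $\delta_C = \qmap(\id+\delta\kmap)^{-1}\delta\jmap$, while the perturbed inclusion is $(\id+\kmap\delta)^{-1}\jmap$. The statement that $\jmap$ itself already serves as the inclusion and that $\delta_C$ reduces to the simple expression $\qmap\delta\jmap$ reflects the additional identities $\kmap\jmap = 0$ and, iteratively, $\qmap(\delta\kmap)^n\delta\jmap = 0$ for $n\geq 1$, which are available when the row retracts are special; this is the case throughout the applications of this corollary within the paper, see \autoref{lem:groupSpecialDeformationRetract} and \autoref{cor:SpecialDeformationRetract}.

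The main technical point is ensuring that $\bmap = \delta$ is a small perturbation, i.e.\ that $\id + \delta\kmap$ is invertible. This is not automatic in an arbitrary double complex but follows in all cases of interest via a degree-counting argument: the bigrading of $\CVanEst^{\bullet,\bullet}$ used in this paper always has at least one bounded direction in the relevant operators, so that $(\delta\kmap)^n$ vanishes on every fixed bidegree for $n$ large enough, exactly as in \autoref{lem:partialh}~\ref{item:partialHPowerNZero} and \autoref{lem:deltaKPowerN}~\ref{item:deltaKPowerNZero}. Under this local nilpotency the geometric series $(\id+\delta\kmap)^{-1} = \sum_n (-1)^n (\delta\kmap)^n$ converges pointwise, and the perturbation lemma applies without any further completeness hypothesis. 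Part~(ii) of the corollary is then proved mutatis mutandis by swapping the two differentials and treating $\partial$ as a perturbation of $\delta$.
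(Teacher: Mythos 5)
The paper states this corollary without proof, as a standard consequence of \autoref{prop:HomologicalPerturbation} (citing Crainic), and your overall strategy is exactly the intended one: assemble the row retracts into a single retract of $(D^\bullet,\partial)$ onto $(C^\bullet,0)$, treat $\delta$ as a perturbation, and invoke the perturbation lemma. The local nilpotency of $\delta\kmap$ is indeed the point to check, and your degree-counting justification is the right one for a first-quadrant double complex.

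There is, however, one misattribution that narrows the scope of your argument below what the corollary claims. You justify the simplifications $\Imap = \jmap$ and $\delta_C = \qmap\,\delta\,\jmap$ by appealing to the row retracts being \emph{special}, citing $\kmap\jmap = 0$. First, $\kmap\jmap = 0$ is not the identity you need: the perturbed inclusion is $(\id + \kmap\delta)^{-1}\jmap$, so what must vanish is $\kmap\,\delta\,\jmap$ (with $\delta$ in between), and likewise $\qmap(\delta\kmap)^n\delta\jmap$ for $n \geq 1$ for the differential. Second, and more importantly, these identities hold for \emph{any} row deformation retract, not just special ones, by pure bidegree bookkeeping: $\jmap$ lands in row-degree $\ell = 0$, $\delta$ preserves $\ell$, and $\kmap$ lowers $\ell$ by one into $D^{\bullet,-1} = 0$. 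This is the same degree argument you already use for local nilpotency, so the fix is immediate; but as written your proof only establishes the corollary under the additional hypothesis that the row retracts are special, whereas the statement (and its use in, e.g., \autoref{prop:CEDeformationRetract}, where the column retract is not asserted to be special) requires the general case.
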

In both cases, if we start with a special deformation retract we
obtain a special deformation retract.

\end{appendices}

\bookmarksetup{startatroot}
{
  \footnotesize
  \printbibliography[heading=bibintoc]
}

\ifdraft{\clearpage}{}
\ifdraft{\phantomsection}{}
\ifdraft{\addcontentsline{toc}{section}{List of Corrections}}{}
\ifdraft{\listoffixmes}{}

\end{document}